\DeclareSymbolFont{cyrletters}{OT2}{wncyr}{m}{n}
\DeclareMathSymbol{\Sha}{\mathalpha}{cyrletters}{"58}
\DeclarePairedDelimiter{\abs}{\lvert}{\rvert}
\DeclarePairedDelimiter{\norm}{\lVert}{\rVert}
\newcommand{\tens}{\otimes}
\newtheorem{theorem}{Theorem}[section]
\newtheorem{definition}[theorem]{Definition}
\newtheorem{proposition}[theorem]{Proposition}
\newtheorem{lemma}[theorem]{Lemma}
\newtheorem{corollary}[theorem]{Corollary}
\DeclareMathOperator {\Spec}{\operatorname{Spec}}
\DeclareMathOperator {\Stab}{\operatorname{Stab}}
\newcommand{\set}[2]{\left\{\,#1~\middle|~#2\,\right\}}
\newcommand{\BT}{\mathcal{BT}}
\newtheorem{conj}[theorem]{Conjecture}
\newtheorem{rem}[theorem]{Remark}
\newcommand{\citestacks}[2][Tag]{\cite[\href{https://stacks.math.columbia.edu/tag/#2}{#1 (#2)}]{Stacks}}
\newcommand{\Hom}{\mbox{Hom}}
\newcommand{\End}{{\rm End}}
\newcommand{\wh}{\widehat}
\newcommand{\Gal}{{\rm Gal}}
\newcommand{\ab}{{\rm ab}}
\newcommand{\ad}{{\rm ad}}
\newcommand{\FF}{{\mathbb F}}
\newcommand{\CC}{{\mathbb C}}
\newcommand{\RR}{{\mathbb R}}
\newcommand{\ZZ}{{\mathbb Z}}
\newcommand{\QQ}{{\mathbb Q}}
\newcommand{\AAA}{{\mathbb A}}
\newcommand{\lto}{\longrightarrow}
\newcommand{\cH}{\mathcal{H}}
\newcommand{\ol}{\overline}
\newcommand{\wt}{\widetilde}
\newcommand{\Q}{\mathbb{Q}}
\newcommand{\Z}{\mathbb{Z}}
\newcommand{\F}{\mathbb{F}}
\newcommand{\R}{\mathbb{R}}
\newcommand{\A}{\mathbb{A}}
\title[Andr\'e-Pink-Zannier for Shimura varieties of abelian type]{Generalised Andr\'e-Pink-Zannier conjecture for  Shimura varieties of abelian type}
\author{Rodolphe Richard}
\address{UCL Department of Mathematics,
University College London, 
Gower Street, 
London, WC1E 6BT
}
\email{r.richard@ucl.ac.uk}
 \author{Andrei Yafaev}
\address{UCL Department of Mathematics,
University College London, 
Gower Street, 
London, WC1E 6BT
}
\email{yafaev@ucl.ac.uk}
\begin{document}

\begin{abstract}
In this paper we prove the generalised Andr\'e-Pink-Zannier conjecture (an important case of the Zilber-Pink conjecture)
for all Shimura varieties of abelian type. Questions of this type were first asked by Y.~André in 1989.
We actually prove a general statement for all Shimura varieties, subject to certain assumptions
that are satisfied for Shimura varieties of abelian type and are expected to hold in general.

We also prove another result, a~$p$-adic Kempf-Ness theorem, on the relation between good reduction of homogeneous spaces over~$p$-adic integers with Mumford stability property in $p$-adic geometric invariant theory.
\end{abstract}

\maketitle
\setcounter{tocdepth}{1	}

\tableofcontents

\section{Introduction}
 The central object of study in this article is the following conjecture.
\begin{conj}[Andr\'e-Pink-Zannier] \label{APZ}
Let $S$ be a Shimura variety and $\Sigma$ a subset of 
a generalised Hecke orbit in $S$ (as in~\cite[Def.~2.1]{RY}). Then the irreducible components of the Zariski closure of 
$\Sigma$ are weakly special subvarieties.
\end{conj}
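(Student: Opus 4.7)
The plan is to follow the Pila--Zannier strategy for Zilber--Pink-type problems on Shimura varieties, specialised to the case where $\Sigma$ lies in a generalised Hecke orbit. The three classical pillars are: (i) a definability statement placing the pre-image of $\Sigma$ in a fundamental set inside a set definable in an o-minimal structure, (ii) a functional transcendence input (hyperbolic Ax--Lindemann, or Ax--Schanuel for Shimura varieties) forcing positive-dimensional algebraic components in the uniformisation of a Zariski-closed set to come from weakly special subvarieties, and (iii) a lower bound on the size of Galois orbits of points in the generalised Hecke orbit, strong enough to beat the polynomial Pila--Wilkie upper bound on rational points of bounded height in the definable pre-image.

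Following the pattern suggested by the abstract, I would first prove a conditional theorem: assuming a collection of arithmetic inputs (good integral canonical models at a prime $p$, control of adelic stabilisers via $p$-adic stability, and Mumford--Tate compatibility), the conjecture holds for any Shimura variety. Then I would verify these inputs for Shimura varieties of abelian type. The verification uses the standard reduction from abelian-type to Hodge-type Shimura data (through Deligne's construction via central isogenies and quotients by finite groups) together with Kisin's results on integral canonical models.

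For pillars (i) and (ii), in the Hodge-type case one can invoke the results of Klingler--Ullmo--Yafaev and Mok--Pila--Tsimerman; these descend to abelian type through the reduction above, since weak specialness and the uniformisation are functorial for the relevant morphisms of Shimura data. The heart of the paper is therefore pillar (iii). Here the $p$-adic Kempf--Ness theorem advertised in the abstract enters: at a prime $p$ of good reduction, one realises the generalised Hecke orbit via an adelic group action and reinterprets good reduction of the relevant homogeneous spaces over $\Z_p$ as a Mumford stability condition in $p$-adic geometric invariant theory. This stability controls the size of stabilisers and, combined with $\ell$-adic information away from $p$ and with Shimura reciprocity, should yield a polynomial lower bound on $[L(x):L]$ for $x \in \Sigma$ that matches the Pila--Wilkie upper bound.

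The main obstacle is this third pillar. A generalised Hecke orbit is considerably larger than a classical one --- the Mumford--Tate group varies across it --- so the Galois-orbit lower bound must be uniform across inner forms and across adelic stabilisers attached to different points of the orbit. Extracting this uniformity from the $p$-adic Kempf--Ness theorem, and matching it carefully with the height function implicit in the Pila--Wilkie counting, is where the most delicate work will lie; the abelian-type hypothesis enters precisely to guarantee the integral-model inputs that make this feasible.
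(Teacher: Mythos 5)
Your outline captures the architecture of the paper at the level of the abstract: (i) a conditional theorem for all Shimura varieties under an arithmetic hypothesis, (ii) verification of that hypothesis for abelian type, (iii) Pila--Zannier machinery with the Galois lower bound being the real work, and (iv) the $p$-adic Kempf--Ness theorem as the engine for (iii). Note first that the statement you were handed is a \emph{conjecture}; the paper does not prove it in general, only for abelian type (Th.~\ref{main theorem}) and, more generally, under the ``uniform integral Tate'' hypothesis (Th.~\ref{main theorem 2}), and your proposal correctly aims at this restricted scope. The paper also does not re-establish the o-minimality, Ax--Lindemann, or point-counting inputs; it imports the formal Pila--Zannier scaffold wholesale from the companion paper~\cite{RY}, and the present paper's contribution is entirely concentrated in formulating the right arithmetic hypothesis and proving the Galois lower bound under it.

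There are, however, two genuine mismatches worth flagging. First, the conditional hypothesis is not the loosely described ``good integral canonical models, adelic stabiliser control, Mumford--Tate compatibility,'' but a precise condition on the \emph{image of the Galois representation}: the ``uniform integral Tate conjecture'' (Def.~\ref{defi:Tate}, Def.~\ref{defi:Tate bis}), asking that for every $D$ there is a bound $M(D)$ so that, for all $p\geq M(D)$, every index-$\leq D$ subgroup of the mod-$p$ Galois image has semisimple action on $\mathfrak g_{\F_p}$ and has centraliser in $G_{\F_p}$ equal to $Z_{G_{\F_p}}(M_{\F_p})$. This is a strengthening of Faltings' Tate-semisimplicity theorems to a statement that is \emph{uniform in $p$ and integral} (i.e.\ about mod-$p$ reductions, not $\Q_\ell$-coefficients). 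Second, and more importantly, the verification for abelian type does \emph{not} pass through Kisin's integral canonical models. It goes through the Deligne reduction (abelian $\to$ Hodge $\to$ Siegel) and then invokes Faltings' theorems in the refined quantitative form of Masser--W\"ustholz, together with Serre's $\ell$-independence and connectedness results for Galois images, and a specialisation argument of Noot to descend from fields of finite type to number fields (see \S\ref{Tate:abelian type} and Th.~\ref{Faltings}--Cor.~\ref{coro4.12}). Kisin's work plays no role here.

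On the Galois lower bound itself, your description is directionally right but glosses over the key mechanism. The paper factors the problem prime by prime; at each $p$, after constructing a suitable tuple in the Lie algebra whose reduction mod $p$ generates the ``right'' group with the ``right'' centraliser (Prop.~\ref{prop X Y}), it compares two local height functions $H_v, H_{v'}$ attached to closed orbits of $G$ in a linear representation, via Th.~\ref{thm:compare reductive}. The $p$-adic Kempf--Ness theorem (Th.~\ref{pKN}) is exactly what makes this comparison go through with constants depending only on the set of weights of $\rho$ and not on $p$: it equates integrality of $g\cdot v$ with integrality of the image point in the GIT quotient $G/F$, and the rest is a convexity/slope argument in the apartment (\S\ref{sec:slopes}). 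You would also need to handle the failure of the bound in the small-height regimes $H_{v'}(\phi)\in\{1;p\}$, which in the paper requires a separate argument using the toric part and Edixhoven--Yafaev style index estimates (see \S\ref{last case}). Finally, your remark that the Mumford--Tate group ``varies across'' a generalised Hecke orbit is misleading for the geometric Hecke orbits that actually carry the weight of the argument: there it is $\Q$-conjugate, and the paper first reduces generalised Hecke orbits to finitely many geometric ones precisely to enforce that uniformity.
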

This conjecture is an important special case of the 
 Zilber-Pink conjectures for Shimura varieties, which
has recently been and continues to be a subject of active research.

A special case of Conjecture~\ref{APZ} was first formulated
in 1989 by Y.~André in~\cite[\S{}X 4.5, p.\,216  (Problem 3)]{Andre}. Conjecture~\ref{APZ} was then stated in the introduction to the second author’s 2000 PhD thesis~\cite{Y}\footnote{The statement there uses the terminology ‘totally geodesic subvarieties’ instead of ‘weakly special’, but Moonen had proved in~\cite{MoMo} that the two notions are equivalent.}, following discussions
with Bas Edixhoven. Both statements refer to classical Hecke orbits, rather than \emph{generalised} Hecke orbits (cf.~\cite[\S2.5.1]{RY}).

Zannier has considered questions of this type in the context of abelian
schemes and tori. Richard Pink, in his 2005 paper~\cite{Pink}, has formulated
and studied this question. 

Pink proves it for “Galois generic” points of Shimura varieties\footnote{Roughly, 
the image of the corresponding Galois representation intersects the 
derived subgroup of the ambient
group in an adélically open subgroup. This may be too strong to hold in general. See the first author's 2009 PhD thesis~\cite[III.\S7, p.\,59]{R-PhDfull} for a weaker assumption.}: this implies in particular that such points are Hodge generic in their connected component.
Pink uses equidistribution
of Hecke points proved in~\cite{COU} (or in~\cite{EO}).

We refer to the introduction of~\cite{RY} for further background on Conjecture~\ref{APZ}.

In the Pila-Zannier approach and most other approaches to Zilber-Pink conjectures, one of the major difficulties is to obtain suitable lower
bounds for Galois orbits of points in the ``unlikely locus'' (see~\cite{DR}). 

In~\cite{RY}, we develop a general approach to Conjecture~\ref{APZ} based on the Pila-Zannier strategy (o-minimality and functional transcendence). In~\cite{RY}, we define generalised Hecke orbits, we define a natural height function on these
orbits, and we prove precise lower Galois bounds~\cite[Th.~6.4]{RY} under the ``weakly adélic Mumford-Tate conjecture''~\cite[\S6.1]{RY}.

Let~$(G,X)$ be a Shimura datum and let~$K\leq G(\AAA_f)$ be a compact open subgroup
and let~$S=Sh_K(G,X)$ be the associated Shimura variety.
The main result of \cite{RY} is as follows.
\begin{theorem}[Theorem 1.2 of \cite{RY}]\label{main theorem RY}
Let $x_0 \in X$.
Assume that~$x_0$ satisfies the weakly adélic Mumford-Tate conjecture.

Then the conclusion of Conjecture~\ref{APZ} holds for any subset of the generalised Hecke orbit of $[x_0,1]$.
\end{theorem}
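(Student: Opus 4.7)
The plan is to implement the Pila--Zannier strategy: combine a functional transcendence input (hyperbolic Ax--Lindemann) with the Pila--Wilkie counting theorem, playing the Galois lower bounds of \cite[Thm.~6.4]{RY} on the arithmetic side against a polynomial height upper bound on the geometric side. The whole argument takes place inside a Siegel fundamental set $\mathcal{F}$ for the uniformisation $\pi\colon X^+\to S(\CC)$ of a connected component, and crucially uses that $\pi|_{\mathcal{F}}$ is definable in the o-minimal structure $\R_{\mathrm{an},\exp}$.

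By standard reductions one may assume that $V := \overline{\Sigma}^{\,\mathrm{Zar}}$ is irreducible and not weakly special, and aim for a contradiction. Every element of the generalised Hecke orbit of $[x_0,1]$ has a representative of the form $[x_0,g]$ for some $g\in G(\AAA_f)$ modulo an appropriate centraliser, to which \cite{RY} attaches a complexity $C(g)$. The first step is to prove a polynomial upper bound on the height (in $\mathcal{F}$) of a preimage of $[x_0,g]$ in terms of $C(g)$, a purely geometric estimate obtained from reduction theory in $G(\AAA_f)$.

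On the arithmetic side one invokes \cite[Thm.~6.4]{RY}: the weakly adélic Mumford--Tate assumption on $x_0$ gives a matching lower bound of the form $\#\mathrm{Gal}\cdot[x_0,g]\gg_{\epsilon} C(g)^{\delta-\epsilon}$ for some $\delta>0$. Confronting the two bounds via Pila--Wilkie applied to the definable preimage $\pi^{-1}(V)\cap\mathcal{F}$ forces a positive-dimensional connected semialgebraic subset to sit inside this preimage. By the hyperbolic Ax--Lindemann theorem, available for all Shimura varieties after Klingler--Ullmo--Yafaev and Mok--Pila--Tsimerman, this semialgebraic set is contained in the preimage of a proper weakly special subvariety $W\subset V$. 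A final bootstrap, using that generalised Hecke correspondences preserve $\Sigma$ and hence $V$, propagates such $W$'s throughout $V$ and forces $V$ itself to be weakly special, contradicting the initial assumption.

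The main obstacle is twofold. First, upgrading the classical height--complexity comparison to the \emph{generalised} Hecke setting requires an adapted reduction theory, since generalised Hecke correspondences can relate points having distinct Mumford--Tate groups via isogenies; this must be made compatible with the definition of $C(g)$ used in the Galois lower bound. Second, the final Hecke-invariant propagation from a single weakly special piece $W$ to the whole of $V$ is delicate and of André--Oort flavour: it requires controlling closures of families of weakly special subvarieties under generalised Hecke correspondences, which is where the full structural framework of \cite{RY} must be brought to bear.
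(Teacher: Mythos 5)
The statement you are trying to prove is Theorem~1.2 of~\cite{RY}, which the present paper only \emph{quotes}; it does not reprove it. The detailed argument lives in~\cite[\S 7]{RY}, and what the present paper does (in its Section~3) is to indicate how that same proof scheme is reused for Theorem~\ref{main theorem 2} after swapping the ``weakly adélic Mumford--Tate'' hypothesis for the uniform integral Tate hypothesis. So there is no in-paper proof to measure you against; the most one can do is check that your sketch is consistent with the strategy the paper attributes to~\cite[\S\S 3.1--3.3, 7]{RY}.

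At that level, your outline is essentially the right one: a Pila--Zannier contradiction argument in a definable fundamental domain, pitting the polynomial Galois lower bound of~\cite[Th.~6.4]{RY} against a polynomial height upper bound, then invoking Pila--Wilkie, hyperbolic Ax--Lindemann, and a Hecke-invariance bootstrap. You also correctly identify the two genuine pressure points --- the height/complexity comparison for \emph{generalised} Hecke orbits and the final propagation of weakly special subvarieties --- which are exactly the places where the structural machinery of~\cite{RY} (reduction to Hodge generic, reduction to the adjoint datum, the induction for factorable subvarieties) is deployed.

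One small but real inaccuracy: you assert that every element of the generalised Hecke orbit of $[x_0,1]$ has a representative $[x_0,g]$ with $g\in G(\AAA_f)$, modulo a centraliser. That describes the \emph{classical} Hecke orbit. The generalised (geometric) Hecke orbit, per~\cite[Def.~2.1]{RY}, is parameterised by pairs $(\phi,g)$ with $\phi$ ranging over $W(\QQ)=(G\cdot\phi_0)(\QQ)$, giving points $[x_\phi,g]$ where $x_\phi=x_0\circ\phi$ can have a \emph{different} Mumford--Tate group from $x_0$. You later acknowledge this when you mention isogenies, but your complexity function $C(g)$ must in fact be a function of the pair $(\phi,g)$ --- the height function $H_f(\phi)$ of~\cite[App.~B]{RY} enters precisely here --- and the Galois lower bound is stated against that bivariate complexity. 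Your sketch elides this but does not seem to depend on the elision in a fatal way, since you correctly flag that ``an adapted reduction theory'' and compatibility with the complexity used in the Galois bound are required.
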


In the present article we prove conclusions of this theorem \emph{unconditionally} for all Shimura varieties 
\emph{of abelian type}. This completely generalises the main result of~\cite{Orr} by M.~Orr.

Our main result is as follows.
\begin{theorem} \label{main theorem}
Let~$s_0$ be a point in a Shimura variety~$Sh_K(G,X)$ of abelian type.
Let~$Z$ be a subvariety whose intersection with the generalised Hecke orbit of~$s_0$ is Zariski dense
in~$Z$. 
Then~$Z$ is a finite union of weakly special subvarieties of~$S$.
\end{theorem}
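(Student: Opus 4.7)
My plan is to reduce Theorem~\ref{main theorem} to Theorem~\ref{main theorem RY} by verifying, unconditionally for Shimura varieties of abelian type, the weakly adélic Mumford-Tate conjecture at every point. Given $Z$ and $s_0 = [x_0,1]$ as in the statement, applying Theorem~\ref{main theorem RY} to $x_0$ would immediately force each irreducible component of $Z$ to be weakly special. The remaining task is therefore purely one about Galois representations attached to points of abelian type Shimura varieties.

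I would carry out this verification through two standard reductions. First, abelian type to Hodge type: by Deligne's construction, there exists a Hodge type datum $(G_1,X_1)$ with $(G_1^{\ad}, X_1^{\ad}) \cong (G^{\ad}, X^{\ad})$, linked to $(G,X)$ by a central isogeny between derived groups. The associated Shimura varieties share isomorphic adjoint components, and the Galois representations on their pro-étale covers are compatible under this isogeny. Since the weakly adélic conjecture is a statement on the image of Galois inside the derived (equivalently adjoint) group up to compact-open discrepancies, verifying it on the Hodge type side would suffice. Second, Hodge type to Siegel: the symplectic embedding $(G_1, X_1) \hookrightarrow (\GSp_{2g}, \HH_g)$ realises the point as a polarised abelian variety $A_{x_0}$ equipped with distinguished Hodge cycles, reducing the question to a statement on the Galois image inside the Mumford-Tate group of $A_{x_0}$ acting on $\prod_\ell T_\ell(A_{x_0})$.

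For the abelian variety side, the weakly adélic statement should follow from the combination of Bogomolov's theorem on $\ell$-adic openness in the Mumford-Tate group, Serre's theorems on adelic openness and independence of $\ell$, and, for CM summands, the Serre-Tate theory via class field theory. These ingredients are essentially those that Orr uses in~\cite{Orr} for the classical Hecke orbit case, so large portions of this verification can be modeled on his argument.

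The principal difficulty I anticipate lies not in verifying the weakly adélic Mumford-Tate conjecture per se, but in correctly transferring both the hypothesis and the conclusion of Theorem~\ref{main theorem RY} across the abelian type/Hodge type and Hodge type/Siegel passages, in the setting of \emph{generalised} rather than classical Hecke orbits. Generalised Hecke orbits incorporate not just $G(\AAA_f)$-translations but also isogenies stemming from morphisms between distinct Shimura data (cf.~\cite[\S2.5.1]{RY}). Consequently I must verify that under each reduction, generalised Hecke orbits push forward into generalised Hecke orbits, and that weakly special subvarieties pull back to unions of weakly special subvarieties; only then does the conclusion for $(G_1,X_1)$ or $(\GSp_{2g},\HH_g)$ transfer back to $(G,X)$. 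A subsidiary technical step is to reduce to a single connected component of $\Sh_K(G,X)$ using finiteness of the component set, which is routine but must be kept track of throughout.
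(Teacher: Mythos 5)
Your proposed reduction is to Theorem~\ref{main theorem RY}, which requires the weakly adélic Mumford-Tate hypothesis at~$x_0$. This is not the route the paper takes, and more importantly it cannot work as stated: verifying the weakly adélic Mumford-Tate conjecture for an arbitrary point of a Shimura variety of abelian type would, via the Hodge-type and Siegel reductions you describe, amount to proving (an adélic strengthening of) the Mumford-Tate conjecture for arbitrary abelian varieties over fields of finite type. This is an open problem. The tools you list do not close it: Bogomolov's theorem gives that the~$\ell$-adic Galois image is open in \emph{its own} Zariski closure, not that this closure equals the Mumford-Tate group; Serre's~$\ell$-independence and adelic-openness results likewise presuppose, or work relative to, the algebraic monodromy group rather than the Mumford-Tate group. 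Also, Orr in~\cite{Orr} does not establish the Mumford-Tate conjecture — his Galois lower bounds come from Masser-Wüstholz isogeny estimates — so his argument is not a model for the verification you have in mind.

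The paper's actual route is different precisely to dodge this obstruction. It formulates a hypothesis that is strictly weaker than the weakly adélic Mumford-Tate conjecture, namely the uniform integral Tate property of Definition~\ref{defi:Tate bis}, which is a centraliser/semisimplicity statement (in the spirit of the Tate conjecture) rather than an openness statement (in the spirit of the Mumford-Tate conjecture). It then proves the more general Theorem~\ref{main theorem 2} under this weaker hypothesis — re-running the Pila-Zannier machinery of~\cite{RY} with new Galois lower bounds (Theorem~\ref{Galois bounds}), which is where the bulk of the technical work in \S\S\ref{sec:bounds}--\ref{sec:pKN} lies. Finally, the uniform integral Tate property \emph{is} provable unconditionally for abelian type (and adjoint type) points, via Faltings' theorems in the refined form of Theorem~\ref{Faltings} and Proposition~\ref{prop Falt refined}, together with the reduction to the adjoint case. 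Your second and fourth paragraphs — the abelian-type-to-Hodge-type and Hodge-type-to-Siegel reductions, the need to track how generalised Hecke orbits and weakly special subvarieties behave under these functorial passages — do match the paper's bookkeeping in \S\S\ref{Tate:abelian type} and~\ref{sec:functoriality}. The missing idea is the replacement of the Mumford-Tate hypothesis by a Tate-type hypothesis that Faltings actually gives you.
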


We actually prove the more general statement below, which we believe to be of independent
interest. Its assumption is weaker than~`weakly adélic Mumford-Tate conjecture' in~Th.~\ref{main theorem RY}. It is the `uniform integral Tate conjecture' assumption explained in~\S\ref{sec:Def:Tate}.
We refer to~\cite[Def.~2.1]{RY} for the notion of geometric Hecke orbit. By~\cite[Th.~2.4]{RY}, a generalised Hecke orbit is a finite union of geometric Hecke orbits. 
\begin{theorem} \label{main theorem 2}
Let~$s_0=[x_0,1]$ be a point in a Shimura variety~$Sh_K(G,X)$, and assume the uniform integral Tate conjecture
for~$x_0$ in~$X$ in the sense of~Definition~\ref{defi:Tate bis}.
Let~$Z$ be a subvariety whose intersection with the geometric Hecke orbit of~$s_0$ is Zariski dense
in~$Z$. 
Then~$Z$ is a finite union of weakly special subvarieties of~$S$.
\end{theorem}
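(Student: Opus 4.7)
The plan is to adapt the Pila–Zannier strategy of~\cite{RY} with the weaker `uniform integral Tate' hypothesis substituted for the `weakly adélic Mumford–Tate' hypothesis used in Theorem~\ref{main theorem RY}. Working with the geometric Hecke orbit~$\Sigma$ of~$s_0$, we may pass to a connected component and consider the uniformizing map $\pi\colon X^+ \to S^+$ (or rather the mixed uniformization $X^+\times G(\AAA_f)/K\to S$). Each point $s\in \Sigma\cap Z$ is of the form $s=[g\cdot x_0,1]$ for some $g\in G(\QQ)^+$ (up to a finite correction from the decomposition of generalised into geometric Hecke orbits, cf.~\cite[Th.~2.4]{RY}); such a~$g$ can be chosen in a Siegel fundamental set of~$G(\QQ)^+$ acting on itself after a bounded adjustment, so that the natural height on~$\Sigma$ from~\cite{RY} becomes comparable to an arithmetic complexity of~$g$.

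Next I would feed this into o-minimality. The Siegel-set reparameterisation makes the preimage $\tilde{Z}:=\pi^{-1}(Z)\cap (F\cdot x_0)$ (where $F$ is a Siegel set in~$G(\RR)^+$ lifted to a suitable subset of $G(\AAA_f)$) definable in the o-minimal structure~$\RR_{\mathrm{an},\exp}$, via the same definable fundamental domain arguments used throughout the Pila–Zannier literature and in~\cite{RY}. Applying the Pila–Wilkie counting theorem to~$\tilde{Z}$ then yields, for every~$\epsilon>0$, only~$O(H^\epsilon)$ points of complexity~$\leq H$ lying outside the `algebraic part' $\tilde{Z}^{\mathrm{alg}}$.

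The lower bound is where the uniform integral Tate hypothesis enters. Assuming~$Z$ is defined over a number field~$L$ large enough to contain the reflex field and the field of definition of~$Z$, every $s\in \Sigma\cap Z$ has Galois conjugates over~$L$ lying in $\Sigma\cap Z$, and these conjugates correspond to further Hecke translates of~$s_0$. The uniform integral Tate conjecture in the sense of Definition~\ref{defi:Tate bis} will provide a polynomial lower bound for the number of such Galois conjugates in terms of the height of~$s$, sharp enough to exceed the Pila–Wilkie upper bound for points outside~$\tilde{Z}^{\mathrm{alg}}$. The comparison therefore forces infinitely many Hecke translates contributing to~$Z$ to come from positive-dimensional semi-algebraic blocks inside~$\tilde{Z}^{\mathrm{alg}}$.

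Finally, functional transcendence closes the argument: by an Ax–Schanuel / Ax–Lindemann type statement for Shimura varieties (available thanks to Mok–Pila–Tsimerman and subsequent work, and already invoked in~\cite{RY}), any positive-dimensional complex-analytic component of~$\tilde Z^{\mathrm{alg}}$ through a Hodge-generic point of~$X^+$ corresponds to an algebraic subgroup of~$G$ stabilising a weakly special subvariety contained in~$Z$. Running this as an induction on~$\dim Z$ produces a finite union of weakly special subvarieties covering a Zariski-dense subset of~$Z$, hence equal to~$Z$ by irreducibility of its components. The main obstacle I anticipate is the lower-bound step: the uniform integral Tate hypothesis is weaker than that of~\cite{RY}, so the Galois lower bound must be phrased in terms of the refined height from~\cite[Th.~6.4]{RY} and compared very carefully with the Pila–Wilkie exponent coming from the Siegel-set parameterisation, particularly for Hecke translates that are not Hodge-generic in~$S$. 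Handling the latter is likely to require an additional reduction to the smallest special subvariety containing the orbit, paralleling the reduction step performed in~\cite{Orr}.
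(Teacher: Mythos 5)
Your proposal follows the same Pila--Zannier strategy the paper uses: definability via a fundamental set, Pila--Wilkie counting, a Galois lower bound driven by the arithmetic hypothesis, Ax--Lindemann/Ax--Schanuel to produce weakly special blocks, and reduction steps (Hodge generic, adjoint, factorable) with an induction on dimension. The paper's own \S\ref{sec:proof} simply invokes the proof of~\cite[\S7]{RY} with three substitutions (functoriality of the Tate property via Prop.~\ref{Tate:invariance}, \ref{Tate:subdatum}, \ref{indep:subdatum}, \ref{Tate:products}, and replacing~\cite[Th.~6.4]{RY} by Th.~\ref{Galois bounds}), which is exactly the swap you identify as the crux, so your outline and the paper are essentially aligned.
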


Using Faltings' theorems, we prove in~\S\ref{Tate:abelian type} that points on Shimura varieties of adjoint abelian type and satisfy this `uniform integral Tate assumption'.
Thus Theorem~\ref{main theorem}, in the adjoint type case, is a special case of Theorem~\ref{main theorem 2}. Because Conjecture~\ref{APZ} can be reduced to the adjoint case, we deduce Theorem~\ref{main theorem} for any Shimura variety of abelian type.

At the heart of this article is obtaining polynomial lower bounds~\cite[Th.~6.4]{RY} which are unconditional for Shimura varieties
of abelian type, or in general under the assumption of the Tate hypothesis. We emphasize that Shimura varieties of abelian type constitute the most important class
of Shimura varieties.
 
The Tate hypothesis is used to compare the sizes of Galois orbits with that of the adélic orbits of~\cite[App.~B]{RY}. In our setting,
we can easily recover former results of~\cite{OrrPhD} which were only concerned with~$S$-Hecke orbits (involving a finite set~$S$ of primes).
In order to work with whole Hecke orbits, and even geometric Hecke orbits, we use an ``integral and uniform'' refined version of the Tate conjecture. Using generalised Hecke orbits is important for our strategy to work, in particular for the reduction steps in ~\cite[\S7]{RY}.

Some of the new ideas in this article relate the notion of ``Stability'' in the Mumford sense to the Tate hypothesis. The fine estimates we need
use stability not only over complex numbers, but in a broader context, over~$\Z_p$ and~$\Z$. This is where the ``uniformity and integrality''
in our Tate hypothesis is essential. These ideas originate from~\cite{R-PhD}, part of the first author's 2009 PhD thesis.

This article also develops several results of independent interest. Theorem~\ref{pKN} is a~$p$-adic version of a Theorem of Kempf-Ness~\cite{KN}. We expect it to be useful in other contexts, and it is proved in more generality than needed here. Theorem~\ref{thm:compare reductive} gives precise and uniform comparison on norms along two closed orbits of reductive groups.
In Appendix~\ref{sec:AppB}, we give some consequences of Faltings theorems, in the axiomatic form given in~\S\ref{sec:Def:Tate}, for factorisations of Galois images, and in particular~$\ell$-independence. Our arguments rely only on group theory, they do not involve ramification properties, and therefore apply to more general groups than images of Galois representations. 

%
%

\subsection*{Outline of the paper}
We define the uniform integral hypothesis in section~\ref{sec:Def:Tate}.

In section~\ref{sec:proof}, we reduce Th.~\ref{main theorem 2}
to the bounds on Galois orbits established in the rest of the paper, and the functorial
invariance properties of the Tate hypothesis of section~\ref{sec:functoriality}.
Since the formal strategy is almost identical to that of \cite[\S7]{RY} 
we only give a sketch indicating necessary adjustments and provide precise references to~\cite{RY}.

In section~\ref{sec:functoriality}, we also derive the refined version of Faltings' theorems that we use, 
using arguments of Serre and Noot. We deduce that the uniform integral Tate
hypothesis holds in Shimura varieties which are of abelian type and also of adjoint type.

The central and technically hardest parts of the paper are \S\S\ref{sec:bounds}--\ref{sec:pKN}.
There we establish the lower bounds for the Galois orbits of points in 
geometric Hecke orbits as in \cite{RY} under  assumptions of Th.~\ref{main theorem 2}.

The main result~Th.~\ref{thm:compare reductive} of section~\ref{sec:reductive} is 
essential to the proofs in section~\ref{sec:bounds}. We derive it in section~\ref{sec:reductive}
from the results of sections~\ref{sec:pKN} and~\ref{sec:slopes}.

Section~\ref{sec:pKN} gives a~$p$-adic analogue Th.~\ref{pKN} of a Theorem of Kempf-Ness.
We prove in greater generality than required for Th.~\ref{thm:compare reductive}, as we believe
it will be useful in other contexts. It involves good reduction properties 
of homogeneous spaces of reductive groups over $\ZZ_p$, and of closed orbits in linear representations over $\ZZ_p$.

%





The ideas behind the convexity and slope estimates in \S\ref{sec:slopes} can be better understood in the context of Bruhat-Tits buildings as in~\cite{R-PhD}. The height functions which are central in our implementation of the Pila-Zannier strategy give examples of the type of functions studied in~\S\ref{sec:slopes}.

The Appendix~\ref{AppA} describes results about closed orbits of tuple in the adjoint representation of reductive groups in arbitrary characteristic. These are used in the proof of Prop.~\ref{Prop5.5}.

The Appendix~\ref{sec:AppB} is used in~\S\ref{thm 51 reduction produit} in the proof of Theorem~\ref{Galois bounds}.

\subsubsection*{Acknowledgements} We would like to express our greatest gratitude to Laurent Moret-Bailly for 
discussions and suggestions regarding the content of section~\ref{sec:pKN}. The first author is grateful to Ahmed Abbes and  Emmanuel Ullmo for discussions about \S\ref{sec:flat recalls}, and Ofer Gabber about Th.~\ref{pKN}. The authors are especially grateful to the referees, for their thorough reading and their numerous useful comments.

Both authors were supported by Leverhulme Trust Grant RPG-2019-
180. The support of the Leverhulme Trust is gratefully acknowledged.

The first author is grateful to the IHÉS for its invitation during the preparation of this article.

\section{Uniform integral Tate conjecture}\label{sec:Def:Tate}
In this section, we define in~Def.~\ref{defi:Tate bis} our main assumption in this paper, the `uniform integral Tate conjecture' property.
This is an extension of the conclusions of Faltings' theorem in the form given in Th.~\ref{Faltings}, to all Shimura varieties.
\subsection{Uniform integral Tate conjecture}
In~\S\ref{defTate1} and~\S\ref{defTate2} we consider an abstract setting. In~\S\ref{Shimura applied def} we specialise it to the context of Shimura varieties.

\subsubsection{}\label{defTate1}

Let~$M\leq G$ be (connected) reductive algebraic groups over~$\QQ$.
We identify~$G$ with its image by a faithful representation
\[
\rho_G:G\to GL(d).
\]
Def.~\ref{defi:Tate} and Theorem~\ref{main theorem 2} will not depend on this choice.
The Zariski closure in~$GL(d)_{\Z}$ of the algebraic groups~$M$ and~$G$ and~$Z_{G}(M)$ define models over~$\Z$. We write~$G_{\F_p}$ for the special fibre\footnote{For almost all primes~$p$ the group~$G(\ZZ_p)$ is hyperspecial and
~$G_{\FF_p}$ is a connected reductive
algebraic group over~$\FF_p$.} and
\[
G(\ZZ_p)=G(\QQ_p)\cap GL(d,\ZZ_p)\text{ and }G(\widehat{\ZZ})=\prod_p G(\ZZ_p)=G(\AAA_f)\cap GL(d,\widehat{\ZZ}).
\]
We also have a reduction map~$G(\ZZ_p)\to G_{\FF_p}(\FF_p)$. These constructions apply to~$M$ and~$Z_G(M)$ as well.

\subsubsection{} \label{defTate2}
Let~$U\leq M(\AAA_f)$ be a compact subgroup.

For every prime~$p$, we define~$U_p=M(\ZZ_p)\cap U$.
We denote by~$U(p)$ the image of~$U_p$ in~$G(\FF_p)$.
We define~${U_p}^0=U_p\cap {H_p}^0(\QQ_p)$ where~$H_p=\overline{U_p}^{Zar}\leq G_{\QQ_p}$ is the Zariski closure as a~$\QQ_p$-algebraic subgroup, and~${H_p}^0$ is its neutral Zariski connected component.

\begin{definition}[{\bf Uniform integral Tate property}]\label{defi:Tate}
 We say that a compact subgroup~$U\leq M(\AAA_f)$ \emph{``satisfies the uniform integral Tate'' property with respect to~$M$,~$G$ and~$\rho_G$} if:
\begin{enumerate}
\item \label{defi:Tate1}
For every~$p$,
\begin{subequations}
\begin{equation}\label{defi:tate eq1}
Z_{G_{\Q_p}}(U_p)=
Z_{G_{\Q_p}}({U_p}^0)=
Z_{G}(M)_{\Q_p}.
\end{equation}
and
\begin{equation}\label{defi:tate eq 1.2}
\text{the action of $U_p$ on ${\Q_p}^d$ is semisimple.}\footnote{Some authors refer to this as completely reducible.}
\end{equation}
(This~\eqref{defi:tate eq 1.2} is equivalent to: $H_p$ is reductive.)
\end{subequations}
\item \label{defi:Tate2}
For every~$D$, there exists an integer~$M(D)$ such that for every~$p \geq M(D)$ and every~$U'\leq U_p$ of index~$[U_p:U']\leq D$, we have 
\begin{subequations}
\begin{equation}\label{defi:tate eq 2}
Z_{G_{\F_p}}(U'(p))=Z_{G_{\F_p}}(M_{\F_p})
\end{equation}
and
\begin{equation}\label{defi:tate eq 2.2}
\text{the action of $U'(p)$ on $\overline{\F_p}^d$ is semisimple.}
\end{equation}
\end{subequations}
(When~$p>d$,~\eqref{defi:tate eq 2.2} is equivalent to: the Nori group, defined below, of~$U'(p)$ is semisimple.)
\end{enumerate}
\end{definition}
In our terminology, \emph{integrality} refers to the second property over~$\F_p$ on~$U(p)$ and \emph{uniformity} to the fact that
the integer~$M(D)$ depends on~$D$ only.

\subsubsection{Remarks} \label{rem:Tate}
We collect here some facts that will be used throughout this article.
\begin{enumerate}
\item \label{rem2}
For~$p$ large enough, in terms of~$d$, we can use Nori theory~\cite{N}. For a subgroup~$U'(p)\leq G(\F_p)$,
the group ${U'(p)}^{\dagger}$ generated by unipotent elements of~$U'(p)$ is of the form~$H(\FF_p)^\dagger$ for an algebraic group~$H\leq G_{\FF_p}$ over $\FF_p$. We call this~$H$ the \textbf{Nori group} of~$U'(p)$.
The property~\eqref{defi:tate eq 2.2} is then equivalent to the fact that~$H$ is a \textbf{reductive} group~$H\leq G_{\FF_p}$ over $\FF_p$(see~\cite[Th.~5.3]{SCR}).
We also note that~$[H(\FF_p):H(\FF_p)^\dagger]$
can be bounded in terms of~$\dim(G)$ (see.~\cite[3.6(v)]{N}).
\item \label{rem2.2} If~$U'\leq U$ has index~$[U:U']\leq p$, then~$U'(p)^\dagger= U(p)^\dagger$.  
\item \label{rem3}
This ``uniform integral Tate'' property does not depend\,\footnote{Indeed, Def.~\ref{defi:Tate} does not involve~$\rho_G$ itself, but only the induced models of~$G$ and~$M$. The algebraic groups~$G_{\Q_p}$ and~$M_{\Q_p}$ do not depend on the integral models, and two models, for almost all~$p$,  induce the same local models~$G_{\Z_p}$ and~$M_{\Z_p}$.} on the choice of a faithful representation~$\rho_G$. 
\item \label{rem4}
The semisimplicity of the action over~$\overline{\F_p}$ is equivalent to the semisimplicity over~$\F_p$.(cf. 
\cite[Alg. VIII, \S12. N.8 Prop. 8 Cor 1 a)]{BBKA8} with~$K=\F_p$, $L=\ol{\F_p}$, $A=K[U'(p)]$, $M=\ol{\F_p}^d$.)
\item \label{passage aux Up}
The group~$U\leq M({\AAA_f})$ ``satisfies the uniform integral Tate'' property with respect to~$M$,~$G$ and~$\rho$ if and only if the subgroup~$\prod_p {U_p}\leq U$ does so.

\item 
Part~\eqref{defi:Tate1} of Def.~\ref{defi:Tate} is satisfied for~$U$ if and only it is satisfied for some subgroup of finite index in~$U$. It follows from Lem.~\ref{lem:Gcr} and Cor.~\ref{cor:Gcr} that, for any subgroup~$U'\leq U$, if Part~\eqref{defi:Tate1} of Def.~\ref{defi:Tate} is satisfied for~$U'$, then it is satisfied for~$U$.
\item \label{rem(7)} 
Let~$U'\leq U_p\leq M(\Q_p)$ be subgroups. 

If Property~\eqref{defi:tate eq1} of part~\eqref{defi:Tate1} of Def.~\ref{defi:Tate} is satisfied for~$U'$, then it is satified for~$U_p$: we will have~$U'^0\leq {U_p}^0\leq U_p \leq M_{\Q_p}$, and~$Z_G(M)=Z_G(U'^0)\geq Z_G({U_p}^0)\geq Z_G({U_p})\geq Z_G(M)$.

Assume that~$U'$ is of finite index in~$U_p$. Then Property~\eqref{defi:tate eq 1.2} of part~\eqref{defi:Tate1} of Def.~\ref{defi:Tate} is satisfied for~$U'$ if and only if it is satified for~$U_p$: if~$H'_p$ is the Zariski closure of~$U'$, the finite index property implies~${H'_p}^0=H_p^0$.
\item \label{rem g g' tate}
If a compact subgroup~$U\leq M(\AAA_f)$ \emph{``satisfies the uniform integral Tate'' property with respect to~$M$,~$G$ and~$\rho_G$}, then, for any~$M\leq G'\leq G$, the group~$U$ satisfies the uniform integral Tate property \emph{with respect to~$M$,~$G'$ and~$\rho_G$}.
\item \label{rem(9)} For every~$p$, let~$V_p\leq U_p$ be a finite index subgroup, and assume that there exists~$D'\in\Z_{\geq1}$ such that
\begin{equation}\label{Vp dans Up toujours borne}
\forall p, [U_p:V_p]\leq D'.
\end{equation}
If~\eqref{defi:Tate2} is satisfied for~$U$, then~\eqref{defi:Tate2} is satisfied for~$\prod_p V_p$ with the function~$D\mapsto M(D\cdot D')$.
\end{enumerate}
In view of Remark~\ref{rem:Tate}~(\ref{rem3}) we may, from now on, just say ``satisfies the uniform integral Tate property'' without referring to a particular faithful representation~$\rho_G$.

We deduce from the above facts the following.
\begin{lemma}\label{lem U ast} 
Let~$U''\leq U\leq M(\widehat{\Z})$ be such that~$U''$ satisfies the uniform integral Tate property with respect to~$M$,~$G$ and~$\rho_G$ and such that~$U''$ is of finte index in~$U$. Then~$U$ satisfies the uniform integral Tate'' property with respect to~$M$,~$G$ and~$\rho_G$.
\end{lemma}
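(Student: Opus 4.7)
The plan is to check the two parts of Definition~\ref{defi:Tate} for $U$, by intersecting the ``test subgroups'' of $U_p$ with $U''_p$ and transferring the Tate-type conclusions from $U''_p$ back to $U_p$. Set $D':=[U:U'']<\infty$. A preliminary observation, used throughout, is that for every prime $p$
\[
 [U_p : U''_p]\;\leq\; D',
\]
because $U_p/U''_p\hookrightarrow U/U''$. In particular $U''_p$ is of finite index in $U_p$ for every $p$.

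For part~\eqref{defi:Tate1}, I would combine Remark~\ref{rem:Tate}(\ref{rem(7)}) with a standard Clifford/Maschke-type argument. Equation~\eqref{defi:tate eq1}: from $U''_p\le U_p\le M(\Q_p)$ we get the inclusions $Z_G(M)_{\Q_p}\le Z_{G_{\Q_p}}(U_p)\le Z_{G_{\Q_p}}(U''_p)=Z_G(M)_{\Q_p}$, and since $[U_p:U''_p]<\infty$ the Zariski closures have the same neutral component, so $(U''_p)^0\le U_p^0$, giving the same chain of equalities for $U_p^0$. Equation~\eqref{defi:tate eq 1.2}: the action of $U''_p$ on $\Q_p^d$ is semisimple and $U_p\supseteq U''_p$ is of finite index; in characteristic~$0$ one averages an $U''_p$-equivariant projector over the finite quotient $U_p/U''_p$ (which is permitted since $|U_p/U''_p|\le D'$ is invertible in $\Q_p$) to turn it into a $U_p$-equivariant one, so the $U_p$-action on $\Q_p^d$ is semisimple.

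For part~\eqref{defi:Tate2}, fix $D\ge 1$ and define
\[
M_U(D)\;:=\;\max\bigl(M_{U''}(D),\,D'+1,\,d+1,\,p_0\bigr),
\]
where $p_0$ absorbs the finitely many primes at which the integral models of $M$, $G$ and $Z_G(M)$ are not well-behaved. Let $p\ge M_U(D)$ and $U'\le U_p$ with $[U_p:U']\le D$. Put $V:=U'\cap U''_p$. A short index computation (using $[U'\cdot U''_p : U']=[U''_p:V]$ and $U'\cdot U''_p\le U_p$) gives $[U''_p:V]\le D$, so by the uniform integral Tate property of $U''$ one has
\[
Z_{G_{\F_p}}\bigl(V(p)\bigr)=Z_{G_{\F_p}}(M_{\F_p})
\qquad\text{and}\qquad V(p)\text{ acts semisimply on }\overline{\F_p}^d.
\]
To pass from $V(p)$ to $U'(p)$: the inclusion $V(p)\le U'(p)\le M_{\F_p}(\F_p)\subseteq M_{\F_p}$ (valid for $p\ge p_0$, i.e.\ once $M$ has good reduction at $p$) forces both $Z_{G_{\F_p}}(U'(p))\le Z_{G_{\F_p}}(V(p))=Z_{G_{\F_p}}(M_{\F_p})$ and $Z_{G_{\F_p}}(M_{\F_p})\le Z_{G_{\F_p}}(U'(p))$, hence~\eqref{defi:tate eq 2}.

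The semisimplicity step is the most delicate point, and is where I would invoke Nori theory. Since $[U'(p):V(p)]\le [U':V]\le D'<p$, Remark~\ref{rem:Tate}(\ref{rem2.2}) gives $U'(p)^\dagger=V(p)^\dagger$, so both groups have the same Nori group~$H\le G_{\F_p}$. Because $p\ge d+1$, Remark~\ref{rem:Tate}(\ref{rem2}) tells us that semisimplicity of $V(p)$ on $\overline{\F_p}^d$ is equivalent to $H$ being reductive, and the same equivalence applies to $U'(p)$; hence~\eqref{defi:tate eq 2.2} transfers from $V(p)$ to $U'(p)$. Combining the two parts gives the uniform integral Tate property for $U$. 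The main obstacle to watch out for in writing out the details is verifying that the constants hidden in ``almost all $p$'' (good reduction of $M$, $G$, $Z_G(M)$; validity of Nori's equivalence; $p>D'$) can all be absorbed into a single $M_U(D)$ depending only on $D$, $D'$ and $d$, so that the \emph{uniformity} in $p$ is preserved.
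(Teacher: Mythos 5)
Your proof is correct, and it is the natural way to fill in the details that the paper leaves implicit by citing the preceding remarks. Part~(1) is handled exactly as Remark~\ref{rem:Tate}(\ref{rem(7)}) prescribes (your Maschke averaging for~\eqref{defi:tate eq 1.2} is an equivalent replacement for the paper's Zariski-closure observation that $H''_p{}^0 = H_p^0$), and for Part~(2) your intersection $V := U' \cap U''_p$ with the index bounds $[U''_p:V]\leq D$ and $[U':V]\leq D'$, together with the Nori-group identification $V(p)^\dagger = U'(p)^\dagger$ from Remark~\ref{rem:Tate}(\ref{rem2.2}) and the reductivity criterion of Remark~\ref{rem:Tate}(\ref{rem2}), is precisely the argument the paper intends.
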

\subsubsection{}\label{Shimura applied def}
We denote by~$(G,X)$ a Shimura datum, by~$K\leq G(\A_f)$ a compact open subgroup, and by~$S=Sh_K(G,X)$ the associated Shimura variety. Fix~$x_0\in X$ and let~$M\leq G$ be the Mumford-Tate group of~$x_0$. Let~$E$ be
a field of finite type over $\Q$ such that~$s_0=[x_0,1]\in S(E)$ (such an~$E$ always exists). 
We denote by~$\rho_{x_0}:\Gal(\ol{E}/E)\to M(\A_f)$ 
the representation associated to $x_0$ (see \cite[\S3.1, Def.\,3.1]{RY}), and by~$U\leq M(\A_f)\cap K$ its image.

The main hypothesis in Theorem~\ref{main theorem 2} is the following.  

\begin{definition}\label{defi:Tate bis}We say that~$x_0$ ``satisfies the uniform integral Tate conjecture'' if~$U=\rho_{x_0}(\Gal(\ol{E}/E))$ 
``satisfies the uniform integral Tate'' property with respect to~$M$,~$G$ in the sense of Def.~\ref{defi:Tate}. 
\end{definition}



\subsubsection{}
We will make use of the following terminology.
\begin{definition}\label{defi:indep}
We say that a subgroup~$U\leq M(\AAA_f)$ satisfies the~$\ell$-independence property if it is of the form
\[
U=\prod_p U_p
\]
with~$U_p\leq M(\QQ_p)$ for every prime~$p$.
\end{definition}

\section{Proof of the main result}\label{sec:proof}
\addtocontents{toc}{\protect\setcounter{tocdepth}{1}}
The structure of the proof of Th.~\ref{main theorem 2} is essentially the same as in~\cite{RY}.
The main difference is that our hypothesis is the integral uniform Tate property 
instead of the ``weakly adélic Mumford-Tate conjecture''. Using the results of
\S\S\ref{sec:functoriality},\ref{sec:bounds}, we may follow the same proof as~\cite[\S7]{RY} making the following changes.

\subsection{}
In the step ``reduction to the Hodge generic case''~\cite[7.1.1]{RY} we make the following changes.

Since we work with geometric Hecke orbits~$\cH^{g}(x_0)$ instead of generalised Hecke orbits~$\cH(x_0)$, we use~\cite[Cor.~2.7]{RY} 
to remark that, with~$\Sigma^g=\cH^g([x_0,1])\cap Z$ the following set is a finite union
\[
\Sigma'^g:=\stackrel{-1}{\Psi}(\Sigma^g)=\cH^g([x'_1,1])\cup\ldots\cup \cH^g([x'_k,1])
\]
of geometric Hecke orbits in~$Sh_{K\cap G'(\AAA_f)}(G',X')$. According to Prop.~\ref{Tate:invariance} and Prop.~\ref{Tate:subdatum}, each of the~$[x'_1,1]$, \ldots, $[x'_k,1]$ satisfy the uniform integral Tate conjecture (relative to~$M$ and~$G'$).

 We replace~``On the other hand, the Mumford-Tate hypothesis [...]'' by the observation that if a point of the geometric Hecke orbit~$\cH^g([x_0,1])$ in $Sh(G,X)$ satisfies the uniform integral Tate conjecture (relative to~$M$ and~$G$), then, by Prop.~\ref{Tate:subdatum}, each point of each of the geometric Hecke orbits~$\cH^g([x'_1,1]),\ldots,\cH^g([x'_k,1])$ satisfy the uniform integral Tate conjecture (relative to~$M$ and~$G'$).

\subsection{} In the step ``reduction to the adjoint datum''~\cite[7.1.2]{RY} we make the following changes.

Instead of~``Using §3, the Mumford-Tate hypothesis will still be
valid even [...]'', we use~Prop.~\ref{Tate:invariance}.

Instead of~``In view of § 7, the Mumford-Tate hypothesis [...]'' we use Prop.~\ref{indep:subdatum}.

\subsection{} In~\cite[7.1.3]{RY}, ``Induction argument for factorable subvarieties'', we make the following changes.

Instead of~``As explained in § 7, the Mumford-Tate hypothesis [...]'' we use Prop.~\ref{Tate:products}.

\subsection{} The last change from~\cite[\S\S 3.1--3.3, 7]{RY} is in~\cite[7.2.3]{RY} where we use our Th.~\ref{Galois bounds},
instead of the lower bound on the size of Galois orbits~\cite[Th.~6.4]{RY}.

We may apply Th.~\ref{Galois bounds} to the Galois image~$U$, because: the hypothesis on~$M^{ab}$ is satisfied for Galois
images (cf.~\cite[Lem.~\S6.11]{RY}); the other hypotheses are satisfied by assumption. (In the case of Shimura data of abelian type,
see~\S\ref{Tate:abelian type}.)

\section{Functoriality of the Tate condition and independence condition}\label{sec:functoriality}
\addtocontents{toc}{\protect\setcounter{tocdepth}{1}}
In this section, we verify that the conditions in definition \ref{defi:Tate} and~\ref{defi:indep} are preserved by various natural
operations. This is necessary to make simplifying assumptions in the proof of the main theorems (cf. \cite[\S7.1]{RY}).
We also show that the conditions of \ref{defi:Tate} and~\ref{defi:indep} hold for all Shimura varieties of abelian type.

According to~Remark~\ref{rem:Tate}, the definition~\ref{defi:Tate} does not depend on~$\rho_G$.
It follows from definition~\ref{defi:indep}, that the property that the Galois image satisfies the~$\ell$-independence property
does not depend on~$G$, nor on~$\rho_G$.

\subsection{Invariance on the geometric Hecke orbit}
We refer to~\cite[Def.~2.1]{RY} for the notion of geometric Hecke orbit~$\cH^g(x_0)$, the variety~$W=G\cdot \phi_0$ and the notation~$x_\phi=x_0\circ\phi$ for~$\phi\in W(\Q)$.
\begin{proposition}\label{Tate:invariance}
Let~$x_\phi\in \cH^g(x_0)$ for some~$\phi\in W(\Q)$.

If~$x_0$ ``satisfies the uniform integral Tate conjecture'',
then~$x_\phi$ ``satisfies the uniform integral Tate conjecture''.

If the image~$U$ of~$\rho_{x_0}$ satisfies the~$\ell$-independence 
property in the sense of~Def.~\ref{defi:indep}, then the image~$U'$ of~$\rho_{x_\phi}$ satisfies the~$\ell$-independence 
property in the sense of~Def.~\ref{defi:indep}.
\end{proposition}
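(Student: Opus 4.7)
The plan is to observe that passing from $x_0$ to $x_\phi = x_0 \circ \phi$ amounts, at the level of Mumford--Tate groups and Galois representations, to inner conjugation by $\phi \in G(\Q)$. By the discussion in~\cite[\S 2, \S 3.1]{RY} the Mumford--Tate group of $x_\phi$ is $M' = \phi^{-1} M \phi \leq G$, and, after possibly replacing $E$ by a finite extension $E'$ over which the identification between $x_0$ and $x_\phi$ is realised, the two Galois representations are intertwined by
\[
\rho_{x_\phi}|_{\Gal(\overline{E}/E')} = \phi^{-1} \bigl(\rho_{x_0}|_{\Gal(\overline{E}/E')}\bigr) \phi.
\]
Writing $U^* \leq U$ for the finite-index image $\rho_{x_0}(\Gal(\overline{E}/E'))$, this gives $\rho_{x_\phi}(\Gal(\overline{E}/E')) = \phi^{-1} U^* \phi$, and the full image $U'$ of $\rho_{x_\phi}$ contains this as a subgroup of finite index. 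By Lemma~\ref{lem U ast} together with Remark~\ref{rem:Tate}(\ref{rem(7)}) and~(\ref{rem(9)}), it is therefore enough to establish the uniform integral Tate property for the group $\phi^{-1} U^* \phi$ (with respect to $M'$, $G$).

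Granting this reduction, I would verify Definition~\ref{defi:Tate} one clause at a time. For clause~(\ref{defi:Tate1}), at each prime $p$ conjugation by $\phi \in G(\Q_p)$ is an algebraic automorphism of $G_{\Q_p}$ that carries $M_{\Q_p}$ onto $M'_{\Q_p}$, $Z_G(M)_{\Q_p}$ onto $Z_G(M')_{\Q_p}$, and $U^*_p$ and $(U^*_p)^0$ onto their $\phi$-conjugates; consequently the identities~\eqref{defi:tate eq1} and the semisimplicity~\eqref{defi:tate eq 1.2} are preserved. For clause~(\ref{defi:Tate2}) the key input is that the fixed element $\phi \in G(\Q)$, together with its inverse, lies in $G(\Z_p)$ for all but finitely many primes; let $p_0$ be such a bound. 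For $p \geq p_0$ conjugation by $\phi$ is a $\Z_p$-algebraic isomorphism of $G_{\Z_p}$ that identifies the integral models of $M$ and $M'$ and reduces modulo $p$ to an automorphism of $G_{\F_p}$ identifying $M_{\F_p}$ with $M'_{\F_p}$. Taking $M'(D) := \max(M(D), p_0)$, the resulting bijection between subgroups of $U^*_p$ of index $\leq D$ and subgroups of $\phi^{-1} U^*_p \phi$ of the same index, together with the matching of their mod-$p$ reductions, transports~\eqref{defi:tate eq 2} and~\eqref{defi:tate eq 2.2} from $x_0$ to $x_\phi$.

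The $\ell$-independence statement follows from the same identification: viewing $\phi \in G(\Q) \hookrightarrow G(\A_f)$ as a constant adele, conjugation by $\phi$ commutes with the factorisation across primes, so if $U = \prod_p U_p$ with $U_p \leq M(\Q_p)$, then $\phi^{-1} U \phi = \prod_p \phi^{-1} U_p \phi$ with $\phi^{-1} U_p \phi \leq M'(\Q_p)$; the finite-index discrepancy between $U'$ and this product is absorbed by passing to a finite-index product subgroup, which preserves Definition~\ref{defi:indep}. The main obstacle in the whole argument is the initial intertwining relation $\rho_{x_\phi} = \phi^{-1} \rho_{x_0} \phi$ on a common finite-index subgroup: although natural from the construction of the geometric Hecke orbit in~\cite[Def.~2.1]{RY}, it requires careful tracking of how $\phi \in W(\Q)$ acts on the arithmetic data (compatible systems of $\ell$-adic realisations) underlying the two Galois representations. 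Once this intertwining is in hand, everything else is a formal transfer under inner conjugation by a rational element, which is integral at almost all primes.
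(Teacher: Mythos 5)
Your approach is the right one in spirit — transfer everything by conjugation and use integrality of the conjugating element at almost all primes — but it rests on a mis-identification of what $\phi$ is, and this mis-identification is not innocent. In the setup of the paper, $\phi \in W(\Q)$ is a $\Q$-rational \emph{homomorphism} $M \to G$, namely a $\Q$-point of the conjugacy class $W = G \cdot \phi_0$, where $\phi_0 : M \hookrightarrow G$ is the inclusion. The conjugating element $g$ with $\phi = g\phi_0 g^{-1}$ lives only in $G(\ol{\Q})$ (i.e., in $G(L)$ for some number field $L$), not in $G(\Q)$. Your argument treats $\phi$ throughout as an element of $G(\Q)$: you write $M' = \phi^{-1}M\phi$, you embed $\phi$ as a constant adele $\phi \in G(\Q) \hookrightarrow G(\A_f)$, and you argue that $\phi$ and $\phi^{-1}$ lie in $G(\Z_p)$ for almost all $p$. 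None of these statements are available in the actual setup: the Mumford--Tate group of $x_\phi$ is $\phi(M)$ (where $\phi$ is applied as a homomorphism), and the conjugating element $g$ satisfies $g \in G(O_{L\otimes\Q_p})$ only for $p$ large enough, after which one must pick an embedding $L \hookrightarrow \ol{\Q_p}$ and a prime of $L$ above $p$ to reduce.

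This matters precisely in the two places you call ``formal transfer''. For the centraliser clause~\eqref{defi:tate eq1}, the equality $Z_{G}(\phi(U_p)) = g Z_{G}(U_p) g^{-1}$ only makes sense over $\ol{\Q_p}$, and the conclusion $Z_{G_{\Q_p}}(U'_p) = Z_G(\phi(M))_{\Q_p}$ is then extracted by descent from $\ol{\Q_p}$. For clause~\eqref{defi:Tate2}, what you need is that, for $p$ large, the component $g_p$ reduces to an element $\ol{g_p} \in G(\kappa_L) \leq G(\ol{\F_p})$ through which the $\F_p$-centralisers and semisimplicity transport; this is an $L$-integral statement, not a $\Q$-integral one, and the threshold $m_0$ it produces depends on $L$. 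Separately, the finite-index dance with $E'$ and $U^*$ is unnecessary: by~\cite[Prop.~3.4]{RY} one has the exact identity $U' = \phi(U)$, which is what the paper uses directly (this also sidesteps your worry about a ``finite-index discrepancy'' when transporting the $\ell$-independence condition, which as literally stated in Def.~\ref{defi:indep} demands an equality $U' = \prod_p U'_p$, not merely finite index in a product). To repair the proof you should keep the two objects separate: use the $\Q$-rationality of $\phi$ as a homomorphism to get $\ell$-independence and reductivity of $\ol{\phi(U_p)}^{Zar}$, and use the $L$-rationality and $L$-integrality of $g$ (via embeddings $L \to \ol{\Q_p}$ and reduction at a prime of $L$) for the centraliser identities over $\ol{\Q_p}$ and $\ol{\F_p}$.
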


\begin{proof}Let~$g\in G(\ol{\QQ})$ be such that~$\phi=g\phi_0g^{-1}$, and let~$L$ be a number field such that~$g\in G(L)$,
and let~$\AAA_{f,L}=\AAA_f\tens_\QQ L$ be the ring of ad\`eles of the number field~$L$. Denote by~$U'$ the image of~$\rho_{x_\phi}$.
According to~\cite[Prop.~3.4]{RY} we have
\[
U'=\phi(U).
\]

We prove the assertion about~$\ell$-independence. Assume~$U=\prod_p U_p$.   
 Since $\phi$ is defined over $\QQ$, we have
\[
U'=\prod_p \phi(U_p).
\]
This proves the assertion about~$\ell$-independence.

We treat the semisimplicity over~$\Q_p$ in Def.~\ref{defi:Tate}.  Assume that the action of~$U_p$ is semisimple. Equivalently, the Zariski closure~$\ol{U_p}^{Zar}$  is reductive. As~$\phi$ is defined over~$\QQ$, the algebraic group~$\ol{\phi(U_p)}^{Zar}=\phi(\ol{U_p}^{Zar})$
is reductive, or equivalently, the action of~$U'_p=\phi(U_p)$ is semisimple.


We now treat the centraliser property of part~\ref{defi:Tate1} of Def.~\ref{defi:Tate}.
For every prime~$p$, let us choose an embedding~$L\to\ol{\Q_p}$. We have~$gmg^{-1}=\phi(m)$ for~$m\in M(\ol{\Q_p})$, and thus~$U'_p=gU_pg^{-1}\leq \phi(M)(\ol{\Q_p})$. We have
\begin{multline}\label{comment}
Z_{G_{\ol{\Q_p}}}(U'_p)=gZ_{G_{\ol{\Q_p}}}(U_p)g^{-1}\\
=gZ_{G_{\ol{\Q_p}}}({U_p}^0)g^{-1}=
gZ_{G}(M)_{\ol{\Q_p}}g^{-1}=Z_{G}(gMg^{-1})_{\ol{\Q_p}}=Z_{G}(\phi(M))_{\ol{\Q_p}}
\end{multline}
As~$\phi(M)$ is the Mumford-Tate group of~$x_\phi$ we have proved~(1) of~Def.~\ref{defi:Tate} for~$x_\phi$.

We now deal with part \ref{defi:Tate2} of Def.~\ref{defi:Tate}.

Note that the component~$g_p$ of~$g$ as an adélic element is in~$G(O_{L\tens\Q_p})$ for~$p$ large enough.
For~$p$ large enough, the group~$\phi(M)(\ZZ_p)=gM(\ZZ_p) g^{-1}$ is hyperspecial and the reduction map~
\[
g\mapsto \ol{g}:\phi(M)(\ol{\ZZ_p})\to \phi(M)(\ol{\FF_p})
\]
is well defined. 

Let~$m_0\in\Z_{\geq1}$ be such that the above apply for~$p\geq m_0$.
Let~$D$ and~$M(D)$ be as in (2) of Def.~\ref{defi:Tate}, and let~$V\leq U_p$ be a subgroup of index~$[U_p:V]\leq D$,
and denote by~$V(p)\leq U(p)$ the corresponding subgroup as in  (2) of Def.~\ref{defi:Tate}, and define~$V':=g V g^{-1}\leq \phi(U)$ and~$V'(p)$ accordingly. 
Then, for~$p\geq M'(D):=\max\{m_0;M(D)\}$ we have
\[
V'(p)=\ol{g_p}V(p)\ol{g_p}^{-1}
\]
with~$\ol{g}$ the reduction of~$g$ in~$G(\kappa_L)\leq G(\ol{\FF_p})$ , where~$\kappa_L$ is the residue field of~$L$ at a prime above~$p$.
The semisimplicity follows.

For~$p\geq M'(D)$, we also have
\begin{multline*}
Z_{G_{\ol{\FF_p}}}(V'(p))=\ol{g_p}Z_{G_{\ol{\FF_p}}}(V(p))\ol{g_p}^{-1}=
\ol{g_p}Z_{G_{\ol{\FF_p}}}(M)\ol{g_p}^{-1}\\
=Z_{G_{\ol{\FF_p}}}(\ol{g_p}M\ol{g_p}^{-1})=Z_{G_{\ol{\FF_p}}}(\phi(M)).\qedhere
\end{multline*}
\end{proof}

\subsection{Passing  to a Shimura subdatum}\label{passage to sub}

\begin{proposition}\label{Tate:subdatum}
Let~$\Psi:(G',X')\to (G,X)$ be an injective morphism of Shimura data.

Let~$x_\phi\in \cH^g(x_0)$ be such that there exists~$x'_\phi$ such that~$x_\phi=\Psi\circ x'_\phi$.

If~$x_0$ ``satisfies the uniform integral Tate conjecture'', 
then~$x'_\phi$ (and~$x_\phi$)``satisfy the uniform integral Tate conjecture''.

If the image~$U$ of~$\rho_{x_0}$ satisfies the~$\ell$-independence 
property in the sense of~Def.~\ref{defi:indep}, then the image~$U'$ of~$\rho_{x'_\phi}$ satisfies the~$\ell$-independence 
property in the sense of~Def.~\ref{defi:indep}.
\end{proposition}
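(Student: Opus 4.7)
The plan is to combine Proposition~\ref{Tate:invariance} with a ``pullback along~$\Psi$'' argument. By Prop.~\ref{Tate:invariance}, the point~$x_\phi\in\cH^g(x_0)\subset X$ itself satisfies the uniform integral Tate conjecture (resp.~the~$\ell$-independence property) as soon as~$x_0$ does. It thus suffices to show the following pullback statement: if~$y\in X$ of the form~$y=\Psi\circ y'$ with~$y'\in X'$ satisfies the Tate (resp.~$\ell$-independence) conjecture, then so does~$y'$. So from now on write~$M'$ for the Mumford--Tate group of~$y'$ in~$G'$,~$U'\leq M'(\AAA_f)$ for the image of~$\rho_{y'}$, and correspondingly~$M=\Psi(M')$ (which is exactly the Mumford--Tate group of~$y=\Psi\circ y'$, because~$\Psi$ is injective) and~$U=\Psi(U')$ (which is the image of~$\rho_y=\Psi\circ\rho_{y'}$). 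Since~$\Psi:G'\hookrightarrow G$ is a closed~$\QQ$-subgroup embedding, it restricts to an isomorphism of~$\QQ$-groups~$M'\xrightarrow{\sim}M$, and hence to a topological isomorphism~$M'(\AAA_f)\xrightarrow{\sim} M(\AAA_f)$ respecting the restricted product decomposition. In particular~$\Psi$ identifies~$U'$ with~$U$ and, for each~$p$,~$U'_p$ with~$U_p$.

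For the~$\ell$-independence property this already finishes the argument: if~$U=\prod_p U_p$ then, applying~$\Psi^{-1}$ factor by factor, $U'=\prod_p U'_p$ with~$U'_p\leq M'(\QQ_p)$.

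For part~\eqref{defi:Tate1} of Def.~\ref{defi:Tate}, the key elementary fact is that for any closed subgroup~$H\leq G'$ (over any field~$k\supset\QQ_p$) one has~$Z_{G'}(H)=G'\cap Z_G(\Psi(H))$. Applied with~$H=U'_p$ and then~$H={U'_p}^0$, the hypothesis~\eqref{defi:tate eq1} for~$y$ gives~$Z_{G_{\QQ_p}}(\Psi(U'_p))=Z_{G_{\QQ_p}}(\Psi({U'_p}^0))=Z_G(M)_{\QQ_p}$, and intersecting with~$G'_{\QQ_p}$ yields~$Z_{G'_{\QQ_p}}(U'_p)=Z_{G'_{\QQ_p}}({U'_p}^0)=Z_{G'}(M')_{\QQ_p}$. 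The semisimplicity condition~\eqref{defi:tate eq 1.2} transfers because we may choose the faithful representation of~$G'$ to be~$\rho_G\circ\Psi$, so that the Zariski closures satisfy~$\overline{U'_p}^{\,Zar}\xrightarrow{\Psi,\sim}\overline{U_p}^{\,Zar}$ and reductivity is preserved.

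For part~\eqref{defi:Tate2}, I would first fix an~$N\in\ZZ_{\geq1}$ such that~$\Psi$ extends to a closed immersion of the induced~$\ZZ[1/N]$-models of~$G'$ and~$G$ (this is standard: such an~$N$ exists because the schematic closure of~$G'$ in~$G_{\ZZ}$ is eventually flat and the embedding is eventually a closed immersion). For~$p\nmid N$, the reduction map and the formation of centralisers commute with~$\Psi$ mod~$p$, and~$\Psi$ identifies every subgroup~$V'\leq U'_p$ of index~$\leq D$ with a subgroup~$\Psi(V')\leq U_p$ of the same index~$\leq D$. Taking~$M'(D):=\max\{N+1,M(D)\}$ (with the~$M(D)$ provided by the Tate hypothesis for~$y$), both the centraliser equality~\eqref{defi:tate eq 2} and the semisimplicity~\eqref{defi:tate eq 2.2} for~$V'(p)$ in~$G'_{\FF_p}$ follow by intersecting the corresponding statements for~$\Psi(V')(p)$ in~$G_{\FF_p}$ with~$G'_{\FF_p}$, exactly as in the~$\QQ_p$-argument of the preceding paragraph.

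The only genuinely delicate point, i.e.~the main obstacle, is the last one: tracking integral models carefully enough to verify that centralisers and Nori/Zariski closures mod~$p$ are compatible with~$\Psi$ for almost all~$p$, and that the resulting uniformity constant~$M'(D)$ indeed depends only on~$D$ (with an at most additive correction by the finite bad-prime set of~$\Psi$). Everything else is formal manipulation of centralisers and products, just as in the last part of the proof of Prop.~\ref{Tate:invariance}.
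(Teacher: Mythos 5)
Your proposal follows the same route as the paper's proof: reduce to~$x_\phi=x_0$ via Prop.~\ref{Tate:invariance}, identify~$U'=\Psi(U')=U$ (the paper cites~\cite[Prop.~3.4]{RY} for this), transfer the centraliser identities by intersecting with~$G'$, and observe that semisimplicity is automatic upon taking~$\rho_{G'}=\rho_G\circ\Psi$. Where the paper merely says ``and similarly for~$\FF_p$ for~$p$ big enough'', you spell out the integral-model bookkeeping (choosing~$N$ so~$\Psi$ is a closed immersion over~$\Z[1/N]$ and setting~$M'(D)=\max\{N+1,M(D)\}$), which is a welcome clarification of the same argument rather than a departure from it.
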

\begin{proof} By Prop.~\ref{Tate:invariance} we may assume~$x_\phi=x_0$. 
We identify~$G'$ with its image in~$G$.

By~\cite[Prop. 3.4]{RY} we have
\[
U=\Psi(U')=U'
\]
where we denote by~$U'$ the image of~$\rho_{x'_0}'$ in $G'(\A_f)$. 

The semisimplicity of the action of $U'$ is automatic.
It follows readily from the definitions and the remark
that
\[
Z_{G_{\Q_p}'}(U'_p)=Z_{G_{\Q_p}}(U'_p)\cap G_{\Q_p}'=Z_{G_{\Q_p}}(M)\cap G_{\Q_p}'=Z_{G_{\Q_p}'}(M).
\]
and similarly for~$\FF_p$ for~$p$ big enough.

The last statement follows from
\[
U'=\Psi(U')=U=\prod_p U_p.\qedhere
\]
\end{proof}

\subsection{Passing to quotients by central subgroups}\label{passage to quotients}

\begin{proposition}\label{indep:subdatum}
Let $F \subset Z(G)$ ($Z(G)$ is the centre of $G$) be a subgroup and let $G'$ be the quotient $G/F$.
Let~$\Psi:(G,X)\to (G',X')$ be the morphism of Shimura data induced by the quotient $G\lto G'$.


If~$x_0$ ``satisfies the uniform integral Tate conjecture'', 
then~$x'_0=\Psi\circ x_0$ ``satisfies the uniform integral Tate conjecture''.

If the image~$U$ of~$\rho_{x_0}$ satisfies the~$\ell$-independence 
property in the sense of~Def.~\ref{defi:indep}, then the image~$U'$ of~$\rho_{x_0'}$ satisfies the~$\ell$-independence 
property in the sense of~Def.~\ref{defi:indep}.
\end{proposition}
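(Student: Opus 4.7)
The plan is to follow the same pattern as Proposition~\ref{Tate:subdatum}, now exploiting that~$F$ is central in~$G$. By~\cite[Prop.~3.4]{RY} we have~$U'=\Psi(U)$, and the Mumford--Tate group of~$x'_0$ is~$M'=\Psi(M)=M/(M\cap F)$. Since~$\Psi$ is defined over~$\Q$ it commutes with the decomposition into local components, so~$U=\prod_p U_p$ gives~$U'=\prod_p \Psi(U_p)$, which proves the~$\ell$-independence part.

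For the semisimplicity conditions~\eqref{defi:tate eq 1.2} and~\eqref{defi:tate eq 2.2}, I would argue by pullback: any faithful representation~$\rho_{G'}\colon G'\to GL(d')$ yields via~$\Psi$ a (generally non-faithful)~$G$-representation; summing with a faithful~$G$-representation and invoking Remark~\ref{rem:Tate}(\ref{rem3}) shows that~$U_p$ (resp.~a subgroup~$V\leq U_p$ of bounded index taken mod~$p$) acts semisimply on this pullback, hence~$U'_p=\Psi(U_p)$ (resp.~$V'(p)=\pi_p(V(p))$) acts semisimply on~$\rho_{G'}$.

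For the centralizer conditions, the key input is that~$F\cap G^{\mathrm{der}}$ is a finite algebraic group, because~$Z(G)\cap G^{\mathrm{der}}$ is finite when~$G$ is reductive. Given~$g'\in Z_{G'_{\Q_p}}(U'_p)$, lift to~$g\in G(\ol{\Q_p})$. Then the commutator map~$[g,-]\colon U_p\to F$ takes values in~$F\cap G^{\mathrm{der}}$ (since commutators lie in~$G^{\mathrm{der}}$) and is a group homomorphism (since~$F$ is central). It extends algebraically to the Zariski closure~$H_p$; restricted to the connected component~$H_p^0$, whose~$\Q_p$-points contain~$U_p^0$, it must be trivial because the target is finite. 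Thus~$g\in Z_G(U_p^0)=Z_G(M)$ by Def.~\ref{defi:Tate}\eqref{defi:Tate1} applied to~$x_0$, whence~$g'\in Z_{G'}(M')_{\Q_p}$. The same argument applied to~$(U'_p)^0$, whose Zariski closure equals~$\Psi(H_p^0)$, handles that centraliser too. For part~\eqref{defi:Tate2}, given~$V'\leq U'_p$ of index~$\leq D$, set~$V=(\Psi|_{U_p})^{-1}(V')$: then~$[U_p:V]=[U'_p:V']\leq D$, and for~$p$ exceeding both~$M(D)$ (from~$x_0$) and the primes dividing~$|F\cap G^{\mathrm{der}}|$, the mod-$p$ analogue of the commutator argument---using the Nori group of~$V(p)$ in place of~$H_p^0$, cf.~Remark~\ref{rem:Tate}(\ref{rem2})---yields~$Z_{G'_{\F_p}}(V'(p))=Z_{G'_{\F_p}}(M'_{\F_p})$.

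The main obstacle is the mod-$p$ step: one has to verify that, for almost all~$p$, the integral quotient~$\pi_p\colon G_{\F_p}\to G'_{\F_p}$ makes sense, sends the Nori group of~$V(p)$ to (a subgroup of) the Nori group of~$\pi_p(V(p))$ with controlled cokernel via Remark~\ref{rem:Tate}(\ref{rem2}), and that commutators into~$F\cap G^{\mathrm{der}}_{\F_p}$ still vanish on the connected Nori group. Packaging these exceptional primes into a single uniform bound~$M'(D)$ deduced from~$M(D)$ completes the proof.
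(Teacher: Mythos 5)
Your proof follows the same skeleton as the paper's (which is terse for this proposition and asserts the chain of centraliser equalities $Z_{G'}(U'_p)=Z_G(U_p)/F=\ldots$ without detail). The $\ell$-independence claim, the $\Q_p$-semisimplicity (the paper shows $H'_p=\Psi(H_p)$ is reductive; your "sum with a faithful rep and invoke Remark~\ref{rem:Tate}(\ref{rem3})" is equivalent), and the $\F_p$-semisimplicity are all fine---though note your appeal to Remark~\ref{rem:Tate}(\ref{rem3}) over~$\F_p$ is secretly the same $G$-cr machinery from~\cite[Th.~5.4]{SCR} that the paper cites directly. The substantive thing you add is the commutator argument: lifting $g'\in Z_{G'_{\Q_p}}(U'_p)$ to $g\in G(\ol{\Q_p})$, observing that $[g,-]\colon U_p\to F\cap G^{\mathrm{der}}$ is a homomorphism into a finite group (using centrality of $F$ and $Z(G)\cap G^{\mathrm{der}}$ finite), hence trivial on $U_p^0$, whence $g\in Z_G(U_p^0)=Z_G(M)$. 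This is exactly what the paper's unproved first equality needs, and it is correct.

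The gap is in the mod-$p$ centraliser step, where you propose to "use the Nori group of $V(p)$ in place of $H_p^0$." That route would only show $g$ centralises $V(p)^\dagger$, the subgroup generated by unipotent elements. But $V(p)^\dagger$ need not have bounded index in $U(p)$---if $M$ has a nontrivial torus factor, $[U(p):U(p)^\dagger]$ grows with $p$---so the uniform Tate condition Def.~\ref{defi:Tate}(\ref{defi:Tate2}), which only governs subgroups of \emph{bounded} index, cannot be applied to $V(p)^\dagger$. The clean fix dispenses with Nori theory entirely: the commutator homomorphism $\phi_g\colon V(p)\to F(\ol{\F_p})\cap G^{\mathrm{der}}(\ol{\F_p})$ has target of order $\leq c:=\abs{(F\cap G^{\mathrm{der}})(\ol{\Q})}$ for almost all $p$, so $\ker\phi_g$ has index $\leq c$ in $V(p)$ and $\leq cD$ in $U(p)$. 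For $p\geq M(cD)$, the Tate condition for $x_0$ applied to the preimage of $\ker\phi_g$ in $U_p$ gives $Z_{G_{\F_p}}(\ker\phi_g)=Z_{G_{\F_p}}(M_{\F_p})$, hence $g\in Z_{G_{\F_p}}(M_{\F_p})(\ol{\F_p})$ and $g'=\Psi(g)\in Z_{G'_{\F_p}}(M'_{\F_p})(\ol{\F_p})$. This also yields the uniform bound $M'(D):=\max\{M(cD),\dotsc\}$ directly and removes the bookkeeping you flagged as "the main obstacle."
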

\begin{proof}
Arguments are similar. Firstly, by~\cite[Prop.~3.4]{RY} we have
\[
U'=\Psi(U).
\]
We use, remarking~$\Psi(M)$ is the Mumford-Tate group of~$x'_0$,
\[
Z_{G^{ad}}(U'_p)=Z_{G}(U_p)/F=Z_{G}(M)/F=Z_{G^{ad}}(M/Z(G))=Z_{G^{ad}}(\Psi(M)).
\]
(For~$p$ big enough~$\Psi$ will be compatible with the integral models.) Let~$H'_p$ be the Zariski closure of~$U'_p$.
Observe that~$\Psi(U_p)\leq U'_p\leq \Psi(H_p)$, and~$\Psi(H_p)\leq G'_{\Q_p}$ is closed, and~$\Psi(U_p)\leq \Psi(H_p)$ is Zariski dense. Thus~$H'_p=\Psi(H_p)$. As~$H_p$ is reductive, so is~$H'_p$. This proves the semisimplicity~\eqref{defi:tate eq 1.2}.

Finally, if~$U=\prod_p U_p$, then~$U'=\prod_p \Psi(U_p)$. This proves the assertion
about~$\ell$-independence.

For the semisimplicity property over~$\F_p$, let~$\rho:G\to GL(n)$ and~$\rho':G'\to GL(m)$ be faithful representations, and~$G_{\F_p}$ and~$G'_{\F_p}$ be the corresponding models over~$\F_p$, which are reductive for~$p\geq c_1(G,G')$. Let~$U'\leq  U(p)$ be of index~$[U':U(p)]\leq D$. By Def.~\ref{defi:Tate}, the representation~$U(p)\to GL(n)_{\F_p}$ is a semisimple representation for~$p\geq M(D)$. If furthermore~$p\geq c_2(\rho)$,
by~\cite[Th.~5.4 (ii)]{SCR}, the subgroup~$U'\leq G$ is~$G$-cr, for~$p\geq c_2(\rho)$. If furthermore~$p\geq c_3(\rho'\circ \Psi)$, by~\cite[Th.~5.4 (i)]{SCR}, the representation~$\rho'\circ \Psi:U'\to GL(m)$ is semisimple. This proves that, for~$p\geq M'(D):=\max\{M(D);c_1(G,G');c_2(\rho);c_3( \rho'\circ \Psi)\}$, the representation~$U'\to GL(m)$ is semisimple. This proves \eqref{defi:tate eq 2.2} of Def.~\ref{defi:Tate} with~$p>M'(D)$.
\end{proof}



\begin{rem}
This proposition in particular shows that we can restrict ourselves to the case of 
Shimura varieties where $G$ is semisimple of adjoint type (by taking $F = Z(G)$ in this proposition).
\end{rem}

\subsection{Compatibility to products.}

\begin{proposition}\label{Tate:products}
Assume $G$ to be of adjoint type and not simple.
Let
$$
(G,X) = (G_1, X_1) \times (G_2, X_2)
$$
be a decomposition of $(G,X)$ as a product. We denote~$\pi_1:G\to G_1$ and~$\pi_2:G\to G_2$ the projection maps.

If~$x_0$ ``satisfies the uniform integral Tate conjecture'', 
then~$x_i=\pi_i\circ x_0$ ``satisfies the uniform integral Tate conjecture''.

If the image~$U$ of~$\rho_{x_0}$ satisfies the~$\ell$-independence
property in the sense of~Def.~\ref{defi:indep}, then the image~$U_i$ of~$\rho_{x_i}$ satisfies the~$\ell$-independence 
property in the sense of~Def.~\ref{defi:indep}.
\end{proposition}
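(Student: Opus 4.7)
My plan is to follow the pattern of Prop.~\ref{Tate:invariance}, Prop.~\ref{Tate:subdatum}, and Prop.~\ref{indep:subdatum}, now with the projections~$\pi_i:G\to G_i$ in place of the previous morphisms. By~\cite[Prop.~3.4]{RY}, the Galois image of~$x_i$ is~$U_i=\pi_i(U)$ and its Mumford-Tate group is~$M_i:=\pi_i(M)$. The~$\ell$-independence assertion is then immediate: if~$U=\prod_pU_p$, then~$U_i=\prod_p\pi_i(U_p)$.

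The key device for the uniform integral Tate property will be an ``embedding trick''. Writing~$\iota_i:G_i\hookrightarrow G=G_1\times G_2$ for the inclusion inserting~$1$ in the opposite factor, for any subgroup~$H\leq G(R)$ one has
\[
\iota_i\bigl(Z_{G_i}(\pi_i(H))\bigr)=\iota_i(G_i)\cap Z_G(H),
\]
because, e.g., $(g,1)\in G_1\times G_2$ commutes with~$(h_1,h_2)$ iff~$g$ commutes with~$h_1$. Applied with~$H=U_p$ and~$H=M$ to the hypothesis~$Z_{G_{\Q_p}}(U_p)=Z_{G_{\Q_p}}(M)$, this yields~$Z_{G_{i,\Q_p}}(\pi_i(U_p))=Z_{G_{i,\Q_p}}(M_i)$. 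For~$\pi_i(U_p)^0$, I would use that the Zariski closure of~$\pi_i(U_p)$ is~$\pi_i(H_p)$ with identity component~$\pi_i(H_p^0)$, so~$\pi_i(U_p^0)\leq\pi_i(U_p)^0\leq\pi_i(U_p)$; sandwiching the centralizers of the two outer groups, each equal to~$Z_{G_{i,\Q_p}}(M_i)$ by the trick, gives~\eqref{defi:tate eq1}. For~\eqref{defi:tate eq 1.2}, the image~$\pi_i(H_p)$ of the reductive group~$H_p$ under the algebraic morphism~$\pi_i$ is reductive in characteristic zero.

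For part~\eqref{defi:Tate2}, given~$V_i\leq\pi_i(U_p)$ of index at most~$D$, I would set~$V:=\pi_i^{-1}(V_i)\cap U_p$, which has index at most~$D$ in~$U_p$; for~$p$ large enough, the reduction maps intertwine~$\pi_i$, so~$\pi_i(V(p))=V_i(p)$. Applying the hypothesis to~$V$ gives, for~$p\geq M(D)$, the equality~$Z_{G_{\F_p}}(V(p))=Z_{G_{\F_p}}(M_{\F_p})$ and semisimplicity of the action of~$V(p)$; the embedding trick over~$\F_p$ then produces the centralizer equality~\eqref{defi:tate eq 2} for~$V_i(p)$. For the semisimplicity~\eqref{defi:tate eq 2.2}, I will argue as in Prop.~\ref{indep:subdatum}: fix faithful representations~$\rho_G$ and~$\rho_i$ of~$G$ and~$G_i$; by~\cite[Th.~5.4~(ii)]{SCR}, the semisimple action of~$V(p)$ via~$\rho_G$ implies, for~$p$ beyond a constant depending on~$\rho_G$, that~$V(p)$ is~$G$-cr; then \cite[Th.~5.4~(i)]{SCR} applied to the (non-faithful) representation~$\rho_i\circ\pi_i:G\to GL(m)$ shows that~$V(p)$ acts semisimply via~$\rho_i\circ\pi_i$, which is exactly the action of~$V_i(p)=\pi_i(V(p))$ via the faithful~$\rho_i$. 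The required threshold~$M'(D)$ is then obtained by enlarging~$M(D)$ by a constant depending only on~$G$,~$G_i$,~$\rho_G$, and~$\rho_i$.

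The main obstacle is the bookkeeping in the~$\F_p$-semisimplicity step, tracking the threshold constants coming from Nori theory and Serre's complete reducibility results, but this is entirely parallel to the corresponding step in Prop.~\ref{indep:subdatum}.
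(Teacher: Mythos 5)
Your proposal is correct and follows essentially the same route the paper intends: the paper's own proof consists of declaring it "the same as above" and recalling exactly your embedding trick, namely~$G_i\cap Z_G(M)=Z_{G_i}(M_i)$, while you spell out the details (the sandwich for the identity component, transport of bounded-index subgroups via~$\pi_i^{-1}$, and the two-step~$G$-cr argument from~\cite[Th.~5.4]{SCR} mirroring Prop.~\ref{indep:subdatum}). The one place worth a remark for completeness — which the paper's one-line proof also leaves tacit — is that you work with~$\pi_i(U_p)$ rather than~$(U_i)_p=U_i\cap M_i(\Z_p)$, which may be strictly larger; for the centraliser statements this is harmless by the sandwich~$\pi_i(U_p)\leq(U_i)_p\leq M_i(\Q_p)$ (cf.\ Remark~\S\ref{rem:Tate}~\eqref{rem(7)}), and for semisimplicity one can invoke the first lemma of Appendix~\ref{AppA}.
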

The proof is the same above. We recall that~$\rho_{x_i}=\pi_i\circ \rho_{x_0}$ by~\cite[Prop.~3.4]{RY}.
We also recall that the Mumford-Tate group of~$x_i$ is~$M_i=\pi_i(M)$, and that
\[
G_i\cap Z_{G}(M)=Z_{G_i}(M_i).
\]


\subsection{Shimura varieties of abelian type}\label{Tate:abelian type}


\begin{proposition}
All Shimura varieties of abelian type and adjoint type satisfy conditions of
Def.~\ref{defi:Tate bis} and Def.~\ref{defi:indep}.

More precisely, let~$(G,X)$ be a Shimura datum of abelian type with~$G$ of adjoint type,
and let~$S$ be an associated Shimura variety. Then for every~$s_0=[x_0,1]\in S$, 
\begin{itemize}
\item the point~$x_0$ satisfies the uniform integral Tate conjecture,
\item and there exists a field of finite type~$E$ over~$\QQ$ such that~$U=\rho_{x_0}(Gal(\ol{E}/E))$
satisfies the~$\ell$-independence condition.
\end{itemize}
\end{proposition}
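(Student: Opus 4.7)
The plan is to reduce the statement, via the functoriality results of this section, to the Siegel modular case, where the conclusion will follow from the refined Faltings theorem~Th.~\ref{Faltings} together with Serre's theorem on the independence of the~$\ell$-adic Galois representations attached to an abelian variety. Since~$G$ is adjoint and~$(G,X)$ is of abelian type, by definition there exists a Shimura datum~$(G_1,X_1)$ of Hodge type such that the quotient~$G_1\twoheadrightarrow G_1^{ad}=G$ by the central subgroup~$Z(G_1)$ induces a morphism of Shimura data~$\Psi_1:(G_1,X_1)\to(G,X)$ with~$(G_1^{ad},X_1^{ad})=(G,X)$. Lifting~$x_0\in X$ to a point~$x_1\in X_1$ with~$\Psi_1\circ x_1=x_0$ (possible because~$X_1\to X$ is surjective on components, $G$ being adjoint), Proposition~\ref{indep:subdatum} reduces the two conclusions to the corresponding statements for~$x_1$.

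Next, being of Hodge type,~$(G_1,X_1)$ admits an embedding~$\iota:(G_1,X_1)\hookrightarrow(\GSp(V),\Hcal_V^\pm)$ into a Siegel Shimura datum, and Proposition~\ref{Tate:subdatum} further reduces the question to the image~$y=\iota\circ x_1$ on the Siegel Shimura variety. Here~$y$ corresponds to a principally polarised abelian variety~$A$ defined over some field~$E$ of finite type over~$\QQ$, and the Galois representation~$\rho_y$ is the action of~$\Gal(\ol{E}/E)$ on the adelic Tate module~$\prod_\ell T_\ell A$. The uniform integral Tate property relative to the Mumford-Tate group of~$y$ inside~$\GSp(V)$ is then exactly the content of the refined Faltings theorem Th.~\ref{Faltings}: Faltings' Tate conjecture and semisimplicity theorem supply part~\eqref{defi:Tate1} of Def.~\ref{defi:Tate}, while the Serre-Noot refinement controlling residual mod-$p$ representations supplies the uniform part~\eqref{defi:Tate2}.

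For the~$\ell$-independence condition we invoke Serre's theorem on the independence of the family of~$\ell$-adic Galois representations attached to~$A$: after replacing~$E$ by a suitable finite extension~$E'$, the Galois image inside~$M(\AAA_f)$ factors as a product~$\prod_p U_p$. Lemma~\ref{lem U ast} ensures that the uniform integral Tate property established after passing to~$E'$ is compatible with this replacement, so both conclusions hold for~$y$ over this~$E'$ and then, by the earlier functoriality reductions, for~$x_0$ as well.

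The main obstacle is the \emph{uniformity} in part~\eqref{defi:Tate2}: the classical Faltings theorem guarantees the centraliser and semisimplicity statements for each fixed~$\ell$-adic or mod-$\ell$ image, but securing a threshold prime~$M(D)$ depending only on the index~$D$, uniformly for all subgroups~$U'\leq U_p$ of index~$\leq D$ and all~$p$, is precisely what the Serre-Noot refinement derived in~\S\ref{sec:functoriality} is designed to supply. Once Th.~\ref{Faltings} is in place, the remaining steps in the proof are purely formal consequences of the functoriality propositions of this section.
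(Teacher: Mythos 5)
Your overall route — reduce from abelian type to Hodge type by passing from $(G,X)$ to a Hodge-type datum $(G_1,X_1)$ with $G_1^{ad}=G$, then further to the Siegel datum $(\GSp_{2g},\mathfrak{H}_g)$ via a Hodge embedding, and finally invoke the refined Faltings theorem Th.~\ref{Faltings} plus Serre's $\ell$-independence theorem — is exactly the strategy the paper follows, via the same functoriality propositions (Prop.~\ref{indep:subdatum}, Prop.~\ref{Tate:subdatum}, Cor.~\ref{coro4.12}, Th.~\ref{Serre}).

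There is however a genuine gap in your very first reduction. You assert that $x_0\in X$ lifts directly to a point $x_1\in X_1$ with $\Psi_1\circ x_1=x_0$, ``possible because $X_1\to X$ is surjective on components, $G$ being adjoint.'' This is not correct: for an isogeny $G_1\to G_1^{ad}=G$ the real points $G_1(\R)$ need not surject onto $G(\R)$, so the conjugacy class $X_1$ maps onto a union of connected components of $X=X_1^{ad}$ that is in general \emph{proper}. Being adjoint does not repair this, since $X^{ad}$ is the $G^{ad}(\R)$-conjugacy class, which is strictly larger than the $G_1(\R)$-conjugacy class. The paper avoids this by first invoking Prop.~\ref{Tate:invariance}: the uniform integral Tate property is invariant under replacing $x_0$ by any point $x_\phi$ in its geometric Hecke orbit, and the geometric Hecke orbit, being conjugation by elements defined over $\Q$ (hence dense), meets every connected component of $X$. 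One may therefore replace $x_0$ by a point of its geometric Hecke orbit that lies in the image of $X_1$, and only then apply Prop.~\ref{indep:subdatum}. Your sketch omits this step, and as written the lift need not exist. Once this is patched, the remainder of your argument matches the paper's proof.
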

\begin{proof} By definition of abelian type Shimura data (\cite[\S3.2]{Upr}, \cite[Prop.~2.3.10]{Deligne}), there exists an isomorphism of Shimura data
\[
(G,X)\simeq ({G'}^{ad},{X'}^{ad})
\]
with~$(G',X')$ of Hodge type. Using Prop.~\ref{Tate:invariance} we may replace~$s_0=[x_0,1]$
by any point of its geometric Hecke orbit, and assume~$x_0$ belongs to the image of~$X'$ in~$X\simeq {X'}^{ad}$: there exists~$x_0'\in X'$ such that~$x_0={x'_0}^{ad}$.

According to~\S\ref{passage to quotients}, it is enough to prove the conclusion for~$x_0'$ instead of~$x_0$.

By definition of Hodge type data, there exists an injective morphism of Shimura data~$(G',X')\to (GSp(2g),\mathfrak{H}_{g})$, the latter being the Shimura datum
of the moduli space~$\mathcal{A}_g$. Let~$\tau_0$ be the image of~$x'_0$ in~$\mathfrak{H}_{g}$.

According to~\S\ref{passage to sub}, we may assume~$(G,X)=(GSp(2g),\mathfrak{H}_{g})$ and~$x_0=\tau_0$. It then follows from Th.~\ref{Serre} and Cor.~\ref{coro4.12}.
\end{proof}

\subsection{Uniform integral Faltings' theorem over fields of finite type}
\begin{theorem}[Faltings]\label{Faltings}
Let~$K$ be a field of finite type over~$\QQ$, and let~$A/K$ be an abelian variety.

Fix an algebraic closure~$\overline{K}$ of~$K$. Denote
\[
T_A\approx \widehat{\Z}^{2\dim(A)}
\]
the~$\widehat{\ZZ}$-linear Tate module, on which we have a continuous~$\widehat{\Z}$-linear
representation
\begin{equation}\label{Faltings rep}
\rho=\rho_A:\Gal(\overline{K}/K)\to GL_{\widehat{\Z}}(T_A).
\end{equation}
We assume that~$\End(A/K)=\End(A/\overline{K})$ and we let~$\End(A/K)$ act on~$T_A$ and denote
\[
Z:=\{b\in\End_{\widehat{\ZZ}}(T_A)|\forall a\in\End(A/K),[b,a]=0\}
\] 
the
$\widehat{\ZZ}$-algebra which is the centraliser  of~$\End(A/K)$ in~$\End_{\widehat{\Z}}(T_A)\approx Mat({2\dim(A)},\widehat{\ZZ})$.

We denote the image of~$\rho$ by
\begin{equation}\label{U in Falt}
U:=\rho(Gal(\ol{K}/K)).
\end{equation} 

Then, for every~$d\in\Z_{\geq1}$, there exists some~$M(A,K,d)\in \Z_{\geq1}$ such that:
for every open subgroup~$U'\leq U$ of index at most~$d$, we have
\[
\widehat{\ZZ}[U']\leq Z
\]
is an open subalgebra of index at most~$M(A,K,d)$.
\end{theorem}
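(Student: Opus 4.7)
The plan is to combine the $\ell$-adic Tate conjecture of Faltings for abelian varieties over fields of finite type with integrality and $\ell$-uniformity refinements of Serre and Noot, then to compare $\widehat{\ZZ}[U']$ with $\widehat{\ZZ}[U]$ by means of compact $\ell$-adic Lie theory.

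I would first record the structural fact that every closed $\widehat{\ZZ}$-submodule $N \subseteq Z$ decomposes as $N = \prod_\ell e_\ell N$ under the idempotents $e_\ell$ of $\widehat{\ZZ} = \prod_\ell \Z_\ell$. Writing $U'_\ell$ for the image of $U'$ in $U_\ell := \rho_\ell(\Gal(\ol K/K)) \leq GL_{\Z_\ell}(T_\ell A)$, this yields
\[
\widehat{\ZZ}[U'] \;=\; \prod_\ell \Z_\ell[U'_\ell]
\]
for every open $U' \leq U$; the task therefore reduces to bounding $[Z_\ell : \Z_\ell[U'_\ell]]$ prime by prime, arranging that all but finitely many indices equal $1$ uniformly in $U'$.

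The first step treats the case $U' = U$. For each prime $\ell$, Faltings's theorem provides semisimplicity of the Galois representation on $V_\ell A := T_\ell A \otimes \Q_\ell$ and identifies its commutant in $\End(V_\ell A)$ with $\End(A/K) \otimes \Q_\ell$; the double commutant theorem then yields
\[
\Q_\ell \cdot \Z_\ell[U_\ell] \;=\; Z_\ell \otimes_{\Z_\ell} \Q_\ell, \qquad Z_\ell := Z \otimes_{\widehat{\ZZ}} \Z_\ell,
\]
so $\Z_\ell[U_\ell]$ is an open sub-order of $Z_\ell$ of finite index $m_\ell$. A Serre--Noot refinement, exploiting Frobenius elements at places of good reduction of $A/K$ and analysing the mod-$\ell$ image of $\rho$ at large $\ell$, shows $m_\ell = 1$ for all but finitely many $\ell$, giving $m := \prod_\ell m_\ell < \infty$.

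Next, for general open $U' \leq U$ of index $\leq d$, I would replace $U'$ by its core in $U$ so as to assume $U' \triangleleft U$ of index $\leq D := d!$, whence $u^D \in U'$ for every $u \in U$. For each prime $\ell$ coprime to $D$, the $D$-power map is an analytic diffeomorphism on a neighbourhood of $1$ in the compact $\ell$-adic Lie group $U_\ell$; hence $U'_\ell$ contains an open subgroup of $U_\ell$ and generates the same closed $\Z_\ell$-subalgebra. Combined with the previous step, this forces $\Z_\ell[U'_\ell] = Z_\ell$ for all $\ell$ outside a finite set depending only on $A$, $K$ and $D$. For each remaining prime, the $\ell$-adic logarithm realises $U'_\ell \leq U_\ell$ as an inclusion of $\Z_\ell$-lattices in $\Lie U_\ell$ of index $\leq D$, yielding a bound on $[\Z_\ell[U_\ell] : \Z_\ell[U'_\ell]]$ depending only on $D$ and $2\dim A$. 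Multiplying these per-prime indices with $m$ produces the required $M(A,K,d)$.

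The hard part will be the uniform integrality $m_\ell = 1$ at almost all $\ell$: the rational Tate identity is formal from Faltings, but pushing it to an integral and $\ell$-uniform statement requires the full Serre--Noot machinery (control of the reduction $\rho_\ell \bmod \ell$, analysis of the Nori-type semisimple envelope at large primes, and handling of places of bad reduction). The subsequent comparison of $U'$ with $U$ is much softer, being essentially routine compact $\ell$-adic Lie theory.
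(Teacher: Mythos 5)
Your overall skeleton is right — prime-by-prime decomposition $\widehat{\ZZ}[U'] = \prod_\ell \Z_\ell[U'_\ell]$, Faltings over $\Q_\ell$ plus double commutant for the generic step, and a quantitative local analysis — and it matches the paper's. You also correctly flag that the crux is the integral uniform statement at almost all $\ell$. However, you attribute that step to the wrong source: the paper's quantitative input is the Masser--W\"{u}stholz refinement of the Tate conjecture (\cite[Th.~1, Cor.~1, Th.~2]{MW}), applied with degree parameter $d\cdot[K:\Q]$ so that it covers \emph{all} open $U'$ of index $\leq d$ simultaneously. Serre's role in the paper is to provide $\ell$-independence and Zariski-connectedness of the $U_\ell$ (Th.~\ref{Serre}), and Noot's is to supply the specialization argument that reduces a field of finite type to a number field (Prop.~\ref{prop Falt refined}) — neither by itself gives the quantitative integral bound you need.

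The genuine gap is in your passage from $U$ to $U'$. You claim that, for $\ell$ coprime to $D$, because the $D$-power map is a local analytic diffeomorphism, $U'_\ell$ ``generates the same closed $\Z_\ell$-subalgebra'' as $U_\ell$. That does not follow: an open subgroup of a compact $\ell$-adic Lie group can generate a proper finite-index subalgebra of the one generated by the whole group (and the fact that $U'_\ell$ contains a neighbourhood of $1$ does not by itself control the $\Z_\ell$-span of the group elements, only their $\Q_\ell$-span). The paper instead reduces this to a mod-$\ell$ problem: it applies \cite[Th.~2]{MW} to the finite extension $k/K$ cut out by $U'$ — of degree $\leq d$ over $K$, hence bounded uniformly — to deduce that $\F_\ell[U'(\ell)]$ is a \emph{semisimple} $\F_\ell$-algebra with centraliser $\End(A/K)\otimes\F_\ell$ for $\ell > M'(A,K,d)$, then uses the double centraliser theorem and Nakayama to lift to $\Z_\ell[U'] = Z\otimes\Z_\ell$. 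There is no shortcut via analytic diffeomorphisms that avoids this mod-$\ell$ argument. Your proposal would need to be restructured around that point: either redo the Masser--W\"{u}stholz mod-$\ell$ analysis directly for $U'$, or cite it explicitly — the reduction to the normal core and the $D$-power trick, as written, do not close the gap.
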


\subsubsection{The number field case}\label{Number field case}
This statement follows from Faltings' theorems. In the case where~$K$ is a number field, we deduce it from~\cite[Th.~1, Cor.~1, Th.~2]{MW} as follows.
\begin{proof}[Proof of Th.~\ref{Faltings} if~$K$ is a number field] Consider~$d\in\Z_{\geq1}$, and let~$M'(A,K,d)$ be the~$M$ of~\cite[Th.~1]{MW}, with their~$d$ our~$d\cdot [K:\Q]$.
Consider~$U'$ as in the statement of Theorem~\ref{Faltings}, denote by~$\Gamma:=\stackrel{-1}{\rho_{A}}(U')\leq \Gal(\ol{K}/K)$ its inverse image, and denote by~$k=\ol{K}^{\Gamma}/K$ the corresponding finite field extension.
For every prime~$\ell$, we denote by~$U'(\ell)$ be the image of~$U'$ in~$E_\ell:=\End_{\F_\ell}(A[\ell])$.

We note that (cf. \cite[Th. 2.7]{DeligneBBK})
\[
\widehat{\Z}[U']=\prod_{\ell}\Z_\ell[U']\text{ in }\End_{\widehat{\Z}}(T_A).
\]

We will prove the following.
\begin{enumerate}
\item	\label{MW 1}	 For every prime~$\ell$, we have 
\[
M_\ell:=\max_{[U:U']\leq d}[Z\tens\Z_\ell:\Z_\ell[U']]<+\infty.
\]
\item 	\label{MW 2}	There exists~$M_A\in\Z_{\geq1}$ such that for every prime~$\ell>\max\{M_A;M'(A,K,d)\}$,
\[
[Z\tens\Z_\ell:\Z_\ell[U']]=1.
\]
\end{enumerate}
The conclusion of Theorem~\ref{Faltings} will then follow with
\[
M(A,K,d):=\prod_{\ell\leq \max\{M_A;M'(A,K,d)\}}M_\ell.
\]

We prove~\eqref{MW 1}. Fix a prime~$\ell$. We recall that~$\End(A/\ol{K})=\End(A/K)\leq \End(A/k)\leq \End(A/\ol{K})$.
By Faltings theorems, for~$A/K$ and~$A/k$, we have
\[
\Q_\ell[U]=\Q_\ell[U']=Z\tens\Q_\ell
\]
in~$\End_{\Q_\ell}(V_\ell)$, where~$V_\ell=T_A\tens_{\widehat{\Z}}\Q_\ell$ is the~$\Q_\ell$ Tate module of~$A$.

It follows that
\[
\Z_\ell[U']\leq \Z_\ell[U]\leq Z\tens\Z_\ell
\]
is of finite index.

We denote by~$U_\ell$ and~$U'_\ell$ the image of~$U$ and~$U'$ in~$\End_{\Q_\ell}(V_\ell)$.
We have~$[U_\ell:U'_\ell]\leq [U:U']\leq d$ and
\[
\Z_\ell[U]=\Z_\ell[U_\ell]\text{ and }
\Z_\ell[U']=\Z_\ell[U'_\ell].
\]
By~\cite[Ch. III,  \S4.1, Prop. 8 and 9]{SerreCG} and~\cite[Prop.~2]{Serre-62}, the group~$U_\ell$ has at most finitely many
open subgroup~$U'_\ell$ of index at most~$d$. We also have~$[U_\ell:U'_\ell]\leq [U:U']$. Thus, as~$U'$ ranges through subgroups of~$U$ of index at most~$d$, we have the finiteness
\[
M_\ell:=\max_{[U:U']\leq d}[Z\tens\Z_\ell:\Z_\ell[U']]<+\infty.
\]
This implies~\eqref{MW 1}.

We prove~\eqref{MW 2}. We define
\[
B:=\wh{\Z}[U'],\quad D:=\End(A/K).
\]
and denote the images of~$B$, resp.~$D$, in~$E_\ell:=\End_{\F_\ell}(A[\ell])$ by
\[
B_\ell=\F_\ell[U'(\ell)]\text{ and } D_\ell.
\] 

When~$\ell>M'(A,K,d)$:
\begin{itemize}
\item the subalgebra~$B_\ell:=\F_\ell[U'(\ell)]\leq E_\ell$ is semisimple, by~\cite[Th.~2]{MW},
\item and its centraliser
\[C_\ell:=Z_{E_\ell}(B_{\ell})=\End_{U'(\ell)}(A[\ell])=\End_{U'}(A[\ell])\]
satisfies, by~\cite[Cor.~1]{MW},
\[C_\ell=\End(A/K)\tens\F_\ell.\]
\end{itemize}
By the double centraliser theorem~(cf.~\cite[VIII\S5, Th.~3]{BBKA8}), we have
\[
B_\ell=Z_{E_\ell}(C_\ell).
\]
Recall that~$D_\ell$ is the image of~$\End(A/K)$ in~$E_\ell$. Let~$Z_\ell:=Z_{E_\ell}(D_\ell)$ be its centraliser.
   
We claim that there exists~$M_A$ such that for~$\ell>M_A$, the map
\begin{equation}\label{Z mod l map}
Z\tens\F_\ell\to Z_{E_\ell}(D_\ell)
\end{equation}
is an isomorphism. 
\begin{proof}[Proof of the claim]

Let us recall a fact. For any subgroup~$\Lambda\leq \Z^n\leq \Q^n$, the map~$\Lambda\tens\F_\ell\to \Z^n\tens\F_\ell$ is injective if and only if~$\ell\nmid[\Lambda\cdot \Q\cap \Z^n:\Lambda]<\infty$. This holds for~$\ell\gg0$, and, if~$\Lambda$ is primitive, for every~$\ell$.

We choose an embedding~$\ol{K}\to\CC$ and consider the Betti cohomology~$H_1(A/\CC;\Z)\approx \Z^{2g}$. We define~$E_0=\End_\Z(H_1(A/\CC;\Z))\approx\Z^{{(2g)}^2}$, and~$Z_0=Z_{E_0}(\End(A/K))$. 

Let us prove the injectivity of~\eqref{Z mod l map}. 
We apply the recalled fact for~$n={(2g)}^2$ and~$\Lambda=Z_0$: the map
\[
Z_0\tens \F_\ell\to E_\ell.
\]
is injective for~$\ell\gg0$ (actually, for every prime~$\ell$). We note that~$Z\simeq Z_0\tens\wh{\Z}$. Thus~$Z\tens \F_\ell\simeq Z_0\tens\F_\ell$, and the map~\eqref{Z mod l map} is injective for~$\ell\gg0$.

Let us prove the surjectivity of~\eqref{Z mod l map}, for~$\ell\gg0$. As we proved that~\eqref{Z mod l map} is injective, it is enough to prove that
\begin{equation}\label{dim Z ell}
\forall\ell\gg0,~\dim_{\F_\ell} Z_0\tens \F_\ell=\dim_{\F_\ell} Z_\ell.
\end{equation}
We may write~$D=\Z\cdot d_1+\ldots+\Z\cdot d_i$. Denote by~$\ad(d):E_0\to E_0$ the map~$e\mapsto [d,e]=d\circ e-e\circ d$.
We choose an isomorphism~$E\simeq \Z^{(2g)^2}$, and, for~$1\leq j\leq i$, denote the coordinates of~$ad(d_j)$ by
\[
ad(d_j)=(\delta_{j,1},\ldots,\delta_{j,(2g)^2}).
\]
We denote by~$W\leq E_0^\vee:=\Hom(E_0,\Z)$ the subgroup generated by~$\{\delta_{j,h}|1\leq j\leq i,1\leq h\leq (2g)^2\}$.
Then~$Z_0\leq E_0$ is the dual~$\{d\in E_0|\forall w\in W, w(d)=0\}$ of~$W$, and~$Z_{E_\ell}(D_\ell)\leq E_{\ell}$ is the dual of the image~$W_\ell$ of~$W$ in~$E_\ell^\vee$. We
have thus~$\dim_{\F_\ell}(Z_0\tens\F_\ell)=(2g)^2-\dim_{\F_\ell} W\tens\F_\ell$. By the recalled fact,
we have~$\dim_{\F_\ell}(W_\ell)=\dim_{\F_\ell} W\tens\F_\ell=\operatorname{rank}(W)$ for~$\ell>M_A:=[W\cdot \Q\cap E_0:W]$. This implies~\eqref{dim Z ell}.
We have proved the claim.
\end{proof}

It follows that, when~$\ell>\max\{M_A;M(A,K,d)\}$ the map
\[
\wh{\Z}[U']\to Z\to Z_\ell =\F_\ell[U'(\ell)] 
\]
is surjective. By Nakayama's lemma~\cite[VIII\S9.3 Cor.~2]{BBKA8}, one deduces that
\[
\Z_\ell[U']=Z\tens\Z_\ell.
\]
We have proven~\eqref{MW 2}. This concludes our proof of Th.~\ref{Faltings} if~$K$ is a number field.
\end{proof}
\subsubsection{General case}
Because we lack a reference, we give a specialisation argument which reduces the theorem for general fields
of finite over~$\QQ$ to the case of number fields.

In view of~Def.~\ref{defi:Tate} we will prove the following refinement of Th.~\ref{Faltings}.
\begin{proposition}\label{prop Falt refined}
The same conclusions holds if we replace~\eqref{U in Falt} by
\begin{equation}\label{U in Falt prod}
U:=\prod_p {U_p}^0
\end{equation}
with~$U_p:=\rho(\Gal(\ol{K}/K))\cap GL_{\Z_p}(T_A\tens\Z_p)$.
\end{proposition}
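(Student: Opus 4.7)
The plan is to reduce Proposition~\ref{prop Falt refined} to Theorem~\ref{Faltings} by passing to a bounded finite extension $L/K$ over which the entire Galois image already lies in $V:=\prod_p U_p^0$. The crucial input will be the assertion that, for an abelian variety, the Zariski closure $H_p=\ol{U_p}^{Zar}$ is Zariski connected for all but finitely many primes~$p$: over number fields this is a classical consequence of Serre's results on $\ell$-adic monodromy for abelian varieties, and over a field of finite type I would descend to the number-field case by a specialisation argument in the spirit of the ``General case'' of Theorem~\ref{Faltings} above.

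Granted this, let $S$ be the finite set of primes at which $H_p$ may fail to be connected. For $p\notin S$ one has $U_p^0=U_p$, while for $p\in S$ the index $[U_p:U_p^0]$ is finite and bounded by $|H_p/H_p^0|$. The composite
\[
\Gal(\ol{K}/K)\xrightarrow{\rho}\prod_p U_p\twoheadrightarrow\prod_{p\in S} U_p/U_p^0
\]
lands in a finite group of order at most $c(A,K):=\prod_{p\in S}|H_p/H_p^0|$, and I would let $L/K$ be the corresponding finite Galois extension, of degree at most $c(A,K)$. By construction $\rho_p(\sigma)\in U_p^0$ for every $\sigma\in\Gal(\ol{K}/L)$ and every prime $p$, so $U_L:=\rho(\Gal(\ol{K}/L))\subseteq V$ as subgroups of $GL_{\widehat{\ZZ}}(T_A)$. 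Since $\End(A/\ol{K})=\End(A/K)$ forces $\End(A/L)=\End(A/K)$, the commutant $Z$ is unchanged when $K$ is replaced by $L$, and $L$ remains of finite type over~$\QQ$.

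Given an open subgroup $V'\leq V$ of index at most~$d$, the natural move is to set $W:=V'\cap U_L$. Since $U_L\subseteq V$, the injection $U_L/W\hookrightarrow V/V'$ forces $[U_L:W]\leq d$. Applying Theorem~\ref{Faltings} to $A/L$ then bounds $[Z:\widehat{\ZZ}[W]]\leq M(A,L,d)$, and because $\widehat{\ZZ}[W]\subseteq\widehat{\ZZ}[V']$ the same bound transfers to $\widehat{\ZZ}[V']$. One then sets $M(A,K,d):=M(A,L,d)$, which depends only on $A/K$ and $d$ since $L$ depends only on $A/K$.

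The hard part will be the connectedness assertion for $H_p$ at almost all primes over fields of finite type over~$\QQ$. Once that is in place, what remains is finite-index book-keeping and an appeal to Theorem~\ref{Faltings} over the extended base field $L$: morally, a bounded extension of $K$ forces the Galois image into $V$, so that Faltings' theorem applied over $L$ takes over.
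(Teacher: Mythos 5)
The central gap is at the displayed composite
\[
\Gal(\ol{K}/K)\xrightarrow{\rho}\prod_p U_p\twoheadrightarrow\prod_{p\in S} U_p/U_p^0,
\]
which is not well-defined. By definition $U_p=\rho(\Gal(\ol{K}/K))\cap GL_{\Z_p}(T_A\tens\Z_p)$ is an \emph{intersection}, so $\prod_p U_p$ is a closed subgroup of $U:=\rho(\Gal(\ol{K}/K))$, and there is no reason for the reverse inclusion $U\subseteq \prod_p U_p$: that would be full $\ell$-independence of the Galois image over $K$, which is false in general. Controlling the $p$-th coordinate of $\rho(\sigma)$ — that is, the \emph{projection} $\rho_p(\sigma)$ — does not place $\rho(\sigma)$ into $\prod_p U_p$, since $U_p$ is generally strictly smaller than the projection of $U$ to the $p$-th factor. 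Consequently the key claim that $U_L:=\rho(\Gal(\ol{K}/L))\subseteq V=\prod_p U_p^0$ is unjustified: even if you could arrange $\rho_p(\Gal(\ol{K}/L))\subseteq {U_p}^0$ for all $p$, you would still not know that $\rho(\Gal(\ol{K}/L))$ lands in $\prod_p U_p$ at all. Connectedness of $H_p$ for almost all $p$ is necessary but not sufficient: you must also confront $\ell$-independence, which your argument nowhere addresses.

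The paper avoids both difficulties simultaneously. Via Noot's specialisation result, it produces a number field $F\leq \ol{K}$, an abelian variety $A_F/F$ with the same Tate module and the same endomorphism ring, and a decomposition group $D_F\leq \Gal(\ol{K}/K)$ with $\rho(D_F)=\rho'(\Gal(\ol{F}/F))=:U_F\leq U$. Serre's theorem \emph{over number fields} (Th.~\ref{Serre}) then delivers, after replacing $F$ and $K$ by bounded finite extensions, the \emph{full} conclusion $U_F=\prod_p (U_F)_p^0$ — $\ell$-independence and connectedness together — whence $U_F\leq \prod_p U_p^0=V$ because $(U_F)_p\leq U_p$. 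The proof then applies the already-established number-field case of Th.~\ref{Faltings} to $A_F/F$ and the subgroup $U'_F=U'\cap U_F$. By contrast, your plan applies Th.~\ref{Faltings} to $A/L$ where $L$ is still of finite type over $\QQ$; since at that point in the argument Th.~\ref{Faltings} has only been proved for number fields (\S\ref{Number field case}), this invocation is circular. The paper's device of producing a subgroup $U_F\leq U$ of number-field provenance is precisely what lets it stay within the established case, and your sketch has nothing analogous.
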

The proof of Prop.~\ref{prop Falt refined} will use the following results. 
\begin{theorem}[Serre]\label{Serre} In the same situation, assume moreover that~$K$ is a number field. Then
there exists a finite extension~$L/K$ such that Galois image~$U:=\rho(\Gal(\ol{L}/L))$ 
\begin{enumerate}
\item is such that~$U$
satisfies the~$\ell$-independence condition in the sense of Def.~\ref{defi:indep}, (\cite[136. Th.~1, p.34]{S4}, \cite[\S3.1]{SCrit})
\item and such that the~$U_p$ are Zariski connected, (\cite[133.~p.\,15 ;135.~2.2.3 p.\,31]{S4}, \cite[6.14, p\,623]{LP})
\item and such that~$U\leq M(\wh{\Z})$ where~$M$ is the Mumford-Tate group of~$A$. (cf.~\cite[\S3]{RY} and~\S\ref{Tate:abelian type}.)
\end{enumerate} 
\end{theorem}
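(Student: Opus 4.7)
The plan is to deduce the three conclusions by successively passing to finite extensions of $K$, each justified by a result of Serre (combined with Deligne's absolute Hodge theorem for abelian varieties for item~(3)). Since all three properties are preserved under passing to further finite extensions, it suffices to produce a finite extension securing each property separately and then take the compositum.

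First I would invoke the main result of \cite[\S3.1]{SCrit} (i.e.\ Serre's theorem that the Galois representations $(\rho_\ell)_\ell$ on $T_A$ become strictly $\ell$-independent after a finite extension): there exists $L_1/K$ finite such that $\rho(\Gal(\bar L_1/L_1))=\prod_p U_p$ with $U_p\le GL_{\Z_p}(T_A\otimes\Z_p)$. This gives~(1). Next, for each prime $p$ the Zariski closure $H_p$ of $U_p$ in $GL(T_A\otimes\Q_p)$ has finite index neutral component $H_p^0$, and by Serre's theorem on connectedness (cf.~\cite[p.\,15]{S4}, \cite[6.14]{LP}), the extension $L_2/L_1$ obtained by adjoining all torsion points killed by a suitable integer $N$ (independent of $p$) simultaneously makes $H_p$ connected for every $p$. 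This yields~(2), and is compatible with the product decomposition in~(1) since replacing each $U_p$ by a finite-index subgroup preserves the $\ell$-independent product form.

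For~(3), I would use that for abelian varieties the Mumford--Tate conjecture's ``easy direction'' holds: by Deligne's theorem that Hodge classes on abelian varieties are absolute Hodge, the connected component $H_p^0$ is contained in $M_{\Q_p}$ for every prime $p$. Consequently, after the extension $L_2$ ensuring connectedness, we have $U_p\le H_p=H_p^0\le M(\Q_p)$ for every $p$. Combined with $U_p\le GL_{\Z_p}(T_A\otimes\Z_p)$, and the definition of $M(\Z_p)$ as $M(\Q_p)\cap GL_{\Z_p}(T_A\otimes\Z_p)$ via the chosen faithful representation, we get $U_p\le M(\Z_p)$. Taking the product, $U=\prod_p U_p\le \prod_p M(\Z_p)=M(\widehat{\Z})$, which gives~(3).

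The main subtlety is coordinating a single finite extension $L$ that delivers all three properties at once: one must check that after enforcing Zariski connectedness (step two), the product decomposition from step one still holds, and that the Mumford--Tate containment is unaffected. These compatibilities are straightforward because each subsequent extension only shrinks the Galois image to a finite-index subgroup which remains a product of its $p$-components and remains contained in $M(\Q_p)\cap GL_{\Z_p}(T_A\otimes\Z_p)$. The deepest input is Serre's independence theorem together with Deligne's absolute Hodge theorem; beyond citing these, nothing further is required.
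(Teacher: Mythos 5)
Your proof is essentially an expansion of what the paper does: the paper simply remarks that ``the assertions in Theorem~\ref{Serre} are found in the indicated references'' and moves on, whereas you fill in the logical skeleton (take the compositum of three finite extensions, one per item) and identify the deep inputs correctly (Serre's $\ell$-independence theorem for~(1), Serre's and Larsen--Pink's connectedness results for~(2), and the ``easy direction'' of Mumford--Tate via Deligne's absolute-Hodge theorem for~(3)). This is consistent with the references the paper cites.

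One point in your argument deserves more care. You write that the compatibilities are straightforward ``because each subsequent extension only shrinks the Galois image to a finite-index subgroup which remains a product of its $p$-components.'' In general a finite-index (even open) subgroup of a product $\prod_p U_p$ need not itself be a product of subgroups of the factors, so this is not an automatic group-theoretic fact. What saves the day is that Serre's independence theorem is formulated so as to be stable under arbitrary further finite base extensions: once $L_1/K$ is chosen so that independence holds over $L_1$, it continues to hold over every finite extension of $L_1$. You should invoke this feature of Serre's theorem explicitly rather than the general (false) statement about finite-index subgroups of products. With that adjustment the compositum argument goes through cleanly, and your treatment of~(3) --- combining $U_p\le H_p=H_p^0\le M_{\Q_p}$ with the integrality $U_p\le GL_{\Z_p}(T_A\otimes\Z_p)$ to get $U_p\le M(\Z_p)$ --- is correct.
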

The assertions in Theorem~\ref{Serre} are found in the indicated references. We deduce Prop.~\ref{prop Falt refined}.
\begin{proof}[Proof of Prop.~\ref{prop Falt refined}] Let~$\eta=\Spec(K)$ and~$\overline{\eta}=Spec(\overline{K})$.  Following~\cite[\S1.2 and Cor.~1.5]{Noot}\footnote{We apply~\cite{Noot} with~$F:=\Q$, and~$F(S):=K$. Our~$F$ is a finite extension of their~$\Q(\sigma)$ such that~$ {\End}(A_\sigma/\ol{\Q(\sigma)})\simeq {\End}(A_\sigma/F)$, and our~$A_F$ is the base change of their~$A_\sigma$.}, 
there exists a number field~$F\leq \ol{K}$, and an abelian variety~$A_F$ over~$F$ such that
\begin{itemize}	
\item We have an identification of Tate modules~$T:=T_A\simeq T_{A_F}$
\item We have an identity (cf.~\cite[Cor.~1.5]{Noot}
\begin{equation}\label{Noot End}
{\End}(A/K)\simeq {\End}(A/F)\simeq {\End}(A/\ol{F})
\end{equation}
as subalgebras of~$B:=\End_{\widehat{\ZZ}}(T)$,
\item we have a diagram
\[
\begin{tikzcd}
Gal(\overline{K}/K)\arrow{d}{\rho}\arrow[hookleftarrow]{r} &  D_F \arrow[twoheadrightarrow]{r} & Gal(\overline{F}/F)\arrow{d}{\rho'}\\
\End(T_A)&\arrow[equals]{l}\End(T)\arrow[equals]{r}& \End(T_{A_F}).
\end{tikzcd}
\]
\end{itemize}
The commutativity implies that~$U_F:=\rho(D_F)$ satisfies
\[
\rho(D_F)=U_F:=\rho'(Gal(\overline{F}/F))
\]
and
\[
\rho(D_F)\leq U:=\rho(Gal(\overline{K}/K)).
\]
By~Th.~\ref{Serre}, after possibly passing to a finite extension of~$F$ and the corresponding finite extension of~$K$,
we may assume
\[
U_F=\prod_p {(U_F)_p}^0.
\]
We note that~$(U_F)_p\leq U_p$ and thus~${(U_F)_p}^0\leq {U_p}^0$.
We deduce
\[
U_F\leq \wt{U}=\prod_p {U_p}^0.
\]

We will prove the refinement Prop.~\ref{prop Falt refined} of Th.~\ref{Faltings} with
\[
M(A,K,d)=M(A_F,F,d).
\]

Fix~$d$ and an open subgroup~$U'\leq \wt{U}$ of index at most~$d$.
We denote
\[
U'_F=U'\cap U_F.
\]
We first note that~$U'_F\leq U_F$ is a subgroup of index at most~$d$.
We have, as~$\widehat{\Z}$-subalgebras of~$B:=Mat({2\dim(A)},\widehat{\ZZ})$,
\[
\widehat{\ZZ}[U'_F]\leq \widehat{\ZZ}[U'].
\]

From~\eqref{Noot End} we have
\[
Z=Z_F:=
\{b\in\End_{\widehat{\ZZ}}(T_A)|\forall a\in\End(A_F/F),[b,a]=0\}.
\]

We use the number field case~\ref{Number field case} of the theorem for~$A_F$ and~$d$ and~$U'_F$ and get
\[
\left[Z_F:\widehat{\ZZ}[U'_F]\right]\leq M(A_F,F,d).
\]
We note that~$\widehat{\ZZ}[U'_F]\leq Z$ because~$U\geq U'$ commutes with the action of~$\End(A)$ (all the endomorphisms
are rational over~$K$).

Finally
\[
\widehat{\ZZ}[U'_F]\leq \widehat{\ZZ}[U']\leq Z
\]
hence
\[
\left[Z:\widehat{\ZZ}[U']\right]\leq 
\left[Z:\widehat{\ZZ}[U'_F]\right]
=
\left[Z_F:\widehat{\ZZ}[U'_F]\right]
\leq M(A_F,F,d).
\]\end{proof}

\begin{corollary}\label{coro Faltings} We consider the setting of~Th.~\ref{Faltings} and denote by~$g$ the dimension of~$A$.
 Choose an isomorphism~$H_1(A;\Z)\simeq \Z^{2g}$ and denote~$\sigma:GL(H_1(A;\Z))\to GL(2g)$ 
the corresponding isomorphism. There exists~$c(A)$ such the following holds.

The subgroup~$U\leq M(\widehat{\Z})$ satisfies the uniform integral Tate property with respect to~$M$,~$G=GL(H_1(A;\QQ))$ and~$\rho_G=\sigma:G\to GL(2g)$ 
in the sense of Def.~\ref{defi:Tate}, with~$M(D):=\max\{c(A);M(A,K,D)\}$.

In the following, we denote~$GSp(H_1(A;\QQ))\approx GSp(2g)$ the $\Q$-algebraic group of symplectic similitudes of the Riemann symplectic form on~$H_1(A;\QQ)$.

The subgroup~$U\leq M(\widehat{\Z})$ satisfies the uniform integral Tate property with respect to~$M$,~$G'=GSp(H_1(A;\QQ))$ and~$\rho_{G'}={\rho_G}\restriction_{G'}:G' \to GL(2g)$ 
in the sense of Def.~\ref{defi:Tate} with~$M(D):=\max\{c(A);M(A,K,D)\}$.
\end{corollary}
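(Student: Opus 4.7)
The plan is to translate the algebra-level statement of Proposition~\ref{prop Falt refined} into the group-scheme language of Definition~\ref{defi:Tate}, using two classical structural inputs: the Mumford--Tate theorem, which identifies $Z_G(M)$ with the algebraic group of units of $\End^0(A):=\End(A)\tens\Q$ acting on $H_1(A;\Q)$; and the double centralizer theorem for semisimple subalgebras of matrix algebras. Throughout I write $V_p=T_A\tens_{\wh\Z}\Q_p$.

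For part~\eqref{defi:Tate1} of Definition~\ref{defi:Tate}: since $U=\prod_q U_q^0$ is the subgroup supplied by Prop.~\ref{prop Falt refined}, the group $U_p=M(\Z_p)\cap U$ already coincides with the previously defined $U_p^0$ and lies in the connected algebraic group $H_p^0$. In particular, the two left-hand terms of~\eqref{defi:tate eq1} are identical, and only the equality with $Z_G(M)_{\Q_p}$ needs checking. Prop.~\ref{prop Falt refined} gives $\Q_p[U_p]=Z\tens\Q_p$ as subalgebras of $\End(V_p)$. Taking commutants and applying the double centralizer theorem, together with the Mumford--Tate identification $Z_{\End(V_p)}(M)=\End^0(A)\tens\Q_p$, yields equality of the algebra-centralizers and hence of the group-centralizers in $GL(V_p)$. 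Semisimplicity~\eqref{defi:tate eq 1.2} is Faltings' classical semisimplicity theorem.

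For part~\eqref{defi:Tate2}: I set $M(D):=\max\{c(A);\,M(A,K,D)\}$, where $c(A)$ is chosen large enough that, for $p\geq c(A)$: (a) the integral Mumford--Tate model is flat and reductive over $\Z_p$, (b) $\End(A)\tens\F_p$ is separable, and (c) $Z_{G_{\F_p}}(M_{\F_p})=(\End(A)\tens\F_p)^\times$. Given $U'\leq U_p$ of index $\leq D$, I extend it to the global subgroup $U'_{\mathrm{glob}}:=U'\times\prod_{q\neq p}U_q^0\leq U$ of the same index. Prop.~\ref{prop Falt refined} then bounds $[Z:\wh\Z[U'_{\mathrm{glob}}]]\leq M(A,K,D)$; the $p$-component of this $\wh\Z$-index is a power of $p$ dividing the total, and therefore equals $1$ once $p>M(A,K,D)$. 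This yields $\Z_p[U']=Z\tens\Z_p$ and, reducing mod $p$, $\F_p[U'(p)]=Z\tens\F_p$. Semisimplicity~\eqref{defi:tate eq 2.2} follows from (b), and~\eqref{defi:tate eq 2} from (c) together with a second application of double centralizer.

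The $GSp$-case follows from the $GL$-case by Remark~\ref{rem:Tate}\eqref{rem g g' tate}, since the Mumford--Tate group $M$ preserves the polarization up to a scalar and is therefore contained in $GSp(H_1(A;\Q))\leq GL(H_1(A;\Q))$. The main obstacle I expect is the passage from the \emph{global} bound of Prop.~\ref{prop Falt refined} to a \emph{local} triviality at $p$: the elementary but crucial observation is that the $p$-component of any $\wh\Z$-index is a power of $p$, and hence trivial as soon as $p$ strictly exceeds the global bound. This is precisely what forces the form $M(D)=\max\{c(A);\,M(A,K,D)\}$.
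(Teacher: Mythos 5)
Your proposal is correct and follows essentially the same route as the paper: reduce via Lemma~\ref{lem U ast} to the case $U=\prod_p U_p^0$ supplied by Prop.~\ref{prop Falt refined}, identify $Z_G(M)$ with the commutant of $\End^0(A)$ via the Mumford--Tate/double-centralizer theorems to get part~\eqref{defi:Tate1}, and extract part~\eqref{defi:Tate2} from the global index bound by observing that its $p$-component is a power of $p$ and hence trivial once $p>M(A,K,D)$. The only thing to tidy is that the reduction to $U=\prod_q U_q^0$ should be justified explicitly by Lemma~\ref{lem U ast} (which lets you pass the Tate property up from a finite-index subgroup), rather than just assumed; the paper cites it for exactly this step.
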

We only treat the case of~$GL(2g)$, as the case of~$GSp(2g)$ follows directly using Remark~\ref{rem:Tate}~\eqref{rem g g' tate}.
\begin{proof}Thanks to Lemma~\ref{lem U ast}, we may assume~$U=\prod_p {U_p}^0$. We have then
\[
\wh{\Z}[U]=\prod_p\Z_p[{U_p}^0].
\]
We use Prop.~\ref{prop Falt refined}. Then
\begin{itemize}
\item for every~$p$, the algebra~${\Q}_p[U_p]={\Q}_p[{U_p}^0]$ is the commutant of~$\End(A/K)\tens\Q_p=Z_{\End_{\Q_p}(H^1(A;\Q_p))}(M)$. This implies~\eqref{defi:tate eq1}. Because~$\End(A/K)\tens\Q_p$ is a semisimple algebra, so is it commutant in~$\End_{\Q_p}(H^1(A;\Q_p))$ and thus the action of~$U_p$ is semisimple.
\item for every~$D$, and every~$p\geq M(A,K,D)$, and every~$U'\leq U(p)$ of index at most~$D$,
we have~$\Z_p[U']=Z\tens\Z_p$, with~$Z$ as in~\eqref{Z mod l map}. Then the algebra~${\F}_p[U']$ is~$Z\tens\F_p$.
For~$p\gg 0$, depending only on~$\End(A/K)$, the image~$Z\tens\F_p$ of~$Z$ is the commutant of~$\End(A/K)\tens\F_p$, by~\eqref{Z mod l map}. If moreover~$p\gg0$,  depending only on~$\End(A/K)$, the algebra~$\End(A/K)\tens \F_p$ is semisimple,
its action on~$\End_{\F_p}(A[p])$ is semisimple, and the action of its centraliser~$\F_p[U']$ is semisimple.\qedhere
\end{itemize}
\end{proof}

We deduce the following, using the well-known relation between Galois representations on the Tate module,
and Galois action on isogeny classes in the Siegel modular variety~$\mathcal{A}_g$ (see~\cite{UY}).
\begin{corollary}\label{coro4.12} Let~$s_0$ be a point in~$\mathcal{A}_g=Sh_{GSp(2g,\wh{\Z})}(\mathfrak{H}_g,GSp(2g))$.
Then~$s_0$ satisfies Def.~\ref{defi:Tate bis}.
\end{corollary}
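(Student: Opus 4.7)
The plan is to transfer Corollary~\ref{coro Faltings} across the modular interpretation of $\mathcal{A}_g$. First, I would choose a field of finite type $E/\Q$ such that $s_0 \in \mathcal{A}_g(E)$. By the moduli interpretation of the Siegel modular variety, $s_0$ corresponds to a principally polarised abelian variety $A/E$ of dimension $g$; the chosen level structure identifies $H_1(A;\widehat{\Z})$ with $\widehat{\Z}^{2g}$ compatibly with the symplectic form, and the symplectic representation of the paper is realised on $H_1(A;\Q)$. By standard Hodge theory the Mumford-Tate group $M$ of $x_0$ in the sense of the paper coincides with the Mumford-Tate group of $A$, which is contained in $GSp(H_1(A;\Q)) \simeq GSp(2g)$.

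Next, I would match the Galois representations. Via the comparison between Betti homology and the $\widehat{\Z}$-linear Tate module, the representation $\rho_{x_0}:\Gal(\overline{E}/E)\to M(\A_f)$ defined in \cite[\S3.1]{RY} is identified with the natural Galois representation $\rho_A$ of Theorem~\ref{Faltings} on the Tate module $T_A$; this is the classical compatibility underlying the discussion in \cite{UY}. After possibly replacing $E$ by a finite extension, I may assume $\End(A/E)=\End(A/\overline{E})$, so that the hypothesis of Theorem~\ref{Faltings} is met. Such a finite extension leaves the validity of Definition~\ref{defi:Tate bis} unchanged: it replaces $U$ by a finite-index subgroup $U''$ with the same Mumford-Tate group, and Lemma~\ref{lem U ast} ensures that the uniform integral Tate property descends from $U''$ back to $U$.

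Finally I would invoke Corollary~\ref{coro Faltings} in its $GSp$ form: it yields that $U=\rho_{x_0}(\Gal(\overline{E}/E))$ satisfies the uniform integral Tate property with respect to $M$ and $G=GSp(H_1(A;\Q))$, with uniform constants $M(D):=\max\{c(A);M(A,E,D)\}$ supplied by that corollary. This is precisely the content of Definition~\ref{defi:Tate bis} for $x_0$.

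The substance of the argument is entirely delegated to Corollary~\ref{coro Faltings}; the only genuine work is the translation between the intrinsic point $s_0=[x_0,1]\in\mathcal{A}_g$, its lift $x_0\in\mathfrak{H}_g$, and the underlying abelian variety $A/E$, and the identification of $\rho_{x_0}$ with $\rho_A$. Since this dictionary is a standard feature of the Siegel modular variety, I do not foresee a serious obstacle: the only mildly delicate point is keeping track that the finite-extension step on $E$ (needed to achieve $\End(A/E)=\End(A/\overline{E})$) is compatible with the formulation of Definition~\ref{defi:Tate bis}, which is handled by Lemma~\ref{lem U ast}.
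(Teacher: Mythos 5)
Your proposal is correct and follows essentially the same path as the paper: the paper's proof is a one-line indication to transport Corollary~\ref{coro Faltings} through the moduli interpretation of $\mathcal{A}_g$ (citing \cite{UY} for the identification of $\rho_{x_0}$ with the Tate-module Galois representation), and you have simply spelled out that dictionary, including the legitimate use of Lemma~\ref{lem U ast} to absorb the finite extension of $E$ needed to arrange $\End(A/E)=\End(A/\overline{E})$.
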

%
%
%


\section{Polynomial Galois bounds}\label{sec:bounds}

This section is at the heart of this paper.
We obtain suitable lower bounds for Galois orbits of points in generalised Hecke orbits under much weaker assumptions than
those made in \cite{RY} (in particular, as seen above, they are satisfied by all Shimura varieties of abelian type).
\subsection{Statement }
We use the notations~$\succcurlyeq$ and~$\approx$ of~\cite[Def.~6.1]{RY} for polynomial domination and polynomial equivalence
of functions. For the definition of~$H_f(\phi)$ we refer to~\cite[App. B]{RY}. For the MT property we refer to~\cite[\S5, Def. 6.1]{RY},
and refer to~\cite[\S 6.4]{RY} for the fact that~\eqref{Galois bound 1} is satisfied for Galois images.

\begin{theorem}\label{Galois bounds}
Let~$M \leq G$ be connected reductive $\Q$-groups. Let~$U\leq M(\AAA_f)$ 
be a subgroup satisfying the following.
\begin{enumerate}
\item \label{Galois bound 1}	
 The image of~$U$ in~$M^{ab}$ is MT in $M^{ab}$. \label{thm:galois bounds H1}
\item \label{Galois bound 2} The group~$U$ satisfies the uniform integral Tate conjectures.
\label{thm:galois bounds H2}
\end{enumerate}

Denote by~$\phi_0:M\to G$ the identity homomorphism and~$W=G\cdot \phi_0$ 
its conjugacy class. Then as~$\phi$ varies in~$W(\A_f)$, 
we have, for any compact open subgroup~$K\leq G(\A_f)$, where~$K_M=K\cap M(\A_f)$,
\begin{equation}\label{CCL:Th51}
[\phi(U):\phi(U)\cap K]\approx [\phi(K_M):\phi(K_M)\cap K]\succcurlyeq H_f(\phi).
\end{equation}
as functions~$W(\A_f)\to \Z_{\geq1}$.
\end{theorem}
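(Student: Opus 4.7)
The plan is to establish the two assertions in \eqref{CCL:Th51} separately: first the polynomial equivalence $[\phi(U):\phi(U) \cap K] \approx [\phi(K_M):\phi(K_M) \cap K]$, then the polynomial lower bound by $H_f(\phi)$. Throughout, I factor each index prime-by-prime and aim for uniform local estimates that multiply to a global polynomial bound.

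For the polynomial equivalence, by Remark~\ref{rem:Tate}\eqref{passage aux Up} I may replace $U$ by $\prod_p U_p$, and then study each factor. At a fixed prime $p$, the semisimplicity \eqref{defi:tate eq 1.2} and the centralizer equality \eqref{defi:tate eq1} in the uniform integral Tate property force the Zariski closure of $U_p$ to coincide, on its neutral component, with $M_{\Q_p}$; combined with hypothesis \eqref{Galois bound 1} on the abelianization (to handle the disconnected/central part), this shows that $U_p$ is open in $M(\Q_p)$. The \emph{integrality} statements \eqref{defi:tate eq 2} and \eqref{defi:tate eq 2.2}, applied for $p \gg 0$ to $U_p \leq (K_M)_p$, then give a bound on the index $[(K_M)_p : U_p]$ which is uniform in $p$. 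Propagating this uniformly through the homomorphism $\phi$ (for all $\phi \in W(\A_f)$ simultaneously) yields the polynomial equivalence.

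For the lower bound $[\phi(K_M):\phi(K_M) \cap K] \succcurlyeq H_f(\phi)$, I would realize the orbit $W = G \cdot \phi_0 \cong G/Z_G(M)$ as an affine $G$-variety with the integral model induced by the embedding $G \hookrightarrow GL(d)$. The local height $H_p(\phi)$ is then naturally expressed as a norm on the orbit $G(\Q_p) \cdot \phi_0 \subset W(\Q_p)$, measuring the distance from the $G(\Z_p)$-orbit. The subgroup $\phi((K_M)_p) \cap K_p$ is the image under $\phi$ of a subgroup of $(K_M)_p$ whose size is controlled by how much $\phi$ displaces $M(\Z_p)$ away from $G(\Z_p)$; this displacement is measured by a norm on a second closed $M$-orbit. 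Theorem~\ref{thm:compare reductive} then provides the precise, uniform-in-$p$ comparison between these two orbit-norms, and translates the height $H_p(\phi)$ into a lower bound on the local index $[\phi((K_M)_p) : \phi((K_M)_p) \cap K_p]$.

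The main obstacle is the \emph{uniformity in $p$} of the Kempf-Ness style comparison, since a priori the quality of stability estimates over $\Z_p$ could degenerate at primes of bad reduction for the various integral models involved. This is precisely where the $p$-adic Kempf-Ness theorem (Theorem~\ref{pKN}) and the integrality aspect of the Tate hypothesis do the essential work: stability of the orbits $G(\Z_p) \cdot \phi_0$ in the Mumford sense, together with the fact that the $\F_p$-centralizer of $U(p)$ is the right one for almost all $p$ with a controlled exceptional set, prevents any degeneration and yields local bounds that are uniform in $p$. Taking the product over primes and observing that $\phi \in W(\A_f)$ is integral at almost all primes (so only finitely many local factors are nontrivial) then produces the polynomial lower bound $\succcurlyeq H_f(\phi) = \prod_p H_p(\phi)$.
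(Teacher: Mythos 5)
Your proposal has two serious structural problems that would prevent it from working.

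First, your route to the equivalence $[\phi(U):\phi(U)\cap K]\approx[\phi(K_M):\phi(K_M)\cap K]$ rests on the claim that the uniform integral Tate property gives a bound on $[(K_M)_p:U_p]$ that is uniform in $p$. This is false. The Tate hypothesis (Def.~\ref{defi:Tate}) controls \emph{centralizers} and \emph{semisimplicity} of the actions of $U_p$ and of its small-index subgroups, not the group index of $U_p$ inside $M(\Z_p)$. In the abelian variety case (Th.~\ref{Faltings}), the integral Tate conjecture bounds the index of the \emph{subalgebra} $\wh{\Z}[U']$ in $Z$, which is a much weaker constraint than bounding the group index: a cyclic group can generate a large algebra. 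Indeed the index $[M(\Z_p):U_p]$ is not bounded under the paper's hypotheses, and the paper never claims otherwise. In the actual proof, the equivalence $\approx$ is \emph{deduced} from the two-sided polynomial domination: the easy inequality $[\phi(U):\phi(U)\cap K]\leq[\phi(K_M):\phi(K_M)\cap K]\preccurlyeq H_f(\phi)$ comes from $U\leq K_M$ and \cite[Th.~C1]{RY}, and the hard content is $[\phi(U):\phi(U)\cap K]\succcurlyeq H_f(\phi)$, from which the equivalence follows. It is not proved by a direct index comparison between $U_p$ and $(K_M)_p$.

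Second, you state your lower-bound argument for $[\phi(K_M):\phi(K_M)\cap K]$ rather than for $[\phi(U):\phi(U)\cap K]$. The bound for $K_M$ is already supplied by \cite[Th.~B.1, Cor.~B.2]{RY} and is not where the difficulty lies; the entire point of Theorem~\ref{Galois bounds} is to replace $K_M$ by the (potentially much smaller) Galois image $U$. The $p$-adic Kempf--Ness machinery and Theorem~\ref{thm:compare reductive} cannot be applied directly to $W\simeq G/Z_G(M)$; they are applied to an auxiliary closed embedding $\iota\colon W\to E$ constructed from a tuple of Lie-algebra elements $(X_1,\ldots,X_k,Y_1/p,\ldots,Y_l/p)$ lying in $\log(U_p)$ (Prop.~\ref{prop X Y}). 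The construction of this tuple is where the integral Tate hypothesis does its real work: it is needed to ensure that the reductions modulo $p$ of these vectors still have the right centralizer $Z_{G_{\F_p}}(M_{\F_p})$ and closed $G_{\F_p}$-orbit, which feeds into the hypotheses of Th.~\ref{thm:compare reductive} and Th.~\ref{pKN}. Your sketch gestures at the Kempf--Ness stability ideas but does not engage with this construction, nor with the paper's division into ``every prime'' (constants allowed) versus ``almost all primes'' (no multiplicative constants, so the infinite product behaves). Without the construction of the adapted tuple from $U_p$, there is no mechanism to pass from heights on $W$ to a lower bound on the $U$-orbit size.
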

We note that, by~\cite[Th.~C1]{RY}, we have
\[
[\phi(U):\phi(U)\cap K]\leq [\phi(K_M):\phi(K_M)\cap K]\preccurlyeq H_f(\phi).
\]
In~\eqref{CCL:Th51} is thus enough to prove
\begin{equation}\label{CCLbis:Th51}
[\phi(U):\phi(U)\cap K]\succcurlyeq H_f(\phi).
\end{equation}
\subsubsection{Reduction to the case~$U=\prod_p U_p$}\label{thm 51 reduction produit}
 Let~$U':=\prod_p \left(U\cap M^{der}(\Z_p)\right)\cdot \left(U\cap Z(M)({\Z_p})\right)$.  As~$U\geq U'$, we have that~$[\phi(U):\phi(U)\cap K]\geq [\phi(U'):\phi(U')\cap K]$ and thus it is enough to prove~\eqref{CCLbis:Th51} with~$U'$ instead of~$U$. 

Let~$W$ and~$W'$ be the image of~$U$ and~$U'$ in~$M^{ab}(\widehat{\Z})$.
The assumption~\eqref{Galois bound 1} of~Th.~\ref{Galois bounds} implies that there exists~$f\in\Z_{\geq1}$ such that~$\prod_p W_p\geq \{u^f| u\in M^{ab}(\widehat{\Z})\}\geq \{w^f| w\in W\}$. Thus~$W$ satisfies~\eqref{hypothese Cor B2}, and by Cor.~\ref{AppB:main cor} we may apply~Cor.~\ref{cor:burnside} to~$U'$ and thus~$U'$ satisfies the assumption~\eqref{Galois bound 2} of Theorem~\ref{Galois bounds}. From Cor~\ref{AppB:main cor} we deduce~$\forall w\in W_p, w^e \in W_p':=W'\cap M^{ab}(\Q_p)$. By Lem.~\ref{lem:burnside} we have~$[M^{ab}(\Z_p):W'_p]\leq [M^{ab}(\Z_p):W_p]\cdot k(n,e)$. Thus~$U'$ satisfies the assumption~\eqref{Galois bound 1} of Theorem~\ref{Galois bounds}.

We may thus substitute~$U$ with~$U'$ in Theorem~\ref{Galois bounds} and thus assume that~$U=\prod_p U_p$.

\subsubsection{Reduction to the case~$U_p=U_p^0$}
Let~$V=\prod_p U_p^0$, where~$U_p^0$ is the neutral component of~$U_p$ for the Zariski topology of~$M(\Q_p)$. By Proposition~\ref{prop:Up0 bounded index} below, there exists~$D\in\Z_{\geq 1}$ such that~$\forall p, [U_p:U_p^0]\leq D$.

It follows that the assumption~\eqref{Galois bound 1} of Theorem~\ref{Galois bounds} is satisfied for~$U=V$.

By remarks~\eqref{rem(7)} and~\eqref{rem(9)} of \S\ref{rem:Tate}, it follows that the assumption~\eqref{Galois bound 2} of Theorem~\ref{Galois bounds} is satisfied for~$U=V$.

As~$V\leq U$, we have~$[\phi(U):\phi(U)\cap K]\geq [\phi(V):\phi(V)\cap K]$  and thus it is enough to prove~\eqref{CCLbis:Th51} with~$V$ instead of~$U$.

It follows that, in proving Theorem~\ref{Galois bounds}, we may assume
\begin{equation}	\text{For every~prime $p$, $U_p$ is Zariski connected.}\label{thm:galois bounds H3}
\end{equation}

\begin{proposition}\label{prop:Up0 bounded index}
Let~$U$, $M$ and~$G$ be as in Definition~\ref{defi:Tate}.

Assume moreover that the image of~$U$ in~$M^{ab}$ is MT in~$M^{ab}$ as in Th.5.1 (1).

Define~$H_p:=\overline{U_p}^{Zar}\leq M_{\Q_p}$.

Then:
\begin{itemize}
\item we have~$Z(M)_{\Q_p}^0\leq H_p$ for every~$p$,
\item and we have
\[
\sup_{p}\#\pi_0(H_p)<+\infty.
\]
\end{itemize}
\end{proposition}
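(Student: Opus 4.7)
The plan is to treat the two assertions in turn, deriving both from a common identity: since $U_p^0$ is Zariski dense in $H_p^0$ (as an open, finite-index subgroup of $U_p$), the second equality in~\eqref{defi:tate eq1} of Def.~\ref{defi:Tate} applied to $U_p^0$ gives $Z_{G_{\Q_p}}(H_p^0)=Z_G(M)_{\Q_p}$, and intersecting with $M_{\Q_p}$ yields $Z_M(H_p^0)=Z(M)_{\Q_p}$; the same reasoning with $U_p$ in place of $U_p^0$ gives $Z_M(H_p)=Z(M)_{\Q_p}$. Moreover, by~\eqref{defi:tate eq 1.2} the groups $H_p$ and $H_p^0$ are reductive.

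For the first bullet, I would use the MT hypothesis on $\pi(U)$ (via the $f$-th power argument already invoked in~\S\ref{thm 51 reduction produit}) to deduce that $\pi(U_p)$ contains an open subgroup of $M^{\mathrm{ab}}(\Z_p)$ for each $p$, hence is Zariski dense in $M^{\mathrm{ab}}_{\Q_p}$ and so $\pi(H_p^0)=M^{\mathrm{ab}}_{\Q_p}$. Writing the reductive group $H_p^0 = Z(H_p^0)^0\cdot (H_p^0)^{\mathrm{der}}$, the central torus $Z(H_p^0)^0$ lies in $Z_M(H_p^0)=Z(M)_{\Q_p}$ and hence, by connectedness, in $Z(M)^0_{\Q_p}$. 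On the other hand, the composition $Z(H_p^0)^0\twoheadrightarrow (H_p^0)^{\mathrm{ab}}\twoheadrightarrow M^{\mathrm{ab}}_{\Q_p}$ is surjective, forcing $\dim Z(H_p^0)^0\geq \dim M^{\mathrm{ab}}=\dim Z(M)^0$. Two inclusions of tori of equal dimension must be equalities, so $Z(H_p^0)^0=Z(M)^0_{\Q_p}$, and thus $Z(M)^0_{\Q_p}\leq H_p^0\leq H_p$.

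For the second bullet, I would bound the conjugation action of $\pi_0(H_p)=H_p/H_p^0$ on $H_p^0$. Since $Z_M(H_p)=Z(M)$, the group $H_p$ centralizes $Z(M)^0=Z(H_p^0)^0$ pointwise, so modulo inner automorphisms of $H_p^0$ we obtain a homomorphism $\pi_0(H_p)\to \mathrm{Out}(H_p^0)$ whose kernel embeds into $\bigl(Z_M(H_p^0)\cdot H_p^0\bigr)/H_p^0 \hookrightarrow Z(M)/Z(M)^0$, a finite group bounded in terms of $M$ alone. The image, acting trivially on the central torus $Z(H_p^0)^0$, is controlled by $\mathrm{Out}((H_p^0)^{\mathrm{der}})$; since $(H_p^0)^{\mathrm{der}}$ is a connected semisimple subgroup of $M^{\mathrm{der}}$ of rank at most $\mathrm{rank}(M^{\mathrm{der}})$, the finiteness of the list of isomorphism types of connected semisimple groups of bounded rank gives a uniform bound on $\#\mathrm{Out}((H_p^0)^{\mathrm{der}})$. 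The main obstacle is precisely this last uniformity step, which however reduces to a classical finiteness statement in the classification of semisimple groups; the remaining computations use only the two centralizer identities $Z_M(H_p^0)=Z_M(H_p)=Z(M)$ and the surjectivity of $\pi$ restricted to $H_p^0$.
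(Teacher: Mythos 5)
Your proof is correct and follows essentially the same route as the paper's: both derive $Z_M(H_p^0)=Z(M)_{\Q_p}$ from the Tate centralizer identity, prove the first bullet from the MT hypothesis via the fact that $ab_M$ kills $(H_p^0)^{\mathrm{der}}$, and bound $\pi_0(H_p)$ by estimating the kernel (by $\pi_0(Z(M))$) and image (by $\#\mathrm{Out}$ of a semisimple subgroup of bounded rank) of the map to $\mathrm{Out}(H_p^0)$. The only slip is notational: the kernel maps onto a \emph{quotient} of $Z(M)/Z(M)^0$ (namely $Z(M)/(Z(M)\cap H_p^0)$), not into a subgroup, so the arrow should be a surjection rather than ``$\hookrightarrow$'' — but the resulting cardinality bound is what is needed and is correct.
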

\begin{proof}
By Definition~\ref{defi:Tate}, the algebraic group~$H_p^0$ is reductive. We write
\[
H_p^0=S_p\cdot T_p
\]
where~$S_p$, the derived subgroup of~${H_p}^0$, is semisimple, and~$T_p$, the centre of~$H_p^0$, is a torus. 

We have~$T_p=Z(H_p^0)\leq Z_{M_{\Q_p}}(H_p^0)=Z_{M_{\Q_p}}(U_p^0)=Z(M_{\Q_p})$, by Definition~\ref{defi:Tate}.

Denote by~$ab_M:M\to M^{ab}$ the abelianisation map. As~$S_p$ is semisimple, we have~$S_p\leq \ker(ab_M)$.
Thus~$ab_M(H_p^0)=ab_M(S_p\cdot T_p)=ab_M(T_p)$.

Since the image of~$U$ in~$M^{ab}$ is MT in~$M^{ab}$, the subgroup~$ab_M(U_p)^0\leq M^{ab}(\Q_p)$
is open in the $p$-adic topology. It follows that~$ab_M(H_p^0)\leq M^{ab}$ is open in the Zariski topology.

Thus~$ {M^{ab}}^0\leq ab_M(H_p^0)=ab_M(T_p^0)$. 

As~$M$ is reductive, the map~$Z(M)\to M^{ab}$ is an isogeny. As~$ab_M(T_p)$ is of finite index in~$M^{ab}$,
the algebraic subgroup~$T_p\leq Z(M_{\Q_p})$ is of finite index. That is,~$Z(M_{\Q_p})^0\leq T_p\leq H_p$.

This proves the first claim.

Denote by~$N:=N_{M_{\Q_p}}(H_p^0)$ the normaliser.
The adjoint action induces an homomorphism
\[
H_p\to N\to Aut(H_p^0).
\]
As~$H_p^0$ is reductive, we have~$N^0=H_p^0\cdot Z_{M_{\Q_p}}(H_p)^0$. The first claim implies~$H_p^0=N^0$.

We thus have an inclusion~$\pi_0(H_p)=H_p/H_p^0\to N/N^0= \pi_0(N)$. Let~$K$ and~$I$ denote the kernel and the image of the map
\[
N/H_p^0\to Out(H_p^0).
\]
A coset~$kH^0_p\in N/H_p^0$ is in~$K$ if and only if there exists~$h\in H_p^0$ such that~$k\cdot h\in Z_{M_{\Q_p}}(H_p^0)$.
But~$Z_{M_{\Q_p}}(H_p^0)=Z(M_{\Q_p})$ and, by the first claim, $H_p^0=Z(M_{\Q_p})^0$.
Thus~$\#K\leq \#\pi_0(Z(M_{\Q_p}))$. For~$p\gg0$, the number~$\#\pi_0(Z(M_{\Q_p}))$ does not depend on~$p$.

As~$S_p$ is semisimple, we have~$\# Out(S_p)<+\infty$. As there are only finitely many conjugacy classes of semisimple groups in~$M_{\mathbb{C}}$, there exists~$C(M)$ such that, for all~$p$, we have~$  \# Out(S_p)<C(M)$.

For~$t\in T_p$, we have~$t\in Z(M_{\Q_p})^0$, by the first claim. Thus, for~$m\in N$, we have~$m\in Z_M(T_p)$.
This implies that the image of~$N$ in~$Out(T_p)=Aut(T_p)$ is trivial. Thus the map~$Out(H_p^0)\to Out(S_p)$ is injective on~$I$.

Thus~$\#I\leq C(M)$.

The second claim follows.
\end{proof}

\subsubsection{Reduction to a local problem} From~\cite[Th.~B.1, Cor.~B.2]{RY} we already have
\[
[\phi(K_M):\phi(K_M)\cap K]\succcurlyeq H_f(\phi),
\]
and because~$U\leq K_M$, we have
\[
[\phi(U):\phi(U)\cap K]\leq [\phi(K_M):\phi(K_M)\cap K].
\] 
By~\cite[Th.~C1]{RY}, we have
\[
[\phi(K_M):\phi(K_M)\cap K]\preccurlyeq H_f(\phi).
\]

Thus it will be enough to prove
\begin{equation}\label{to prove}
[\phi(U):\phi(U)\cap K]\succcurlyeq [\phi(M(\widehat{\Z})):\phi(M(\widehat{\Z}))\cap K].
\end{equation}

Since they are commensurable groups, we may replace~$K$ by~$G(\widehat{\Z})$. 

In view of Remark~\ref{rem:Tate}~\eqref{passage aux Up}, we may replace~$U$ by~$\prod_pU_p$, 
which is smaller.

Then the required inequality~\eqref{to prove}
can be rewritten in the product form
\[
\prod_p[\phi(U_p):\phi(U_p)\cap G(\Z_p)]\succcurlyeq \prod_p[\phi(M(\Z_p)):\phi(M(\Z_p))\cap G(\Z_p)]
\]
and thus the problem can be studied prime by prime. 

More precisely, it will be enough to prove
\begin{itemize}
\item that there exists~$c\in\R_{>0}$ such that, for almost all primes
\begin{multline}\label{precise-bound}
\forall \phi\in W(\Q_p), [\phi(U_p):\phi(U_p)\cap G(\Z_p)]\geq\\ [\phi(M(\Z_p)):\phi(M(\Z_p))\cap G(\Z_p)]^{c}.
\end{multline}
\item and, for the finitely remaining primes, that we have the polynomial domination, as functions~$W(\Q_p)\to \R_{\geq 0}$, 
\[
[\phi(U_p):\phi(U_p)\cap G(\Z_p)]\succcurlyeq [\phi(M(\Z_p)):\phi(M(\Z_p))\cap G(\Z_p)].
\]
Namely, there exist~$a(p),c(p)\in\R_{>0}$ such that 
\begin{multline}\label{imprecise-bound}
\forall \phi\in W(\Q_p),  [\phi(U_p):\phi(U_p)\cap G(\Z_p)]\geq\\  a(p)\cdot [\phi(M(\Z_p)):\phi(M(\Z_p))\cap G(\Z_p)]^{c(p)}.
\end{multline}
\end{itemize}

It will be sufficient, instead of~\eqref{precise-bound}, to prove:
there exist~$a,c\in\R_{>0}$ such that, for almost all primes
\begin{multline}\label{precise with a}
\forall \phi\in W(\Q_p), [\phi(U_p):\phi(U_p)\cap G(\Z_p)]\geq\\ a\cdot [\phi(M(\Z_p)):\phi(M(\Z_p))\cap G(\Z_p)]^{c}.
\end{multline}
Indeed we have, for every~$a\in\R_{>0}$, 
\[
\prod_{\left\{p\middle|[\phi(M(\Z_p)):\phi(M(\Z_p))\cap G(\Z_p)]\neq 1\right\}} a = \left(\prod_{p}[\phi(M(\Z_p)):\phi(M(\Z_p))\cap G(\Z_p)]\right)^{o(1)}.
\]
(cf. \cite[Proof of Cor. B2]{RY}.)

This is the following statement.
\begin{theorem}[Local Galois bounds]\label{thm:local Galois}
In the setting of Th.~\ref{Galois bounds}, there exist~$a,c\in\RR_{>0}$, and for each~$p$, there exists~$b(p)\in\RR_{>0}$ such that
\[
[\phi(U_p):\phi(U_p)\cap K_p]\geq  b(p)\cdot [\phi(M(\Z_p)):\phi(M(\Z_p))\cap K_p]^{c}
\]
and such that~$b(p)\geq a$ for almost all~$p$.
\end{theorem}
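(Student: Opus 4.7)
The plan is to interpret both indices as polynomially comparable orbit-theoretic norms of $\phi$ under two different reductive group actions on a common ambient space, and then compare these norms via Theorem~\ref{thm:compare reductive}.

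Fix a prime $p$ and $\phi \in W(\Q_p)$, and let $H_p \leq M_{\Q_p}$ denote the Zariski closure of $U_p$. Under the standing reductions of~\S\ref{sec:bounds}, $H_p$ is Zariski connected and reductive, and $U_p$ is open in $H_p(\Q_p)$. The centraliser identity~\eqref{defi:tate eq1} from the uniform integral Tate hypothesis gives
\[
Z_{G_{\Q_p}}(H_p) = Z_{G_{\Q_p}}(U_p) = Z_G(M)_{\Q_p},
\]
so $H_p$ and $M_{\Q_p}$ are two reductive subgroups of $G_{\Q_p}$ with the \emph{same} centraliser. In particular, the $G_{\Q_p}$-conjugacy class of the inclusion $H_p \hookrightarrow G_{\Q_p}$ has the same stabilisers and dimension as $W_{\Q_p} = G_{\Q_p}\cdot\phi_0$.

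Each of the two indices is polynomially equivalent to a norm on the corresponding conjugacy class. Arguing as in~\cite[App.~B]{RY}, one identifies, up to polynomially bounded factors,
\[
[\phi(M(\Z_p)) : \phi(M(\Z_p)) \cap G(\Z_p)] \approx \|\phi\|_{M,p}, \qquad [\phi(U_p) : \phi(U_p) \cap G(\Z_p)] \approx \|\phi\|_{H_p,p},
\]
where $\|\cdot\|_{M,p}$ and $\|\cdot\|_{H_p,p}$ are $p$-adic norms on the closed $G_{\Q_p}$-orbits of $\phi_0$ inside $\Hom(M,G)$ and of the inclusion $H_p\hookrightarrow G_{\Q_p}$ inside $\Hom(H_p,G)$, respectively. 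Closedness of both orbits under $G_{\Q_p}$-conjugation is equivalent to the reductivity of $M$ and $H_p$. The theorem thus reduces to the uniform orbit-norm comparison
\[
\|\phi\|_{H_p,p} \succcurlyeq \|\phi\|_{M,p}^{\,c}
\]
with an exponent $c>0$ and implied constant independent of $p$ for almost all $p$.

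This last inequality is exactly what Theorem~\ref{thm:compare reductive} provides for the pair $H_p \leq M_{\Q_p}$ in $G_{\Q_p}$: the shared-centraliser condition just established is its defining hypothesis. The main obstacle is then the uniformity in $p$: without integral control, the exponent and implicit constants produced by Theorem~\ref{thm:compare reductive} could a priori degenerate as $p \to \infty$, making their product over all primes unmanageable. This is resolved by the \emph{integral} and \emph{uniform} clauses of Definition~\ref{defi:Tate}: for $p \geq M(1)$ the Nori group of the image of $U_p$ in $G(\F_p)$ is a reductive $\F_p$-subgroup of $G_{\F_p}$ with centraliser equal to $Z_{G_{\F_p}}(M_{\F_p})$ (Remark~\ref{rem:Tate}(\ref{rem2})), so $H_p$ extends to a reductive $\Z_p$-model whose mod-$p$ reduction still has the required centraliser. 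The $p$-adic Kempf--Ness Theorem~\ref{pKN} then promotes this to good reduction of the closed orbit of $\phi$ at $p$, which is the last input needed for Theorem~\ref{thm:compare reductive} to output universal constants $a, c > 0$. For the finitely many primes $p < M(1)$, a non-uniform application of Theorem~\ref{thm:compare reductive} over $\Q_p$ still produces a strictly positive $b(p)$ for each such $p$, completing the proof.
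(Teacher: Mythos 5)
Your high-level plan correctly identifies the paper's ingredients: interpret the indices as orbit norms, compare them via Theorem~\ref{thm:compare reductive}, and get uniform constants for almost all $p$ from the integral Tate hypothesis and the $p$-adic Kempf--Ness Theorem~\ref{pKN}. However, the way you route the Tate hypothesis into Theorem~\ref{thm:compare reductive} has a genuine gap concerning the centre of $M$, and the argument as written fails.

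You claim the Nori group of the image of $U_p$ in $G(\F_p)$ has centraliser $Z_{G_{\F_p}}(M_{\F_p})$, citing Remark~\ref{rem:Tate}(\ref{rem2}). That remark says nothing of the kind: the Nori group is generated by unipotent one-parameter subgroups, hence \emph{semisimple}, so it sees none of $Z(M)$, and its centraliser in $G_{\F_p}$ strictly contains $Z_{G_{\F_p}}(M_{\F_p})$ whenever $Z(M)$ is positive-dimensional. Condition~\eqref{defi:tate eq 2} of Definition~\ref{defi:Tate} controls the centraliser of the full $U'(p)$, not of $U'(p)^\dagger$, and passing from one to the other requires adjoining central elements to the nilpotent tuple. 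The crucial difficulty — which your account does not engage — is that the MT hypothesis only forces $\exp(p\cdot\mathfrak{z(m)}_{\Z_p})\leq U_p$, so the central elements $Y_j\in\mathfrak{u}$ produced in Prop.~\ref{prop X Y} are such that $\ol{Y_j/p}$, not $\ol{Y_j}$, generates $\mathfrak{z(m)}_{\F_p}$. This yields (Prop.~\ref{Prop5.5}) only $[\phi(U):\phi(U)\cap G(\Z_p)]\geq H_{v'}(\phi)/p$ with $H_{v'}(\phi)\geq H_p(\phi)^{c(\rho)}$, which collapses precisely when $H_{v'}(\phi)=p$; recovering from that boundary case requires applying Theorem~\ref{pKN} to replace the tuple $v'$ by an integral central substitute $v''$ and then invoking the torus index estimate~\eqref{EdYa index} from \cite{EdYa} — steps entirely absent from your outline. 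Finally, the claimed uniform-in-$p$ equivalence $[\phi(U_p):\phi(U_p)\cap G(\Z_p)]\approx\norm{\phi}_{H_p,p}$ would itself need proof: $H_p$ is a $\Q_p$-group with no canonical hyperspecial $\Z_p$-model, and without the explicit integral generators of \S\ref{almost all primes} you only obtain the $p$-dependent bound~\eqref{imprecise-bound} of \S\ref{every prime}, not the uniform estimate~\eqref{precise with a}.
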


We prove~\eqref{imprecise-bound} in~\ref{every prime}. It is deduced from the functoriality of heights.

We prove, for almost all primes,~\eqref{precise-bound} in~\ref{almost all primes}. It requires new tools developed
in this article.

For reference, we rephrase ``The image of~$U$ in~$M^{ab}$ is MT in $M^{ab}$'' as follows.
We denote~$ab_M:M\to M^{ab}:=M/M^{der}$ the abelianisation map. Denote by~$M^{ab}(\Z_p)$
the maximal compact subgroup of the torus~$M^{ab}$. Then there
exists~$C_{MT}\in \Z_{\geq 1}$ such that
\begin{equation}\label{defi CMT}
\forall p, [M^{ab}(\Z_p):ab_M(U_p)]\leq C_{MT}.
\end{equation}
Because~$\exp(p\mathfrak{m}^{ab}_{\Z_p})$ is a~pro-$p$-group, its action
on~$M^{ab}(\Z_p)/ab_M(U_p)$ is trivial when~$p>C_{MT}$: we have
\begin{equation}\label{defi CMT 2}
\forall p>C_{MT},\exp(p\mathfrak{m}^{ab}_{\Z_p}) \leq ab_M(U_p).
\end{equation}

\subsection{For every prime}\label{every prime}
We fix a prime~$p$. Let~$f_1,\ldots,f_{k}$
be a basis of the~$\Q_p$-Lie algebra~$\mathfrak{u_p}$ of~$U_p$. Replacing each~$f_i$ by a sufficiently 
small scalar multiple, we may assume that each~$u_i=\exp(f_i)$ converges and belongs to~$U_p$.
By~\eqref{Galois bound 2} of~Th.~\ref{Galois bounds} and~\eqref{defi:tate eq1} of~Def.~\ref{defi:Tate},
we have\footnote{We observe the following facts. For a compact subgroup~$U\leq GL(n,\Q_p)$, any open subgroup~$V\leq U$ is of finite index. If~$U$ is connected for the Zariski topology, then~$V$ is Zariski dense in~$U$. By construction, we have~$\mathfrak{u}\subseteq  \Q_p[V]$ in~$\mathfrak{gl}(n,\Q_p)$, and, with~$V\leq U$ generated by~$U\cap \exp\left(\mathfrak{u}\cap p^2\mathfrak{gl(n,\Z_p)}\right)$, we have~$V\subseteq \Q_p[\mathfrak{u}]$.  From these facts, it follows that
\[Z_{GL(n)_{\Q_p}}(\mathfrak{u})=Z_{GL(n)_{\Q_p}}(\Q_p[\mathfrak{u}])=Z_{GL(n)_{\Q_p}}(\Q_p[V])=Z_{GL(n)_{\Q_p}}(V)=Z_{GL(n)_{\Q_p}}(U^0).\]
}
\[
Z_{G_{\Q_p}}({U_p})=Z_{G_{\Q_p}}({U_p}^0)=Z_{G_{\Q_p}}(\mathfrak{u}_p)=Z_{G_{\Q_p}}(\{f_1,\ldots,f_k\}).
\]

Let~$\mathfrak{g}$ be the Lie $\Q_p$-algebra of~$G_{\Q_p}$. We define, for some faithful linear representaton~$\rho:G\to GL(V)$ defined over~$\Q$, 
\[
v=(f_1,\ldots,f_k)\in \mathfrak{g}^k\stackrel{d\rho}{\hookrightarrow} E:=\End(V_{\Q_p})^k.
\]
For the induced action of~$G$ on~$E$ we have
\[
Z_{G_{\Q_p}}(U_p)=Stab_{G_{\Q_p}}(v).
\]
By our assumption,
\[
Z_{G_{\Q_p}}(U_p)=Stab_{G_{\Q_p}}(\phi_0)=Z_G(M)_{\Q_p}.
\]
As a consequence, we have a well defined isomorphism~$W\to G\cdot v$ defined over~$\Q_p$ of homogeneous varieties,
given by
\[
g\cdot Z_G(M)_{\Q_p}\mapsto g\cdot v
\]
From~\eqref{defi:Tate1} of Def.~\ref{defi:Tate}, the Zariski closure~$\ol{U_p}^{Zar}$ is reductive.
We may thus apply~\cite[Th. 3.6]{Ri-Conj-Duke}, and deduce that the induced map
\[
\iota:W\to E
\]
is a closed affine embedding.

We use the standard norm on~$E\simeq {\Q_p}^{\dim(V)^2\cdot k}$. We denote by~$H_\iota$ the local Weil height associated to this embedding, which is given by
\begin{multline}
H_\iota:\phi\mapsto H_p(g\cdot v):=\max\{1;\norm{g\cdot v}\}\\=\max\{1;\norm{g\cdot f_1};\ldots;\norm{g\cdot f_k}\}.
\end{multline}
By  functoriality properties of height functions, the function~$H_\iota$ and~$\phi\mapsto H_p(\phi)$ are polynomially equivalent.

Namely, there are~$a(p)$ and~$c(p)$ such that
\[
H_\iota\geq a(p)\cdot  H_p(\phi)^{c(p)}.
\]

We denote~$U'\leq U_p$ the~$p$-adic Lie subgroup generated by
\[\{\exp(f_1);\ldots;\exp(f_k)\}.\]
We have
\[
[\phi(U_p):\phi(U_p)\cap G(\Z_p)]\geq [\phi(U'):\phi(U')\cap G(\Z_p)].
\]
Using~\cite[Th.~A3]{RY}, we also have, with~$U_i:=\exp(f_i)^\Z\leq U'$, and denoting by~$\norm{~}:\End(V_{\Q_p})\simeq \Q_p^{\dim(V)^2}\to\R_{\geq0}$ the standard norm,
\begin{multline}
[\phi(U'): \phi(U')\cap K_p]\geq
\max_{1\leq i\leq k}[\phi(U_i): \phi(U_i)\cap K_p]
\\
\geq 
\max_{1\leq i\leq k} \max\{1;\norm{d\phi(f_i)}\}
= H_\iota(\phi)/\dim(V).
\end{multline}
By~\cite[App. Th.~C2 (102)]{RY}, we have~$[\phi(M(\Z_p)): \phi(M(\Z_p))\cap K_p]\preccurlyeq H_\iota(\phi)$.
We deduce~\eqref{imprecise-bound}.
\subsubsection{Remark}

Note that the above bound already implies the Andr\'e-Pink-Zannier conjecture for
$S$-Hecke orbits. This is more general than the result of Orr (\cite{OrrPhD}) for Shimura varieties of abelian type
and less precise than \cite{RY2} which proves a strong topological form under a weaker hypothesis.
The method of Orr relies on Masser-W\"ustholz bounds, and~\cite{RY2} relies ultimately on $S$-adic Ratner theorems through
the work of~\cite{RZ}.

\subsection{For almost all primes}\label{almost all primes}
\subsubsection{Construction of tuples}
We denote by
\[
Y\mapsto \ol{Y}:\mathfrak{m}_{\Z_p}\to\mathfrak{m}_{\F_p}\text{ and }\pi_p: U_p\to M_{\F_p}(\F_p)
\]
the reduction modulo~$p$ maps, and define
\[
U(p):=\pi_p(U_p)
\]
the image of~$U_p$ in~$M_{\F_p}(\F_p)$.
We denote the subgroup of~$U(p)$ generated by its unipotent elements
by
\begin{equation}\label{defi daggers}
U(p)^\dagger
\end{equation}
and its inverse image in~$U_p$ by
\[
U_p^\dagger:=\stackrel{-1}{\pi_p}(U_p).
\]
Let~$\mathfrak{m}^{der}$ and $\mathfrak{z(m)}$ be the derived and centre subalgebras of~$\mathfrak{m}$, and define~$\mathfrak{m}^{der}_{\Z_p}=\mathfrak{m}^{der}\cap\mathfrak{m}_{\Z_p}$ and~$\mathfrak{z(m)}_{\Z_p}=\mathfrak{z(m)}\cap\mathfrak{m}_{\Z_p}$.  We define
\begin{equation}\label{defi nu}
\nu=\mathfrak{m}^{der}_{\Z_p}+p\cdot \mathfrak{z(m)}_{\Z_p}.
\end{equation}

\begin{proposition}\label{prop X Y}
 We consider the setting of Theorems~\ref{thm:local Galois} and~\ref{Galois bounds}. For almost all~$p$, there exists
\[
X_1,\ldots,X_k,Y_1,\ldots,Y_l\in\mathfrak{m}_{\ZZ_p}
\]
satisfying the following
\begin{enumerate}
\item \label{XY1} The exponentials~$\exp(X_1),\ldots,\exp(X_k)$ converge and topologically generate~$U_p^\dagger$.
\item \label{XY2} We have
\[
Y_1,\ldots,Y_l\in \mathfrak{u}:=\Z_p\cdot X_1+\ldots+\Z_p\cdot X_k.
\]
\item \label{XY3} We have
\[
\frac{1}{p}\cdot Y_1\cdot \ZZ_p+\ldots+\frac{1}{p}\cdot  Y_l\cdot \ZZ_p = \mathfrak{z(m)}_{\Z_p}\pmod{p\cdot\mathfrak{m}_{\Z_p}}.
\]
\item \label{XY4} We have
\begin{equation}\label{PropNori:Centalisateurs}
Z_{G_{\FF_p}}(M_{\FF_p})=
Z_{G_{\FF_p}}\left(\left\{\ol{X_1};\ldots;\ol{X_k};\ol{\frac{1}{p}Y_1};\ldots;\ol{\frac{1}{p}Y_l}\right\}\right).
\end{equation}
\end{enumerate}
\end{proposition}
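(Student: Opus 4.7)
The approach is to combine Nori theory with the uniform integral Tate hypothesis to understand $U(p)^\dagger$ at almost all primes, then build the $X_i$'s by lifting nilpotent elements from the Nori Lie algebra and the $Y_j$'s by scaling a $\Z_p$-basis of $\mathfrak{z(m)}_{\Z_p}$ by $p$ to capture the central directions that are invisible modulo $p$ in the $X_i$'s. For almost all $p$, Nori theory (Remark~\ref{rem:Tate}\eqref{rem2}) and Definition~\ref{defi:Tate} will give $U(p)^\dagger = H(\F_p)^\dagger$ for a reductive algebraic subgroup $H \leq M_{\F_p}$ satisfying $Z_{G_{\F_p}}(H) = Z_{G_{\F_p}}(M_{\F_p})$ (and the Lie algebra analogue $Z_{\mathfrak{g}_{\F_p}}(\mathfrak{h}) = Z_{\mathfrak{g}_{\F_p}}(\mathfrak{m}_{\F_p})$ for $p$ larger than $\dim G$). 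Proposition~\ref{prop:Up0 bounded index} reduced modulo $p$ yields $Z(M_{\F_p})^0 \leq H$, which combined with $H \leq M_{\F_p}$ forces $\mathfrak{z}(\mathfrak{h}) = \mathfrak{z(m)}_{\F_p}$ and, by reductivity of $H$, the decomposition $\mathfrak{h} = \mathfrak{h}^{der} + \mathfrak{z(m)}_{\F_p}$. A standard Bruhat--Tits smoothness argument provides, for almost all $p$, a smooth reductive $\Z_p$-model $\mathcal{H}$ of $H_p = \overline{U_p}^{Zar}$ with special fibre $H$, whose Lie algebra $\mathfrak{u}_{\Z_p} := \mathfrak{u} \cap \mathfrak{m}_{\Z_p}$ reduces mod $p$ to $\mathfrak{h}$, and with $U_p \leq \mathcal{H}(\Z_p)$ open of uniformly bounded index $d$.

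\textbf{Construction of the $X_i$.} I will pick nilpotent elements $\bar Z_1,\ldots,\bar Z_s \in \mathfrak{h}$ generating $\mathfrak{h}^{der}$ as a Lie algebra (root vectors suffice for $p$ large), lift each to a nilpotent $Z_j \in \mathfrak{u}_{\Z_p}$, and set $X_j := d \cdot Z_j$. Since $Z_j$ is topologically nilpotent, $\exp(Z_j)$ converges and lies in $\mathcal{H}(\Z_p)$, so $\exp(X_j) = \exp(Z_j)^d \in U_p$; and $\bar X_j = d \cdot \bar Z_j$ still generates the line $\F_p \bar Z_j$ as long as $p > d$. I then complete the list by appending a $\Z_p$-basis $X_{s+1},\ldots,X_k$ of $p \mathfrak{u}_{\Z_p}$, whose exponentials topologically generate the first congruence subgroup $U_p^{(1)} = \exp(p\mathfrak{u}_{\Z_p})$. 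The full family $\{\exp(X_i)\}$ then topologically generates $U_p^\dagger$: the tail generates the normal subgroup $U_p^{(1)}$, while for $j \leq s$ the reduction $\exp(\bar X_j)$ together with its $\F_p$-powers sweeps out the unipotent one-parameter subgroup associated to $\bar Z_j$, and these subgroups jointly generate $H(\F_p)^\dagger = U(p)^\dagger = U_p^\dagger/U_p^{(1)}$.

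\textbf{The $Y_j$'s and verification of~\eqref{XY4}.} Fix a $\Z_p$-basis $e_1,\ldots,e_l$ of $\mathfrak{z(m)}_{\Z_p}$; since $\mathfrak{z(m)}_{\Q_p} \subseteq \mathfrak{u}$ by Proposition~\ref{prop:Up0 bounded index}, each $e_j$ lies in $\mathfrak{u}_{\Z_p}$. Set $Y_j := p \cdot e_j$: then $Y_j \in p \mathfrak{u}_{\Z_p} = \sum_{i>s} \Z_p X_i$, establishing~\eqref{XY2}, and $\overline{Y_j/p} = \bar e_j$ forms a basis of $\mathfrak{z(m)}_{\F_p}$, establishing~\eqref{XY3}. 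For~\eqref{XY4}, the key observation is that for any subset $S \subset \mathfrak{g}_{\F_p}$, the centraliser $Z_{G_{\F_p}}(S)$ equals the centraliser of the Lie subalgebra generated by $S$, because $\mathrm{Ad}(g)$ preserves Lie brackets; applied to $S = \{\bar X_i, \overline{Y_j/p}\}$, the Lie subalgebra generated contains $\mathfrak{h}^{der}$ (from the nilpotent $\bar X_j$'s, $j \leq s$) together with $\mathfrak{z(m)}_{\F_p}$ (from the $\overline{Y_j/p}$'s), hence all of $\mathfrak{h}$. Therefore $Z_{G_{\F_p}}(\{\bar X_i, \overline{Y_j/p}\}) = Z_{G_{\F_p}}(\mathfrak{h}) = Z_{G_{\F_p}}(H) = Z_{G_{\F_p}}(M_{\F_p})$, as required.

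\textbf{Main obstacles.} The chief technical difficulty will be the preliminary step of producing the smooth reductive $\Z_p$-model $\mathcal{H}$ with $U_p \leq \mathcal{H}(\Z_p)$ of uniformly bounded index and special fibre matching the Nori group $H$, together with the ability to lift nilpotent elements of $\mathfrak{h}$ to nilpotent elements of $\mathfrak{u}_{\Z_p}$. This is a standard but non-trivial combination of Nori theory, Bruhat--Tits theory, and smoothness of the nilpotent cone for $p$ sufficiently large, and forces us to exclude a possibly larger finite set of primes than the one appearing in Definition~\ref{defi:Tate}. A secondary subtlety is the passage from group-level to Lie-algebra-level centralisers over $\F_p$, which is routine once $p$ is large compared to $\dim G$.
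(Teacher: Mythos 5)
Your overall architecture is reasonable — Nori theory plus the Tate hypothesis does control $U(p)^\dagger$ mod $p$, and your centraliser argument for \eqref{XY4} (centralising a set of Lie-algebra elements is equivalent to centralising the Lie subalgebra they generate) is sound and somewhat cleaner than the paper's. But there is a genuine gap at the foundation of your construction: the assertion that $U_p$ has \emph{uniformly bounded} index $d$ in the hyperspecial points $\mathcal{H}(\Z_p)$ of a smooth reductive model of $\ol{U_p}^{Zar}$. Nothing in the hypotheses supplies this. The uniform integral Tate property constrains $U_p$ modulo $p$ and the Zariski closure over $\Q_p$, and the MT hypothesis bounds the index of the \emph{abelianisation} of $U_p$ in $M^{ab}(\Z_p)$; neither says anything about how deep $U_p$ can sit inside $M(\Z_p)$ along the derived directions at higher congruence levels. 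Without this bound, three of your steps collapse simultaneously: (i) $\exp(Z_j)^d \in U_p$ does not follow (and even with a bounded index you would need to pass to the normal core, replacing $d$ by $d!$, which you do not address); (ii) the tail family $X_{s+1},\ldots,X_k$ chosen as a $\Z_p$-basis of $p\mathfrak{u}_{\Z_p}$ need not have exponentials lying in $U_p^\dagger$, so \eqref{XY1} fails; and (iii) the inclusion $Y_j = p e_j \in \sum_{i>s}\Z_p X_i$ needed for \eqref{XY2} depends on the same tail being present.

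The paper's own proof is engineered precisely to avoid needing any such index bound. It builds $\mathfrak{u}$ directly from logarithms of a concrete topological generating set of $U_p^\dagger$ (unipotent lifts plus generators of $\ker(\pi_p)$), so that $\exp(X_i)\in U_p^\dagger$ is true by fiat, and then it extracts the central directions not by claiming $\mathfrak{z(m)}_{\Z_p}\subseteq\mathfrak{u}$ (which is false in general) but by proving the much weaker containment $p\cdot\mathfrak{m}^{ab}_{\Z_p}\leq ab_{\mathfrak{m}}(\mathfrak{u})$ via the MT hypothesis on $M^{ab}$, and then using semisimplicity of the $U(p)$-action on $\mathfrak{m}_{\F_p}$ twice to correct a chosen preimage so that its reduction lands in $\mathfrak{z(m)}$ modulo $p\mathfrak{m}_{\Z_p}$. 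This two-step correction is the technical heart of the proof and has no analogue in your proposal. To repair your argument you would need either to supply a proof of the uniform index bound (which I do not believe holds under the stated hypotheses) or to replace the ``pick $Y_j = p e_j$'' step by an argument, like the paper's, that produces $Y_j\in\mathfrak{u}$ from the MT hypothesis rather than from an assumed saturation $\mathfrak{z(m)}_{\Z_p}\subseteq\mathfrak{u}\cap\mathfrak{m}_{\Z_p}$.
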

\begin{proof}
Let~$u_1,\ldots,u_i$ be unipotent generators of~$U(p)^\dagger$. Because~$U(p)^\dagger\leq U(p)=\pi_p(U_p)$, we may write
\[
u_1=\pi_p(x_1),\ldots,u_i=\pi_p(x_i)
\]
with~$x_1,\ldots,x_i\in U_p$. By definition of~$U_p^\dagger$, we have~$x_1,\ldots,x_i\in U^\dagger_p$.

The compact group~$\ker(\pi_p)\leq U_p\leq M(\Z_p)$ is topologically of finite type. We choose~$x_{i+1},\ldots,x_k$
a topologically generating family of~$\ker(\pi_p)$. By construction
\begin{equation}\label{defi xs}
x_1,\ldots,x_k\text{ topologically generate~$U_p^\dagger$.}
\end{equation}
Moreover, the~$\pi_p(x_i)$ are unipotent. By~\cite[Prop.~A.1]{RY}, the series~$X_1=\log(x_1),\ldots,X_k=\log(x_k)$
converge, and, for~$p>d$, we have~$X_1,\ldots,X_k\in \mathfrak{m}_{\Z_p}$. Using~\cite[\S6.1.5]{Robert} (cf.~\cite[Proof of Lem. A.2]{RY}), we can deduce,  for~$p>d+1$, that
\begin{equation}\label{defi X}
x_1=\exp(X_1),\ldots,x_k=\exp(X_k).
\end{equation}
We define
\begin{equation}\label{defi u}
\mathfrak{u}=\Z_p\cdot X_1+\ldots+\Z_p\cdot X_k\qquad
u=(X_1,\ldots,X_k).
\end{equation}
For~$X\in \mathfrak{m}_{\Z_p}\smallsetminus\nu$, its reduction in~$\mathfrak{m}_{\F_p}$
is not nilpotent. Thus~$X^n/n!$ does not converge to~$0$, and the series~$\exp(X)$ does not converge.
Consequently we have
\[
X_1,\ldots,X_k\in\nu\text{ and thus }\mathfrak{u}\leq \nu.
\]
We define
\[
\pi_\nu:\nu\to \ol{\nu}:=\nu\tens\F_p,
\]
and denote the image of~$\mathfrak{u}$ by
\[
\ol{\mathfrak{u}}\leq \ol{\nu}.
\]
From~\eqref{defi nu}, we have
\begin{equation}\label{defi nubar}
\ol{\nu}=\mathfrak{m}_{\Z_p}^{der}~(\bmod~p\nu)+p\cdot \mathfrak{z(m)}_{\Z_p}~(\bmod~{p\nu}).
\end{equation}
We notice that
\begin{equation}\label{same rep}
\mathfrak{m}_{\F_p}=\mathfrak{m}^{der}_{\F_p}+\mathfrak{z(m)}_{\F_p}\text{ and }\ol{\nu}\simeq \mathfrak{m}^{der}_{\F_p}+\frac{p\cdot \mathfrak{z(m)}}{p^2\cdot\mathfrak{z(m)}}
\end{equation}
are isomorphic $\F_p$-linear representation of~$M(\Z_p)$ and~$M(\F_p)$, and thus
as representations of~$U_p$ and~$U(p)$ as well.

We consider
\[
ab_{\mathfrak{m}}:\mathfrak{m}_{\Z_p}\to \mathfrak{m}_{\Z_p}^{ab}:=\mathfrak{m}_{\Z_p}/\mathfrak{m}_{\Z_p}^{der}.
\]
Let us prove the claim
\begin{equation}\label{reciprocite p p2}
p\cdot\mathfrak{m}_{\Z_p}^{ab}\leq ab_{\mathfrak{m}}(\mathfrak{u}).
\end{equation}
\begin{proof}
Let~$Z\in p\cdot\mathfrak{m}_{\Z_p}^{ab}$. Let~$z=\exp(Z)\in M^{ab}(\Z_p)$.

From~\eqref{defi CMT 2}, when~$p>C_{MT}$, there exists~$y\in U_p$ with~$\ab_M(y)=z$.

Assume~$p\gg0$, so that the algebraic tori~$Z(M)_{\Z_p}$ and~$M^{ab}$ have good reduction
,
and assume furthermore~$p>\#\ker(Z(M)\to M^{ab})$ so that the differential of the isogeny~$Z(M)\to M^{ab}$
induces a $\Z_p$-isomorphism~$\mathfrak{z(m)}_{\Z_p}\to\mathfrak{m}_{\Z_p}^{ab}$. Thus, there exists~$Z'\in p\mathfrak{z}(\mathfrak{m})_{\Z_p}$ with~$ab_{\mathfrak{m}}(Z')=Z$.

Let~$z'=\exp(Z')$. As~$Z'\in\mathfrak{z}(\mathfrak{m})$, we have~$z'\in Z(M)(\Q_p)$. As~$Z'\in p\cdot \mathfrak{z}(\mathfrak{m})_{\Z_p}$, we have, for~$p>2$,~$z'\in Z(M)(\Z_p)$.
Moreover~$\lim_{n\to \infty}{z'}^{p^n}=\lim_{n\to \infty}\exp(p^nZ')=1$. Thus~$\ol{z'}\in Z(M)(\F_p)$ is unipotent.

We have~$\ol{z'}\in Z(M)(\F_p)^\dagger$ and, because~$Z(M)(\F_p)$  has good reduction, we have~$Z(M)(\F_p)^\dagger=\{1\}$. We also have~$y\in M^{der}(\Z_p)\cdot z'$. 
Thus
\[
\ol{y}\in U(p)\cap M^{der}(\F_p).
\]
Let~$\gamma=[\hat{U}:\hat{U}^\dagger]\leq c(\dim(G))$ from Lem.~\ref{lem:4.1}. 
Then~$\ad_{M}(\ol{y})^{\gamma}\in \ad_M(U(p))^\dagger=\ad_M(U(p)^\dagger)$.
Let~$U'$ be the inverse image of~$\ad_M(U(p))^\dagger$ by~$\ad_{M^{der}}:M^{der}(\F_p)\to M^{ad}(\F_p)$.
Since~$\ad_M(U(p))^\dagger=\ad_M(U(p)^\dagger)$ and~$\ker(\ad_{M^{der}})=Z(M^{der})(\F_p)$, we have~$U'=U(p)^\dagger\cdot Z(M^{der})(\F_p)$. There are~$u\in U(p)^\dagger$ and~$z\in   Z(M^{der})(\F_p)$
such that
\[
\ol{y}^\gamma=u\cdot z.
\]
We use that~$f:=\sup_{p}Z(M^{der})(\F_p)<+\infty$. Since~$z$ commutes with~$u$, we have
\[
(\ol{y}^\gamma)^{f!}=u^{f!}\cdot z^{f!}=u^{f!}\in U(p)^\dagger.
\]
Thus~$y^{\gamma\cdot f!}\in U_p^\dagger$. Assume~$p>c(\dim(G))\geq \gamma':=\gamma\cdot f!$, so that~$\gamma'\in\Z_p^\times$.  Because~$Z$ is arbitrary, we have
\[
ab_M(U_p^\dagger)\geq\exp(p\cdot\mathfrak{m}_{\Z_p}^{ab})^{\gamma'}=\exp(\gamma'\cdot p\cdot\mathfrak{m}_{\Z_p}^{ab})
=
\exp(p\cdot\mathfrak{m}_{\Z_p}^{ab}).
\]
Conversely~$ab_M(U_p^\dagger)\leq \exp(p\cdot\mathfrak{m}_{\Z_p}^{ab})=\ker M^{ab}(\Z_p)\to M^{ab}(\F_p)$
because~$ab_{M_{\F_p}}(U(p)^\dagger)\leq ab_{M_{\F_p}}(M^{der}(\F_p)^\dagger)=\{1\}$.

The group~$U_p^\dagger$ is topologically generated by~$\exp(X_1),\ldots,\exp(X_k)$, and thus
$ab_M(U_p^\dagger)$ is topologically generated by~$\exp(Z_1),\ldots,\exp(Z_k)$ with~$Z_i=ab_{\mathfrak{m}}(X_i)$.

Thus the logarithms
\[
Z_i=\log(z_i)=ab_{\mathfrak{m}}(X_i)
\]
topologically generate~$\log(\exp(p\cdot\mathfrak{m}^{ab}))=
p\cdot\mathfrak{m}^{ab}$. The conclusion follows.
\end{proof}

We let
\begin{equation}\label{defi Zs}
Z_1,\ldots,Z_l\text{ be a basis of }
\frac{p\mathfrak{m}^{ab}}{p^2\mathfrak{m}^{ab}}\simeq \nu/\mathfrak{m}^{der}_{\Z_p}\simeq \ol{\nu}/\mathfrak{m}^{der}_{\F_p}
\end{equation}
Pick an arbitrary~$Z\in\{Z_1;\ldots;Z_l\}$, and define
\[
A=\{\ol{Y}\in\ol{\mathfrak{u}}| \ol{Y}\equiv Z\pmod{\mathfrak{m}^{der}_{\F_p}}\}.
\]
From~\eqref{reciprocite p p2}, this~$A$ is non empty. It is thus an affine subspace of~$\ol{\nu}$,
and, for any~$\ol{Y}_0\in A$, we have
\[
A=\ol{Y}_0+V,
\]
{ where }\(V=\ol{\mathfrak{u}}\cap \mathfrak{m}^{der}_{\F_p}\)
is the ``direction'' of~$A$. The $\F_p$-linear vector subspace~$V\leq \ol{\nu}$ is invariant under~$U(p)$,
and because the action of~$U(p)$ is semisimple on~$\mathfrak{m}_{\F_p}$, and thus,
by~\eqref{same rep}, on~$\ol{\nu}$, there exists a supplementary $U(p)$-invariant $\F_p$-linear subspace
\[
W\leq \ol{\nu}.
\]
The following intersection is an affine space of dimension~$0$, hence it is a singleton
\[
A\cap W=\{\ol{Y}\}.
\]
It is also invariant under~$U(p)$. Thus the line
\[
\F_p\cdot \ol{Y}
\]
is fixed by~$U(p)$. But the centraliser of~$U(p)$ and~$M_{\F_p}$ in~$M_{\F_p}$ are the same.
For~$p\gg0$, these centralisers are smooth as group schemes (cf. Lem.~\ref{conj orbit lemma}), and thus have the same Lie algebra
\[
\mathfrak{z}_{\mathfrak{m}_{\F_p}}(U(p))
=
\mathfrak{z(m)}_{\F_p}.
\]
Thus
\begin{equation}\label{Ybar in Z}
\ol{Y}\in{p\cdot \mathfrak{z}(M)}~(\bmod~p\nu).
\end{equation}
We finally choose a representative~$Y'\in \mathfrak{u}$ of~$\ol{Y}\in\ol{\mathfrak{u}}$.

Thus~$Y'\in p\cdot \mathfrak{z(m)}+p\nu=p\mathfrak{m}_{\Z_p}$ and~$Y'\in\mathfrak{u}$. We define
\[
\wt{\mathfrak{m}}:=\frac{p\mathfrak{m}_{\Z_p}}{p^2\mathfrak{m}_{\Z_p}}=(p\mathfrak{m}_{\Z_p})\tens \F_p,
\]
and denote the image of~$Y'\in \mathfrak{u}\cap p\mathfrak{m}_{\Z_p}\leq p\mathfrak{m}_{\Z_p}$ by
\[
\wt{Y}\in\wt{\mathfrak{u}}:=(\mathfrak{u}\cap p\mathfrak{m}_{\Z_p})\tens \F_p\leq \wt{\mathfrak{m}}.
\]
Again~$\wt{\mathfrak{m}}\simeq\mathfrak{m}_{\F_p}$ as a representation. We define~$\wt{\mathfrak{m}^{der}}:=(\mathfrak{m}^{der}\cap p\mathfrak{m}_{\Z_p})/p^2\mathfrak{m}_{\Z_p}$, and
\[
A'=\{\wt{Y}'\in\wt{\mathfrak{u}}| \wt{Y}'\equiv \wt{Y}\pmod{\wt{\mathfrak{m}^{der}}}\}.
\]
and similarly, there exists~$\wt{Y}'\in A'$ which is fixed by~$U(p)$ and thus is in~$\wt{\mathfrak{z(m)}}$.

We choose a lift~$Y$ of~$\wt{Y}'$ in~$\mathfrak{u}$.

Repeating the process for each~$Z\in\{Z_1;\ldots;Z_l\}$ we define
\begin{equation}\label{Ys in u}
Y_1,\ldots,Y_l\in\mathfrak{u}.
\end{equation}

The assertion~\ref{XY1} follows from~\eqref{defi xs}, \eqref{defi X}, \eqref{defi u}.

The assertion~\ref{XY2} follows from~\eqref{defi u}, \eqref{Ys in u}.

The assertion~\ref{XY3} follows from~\eqref{Ybar in Z}, \eqref{defi Zs}, \eqref{defi nu}, \eqref{defi nubar}.
(We observe that~$Y_1,\ldots,Y_l$ generate~$\mathfrak{z}(\mathfrak{m})_{\Z_p}$ modulo~$p\cdot\mathfrak{m}_{\Z_p}$ since their images in~$p\mathfrak{m}^{ab}_{\Z_p}$ are congruent to~$Z_1,\ldots,Z_l$ modulo~$p^2\mathfrak{m}^{ab}_{\Z_p}$. Indeed~$Y\equiv \wt{Y}\pmod{\wt{\mathfrak{m}^{der}}}$ gives
\[
Y\in Y'+p^2\mathfrak{m}_{\Z_p}+p\mathfrak{m}^{der}_{\Z_p}=Y'+p(p\cdot\mathfrak{z}(\mathfrak{m})_{\Z_p}+\mathfrak{m}_{\Z_p}^{der})=Y'+p\nu.
\]
Thus~$Y\equiv Y'\pmod{p\nu}$, and therefore maps to~$Z\in p\mathfrak{m}^{ab}_{\Z_p}$ modulo~$p^2\mathfrak{m}^{ab}_{\Z_p}$.)

We will now prove the assertion~\ref{XY4}. We define
\[
Z:=Z_{G_{\FF_p}}\left(\left\{\overline{X_1};\ldots;\ol{X_k},\ol{\frac{1}{p}Y_1};\ldots;\ol{\frac{1}{p}Y_l}\right\}\right).
\]
and
\[
U':=(U(p)\cap Z(M)^0_{\F_p}(\F_p))
\text{ and }
U'':=U(p)^\dagger\cdot U'.
\]

We first note that~$\pi_p(\exp(X_1)),\ldots,\pi_p(\exp(X_k))$ generates the group~$U(p)^\dagger$
and that~$\ol{Y_1/p},\ldots,\ol{Y_l/p}$ generates the Lie algebra~$\mathfrak{z(m)}_{\F_p}$.

Thus
\begin{equation}
Z=Z_{G_{\FF_p}}(U(p)^\dagger)\cap Z_{G_{\FF_p}}(\mathfrak{z(m)}_{\F_p})
\end{equation}
We have\footnote{We use that~$Z(M_{\F_p})^0$ is connected and, for~$p\gg 0$ smooth as a group scheme.}
\[
Z_{G_{\FF_p}}(\mathfrak{z(m)}_{\F_p})=Z_{G_{\FF_p}}(Z(M)^0).
\]

Applying Lem.~\ref{lem:4.1} with~$\delta=C_{MT}$ from~\eqref{defi CMT}, we have,
\[
[U(p): U(p)^\dagger\cdot U']\leq D:=C_{MT}\cdot \gamma(\dim(G)).
\]
With~$p>M(D)$, with~$M(D)$ as in Def.~\ref{defi:Tate}, we have
\[
Z_{G_{\F_p}}(M_{\F_p})=
Z_{G_{\F_p}}(U(p)^\dagger\cdot U')
=
Z_{G_{\FF_p}}(U(p)^\dagger)\cap Z_{G_{\FF_p}}(U').
\]
From Cor.~\ref{coro big dans Z}, we have~$[Z(M)^0_{\F_p}(\F_p):U']\leq \gamma(n)\cdot C_{MT}$ for some~$n$.
We may thus apply Lemma~\ref{Lemma bounded and centraliser}, with~$\lambda=\gamma(n)\cdot C_{MT}$ and deduce
\[
\forall p\gg 0, Z_{G_{\F_p}}(U')=Z_{G_{\F_p}}(Z(M)^0_{\F_p}).
\]
From~$U'':=U(p)^\dagger\cdot U'\leq U(p)^\dagger\cdot Z(M)^0_{\F_p}\leq M_{\F_p}$
we get
\[
Z_{G_{\F_p}}(M_{\F_p})
=
Z_{G_{\F_p}}(U'')\leq Z_{G_{\F_p}}(U(p)^\dagger)\cap Z_{G_{\F_p}}(Z(M)^0_{\F_p})
\leq 
Z_{G_{\F_p}}(M_{\F_p})
\]
Finally
\[
Z=Z_{G_{\F_p}}(U(p)^\dagger)\cap Z_{G_{\F_p}}(Z(M)^0_{\F_p})=
Z_{G_{\F_p}}(M_{\F_p}).\qedhere
\]
\end{proof}


%
%
%
%
%

\subsubsection{Conjugacy classes of tuples}
The following will be used to check, for almost all primes, one of the hypotheses of Th.~\ref{thm:compare reductive}.
\begin{lemma}\label{conj orbit lemma}
Let~$p$ be a prime, let~$G\leq GL(n)_{\F_p}$ be a reductive algebraic subgroup,
and consider~$v_1,\ldots,v_k\in G(\F_p)$.  Denote by~$U$ the group generated by~$\{v_1;\ldots;v_k\}$,
and define~$v=(v_1,\ldots,v_k)$.

Assume that 
\begin{equation}\label{U ss g}
\text{the action of~$U$ on~$\mathfrak{g}_{\F_p}$ is semisimple.}
\end{equation}

If~$p>2\cdot \dim(G)$ then the simultaneous conjugacy class~$G\cdot v$ is Zariski closed in~$G^k$.

If~$p>c_3(\dim(G))$ then the centraliser of~$v$ in~$G$, as a group scheme over~$\F_p$, is smooth.
\end{lemma}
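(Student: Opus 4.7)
The plan is to reformulate both assertions in terms of Serre's notion of $G$-complete reducibility ($G$-cr) for the subgroup $H := \langle v_1, \ldots, v_k \rangle = U \leq G(\F_p)$. The two main inputs will be: Serre's criterion, that for $p$ large enough in terms of $\dim G$, semisimplicity of the $H$-action on $\mathfrak{g}_{\F_p}$ is equivalent to $H$ being $G$-cr; and the Richardson--Bate--Martin--R\"ohrle theorem characterising closed simultaneous-conjugation orbits, namely that $G \cdot v \subseteq G^k$ is Zariski closed if and only if the subgroup generated by the tuple is $G$-cr.

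For the first claim, I would first apply Serre's criterion to convert hypothesis \eqref{U ss g} into $G$-complete reducibility of $H$. The standard bounds in the theory of $G$-cr are dominated by $2\dim G$ (after choosing a faithful representation $G \hookrightarrow GL(n)_{\F_p}$), which is why the hypothesis $p > 2\dim G$ is imposed. With $H$ now known to be $G$-cr, the Richardson--Bate--Martin--R\"ohrle theorem then immediately gives closedness of $G \cdot v$ in $G^k$.

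For the second claim, I would invoke the companion result that, for $p$ larger than a polynomial threshold $c_3(\dim G)$, the scheme-theoretic centraliser $Z_G(H)$ of any $G$-cr subgroup $H \leq G$ is reductive, and in particular smooth over $\F_p$. Since the scheme-theoretic centraliser of the tuple $v$ coincides with $Z_G(H)$, being cut out by the same conjugation equations $g v_i g^{-1} = v_i$, and $H$ is $G$-cr by the previous paragraph, smoothness of $Z_G(v)$ will follow. Equivalently one checks that $\dim Z_G(v) = \dim \mathfrak{g}^U$; any excess of the right-hand side arises only in small characteristic and is excluded by $p > c_3(\dim G)$.

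The main obstacle will only be the book-keeping of effective constants: both Serre's criterion and the reductivity of centralisers of $G$-cr subgroups hold under lower bounds on $p$ polynomial in $\dim G$, and tracking them produces the explicit thresholds $2\dim G$ and $c_3(\dim G)$ stated in the lemma. No new ideas beyond those of \cite{SCR} and the surrounding $G$-cr literature are required.
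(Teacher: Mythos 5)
For the first assertion your route is identical to the paper's: apply Serre's criterion (\cite[Cor.~5.5]{SCR}, under $p>2h(G)-2$ with $h(G)\le\dim G$ by \cite[\S5.1]{SCR}) to pass from semisimplicity of the adjoint action to $G$-complete reducibility, then cite \cite[Th.~3.7]{SCR} (Richardson's criterion, cf.\ \cite[\S16]{Ri-Conj-Duke}) for closedness of the simultaneous conjugacy orbit. No daylight there.

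For the second assertion you take a mildly different path. You invoke a ``black-box'' statement that centralisers of $G$-cr subgroups are reductive and smooth once $p$ exceeds a threshold depending on $\dim G$ --- this is the Herpel/Bate--Martin--R\"ohrle--Tange point of view, which the paper cites only as a side remark (``cf.\ also \cite{BMR10} and \cite{H13}''). The paper itself argues more directly: it applies Nori's Theorem E (\cite[\S4, Th.~E]{N}), which under $p>c_3(\dim G)$ and the semisimplicity hypothesis gives the equality $\dim\mathfrak{g}^{U}=\dim Z_{G}(v)$, and then invokes the Demazure--Gabriel smoothness criterion \cite[II, \S5.2, 2.8]{DG}. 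Your route is valid but note one bookkeeping point: you derive $G$-cr from the \emph{first} part, which needs $p>2\dim G$, whereas the lemma's second conclusion is stated under $p>c_3(\dim G)$ alone. The paper's use of Nori's Theorem E gets the required dimensional equality directly from the semisimplicity hypothesis without an intermediate $G$-cr step, so it decouples the two thresholds cleanly. In practice $c_3$ dominates $2\dim G$ so your argument works, but the paper's route avoids needing to check that, and it is also why the two bounds in the statement can be given independently.
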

The quantity~$c_3$ is from~\cite[\S4. Th.~E]{N}.
\begin{proof}
From~\cite[\S5.1]{SCR}, we have~$h(G)\leq \dim(G)$, where~$h(G)$ is defined in loc. cit. From~\cite[Cor. 5.5]{SCR}, if~$p>2h(G)-2$, the assumption~\eqref{U ss g} implies that~$U$ is~$G$-cr, or ``strongly reductive'' in~$G$ in the sense of Richardson. The first assertion follows from~\cite[Th.~3.7]{SCR} (cf. \cite[\S16]{Ri-Conj-Duke}).

Thanks to~\eqref{U ss g} and the condition~$p>c_3(\dim(G))$ we may apply~\cite[\S4. Th.~E]{N} (cf. also~\cite[137. p.\,40]{S4}). Thus the hypothesis
of~\cite[II, \S5.2, 2.8, p.\,240]{DG} is satisfied and we conclude. (cf. also~\cite{BMR10} and~\cite{H13} on the subject, beyond the semi-simplicity assumption.)
\end{proof}
\setcounter{secnumdepth}{3}

\subsubsection{Consequences for heights bounds}We denote~$\norm{~}:\mathfrak{g}_{\Q_p}\to\R_{\geq0}$ the~$p$-adic norm~$\norm{X}=\min(\{0\}\cup\{p^k\in p^\Z|p^k\cdot X\in\mathfrak{g}_{\Z_p}\})$  associated to the~$\Z_p$-structure~$\mathfrak{g}_{\Z_p}$.
We denote~$\norm{\Sigma}=\max\{\norm{s}:s\in \Sigma\}$ for a bounded subset~$\Sigma\subseteq \mathfrak{g}_{\Q_p}$.
We recall that~$H_f(\phi)=\prod_p H_p(\phi)$ with~$H_p(\phi)$ given by
\[
H_p(\phi)=\max\{1;\norm{\phi(\mathfrak{m}_{\ZZ_p})}\}.
\]

\begin{proposition}\label{Prop5.5}
 Define~
\[
v=(X_1,\ldots,X_k,Y_1,\ldots,Y_l)\text{ and }
v'=(X_1,\ldots,X_k,\frac{1}{p}Y_1,\ldots,\frac{1}{p}Y_l)
\]
and
\[
H_v(\phi)= \max\{1;\norm{g\cdot v}\}=H_p(g\cdot v)\text{ and }H_{v'}(\phi)=H_p(g\cdot v').
\]

Then, for~$p\gg0$, we have
\begin{equation}\label{galois exp bound}
[\phi(U):\phi(U)\cap G(\ZZ_p)]\geq H_v(\phi)\geq H_{v'}(\phi)/p
\end{equation}
and
\begin{equation}\label{eq}
H_{v'}(\phi)\geq H_p(\phi)^{c(\rho)}.
\end{equation}
\end{proposition}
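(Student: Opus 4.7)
The first inequality of~\eqref{galois exp bound}, namely $H_v(\phi) \geq H_{v'}(\phi)/p$, will be immediate: passing from~$v$ to~$v'$ divides each~$Y_j$ by~$p$, which multiplies its $p$-adic norm by at most~$p$, hence $H_{v'}(\phi) \leq p \cdot H_v(\phi)$. For the remaining inequality of~\eqref{galois exp bound}, Proposition~\ref{prop X Y}\eqref{XY2} gives $Y_j \in \mathfrak{u} = \sum_i \Z_p X_i$, so by ultrametricity $\norm{d\phi(Y_j)} \leq \max_i \norm{d\phi(X_i)}$ and therefore $H_v(\phi) = \max\{1,\, \norm{d\phi(X_i)} : 1 \leq i \leq k\}$. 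By~\eqref{XY1}, each $\exp(X_i)$ lies in $U_p^\dagger \subseteq U$, so $\phi(\exp(X_i)) = \exp(d\phi(X_i))$ lies in $\phi(U)$. I would then apply~\cite[Th.~A.3]{RY} to the cyclic subgroup generated by an $\exp(X_{i})$ realising the maximum, yielding $[\phi(U):\phi(U)\cap G(\Z_p)] \succcurlyeq H_v(\phi)$, up to absorbable constants, provided $p$ is large enough for $\log$ and $\exp$ to converge on the relevant domain.

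The substantive inequality~\eqref{eq} should follow from Theorem~\ref{thm:compare reductive} applied to compare two heights, each realised as the $p$-adic norm along a closed $G$-orbit. I fix a $\Z_p$-basis $m_1,\ldots,m_d$ of $\mathfrak{m}_{\Z_p}$ and set $w = (m_1,\ldots,m_d)$; then $H_p(\phi)$ is, up to a harmless constant, $\max\{1,\norm{g\cdot w}\}$ when $\phi = g \phi_0 g^{-1}$, while $H_{v'}(\phi) = \max\{1,\norm{g\cdot v'}\}$. I must verify that both orbit maps $W \to G\cdot w$ and $W \to G\cdot v'$ are closed affine embeddings with common stabiliser $Z_G(M)$, uniformly for $p \gg 0$. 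The stabiliser of~$w$ is manifestly $Z_G(M)$. Over~$\Q_p$, the stabiliser of~$v'$ is $Z_{G_{\Q_p}}(\mathfrak{u})$; since the $\{X_i\}$ generate the Lie algebra of $U_p^\dagger$ by~\eqref{XY1}, this coincides with $Z_{G_{\Q_p}}(U_p^\dagger) = Z_{G_{\Q_p}}(U_p) = Z_G(M)_{\Q_p}$ by Def.~\ref{defi:Tate}\eqref{defi:Tate1}. Over~$\F_p$, Proposition~\ref{prop X Y}\eqref{XY4} directly gives $Z_{G_{\F_p}}(v'\bmod p) = Z_{G_{\F_p}}(M_{\F_p})$. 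Closedness over~$\Q_p$ will follow from Richardson's theorem~\cite[Th.~3.6]{Ri-Conj-Duke} applied to the reductive Zariski closure $\overline{U_p}^{\rm Zar}$ (Def.~\ref{defi:Tate}\eqref{defi:Tate1}), exactly as in~\S\ref{every prime}; closedness over~$\F_p$ will follow from Lemma~\ref{conj orbit lemma} for $p > c_3(\dim G)$, its semisimplicity hypothesis being~\eqref{defi:tate eq 2.2}. Granted these inputs, Theorem~\ref{thm:compare reductive} would produce a polynomial bound $H_p(g \cdot v') \succcurlyeq H_p(g \cdot w)^{c(\rho)}$ with exponent uniform in $p \gg 0$, which is exactly~\eqref{eq}.

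The main difficulty lies in the uniform verification of the hypotheses of Theorem~\ref{thm:compare reductive} over almost all primes: matching the stabilisers over both~$\Q_p$ and~$\F_p$, and preserving closedness of the orbits in the special fibre. This is precisely the role played by Proposition~\ref{prop X Y}, which was engineered to extract from the ``uniform integral Tate'' data a tuple~$v'$ whose orbit matches that of~$\mathfrak{m}$ stabiliser-wise in both fibres. Once that setup is in place, the polynomial comparison~\eqref{eq} is a clean application of Theorem~\ref{thm:compare reductive}.
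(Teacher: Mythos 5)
Your reduction of~\eqref{galois exp bound} to the Lemma of the exponential~\cite[Th.~A.3]{RY} applied to the cyclic groups $\exp(X_i)^\Z$ matches the paper's argument, although you should note that the paper obtains the \emph{exact} inequality $[\phi(U):\phi(U)\cap G(\Z_p)]\geq H_v(\phi)$, without any extra multiplicative constant — this matters because the bound must be multiplied over almost all primes, and a prime-dependent constant $a(p)$ in each factor would spoil the argument. Your identification of the $\Q_p$-side hypotheses of Theorem~\ref{thm:compare reductive} (closed orbit via Richardson, stabiliser $Z_G(M)$ via~\eqref{defi:tate eq1}) and your overall framing (compare $H_{v'}$ with the height $H_p(\phi)$ realised on the tuple $w$ of $\Z_p$-generators of $\mathfrak{m}_{\Z_p}$, in the representation $\mathfrak{g}^{k+l+d}$) also coincide with what the paper does.

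The gap is the verification of closedness over~$\F_p$. You invoke Lemma~\ref{conj orbit lemma}, but that lemma concerns simultaneous conjugacy classes of tuples of \emph{group elements} $v_1,\ldots,v_k\in G(\F_p)$ in $G^k$, proved via Richardson--Serre for strongly reductive subgroups. The tuple $\ol{v'}=(\ol{X_1},\ldots,\ol{X_k},\ol{Y_1/p},\ldots,\ol{Y_l/p})$ consists of \emph{Lie algebra elements} in $\mathfrak{m}_{\F_p}\leq\mathfrak{g}_{\F_p}$, and its orbit lives in $\mathfrak{g}_{\F_p}^{k+l}$. Semisimplicity of the $U(p)$-action on ${\F_p}^n$ does not by itself give closedness of the adjoint orbit of a Lie-algebra tuple; in positive characteristic this requires a genuinely different argument via McNinch's notion of $G$-cr Lie subalgebras. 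This is precisely what the paper's Appendix~\ref{AppA} supplies: Proposition~\ref{propMNich} shows that the Lie algebra $\mathfrak{l}$ generated by the Nori group of $U(p)^\dagger$ together with $\mathfrak{z(m)}$ is $G_{\ol{\F_p}}$-cr, and Corollary~\ref{corMNich} converts this, via~\cite[Th.~1(1)]{McN}, into closedness of the orbit of $(\ol{X_1},\ldots,\ol{X_k},\ol{Y_1/p},\ldots,\ol{Y_l/p})$. Moreover, to apply Proposition~\ref{propMNich} one must first produce a subgroup $V\leq U(p)$ of bounded index with $V\leq V^\dagger\cdot Z(M)_{\F_p}$, which the paper does using Lemma~\ref{lem:4.2} and Corollary~\ref{Sylow}; only then do the uniform Tate hypothesis~\eqref{defi:Tate2} and the bounded-index reduction give the required semisimplicity and centraliser identity for $V$. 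Both this preparatory construction and the Lie-algebra closed-orbit mechanism are absent from your sketch, and without them the~$\F_p$ hypotheses of Theorem~\ref{thm:compare reductive} are not in place.
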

\begin{proof} Let~$U_i:=\exp(X_i\cdot \Z_p)$. We have
\[
[\phi(U):\phi(U)\cap G(\ZZ_p)]\geq \max_{1\leq i\leq k}[\phi(U_i):\phi(U_i)\cap G(\ZZ_p)]
\]
and
\[
H_v(\phi)= \max\{1;\norm{g\cdot v}\}=\max_{1\leq i\leq k}\max\{1;\norm{g\cdot X_i}\}.
\]
The inequality~$[\phi(U_i):\phi(U_i)\cap G(\ZZ_p)]\geq \max\{1;\norm{g\cdot X_i}\}$
follows from the Lemma of the exponential (\cite[Th. A.3]{RY}). We deduce
\[
[\phi(U):\phi(U)\cap G(\ZZ_p)]\geq H_v(\phi)= \max\{1;\norm{g\cdot v}\}=H_p(g\cdot v)
\]
The inequality~$H_v(\phi)\geq H_{v'}(\phi)/p$ follows from the definitions.

We prove~\eqref{eq}. Let~$m_1,\ldots,m_d$ be a generating set for~$\mathfrak{m}_{\ZZ_p}$ and define~$w=(m_1,\ldots,m_d)$.
We recall that by construction, we have
\[
H_p(\phi)=\max\{1;\norm{g\cdot m_1};\ldots;\norm{g\cdot m_d}\}.
\]

We want to apply Th.~\ref{thm:compare reductive}. The assumptions over~$\Q_p$ are satisfied by Def.~\ref{defi:Tate}--\ref{defi:Tate1} and~\cite[Th. 3.6]{Ri-Conj-Duke}.

Let~$\widehat{U}=\ad_M(U(p)^\dagger)$, and let~$V$ be the inverse image of~$\widehat{U}^\dagger$ by~$\ad_M:U(p)\to \widehat{U}$. From Cor.~\ref{Sylow}, we deduce~$V^\dagger=U(p)^\dagger$ and~$\ad_M(V^\dagger)=\widehat{U}^\dagger$. 
Thus~$V\leq V^\dagger\cdot Z(M)_{\F_p}$. By Lem.~\ref{lem:4.2}, we have~$[\widehat{U}:\widehat{U}^\dagger]\leq c'(n)$. 
It follows~$[U(p):V]\leq c'(n)$. Using~\ref{defi:Tate}--\ref{defi:Tate2}, we deduce that, for~$p\gg0$,
the action of~$V$ on~$\mathfrak{g}_{\F_p}$ is semisimple and~$Z_{G_{\F_p}}(V)=Z_{G_{\F_p}}(M_{\F_p})$. We may thus apply Prop.~\ref{propMNich} and Cor.~\ref{corMNich}, for~$(x_1,\ldots,x_k,y_1,\ldots,y_l)=(\ol{X_1},\ldots,\ol{X_k},\ol{Y_1/p},\ldots,\ol{Y_l/p})$.

It follows that the assumptions over~$\F_p$ of Th.~\ref{thm:compare reductive} are satisfied (in the representation~$\mathfrak{g}^{k+l+d}$, sum of adjoint representations). 

We may thus apply Theorem~\ref{thm:compare reductive}, and we deduce
\[
H_p(g\cdot v')\geq H_p(\phi)^{C(\Sigma(\rho))},
\]
where~$\Sigma(\rho)$ is the set of roots of~$G$ and does not depend on~$p$.  This proves~\eqref{eq} with~$c(\rho):=C(\Sigma(\rho))$.
\end{proof}

\begin{corollary}In particular, if~$H_{v'}(\phi)\notin\{1;p\}$ we have
\[
[\phi(U):\phi(U)\cap G(\ZZ_p)]\geq H_p(\phi)^{c(\rho)/2}.
\]
\end{corollary}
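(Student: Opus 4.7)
The plan is to combine the two inequalities of Proposition~\ref{Prop5.5} and exploit that $H_{v'}(\phi)$, being defined in terms of the $p$-adic norm on $\mathfrak{g}_{\Q_p}$, only takes values in $\{1, p, p^2, p^3, \ldots\}$.

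First I would observe that the hypothesis $H_{v'}(\phi) \notin \{1,p\}$ together with the above fact forces $H_{v'}(\phi) \geq p^2$. In this regime, one has the elementary inequality
\[
\frac{H_{v'}(\phi)}{p} \geq H_{v'}(\phi)^{1/2},
\]
since writing $H_{v'}(\phi) = p^k$ with $k \geq 2$ the claim reduces to $k-1 \geq k/2$.

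Then I would chain this with the two bounds of Proposition~\ref{Prop5.5}: the first bound gives
\[
[\phi(U):\phi(U)\cap G(\ZZ_p)] \geq H_v(\phi) \geq H_{v'}(\phi)/p,
\]
and the second bound gives $H_{v'}(\phi) \geq H_p(\phi)^{c(\rho)}$, hence $H_{v'}(\phi)^{1/2} \geq H_p(\phi)^{c(\rho)/2}$. Putting these together yields the claim. No genuine obstacle arises here; the corollary is essentially a bookkeeping step packaging Proposition~\ref{Prop5.5} in the form that will be applied later, the only subtle point being the exclusion of the boundary case $H_{v'}(\phi) = p$ where the loss of a factor of $p$ in the first inequality cannot be absorbed by a square-root of $H_{v'}(\phi)$.
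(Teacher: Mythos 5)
Your proof is correct and follows essentially the same route as the paper: both observe that $H_{v'}(\phi)\in p^{\Z_{\geq 0}}$ forces $H_{v'}(\phi)\geq p^2$ under the hypothesis, deduce $H_{v'}(\phi)/p\geq H_{v'}(\phi)^{1/2}$, and then chain the two inequalities of Proposition~\ref{Prop5.5}. (You even get the final exponent right as $c(\rho)/2$, whereas the displayed chain in the paper's proof contains a typo reading $H_p(\phi)^{1/(2\cdot c(\rho))}$ where it should read $H_p(\phi)^{c(\rho)/2}$ to match both \eqref{eq} and the statement.)
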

\begin{proof}
We recall that, because~$H_{v'}(\phi)\in p^{\ZZ}$, we have~$H_{v'}(\phi)\geq p^2$ as soon as~$H_p(\phi)\notin\{1;p\}$.
It follows
\[
H_v(\phi)\geq H_{v'}(\phi)/p\geq H_{v'}(\phi)^{1/2}\geq H_p(\phi)^{1/(2\cdot c(\rho))}.\qedhere
\]
\end{proof}
In proving Th.~\ref{thm:local Galois} we may now assume that~$H_{v'}(\phi)\leq p$.

We define
\[
H_X(\phi)=\max\{1;\norm{\phi(X_1)};\ldots;\norm{\phi(X_k)}\}
\]
and
\[
H_Y(\phi)=\max\{1/p;\norm{\phi(Y_1)};\ldots;\norm{\phi(Y_k)}\},
\]
so that
\[
H_{v'}(\phi)=\max\{H_X(\phi);p\cdot H_Y(\phi)\}
\]
and
\[
H_{v}(\phi)=\max\{H_X(\phi); H_Y(\phi)\}.
\]

If~$H_X(\phi)=p$, we have, by~\eqref{galois exp bound},
\[
[\phi(U):\phi(U)\cap G(\ZZ_p)]\geq H_v(\phi)=H_X(\phi)=
H_{v'}(\phi)\geq H_p(\phi)^{c(\rho)}. 
\]
We now assume~$H_X(\phi)=1$. We have~$p\cdot H_Y(\phi)=H_{v'}(\phi)\in\{1;p\}$.
\subsubsection{The case~$p\cdot H_Y(\phi)=H_X(\phi)=1$}
In this case we have~$H_{v'}(\phi)=1$, and by~\eqref{eq},
\[
H_p(\phi)=1.
\]
Obviously
\[
[\phi(U_p):\phi(U_p)\cap G(\ZZ_p)]\geq H_p(\phi).
\]

\subsubsection{The case~$p\cdot H_Y(\phi)=H_{v'}(\phi)=p$}
\label{last case}

From~\eqref{XY3} of Prop.~\ref{prop X Y}, for every~$Y_i$, there exists~$Z_i\in \mathfrak{z(m)}_{\Z_p}$ such that
\[
\frac{1}{p}Y_i\equiv Z_i\pmod{p\cdot\mathfrak{m}_{\Z_p}}.
\]
Define~$v''=(X_1,\ldots,X_k,Z_1,\ldots,Z_l)$. Then the reductions modulo~$p$ are equal
\[
\ol{v'}=\ol{v''}\text{ in }{\mathfrak{m}_{\F_p}}^{k+l}\leq{\mathfrak{g}_{\F_p}}^{k+l}.
\]
Thus
\begin{itemize}
\item The orbit~$G_{\F_p}\cdot \ol{v'}$ is equal to~$G_{\F_p}\cdot \ol{v''}$ and is closed;
\item and~$Stab_{G_{\F_p}}(\ol{v'})=Stab_{G_{\F_p}}(\ol{v''})=Z_{G_{\FF_p}}(M_{\FF_p})$ (cf~\eqref{XY4} of Prop.~\ref{prop X Y}).
\end{itemize}
Applying Th.~\ref{pKN}\footnote{If the adjoint representation is not faithful, we apply Th.~7.1 to the representation~$\mathfrak{g}^{k+l}\oplus W$, where~$G\to GL(W)$ is a faithful representation, and vectors~$v=(v',0)\in\mathfrak{g}^{k+l}\oplus W$ and~$v=(v'',0)\in \mathfrak{g}^{k+l}\oplus W$. This ensures that the representation~$G\to GL(\mathfrak{g}^{k+l}\oplus W) $ is faithful, as assumed in Th.~7.1.}
\[
\phi(v')\in{\mathfrak{g}_{\Z_p}}^{k+l}\text{ if and only if }
\phi(v'')\in{\mathfrak{g}_{\Z_p}}^{k+l}.
\]
We have, as functions of~$\phi$,
\[
H_{v''}=\max\{H_X;H_Z\}\text{ with }H_Z:\phi\mapsto\max\{1;\norm{\phi(Z_1)};\ldots;\norm{\phi(Z_l)}\}.
\]
Because~$H_X(\phi)=1$ and~$H_{v'}(\phi)=p\neq 1$ in~\S\ref{last case}, we have
\begin{equation}\label{Z not 1}
H_Z(\phi)\neq 1.
\end{equation}
Denote by~$\pi:G(\Z_p)\to G(\F_p)$  the reduction modulo~$p$ map.
Let~$N:=Z(M)^0(\Z_p)\cap \ker(\pi)$.
For~$p$ large enough, the torus~$Z(M)^0$ has good reduction over~$\Z_p$, and, for~$p$ large enough,~$N=\exp(2p\mathfrak{z}(\mathfrak{m}))$. Thanks to~\eqref{Z not 1} we can apply\footnote{The reference~\cite[4.3.9]{EdYa} is phrased in terms of orbits of lattices.  See~\cite[\S{}B.1, proof of conclusion 3 after Th.~B.5]{RY} for the precise relation with the indices~\eqref{EdYa index}.}~\cite[4.3.9]{EdYa} and~\cite[Th.~B5 (3)]{RY} with the torus~$Z(M)^0$: we have, for some~$c\in\R_{>0}$ that does not depend on~$p$,
\begin{equation}\label{EdYa index}
[\phi(Z(M)^0(\Z_p)):\phi(Z(M)^0(\Z_p))\cap (\phi(N)\cdot G(\Z_p))]\geq p/c.
\end{equation}

Define~$\Gamma:=Z(M)^0
(\Z_p)\cdot U_p$. As~$Z(M)^0
(\Z_p)\leq \Gamma$, we have
\[
[\phi(\Gamma):\phi(\Gamma)\cap (\phi(N)\cdot G(\Z_p))]\geq
[\phi(Z(M)^0(\Z_p)):\phi(Z(M)^0(\Z_p))\cap (\phi(N)\cdot G(\Z_p))]
\]
Let~$K:=\Gamma\cap \stackrel{-1}{\phi}(G(\Z_p))$. Then~$\Gamma\cap (N\cdot \stackrel{-1}{\phi}(G(\Z_p)))=N\cdot K$, and
\[
[\Gamma:N\cdot K]\geq p/c.
\]
We have
\begin{multline}\label{}
\#\phi(U_p)\cdot G(\Z_p)/G(\Z_p)=
[\phi(U_p):\phi(U_p)\cap G(\Z_p)]=[U_p:U_p\cap K]\\
\geq [U_p:U_p\cap (K\cdot N)]=[U_p\cdot N\cdot K:N\cdot K].
\end{multline}
The formula~$[G_1:G_3]=[G_1:G_2]\cdot [G_2:G_3]$ with ~$G_1=\Gamma$, and~$G_2=U_p\cdot N\cdot K$ 
and~$G_3=N\cdot K$ gives
\begin{equation}\label{lem:eq:indice}
[\Gamma:N\cdot K]=[\Gamma:U_p\cdot N\cdot K]\cdot [U_p\cdot N\cdot K: N\cdot K].
\end{equation}
We deduce
\begin{equation}\label{Galois frac}
\#\phi(U_p)\cdot G(\Z_p)/G(\Z_p)\geq \frac{p}{c\cdot [\Gamma:U_p\cdot N\cdot K]}.
\end{equation}
Recall that~$g\in G(\Z_p)$ is "topologically $p$-nilpotent" if and only if the order of~$\pi(g)\in G(\F_p)$ is a power of~$p$. For~$H\leq G(\Z_p)$ we denote by~$H^\dagger$ the subgroup generated by the topologically $p$-nilpotent elements contained in~$H$. Note that~$\ker(\pi)\cap H\leq H^\dagger$.

Recall that~$Z(M)^0(\Z_p)$ commutes with~$U_p\leq M(\Z_p)$. The product map~$Z(M)^0(\Z_p)\times U_p\to\Gamma$ is thus a surjective group homomorphism.
It follows from~\ref{lifting p-nilpotent elements} with~$U=Z(M)^0(\Z_p)\times U_p$ and~$U'=\Gamma$ that we have
\[
\Gamma^\dagger=Z(M)^0(\Z_p)^\dagger\cdot U_p^\dagger.
\]
Thus~$\ker(\pi)\cap \Gamma\leq \Gamma^\dagger\leq Z(M)^0(\Z_p)^\dagger\cdot U_p^\dagger$.
As~$Z(M)^0$ is a torus, we have~$p\nmid \#Z(M)^0(\F_p)$, and thus~$Z(M)^0(\Z_p)^\dagger=\ker(\pi)\cap Z(M)^0(\Z_p)=N$. Because~$H_X(\phi)=1$, we have~$\phi(X_i)\in \mathfrak{g}_{\Z_p}$. Thus, for~$p\gg0$,
we have~$\phi(\exp(X_i))\in G(\Z_p)$ by~\cite[Th.~A3 (74)]{RY}. Together with~\eqref{XY1} of Prop.~\ref{prop X Y}, this implies~$\phi(U_p^\dagger)\leq G(\Z_p)$. Thus~$U_p^\dagger \leq K$.

We deduce
\[
\ker(\pi)\cap \Gamma\leq \Gamma^\dagger=Z(M)^0(\Z_p)^\dagger\cdot U_p^\dagger\leq N\cdot K.
\]

Let~$z\in Z(M)^0(\Z_p)$ be such that~$\pi(z)\in  Z(M)^0(\F_p) \cap U(p)$. As~$z\in U(p)=\pi(U_p)$, we have~$\pi(z)\in U_p\cdot (\ker(\pi)\cap \Gamma)\leq U_p\cdot K \cdot N$. Thus~$z\in  Z(M)^0(\Z_p)\cap (U_p\cdot K \cdot N)$. It follows that the map
\[
Z(M)^0(\Z_p)/(Z(M)^0(\Z_p)\cap (U_p\cdot K \cdot N))\to Z(M)^0(\F_p)/(Z(M)^0(\F_p)\cap U(p))
\]
is injective. We deduce
\begin{multline}
[\Gamma:\Gamma\cap (K\cdot N)]=
[U_p\cdot Z(M)^0(\Z_p):(U_p\cdot Z(M)^0(\Z_p))\cap ( K \cdot N)]\\
\leq [Z(M)^0(\Z_p):Z(M)^0(\Z_p)\cap (U_p\cdot K \cdot N)]
\leq [Z(M)^0(\F_p):(Z(M)^0(\F_p)\cap U(p)].
\end{multline}
By Cor.~\ref{coro big dans Z} and~\eqref{Galois frac}, this implies
\[
\#\phi(U_p)\cdot G(\Z_p)/G(\Z_p)\geq \frac{p}{c\cdot \gamma(n)\cdot C_{MT}}=\frac{H_{v'}(\phi)}{c\cdot \gamma(n)\cdot C_{MT}}.
\]
Using~\eqref{eq} we conclude
\[
[\phi(U_p):\phi(U_p)\cap G(\Z_p)]\geq\frac{1}{c\cdot \gamma(n)\cdot C_{MT}}\cdot H_p(\phi)^{c(\rho)}.
\]
This proves~\eqref{precise with a} with~$c=c(\rho)$ and~$a=1/(c\cdot \gamma(n)\cdot C_{MT})$. We have proven Th.~\ref{thm:local Galois} and Th.~\ref{Galois bounds}.

%

\subsection{Some Structure Lemmas} We consider the situation of Theorem~\ref{thm:local Galois}. We identify~$G$ with its image by a faithful representation in~$GL(n)$ such that~$G(\Z_p)=GL(n,\Z_p)\cap G$, and we denote by~$U(p)$ the image of~$U_p$ in~$G(\F_p)\leq GL(n,\F_p)$.
We denote by~$\ol{M}=M_{\F_p}$ the~$\F_p$-algebraic group from the model of~$M$ over~$\Z_p$ induced by~$M\leq GL(n)$, and we denote~$Z(M_{\F_p})$ the centre of~$M_{\F_p}$.

In Lem.~\ref{lem:4.1}, Prop.~\ref{prop4.7} Cor.~\ref{coro big dans Z}, the quantity depending on~$n$ also
depends implicitly on the function~$D\mapsto M(D)$ in~\ref{defi:Tate2} of~Def.~\ref{defi:Tate}.

We recall that the~$U_p$ satisfy the uniform integral Tate conjectures as in
Def.~\ref{defi:indep} and~\ref{defi:Tate}. In this section the statements are valid for almost every prime~$p$.
\begin{proposition}\label{prop4.7}
 There exists~$\gamma(n)$ such that,
\[
[Z(M_{\FF_p})(\FF_p):Z(M_{\FF_p})(\FF_p)\cap U(p)]\leq \gamma(n)\cdot [M_{\FF_p}^{ab}(\FF_p):ab_{M_{\FF_p}}(U(p))].
\]
\end{proposition}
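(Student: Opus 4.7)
The plan is to compare $Z(M_{\FF_p})(\FF_p)$ with $M^{ab}_{\FF_p}(\FF_p)$ through the natural morphism $\phi \colon Z(M) \to M^{ab}$ obtained by composing the inclusion $Z(M) \hookrightarrow M$ with the abelianization $M \to M^{ab}$. This is a surjection of commutative algebraic groups with finite kernel $\mu := Z(M) \cap M^{der}$, whose order is bounded in terms of $n$. As a first step I would reduce to the identity component $T := Z(M_{\FF_p})^0$ of the center: the index $[Z(M):T]$ is bounded in $n$ by a standard fact on component groups of centers of connected reductive subgroups of $GL(n)$, which contributes only a bounded multiplicative factor.

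Next I would apply the snake lemma to the pair of exact sequences
\[
1 \to \mu(\FF_p) \cap U(p) \to T(\FF_p) \cap U(p) \to ab(T(\FF_p) \cap U(p)) \to 1
\]
and
\[
1 \to \mu(\FF_p) \to T(\FF_p) \to \phi(T(\FF_p)) \to 1,
\]
with the natural vertical inclusions. Since all the vertical maps are injective, the resulting exact sequence of cokernels gives
\[
[T(\FF_p) : T(\FF_p) \cap U(p)] = [\mu(\FF_p) : \mu(\FF_p) \cap U(p)] \cdot [\phi(T(\FF_p)) : ab(T(\FF_p) \cap U(p))],
\]
and the first factor is at most $|\mu|$, bounded in $n$.

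I would then split the second factor as
\[
[\phi(T(\FF_p)) : \phi(T(\FF_p)) \cap ab(U(p))] \cdot [\phi(T(\FF_p)) \cap ab(U(p)) : ab(T(\FF_p) \cap U(p))].
\]
The first piece is at most $[M^{ab}_{\FF_p}(\FF_p) : ab_{M_{\FF_p}}(U(p))]$ by the elementary second isomorphism theorem applied inside $M^{ab}_{\FF_p}(\FF_p)$. The second piece, the \emph{deficit in $M^{ab}$}, is the technical heart: given $q$ in it, with lifts $t \in T(\FF_p)$ and $u \in U(p)$ such that $\phi(t) = ab(u) = q$, the element $ut^{-1}$ lies in $(UT \cap M^{der})(\FF_p)$ and, modulo $\mu(\FF_p) \cdot (U(p) \cap M^{der}(\FF_p))$, is a well-defined obstruction whose vanishing is equivalent to $q \in ab(T(\FF_p) \cap U(p))$. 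The uniform integral Tate condition ensures for $p \gg 0$ (via Nori theory) that $U(p) \cap M^{der}(\FF_p)$ is sufficiently large in $M^{der}(\FF_p)$, while Lang--Steinberg applied to the finite commutative group scheme $\mu$ bounds $|H^1(\FF_p, \mu)| \leq |\mu|$; together these force the deficit to be bounded by some $\gamma_1(n)$.

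Multiplying the bounded factors yields the claim with $\gamma(n)$ of the form $[Z(M):T] \cdot |\mu| \cdot \gamma_1(n)$. The main obstacle is precisely the deficit bound: without the Tate hypothesis the deficit in $M^{ab}$ could be arbitrarily large, and it is the combination of Nori/Tate control over $U(p) \cap M^{der}(\FF_p)$ with Lang--Steinberg cohomology for $\mu$ that makes this step succeed.
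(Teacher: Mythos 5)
Your plan is a genuinely different route from the paper's proof. The paper instead applies the isogeny $\ol{M}\to \ol{M}^{\ad}\times\ol{M}^{\ab}$ and uses Goursat's Lemma~\ref{Goursat} to convert the quantity to be bounded into a $\dagger$-index bound in $\ol{M}^{\ad}(\FF_p)$, which is exactly Lemma~\ref{lem:4.1}. Your reduction to the identity component $T=Z(M)^0$, the snake-lemma identity (modulo the small typo that $\mu(\FF_p)$ should be $(T\cap\mu)(\FF_p)$ in the first row), and the split of the resulting factor into a second-isomorphism piece plus a ``deficit'' are fine algebraic manipulations, and you correctly identify the deficit $[\phi(T(\FF_p))\cap \ab(U(p)):\ab(T(\FF_p)\cap U(p))]$ as the genuine obstacle.

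The gap is precisely in the deficit bound, and the justification you give does not work as stated. The claim that the uniform integral Tate condition ``ensures $U(p)\cap M^{\der}(\FF_p)$ is sufficiently large in $M^{\der}(\FF_p)$'' is false if read as a bounded-index assertion. Take $M=G=GL(2)\times GL(2)$ in $GL(4)$ (block diagonally) and $U(p)$ the diagonal copy of $GL(2,\FF_p)$: for any bounded-index $U'\leq U(p)$ and $p\gg0$ one has $Z_{G_{\FF_p}}(U')=Z(G_{\FF_p})=Z_{G_{\FF_p}}(M_{\FF_p})$ and the action on $\FF_p^4$ is semisimple, so the Tate condition holds, yet $[M^{\der}(\FF_p):U(p)\cap M^{\der}(\FF_p)]=\#SL(2,\FF_p)$ is unbounded. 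The Lang--Steinberg remark $|H^1(\FF_p,\mu)|\leq|\mu|$ is true but I do not see a role for it: no cohomological lifting argument is needed once the obstruction coset is tracked directly, and $\mu$ controls only a bounded factor anyway. What does bound your deficit is the fact, proven in Lemma~\ref{lem:4.1} from the Tate hypothesis and Jordan/Nori theory, that $[\ad_M(U(p)):\ad_M(U(p))^\dagger]\leq c(n)$. Since $U(p)^\dagger\leq U(p)\cap M^{\der}(\FF_p)$, the image $\ad_M(U(p)\cap M^{\der}(\FF_p))$ contains $\ad_M(U(p))^\dagger$ of bounded index in $\ad_M(U(p))$; pushing the deficit group through $\ad_M$, whose kernel on $M^{\der}$ is the finite $Z(M^{\der})$, then bounds the deficit by $c(n)\cdot \#Z(M^{\der})$. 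So your decomposition does lead to a proof, but only after you import the $\dagger$-index bound in $M^{\ad}$, which is the true content of the paper's argument --- not largeness of $U(p)\cap M^{\der}(\FF_p)$ inside $M^{\der}(\FF_p)$, and not cohomology of $\mu$.
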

By Hypothesis~\ref{thm:galois bounds H1}
of Th.~\ref{Galois bounds} we may use~\eqref{defi CMT} and deduce the following.
\begin{corollary}\label{coro big dans Z}
With~$C_{MT}$ as in~\eqref{defi CMT}, we have
\[
[Z(M_{\FF_p})(\FF_p):Z(M_{\FF_p})(\FF_p)\cap U(p)]\leq \gamma(n)\cdot C_{MT}.
\]
\end{corollary}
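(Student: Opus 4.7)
The plan is to combine Proposition~\ref{prop4.7} with the bound from hypothesis~\eqref{Galois bound 1} of Theorem~\ref{Galois bounds}. The proof is essentially an immediate corollary and is short.

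First I would recall that, by hypothesis~\eqref{thm:galois bounds H1} (the MT property of the image of $U$ in $M^{ab}$), equation~\eqref{defi CMT} provides a constant $C_{MT} \in \Z_{\geq 1}$ such that
\[
[M^{ab}(\Z_p) : ab_M(U_p)] \leq C_{MT}
\]
for \emph{every} prime $p$. The key observation is then that, for almost all $p$, the torus $M^{ab}$ has good reduction over $\Z_p$, so the reduction map
\[
r_p\colon M^{ab}(\Z_p) \to M^{ab}_{\F_p}(\F_p)
\]
is surjective. Since $ab_M$ is defined over $\Z$ (after restricting to a large enough prime), we have $r_p(ab_M(U_p)) = ab_{M_{\F_p}}(U(p))$, and therefore
\[
[M^{ab}_{\F_p}(\F_p) : ab_{M_{\F_p}}(U(p))] \leq [M^{ab}(\Z_p) : ab_M(U_p)] \leq C_{MT}.
\]

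Applying Proposition~\ref{prop4.7} directly yields
\[
[Z(M_{\FF_p})(\FF_p):Z(M_{\FF_p})(\FF_p)\cap U(p)]\leq \gamma(n)\cdot C_{MT},
\]
which is the conclusion. The only point that requires any care is the surjectivity of $r_p$, which holds for $p$ large enough by the good reduction of the torus $M^{ab}$; the finitely many remaining primes can be absorbed into the constant $\gamma(n)$, or handled separately by enlarging $\gamma(n)$ if one wants the statement to hold uniformly in $p$. There is no real obstacle here: all the work is done inside Proposition~\ref{prop4.7}, which this corollary merely specializes using the Mumford--Tate hypothesis on the abelianization.
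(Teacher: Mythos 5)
Your proof is correct and matches the paper's (implicit) argument: the paper simply states that the corollary follows from Proposition~\ref{prop4.7} together with the bound~\eqref{defi CMT}, and your reduction-map argument for passing from $[M^{ab}(\Z_p) : ab_M(U_p)]$ to $[M^{ab}_{\F_p}(\F_p) : ab_{M_{\F_p}}(U(p))]$ is exactly the detail that is left unstated. Note that the worry about the finitely many bad primes is moot: as announced at the start of that subsection, all statements there (including this corollary) are asserted only for almost every prime~$p$.
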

We prove Proposition~\ref{prop4.7}.
\begin{proof}We recall~$\ol{M}:=M_{\F_p}$.
We consider the isogeny
\[
(ad_{\ol{M}},ab_{\ol{M}}):\ol{M}\to \ol{M}^{ad}\times \ol{M}^{ab}
\]
We write~$U=U(p)$ and denote by~$\wt{U}$ its image in~$\ol{M}^{ad}(\FF_p)\times \ol{M}^{ab}(\FF_p)$.

We denote~$\wt{U}_1$ and~$\wt{U}_2$ its images by the projections on the two factors.

From Lemma~\ref{lem:4.1}, we have
\[
[\wt{U}_1:\wt{U}\cap \ol{M}^{ad}(\FF_p)]\leq 
[\wt{U}_1:\wt{U}^\dagger]\leq c(n).
\]
By Goursat's Lemma~\ref{Goursat}
\[
[\wt{U}_2:\wt{U}\cap \ol{M}^{ab}(\FF_p)]=[\wt{U}_1:\wt{U}\cap \ol{M}^{ad}(\FF_p)]\leq c(n).
\]
Let~$U'\leq U$ 
 be the inverse image of~$\wt{U}\cap \ol{M}^{ab}(\FF_p)$   in~$U$, 
 and~$U''\leq \ol{M}(\F_p)$  be the inverse image of~$\wt{U}\cap \ol{M}^{ab}(\FF_p)$ in~$\ol{M}(\FF_p)$.
Because~$Z(\ol{M})$ is the $(ad_{\ol{M}},ab_{\ol{M}})$-inverse image of~$\ol{M}^{ab}$ in~$\ol{M}$, we have
\[
U'\leq U''\leq Z(\ol{M})(\FF_p).
\]
Define~$F:=Z(\ol{M})\cap \ol{M}^{der}$, which is a finite~$\FF_p$-algebraic group of degree at most~$c_2(\dim(M))\leq c_2(n)$.

As we have~$U''\leq F(\ol{\FF_p})\cdot U'$, we have
\[
[U'':U']\leq \# F\leq c_2(n).
\]
On the other hand, we have
\[
[Z(\ol{M})(\FF_p):U'']\leq [ \ol{M}^{ab}(\FF_p):\wt{U}\cap \ol{M}^{ab}(\FF_p)].
\]
It follows
\begin{multline}
[Z(\ol{M})(\FF_p):U']\leq c_2(n)\cdot[ \ol{M}^{ab}(\FF_p):\wt{U}\cap \ol{M}^{ab}(\ol{\F_p})]\\
\leq c(n)\cdot c_2(n)\cdot[ \ol{M}^{ab}(\FF_p):\wt{U}_2]\qedhere
\end{multline}
\end{proof}

\begin{lemma}\label{lem:4.1}
Let~$\hat{U}:=ad_{M_{\FF_p}}(U(p))$~be the image of~$U(p)$ in~$M_{\FF_p}^{ad}(\FF_p)$.
Then~$\hat{U}^\dagger=ad_{M_{\FF_p}}(U(p)^\dagger)$, and there exists~$c(n)$
such that
\[
[\hat{U}:\hat{U}^\dagger]\leq c(n).
\]
\end{lemma}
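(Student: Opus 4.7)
The plan is to establish the two assertions in turn. The first, identifying $\hat U^\dagger$ with the $ad_{M_{\F_p}}$-image of $U(p)^\dagger$, is mostly formal; the second, the uniform bound $[\hat U:\hat U^\dagger]\le c(n)$, is the main obstacle and requires Nori theory.

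For the first assertion, the inclusion $ad_{M_{\F_p}}(U(p)^\dagger)\subseteq \hat U^\dagger$ is immediate because the algebraic homomorphism $ad_{M_{\F_p}}$ preserves unipotence. For the converse, I would lift a unipotent element $\bar u\in\hat U$ to some $u\in U(p)$ and invoke its Jordan decomposition $u=u_s u_u$ in $GL_n(\F_p)$. Since $u_s$ has order prime to $p$ while $u_u$ has $p$-power order, the Chinese remainder theorem furnishes integers $a,b$ with $u^a=u_u$ and $u^b=u_s$; in particular $u_u\in U(p)$ is unipotent, hence $u_u\in U(p)^\dagger$. The decomposition $\bar u=ad_{M_{\F_p}}(u_s)\cdot ad_{M_{\F_p}}(u_u)$ then exhibits the Jordan decomposition of $\bar u$, and unicity combined with the unipotence of $\bar u$ forces $ad_{M_{\F_p}}(u_s)=1$, so $\bar u=ad_{M_{\F_p}}(u_u)\in ad_{M_{\F_p}}(U(p)^\dagger)$.

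For the quantitative bound, the plan is to apply Nori theory (see Remark~\ref{rem:Tate}(\ref{rem2})) to $\hat U$, viewed as a subgroup of $GL(N,\F_p)$ via a faithful representation of $M^{ad}$ of dimension $N$ bounded in terms of $n$. This produces a connected $\F_p$-algebraic subgroup $\hat H\le GL(N)$ with $\hat H(\F_p)^\dagger=\hat U^\dagger$. The semisimplicity condition~\eqref{defi:tate eq 2.2}, transported through the adjoint quotient (using Remark~\ref{rem:Tate}(\ref{rem4})) together with the first assertion, shows $\hat H(\F_p)^\dagger$ acts semisimply, whence $\hat H$ is reductive; Nori's index bound~\cite[3.6(v)]{N} then gives $[\hat H(\F_p):\hat H(\F_p)^\dagger]\le c_0(n)$.

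The main obstacle is to upgrade this into a bound on $[\hat U:\hat U^\dagger]$ itself. For this I would invoke the further structural content of Nori's theorem, namely that $\hat U$ normalises $\hat H$ and that the image of $\hat U$ in the component group $(N_{GL(N)}(\hat H)/\hat H)(\F_p)$ has order prime to $p$, uniformly bounded in $n$; combined with the trivial containment $\hat U^\dagger=\hat H(\F_p)^\dagger\subseteq \hat U\cap \hat H(\F_p)$ and the Nori bound on $[\hat H(\F_p):\hat H(\F_p)^\dagger]$, the decomposition $[\hat U:\hat U^\dagger]=[\hat U:\hat U\cap\hat H(\F_p)]\cdot[\hat U\cap\hat H(\F_p):\hat H(\F_p)^\dagger]$ yields the desired uniform estimate $c(n)$.
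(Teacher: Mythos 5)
Your proof of the first assertion is correct but takes a different, more elementary route than the paper: you use the multiplicative Jordan decomposition of a lift $u\in U(p)$ together with the Chinese remainder theorem to produce a unipotent power $u_u=u^a$ of $u$, whereas the paper derives the equality from the profinite lifting statement Lemma~\ref{lifting p-nilpotent elements} via Cor.~\ref{Sylow}. Both work; the paper's version is phrased so as to be reusable at several places.

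The second assertion, however, has a genuine gap, and it is precisely at the point you flag as the main obstacle. You invoke Nori to produce a connected reductive $\hat H\leq GL(N)_{\F_p}$ with $\hat H(\F_p)^\dagger=\hat U^\dagger$, and then assert as ``further structural content of Nori's theorem'' that the image of $\hat U$ in $N_{GL(N)}(\hat H)/\hat H$ is uniformly bounded in $n$. Nori's theorem does give that $\hat U$ normalises $\hat H$ and that $[\hat H(\F_p):\hat H(\F_p)^\dagger]$ is bounded, but it gives \emph{no} bound on $[\hat U:\hat U\cap \hat H(\F_p)]$. The obstruction is concrete: since $\hat H$ is connected reductive, $N_{GL(N)}(\hat H)/\hat H$ contains $Z_{GL(N)}(\hat H)/\bigl(Z_{GL(N)}(\hat H)\cap \hat H\bigr)$, whose group of $\F_p$-points is typically a torus and grows with $p$. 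A subgroup $\hat U$ can perfectly well have a large image there (the extreme case $\hat U^\dagger=\{1\}$, $\hat H=\{1\}$, $\hat U$ a torus shows the index can be as large as $\#T(\F_p)$). What rules this out in the present situation is the \emph{centraliser} part of the Tate hypothesis, Def.~\ref{defi:Tate}~\eqref{defi:Tate2} eq.~\eqref{defi:tate eq 2}, which your argument never uses. The paper's proof uses Jordan's theorem \cite[Th.~3']{SCrit} to first reduce to $\hat U/\hat U^\dagger$ abelian, then transports the centraliser condition through $ad_M$ via Lemma~\ref{lem:centralisateur adjoint} (together with the Tate hypothesis for $U(p)$) to obtain $Z_{M^{ad}_{\F_p}}(\hat U')=1$ for bounded-index $\hat U'\le\hat U$, and finally feeds this into Lemma~\ref{lem:4.2}, whose proof shows precisely that the ``abelian centraliser part'' of $\hat U$ is trivial. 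Without some version of this centraliser input, the index $[\hat U:\hat U\cap \hat H(\F_p)]$ in your decomposition cannot be bounded in terms of $n$ alone.
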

\begin{proof}
The equality~$\hat{U}^\dagger=ad_{\ol{M}}(U(p)^\dagger)$ follows from Cor.~\ref{Sylow}.

From Jordan's theorem~\cite[Th.~3']{SCrit} (applied to~$G=\hat{U}/\hat{U}^\dagger$) there exists~$\hat{U}^\dagger\leq \hat{U}'\leq \hat{U}$ of index~$[\hat{U}:\hat{U}']\leq C(n):=d(n)$
such that
\[
\hat{U}'/\hat{U}^\dagger
\]
is abelian, where~$d(n)$ is as in~\cite[{\S\S4--5}]{SCrit}.
 Without loss of generality, we may assume~$\hat{U}=\hat{U}'$.

Let~$M^{ad}\to GL(n')$ be the adjoint representation in some~$\Q$-linear basis of~$\mathfrak{m}^{ad}$.
For~$p\gg0$, the $\F_p$-fiber of the induced model of~$M^{ad}$ is~$\ol{M}^{ad}$. For~$p\gg0$ the following holds: for any subgroup~$V\leq M(\F_p)$ whose action on~${\F_p}^n$ is semisimple, the action on~${\F_p}^{n'}$ is semisimple (cf.~\cite[Th.~5.4]{SCR}).

Let~$c_1(n)$ be the~$c(n)$ of Lem.~\ref{lem:centralisateur adjoint} applied with~$N=n$. Let~$c_2(n)$ be the~$c(n')$ of Lem.~\ref{lem:4.2}, where~$M\leq GL(n)$ is our~$\overline{M^{ad}}\leq GL(n')_{\F_p}$.
Let~$c_3(n)=c_1(n)\cdot c_2(n)$.

Let us prove that~$\wh{U}$ satisfies the assumptions of~\ref{lem:4.2}, where~$M\leq GL(n)$ is our~$M^{ad}\leq GL(n')$, and~$p\geq M(c_3(n))$.

By construction~$\hat{U}/\hat{U}^\dagger$ is abelian. To simplify notations, we write~$U$ for~$U(p)$, in the rest of the proof.

Let~$U'\leq \wh{U}$ be such that~$[\wh{U}:U']\leq c_2(n)$.
Let~$V$ be the inverse image of~$U'$ in~$U$. We have~$[U:V]\leq [\wh{U}:U']\leq c_2(n)$.
We apply~Lem.~\ref{lem:centralisateur adjoint} to~$V$:
there is~$V'\leq V$ such that~$[V:V']\leq c_1(n)$ and
\[
Z_{\ol{M}^{ad}}(ad_{\ol{M}}(V'))=Z_{\ol{M}}(V')/Z(\ol{M}).
\]
We have
\[
[U:V']=[U:V]\cdot [V:V']\leq c_2(n)\cdot c_1(n)=c_3(n).
\]
Since~$p\geq M(c_3(n))$, we may apply~\ref{defi:Tate2} of the Tate hypothesis Def.~\ref{defi:Tate}, and we have
\[
Z_{\ol{M}}(V')=Z(\ol{M}).
\]
It follows that
\[
Z_{\ol{M}^{ad}}(U)
\leq 
Z_{\ol{M}^{ad}}(ad_{\ol{M}}(V')) =\{1\}.
\]
We can thus apply Lemma~\ref{lem:4.2} to~$\hat{U}\leq \ol{M}^{ad}(\FF_p)$, and the conclusion follows.
\end{proof}

%
%
%
%
%

\begin{lemma}\label{lem:centralisateur adjoint}
 For every~$N\in\Z_{\geq 1}$ there exists~$c(N)$ such that,
for any prime~$p$ and any reductive algebraic subgroup~$M\leq GL(N)_{\F_p}$, and any~$U\leq M(\F_p)$, there exists~$U'\leq U$ with
\[
[U:U']\leq c(N)
\]
and
\[
Z_{M^{ad}}(ad_M(U'))=Z_M(U')/Z(M)
\]
where~$ad_M:M\to M^{ad}$ is the quotient map to the adjoint group.
\end{lemma}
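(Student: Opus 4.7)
The plan is to construct $U'$ by an iterative procedure that successively kills elements of the ``extra'' set $Z_{M^{ad}}(ad_M(U_i)) \setminus (Z_M(U_i)/Z(M))$ while shrinking $U_i$ by a universally bounded factor at each step. Write $\tilde Z_{U'} := ad_M^{-1}(Z_{M^{ad}}(ad_M(U'))) \leq M$, so that $Z_{M^{ad}}(ad_M(U')) = \tilde Z_{U'}/Z(M)$ as algebraic subgroups of $M^{ad}$. The inclusion $Z_M(U') \subseteq \tilde Z_{U'}$ is automatic, so the desired equality reduces to the reverse inclusion $\tilde Z_{U'} = Z_M(U')$.

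The key cocycle is the following: for any geometric point $g \in \tilde Z_{U'}(\ol{\F_p})$, the map $c_g : u \mapsto [g,u]$ sends $U'$ into $F(\ol{\F_p})$, where $F := Z(M) \cap M^{der}$. Indeed commutators always lie in $M^{der}$, and $[g,u] \in Z(M)$ by definition of $\tilde Z_{U'}$. A direct calculation using the centrality of $Z(M)$ in $M$ shows that $c_g : U' \to F(\ol{\F_p})$ is a \emph{group homomorphism}, with kernel $U' \cap Z_M(g)(\ol{\F_p})$. Moreover $F$ coincides with $Z(M^{der})$, which is a finite group whose order $|F(\ol{\F_p})|$ is bounded by a constant $c_0(N)$ depending only on $N$, since $M^{der}$ is semisimple of rank at most $N$. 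Consequently $[U' : \ker c_g] \leq c_0(N)$.

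Next, I would iterate. Set $U_0 := U$; at step $i$, if $\tilde Z_{U_i} = Z_M(U_i)$ as algebraic subgroups of $M$, terminate with $U' := U_i$. Otherwise choose $g \in \tilde Z_{U_i}(\ol{\F_p}) \setminus Z_M(U_i)(\ol{\F_p})$ and put $U_{i+1} := \ker c_g = U_i \cap Z_M(g)(\ol{\F_p})$. Then $[U_i : U_{i+1}] \leq c_0(N)$ by the cocycle analysis. Crucially, by construction $g$ centralizes $U_{i+1}$, so $g \in Z_M(U_{i+1})(\ol{\F_p}) \setminus Z_M(U_i)(\ol{\F_p})$, and the algebraic centralizer grows \emph{strictly}: $Z_M(U_i) \subsetneq Z_M(U_{i+1})$ as algebraic subgroups of $M$.

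The main obstacle is bounding the length of this chain of strictly increasing centralizer subgroups in $M \leq GL(N)$ by a constant $L(N)$ depending only on $N$. Strict inclusion forces either the dimension to increase (and $\dim M \leq N^2$) or, at fixed dimension, the finite component group $\pi_0(Z_M(U_i))$ to grow strictly; the order of the component group of the centralizer of any subset in a reductive group of rank at most $N$ is itself bounded by a function of $N$, a standard consequence of the theory of reductive groups (controlled via the Weyl group). Combining these bounds yields a universal $L(N)$, the procedure terminates in at most $L(N)$ steps, and one obtains $[U:U'] \leq c_0(N)^{L(N)} =: c(N)$, as required.
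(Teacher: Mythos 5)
Your approach is genuinely different from the paper's, although both revolve around the same key observation: for $g$ with $ad_M(g)$ centralising $ad_M(U')$, the commutator map $c_g : u \mapsto [g,u]$ is a group homomorphism from $U'$ into the finite central group $F := Z(M)\cap M^{der} = Z(M^{der})$, whose order is bounded by a constant depending only on $N$. The paper uses this by choosing a single subgroup $U' := U[e] = \bigcap_{\phi\in\Hom(U,\Z/(e))}\ker\phi$, where $e$ is a universal multiple of $\#F(\ol{\F_p})$; then $c_g$ automatically vanishes on $U'$, and the index $[U:U[e]]\leq e^{k(N)}$ is controlled by a separate input, namely that $U$ is generated by at most $k(N)$ elements (\cite[Prop.~6.7]{RY}). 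You instead run a step-by-step descent, passing to $\ker c_g$ for one offending $g$ at a time and arguing that the strictly increasing chain of algebraic centralisers $Z_M(U_i)$ must terminate. This bypasses the bounded-generation input of \cite{RY} entirely, which is a real structural difference.

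The weak point is the termination bound. You assert that ``the order of the component group of the centralizer of any subset in a reductive group of rank at most $N$ is itself bounded by a function of $N$, a standard consequence of the theory of reductive groups (controlled via the Weyl group).'' This is not a standard statement in that form, and the parenthetical appeal to the Weyl group does not justify it: the Weyl group governs normalisers of maximal tori, not the component groups of centralisers of arbitrary subgroups. Fortunately your plan survives because a much simpler and fully elementary bound is available. At each step you produce $Z_M(U_i)(\ol{\F_p})\subsetneq Z_M(U_{i+1})(\ol{\F_p})$; since $Z_M(U_i)(\ol{\F_p}) = M(\ol{\F_p})\cap Z_{GL(N)}(U_i)(\ol{\F_p})$ and the latter centralisers are nested, this forces $Z_{GL(N)}(U_i)(\ol{\F_p})\subsetneq Z_{GL(N)}(U_{i+1})(\ol{\F_p})$. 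These are the unit groups of the centraliser subalgebras $A_i := Z_{M_N(\ol{\F_p})}(U_i)$, which contain $\ol{\F_p}\cdot I$ and hence are generated by their units; so $A_i\subsetneq A_{i+1}$, and you have a strictly increasing chain of $\ol{\F_p}$-subspaces of the $N^2$-dimensional space $M_N(\ol{\F_p})$. Thus the chain length is at most $N^2$, giving $c(N) = c_0(N)^{N^2}$ with no reference to component groups at all. As written, the justification of the chain bound is the gap; with the replacement above your proof is correct and gives a genuinely alternative (and arguably more self-contained) argument than the paper's.
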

\begin{proof}Let~$M^{der}$ be the derived subgroup of~$M$ and~$F=Z(M^{der})=Z(M)\cap M^{der}$ be its centre.

We claim that there exists~$e\in\Z_{\geq1}$ such that for every reductive subgroup~$M\leq G$, we have
\[
\left.\#{Z(M^{der})(\ol{\F_p})}^{\phantom{l}}\middle|e\right..
\]
\begin{proof}
We have~$\#F({\F_p})\leq \#F(\ol{\F_p})\leq \#\pi_1(M^{ad})$. 
As in~\cite[Proof of Lem.~2.4]{UYtowards}, one can classify~$M^{ad}$ in terms of its Dynkin diagram, and, since~$\dim(M^{ad})$ is bounded, deduce a bound on~$\#\pi_1(M^{ad})$. 
\end{proof}
Define
\[
U[e]:=\bigcap_{\phi\in\mathrm{Hom}{}(U,\Z/(e))}\ker(\phi).
\]
According to~\cite[Prop. 6.7]{RY}, the group~$U$ is generated by~$k(N)$ elements. It follows that
\[
[U:U[e]]\leq e^{k(N)}.
\]
Let~$m\in M(\ol{\F_p})$ such that~$\wh{m}:=ad_M(m)$ belongs to~$Z_{M^{ad}}(ad_M(U))$.
Denote by~$\phi_m$ the map
\[
u\mapsto mum^{-1}u^{-1}:U\to M(\ol{\F_p}).
\]
We claim that~$\phi_m$ is a morphism~$U\to F(\ol{\F_p})$.
\begin{proof}Consider~$u\in U$ and let~$\wh{u}=ad_M(u)$. We have~$\wh{m}\wh{u}\wh{m}^{-1}=\wh{u}$,
and this implies~$mum^{-1}u^{-1}\in \ker(ad_M)$. It follows
\[
\phi_m(U)\subseteq Z(M)(\ol{\F_p}).
\]
Since~$mum^{-1}u^{-1}$ is a commutator, we have~$mum^{-1}u^{-1}\in [M(\ol{\F_p}),M(\ol{\F_p})]=M^{der}(\ol{\F_p})$. It follows
\[
\phi_m(U)\subseteq M^{der}(\ol{\F_p}).
\]
Thus
\[
\phi_m(U)\subseteq F(\ol{\F_p})=M^{der}(\ol{\F_p})\cap Z(M)(\ol{\F_p})
\]
For~$u,u'\in U$, we have
\[
\phi_m(uu')=muu'm^{-1}(uu')^{-1}=mum^{-1}mu'm^{-1}u'^{-1}u^{-1}=mum^{-1}\phi_m(u')u^{-1}.
\]
Since~$\phi_m(u')\in Z(M)(\ol{\F_p})$, we have~$\phi_m(u')u^{-1}=u^{-1}\phi_m(u')$, and
\[
\phi_m(uu')=mum^{-1}u^{-1}\phi_m(u')=\phi_m(u)\phi_m(u').
\]
obviously,~$\phi_m(1)=1$. 
The claim follows.
\end{proof}
Since~$\#F| e$, by construction of~$U[e]$, we have
\[
\phi_m(U[e])=\{1\}.
\]
Equivalently~$m\in Z_M(U[e])$.

This proves the Lemma with~$U'=U[e]$ and~$c(N)=e^{k(N)}$.
\end{proof}

\begin{lemma}\label{lem:4.2}
For every~$n\in\Z_{\geq1}$, there exists~$c(n)$, $c'(n)$, $m(n)$ such that the following holds.

Let~$p>m(n)$ be a prime, let~$M\leq GL(n)$ be adjoint over~$\FF_p$, and let
\[
\hat{U}\leq M(\FF_p)
\]
be a subgroup
\begin{itemize}
\item such that~$\hat{U}/\hat{U}^\dagger$ is abelian 
\item and such that for every~$U'\leq \hat{U}$ of index at most $c(n)$:
\begin{enumerate}
\item \label{cond 1} we have~$Z_M(U')=1$;
\item \label{cond 3} the action of~$U'$ is semisimple;
\end{enumerate}
\end{itemize}
Then we have~$[\hat{U}:\hat{U}^\dagger]\leq c'(n)$.
\end{lemma}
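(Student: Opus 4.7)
The plan is to attach an algebraic Nori group $H$ to $\hat{U}^\dagger$, interpret the abelian quotient $\hat{U}/\hat{U}^\dagger$ as sitting inside the normaliser quotient $N_M(H)/H$, and then use the centraliser hypothesis to kill the resulting abelian image. I would take $m(n)$ large enough that Nori's theorem applies to subgroups of $\GL(n,\FF_p)$, together with the reductivity criterion \cite[Th.~5.3]{SCR}, the structural identity $N_M(H)^0=H\cdot Z_M(H)^0$ for connected reductive $H$, the uniform bound on $|N_M(H)/(H\cdot Z_M(H)^0)|$ in terms of $\dim M\leq n^2$, and the double centraliser theorem. Applying the semisimplicity hypothesis with $U'=\hat{U}$ (so $c(n)\geq 1$), the action of $\hat{U}$ on $\FF_p^n$ is semisimple, and by Clifford's theorem so is that of the normal subgroup $\hat{U}^\dagger$. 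Nori's theorem (cf.\ Remark~\ref{rem:Tate}~\eqref{rem2}) together with \cite[Th.~5.3]{SCR} then produces a connected reductive algebraic subgroup $H\leq M_{\FF_p}$, the Nori group of $\hat{U}^\dagger$, with $H(\FF_p)^\dagger=\hat{U}^\dagger$ and $[H(\FF_p):H(\FF_p)^\dagger]\leq b_1(n)$.

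Next, $\hat{U}$ normalises the characteristic subgroup $\hat{U}^\dagger=H(\FF_p)^\dagger$ and hence, by Zariski density, normalises $H$ itself, so $\hat{U}\leq N_M(H)(\FF_p)$. Passing to $\hat{U}_1:=\hat{U}\cap(H\cdot Z_M(H)^0)(\FF_p)$, of index $\leq b_2(n)$ in $\hat{U}$, the central isogeny $H\times Z_M(H)^0\twoheadrightarrow H\cdot Z_M(H)^0$ (with central kernel $Z(H)\cap Z_M(H)^0$) yields a homomorphism $\chi\colon\hat{U}_1\to T(\FF_p)$ for $T$ the corresponding reductive quotient of $Z_M(H)^0$. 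Its kernel equals $\hat{U}_1\cap H(\FF_p)$ and contains $\hat{U}^\dagger$ with index at most $b_1(n)$, while $\chi(\hat{U}_1)$ is abelian, being a quotient of $\hat{U}/\hat{U}^\dagger$.

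I then bound $|\chi(\hat{U}_1)|$ using the centraliser hypothesis. Any $t\in\chi(\hat{U}_1)$ lifts to $zh\in\hat{U}_1$ with $z\in Z_M(H)^0(\FF_p)$, and $\tilde{A}_t:=Z(H)^0\cdot\langle z\rangle_{\mathrm{Zar}}$ is a commutative algebraic subgroup of $Z_M(H)^0$ centralising $H$. If the prime-to-$p$ order of $t$ is large enough in terms of $n$, then $\tilde{A}_t$ either has positive-dimensional torus part or has so many $\FF_p$-points that the preimage $U'':=\chi^{-1}(\chi(\hat{U}_1)\cap (\tilde{A}_t/(Z(H)^0))(\FF_p))$ has index in $\hat{U}$ bounded in terms of $n$ and admits $\tilde{A}_t$ (minus the trivial factor) in its centraliser; in either case one contradicts $Z_M(U'')=1$ once $c(n)$ exceeds this index. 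Hence every element of $\chi(\hat{U}_1)$ has bounded prime-to-$p$ order; combined with the absence of $p$-torsion (elements of $p$-power order lift via Jordan decomposition to unipotents in $\hat{U}^\dagger$ for $p\gg 0$, so map to zero under $\chi$) and a generation bound on $\chi(\hat{U}_1)$ as in \cite[Prop.~6.7]{RY}, this gives $|\chi(\hat{U}_1)|\leq b_4(n)$, whence $[\hat{U}:\hat{U}^\dagger]\leq b_1(n)\cdot b_2(n)\cdot b_4(n)=:c'(n)$.

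The hard part is the third paragraph: the Zariski closure of $\chi(\hat{U}_1)$ in $T$ can have arbitrarily many components (e.g.\ $\mu_N$ inside a subtorus with $N\mid p^r-1$), so no uniform bound follows from a dimension argument alone. One must instead extract from each sufficiently large cyclic subgroup of $\chi(\hat{U}_1)$ a positive-dimensional commutative subgroup of $M$ centralising a subgroup $U''\leq\hat{U}$ of index bounded in terms of $n$, and then combine these elementwise estimates via the Nori-type generator bound. Balancing $c(n)$ against the index estimates coming from individual element orders is what makes the choice of $c(n)$ and $c'(n)$ delicate.
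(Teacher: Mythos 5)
Your first two paragraphs track the paper's route quite closely: attach the Nori group to $\hat{U}^\dagger$, note $\hat{U}$ normalises it, pass to a bounded-index subgroup inside the neutral component of the normaliser, and read off an abelian quotient ($\chi(\hat{U}_1)$ in your notation, $\tilde{U}_2$ in the paper's) living in a reductive quotient of the centraliser of the Nori group. The paper works with the semisimple piece $S$ rather than the reductive $H$, and uses Goursat's lemma to split $\hat{U}$, up to bounded index, into two commuting pieces $\hat{U}_1\leq S(\FF_p)$ and $\hat{U}_2\leq Z_M(S)(\FF_p)$, but these differences are cosmetic. The divergence — and the gap you yourself flag — is in your third paragraph.

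Your plan is to bound the order of an individual $t\in\chi(\hat{U}_1)$ by producing from a large-order $t$ a bounded-index $U''\leq\hat{U}$ with $Z_M(U'')\neq 1$. This fails for several compounding reasons. First, the group $U''=\chi^{-1}\bigl(\chi(\hat{U}_1)\cap(\tilde{A}_t/Z(H)^0)(\FF_p)\bigr)$ has no reason to have bounded index: the intersection need only contain $\langle t\rangle$, and $\chi(\hat{U}_1)$ can be far larger than $\langle t\rangle$, so neither horn of your ``positive-dimensional or many $\FF_p$-points'' dichotomy controls the index. Second, the dichotomy is itself false: a $0$-dimensional diagonalisable group such as $\mu_{p-1}$ can have $p-1$ rational points without being a torus, so ``large prime-to-$p$ order of $t$'' does not force $\tilde{A}_t$ to have positive dimension. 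Third, $\tilde{A}_t$ centralises $H$ and hence $\hat{U}^\dagger$, but you cannot take $U''=\hat{U}^\dagger$ (its index in $\hat{U}$ is precisely what you are trying to bound), and $\tilde{A}_t$ need not centralise all of $\hat{U}_1$, whose elements have components in the possibly non-commutative $Z_M(H)^0$ where $\tilde{A}_t$ is not central.

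The missing idea is a global, not element-by-element, use of the centraliser hypothesis. In the paper's proof, after the Goursat reduction, the group $\hat{U}_2$ sitting in $Z_M(S)/Z(S)$ is an extension of an abelian prime-to-$p$ group by a finite group $F\leq Z(S)$ of order bounded purely in terms of $n$. Lemma~\ref{Lemma extension} and Corollary~\ref{coro extension} then extract an \emph{abelian} subgroup $U''_2\leq\hat{U}_2$ of index bounded in $n$. Since $\hat{U}_1\leq S(\FF_p)$ and $U''_2\leq Z_M(S)(\FF_p)$ commute, and $U''_2$ is abelian, the whole of $U''_2$ is central in the bounded-index subgroup $\hat{U}_1\cdot U''_2\leq\hat{U}$. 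The centraliser hypothesis then forces $U''_2=1$ at a stroke, with no need to control element orders or generator counts, and the final bound is simply $[\hat{U}:\hat{U}^\dagger]\leq[S(\FF_p):S(\FF_p)^\dagger]$. Without this abelianisation step your third paragraph does not close, and the ``delicate balancing'' you allude to cannot be carried out as stated.
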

\subsubsection{Remark}\label{Remark Lemma}
 In proving the Lemma, we may substitute~$\hat{U}$ with~$U'$ if~$\hat{U}^\dagger\leq U'\leq \hat{U}$
and~$[\hat{U}:U']\leq f(n)$, where~$f(n)$ depends only on~$n$. We then have to change~$c(n)$ into~$c(n)\cdot f(n)$
accordingly.

\begin{proof}We assume~$p>p(n)$, for some~$p(n)$ depeding only on~$n$, so that we can apply Nori theory~\cite{N}.
 
We denote by~$S\leq M$ the~$\F_p$-algebraic group associated by Nori to~$\hat{U}^\dagger$,
and denote by~$N=N_M(S)$ the normaliser of~$S$ in~$M$. 

We deduce from~\eqref{cond 3} that~$S$ is semisimple, and thus~$N^0=S\cdot Z_M(S)^0$.

We recall that the semisimple Lie subalgebras contained in~$\mathfrak{gl}(n)_{\ol{\F_p}}$ can assume finitely many types, independently from~$p$. We deduce uniform bounds
\begin{equation}
\#N/N^0\leq c_1(n)\text{ and }\#Z(S)\leq c_2(n).
\end{equation}

As~$\wh{U}^\dagger$ is a characteristic subgroup of~$\wh{U}$ it is normalised by~$\wh{U}$. It follows
that~$\wh{U}$ normalises the associated semisimple subgroup~$S$ by Nori.
We have~$\hat{U}\leq N$. Let~$N^\dagger\leq N$ be the algebraic subgroup generated by one parameter unipotent subgroups. Then~$N^\dagger$ is connected, and we have~$U^\dagger\leq N^\dagger\leq N^0$. If~$U'=\hat{U}\cap N^0$
we have~$[\hat{U}:U']\leq c_1(n)$ and~$U^\dagger\leq U'$. Using the Remark~\ref{Remark Lemma},
we may replace~$\hat{U}$ by~$U'=\hat{U}\cap N^0$.

We denote~$Z(S)=Z_M(S)\cap S$ and we consider
\[
N^0\to S^{ad}\times Z_M(S)/Z(S).
\]
We denote~$\wt{U}$ the image of~$\hat{U}$, by
\[
\wt{U}_1\leq S^{ad}(\F_p)\text{ and~}\wt{U}_2\leq (Z_M(S)/Z(S))(\F_p)
\] the projections of~$\wt{U}$ 
 and define
\[U'_1:=\wt{U}\cap (S^{ad}(\FF_p)\times\{1\})
\text{ and } U'_2:=\wt{U}\cap(\{1\}\times Z_M(S)/Z(S)).\]

 From Cor.~\ref{Sylow}, 
the image, in~$S^{ad}(\FF_p)\times  Z_M(S)/Z(S)$, of~$S(\F_p)^\dagger\leq \hat{U}$ is~$S^{ad}(\FF_p)^\dagger$.
Thus
\[
S^{ad}(\FF_p)^\dagger\times\{1\} \leq U'_1\leq \wt{U}_1\times\{1\}\leq S^{ad}(\FF_p)\times\{1\}.
\]
With~$r(n)$ given  by~\cite[3.6(iv-v) and p.\,270]{N}, we have
\begin{equation}\label{rN}
[\wt{U}_1\times\{1\}:U'_1]\leq [S^{ad}(\FF_p):S^{ad}(\FF_p)^\dagger]\leq r(n)=2^{n-1}.
\end{equation}


By Goursat's Lemma~\ref{Goursat} and~\eqref{rN} we have
\[
[\wt{U}_1\times\{1\}:U'_1]=[\{1\}\times\wt{U}_2:U'_2]\leq r(n).
\]
Thus, with~$U':=U'_1\cdot U'_2\simeq U'_1\times U'_2$, we have
\[
[\wt{U}:U']\leq [\wt{U}_1\times\wt{U}_2:U']\leq r(n)^2.
\]
Because~$\wh{U}^\dagger=S(\F_p)^\dagger$ is sent to~$S^{ad}(\F_p)^\dagger\times\{1\}$
(cf. Cor.~\ref{Sylow}) and because~$S^{ad}(\F_p)^\dagger\times\{1\}\leq U'_1\leq U'$
we may use the Remark~\ref{Remark Lemma}, and replace~$\hat{U}$ by the inverse image 
of~$U'$. We denote by
\[
\hat{U}_1,\quad\hat{U}_2
\]
the inverse images of~$U'_1$ and~$U'_2$.

Because~$\hat{U}_2\leq Z_M(S)$ and~$\hat{U}_1\leq S$, the groups~$\hat{U}_1$ and~$\hat{U}_2$ commute with each other.

We reduce the situation to the case where~$\hat{U}_2$ is abelian.

We know that~$\hat{U}/\hat{U}^\dagger$ is abelian and that~$\hat{U}^\dagger\leq S(\F_p)$. It follows that~$\hat{U}_2/(\hat{U}_2\cap S(\F_p))$
is abelian. We have~$F:=\hat{U}_2\cap S(\F_p)\leq Z_M(S)\cap S=Z(S)$, and thus~$\abs{F}\leq c_2(n)$,
and~$\hat{U}_2$ is an extension of the abelian group~$U'_2=\hat{U}_2/(\hat{U}_2\cap S)=\wt{U}\cap Z_M(S)/Z(S)$ by a finite group~$F$ of order at most~$c_2(n)$. Moreover,~$U'_2$ is of order prime to~$p$, and thus is diagonalisable in~$Z_M(S)/Z(S)(\ol{\F_p})$. It follows we can find a monomorphism~$U_2'\leq (\ol{\F_p}^\times)^{rk(Z_M(S))}$ where~$rk(Z_M(S))$ is the rank of~$Z_M(S)$. We have~$rk(Z_M(S))\leq rk(M)\leq n$.
From Cor.~\ref{coro extension}, there exists an abelian subgroup~$U''_2\leq \hat{U}_2$ of index at most
\[
c_3(n)=c_2(n)\cdot e(c_2(n))^n.
\]

Using the remark we may replace~$\hat{U}=\hat{U}_1\cdot \hat{U}_2$ by~$U'=\hat{U_1}\cdot U''_2$.
Then~$U''_2$ commutes with~$\hat{U}_1$ and with itself:~$U'_2$ is in the centre of~$\hat{U}$. By Hypothesis~\eqref{cond 1}, we have~$U''_2=1$.

Thus~$\hat{U}=\hat{U_1}\leq S(\FF_p)$ and
\[
[\hat{U}:\hat{U}^\dagger]\leq [S(\FF_p):S(\FF_p)^\dagger]\leq r(n).\qedhere
\]
\end{proof}

\subsubsection{Other lemmas}
\begin{lemma}\label{Lemma bounded and centraliser}
 Let~$H\leq G\leq GL(d)$ be algebraic groups over~$\Q$
with~$H$ Zariski connected.
For every~$\lambda\in\Z_{\geq 1}$, there exists~$N\in\Z_{\geq0}$ such that:
for all prime~$p\geq N$, and for all subgroups~$U\leq H_{\F_p}(\F_p)$ such that
\[
[H_{\F_p}(\F_p):U]\leq \lambda
\]
we have
\[
Z_{G_{\F_p}}(U)=Z_{G_{\F_p}}(H_{\F_p}).
\]
\end{lemma}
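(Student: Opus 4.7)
The plan is to exploit the identity $Z_{G_{\F_p}}(U) = Z_{G_{\F_p}}(\overline{U}^{Zar})$, where $\overline{U}^{Zar}$ denotes the Zariski closure of $U$ in $H_{\F_p}$, and then argue that for $p$ large this Zariski closure is ``large enough'' inside $H_{\F_p}$ to have the same $G_{\F_p}$-centraliser as $H_{\F_p}$ itself. I will reduce successively by the unipotent radical of $H$, then by the derived subgroup (using Nori theory), and finally by a direct weight/character argument on the remaining torus quotient.

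First, for $p$ large, the Levi decomposition $H_{\F_p} = R_u(H)_{\F_p} \rtimes L_{\F_p}$ is defined with $L$ reductive. Since $|R_u(H)_{\F_p}(\F_p)| = p^{\dim R_u(H)}$ is a power of $p$ and $[R_u(H)_{\F_p}(\F_p) : U \cap R_u(H)_{\F_p}(\F_p)] \leq \lambda$, this index must equal $1$ as soon as $p > \lambda$, so $\overline{U}^{Zar} \supseteq R_u(H)_{\F_p}$, and the problem reduces to the image $U_L \leq L_{\F_p}(\F_p)$ of $U$ (which has index $\leq\lambda$), i.e.\ to $H$ reductive. Next, for $p$ large in $d$, Nori theory applies and Remark~\ref{rem:Tate}~\eqref{rem2.2} gives $U^\dagger = H_{\F_p}(\F_p)^\dagger$ whenever $p > \lambda$; since $H_{\F_p}(\F_p)^\dagger$ is Zariski dense in $H^{der}_{\F_p}$ for $p$ large (generation by unipotents in semisimple groups over $\F_p$), one obtains $\overline{U}^{Zar} \supseteq H^{der}_{\F_p}$. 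Combining these, it remains to analyse the image $V$ of $U$ in the quotient torus $T := H/(R_u(H) \cdot H^{der})$, which has index in $T(\F_p)$ bounded in terms of $\lambda$ and the fixed data.

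For the torus case, $T \leq G \leq GL(d)_{\F_p}$ acts on $\F_p^d$ via a finite collection of characters $\chi_1, \ldots, \chi_r \in X^*(T)$, with $r \leq d$. An element $g \in GL(d,\F_p)$ centralises $T$ iff it preserves each weight space $V_{\chi_i}$, and centralises a single $t \in T(\F_p)$ iff it preserves the eigenspaces of $t$; these eigenspaces coincide with the weight spaces provided $\chi_i(t) \neq \chi_j(t)$ for all $i \neq j$. The exceptional locus $B := \bigcup_{i\neq j}\ker(\chi_i\chi_j^{-1})(\F_p) \subset T(\F_p)$ is a union of at most $\binom{r}{2}$ proper closed subgroups of $T$ (each of dimension $\dim T -1$ and with number of geometric components bounded in terms of the weights), so by Lang--Weil $|B| \leq C p^{\dim T - 1}$ with $C$ depending only on $H \leq G \leq GL(d)$. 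Since $|V| \geq |T(\F_p)|/\lambda' \gtrsim p^{\dim T}/\lambda'$ for $p$ large, for $p$ sufficiently large $V \not\subseteq B$, producing some $t \in V$ with $Z_{GL(d,\F_p)}(t) = Z_{GL(d,\F_p)}(T)$; intersecting with $G_{\F_p}$ yields $Z_{G_{\F_p}}(V) = Z_{G_{\F_p}}(T)$, and combining with the first two steps concludes.

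The main obstacle will be uniform bookkeeping of indices across the short exact sequences $1 \to R_u(H) \to H \to L \to 1$ and $1 \to H^{der} \to L \to T \to 1$, ensuring that the index bound $\lambda$ is only replaced by a constant $\lambda'$ depending on $\lambda$ and the fixed data $H \leq G \leq GL(d)$, and that the various ``for $p$ large'' thresholds (validity of Levi and Nori theory, Lang--Weil for tori, separation of weight values in $\F_p^\times$) depend only on $\lambda$ and on this fixed data.
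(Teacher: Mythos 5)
Your structural reduction (unipotent radical, then derived subgroup via Nori, then abelian part) is genuinely different from the paper's argument, which is a direct count: the paper uses Lemma~\ref{lemma schemes} to bound, uniformly in $p$ and $g$, the number of components of $H_{\F_p}\cap Z_{GL(d)_{\F_p}}(g)$, observes that centralisers in $GL(d)$ are connected (being the unit group of a subalgebra), and then notes that any $g\in Z_{GL(d)_{\F_p}}(U)(\F_p)\smallsetminus Z_{GL(d)_{\F_p}}(H_{\F_p})(\F_p)$ would confine $U$ to a proper connected subgroup of $H_{\F_p}$, hence of dimension $\leq\dim H-1$, contradicting $\#U\geq(p-1)^{\dim H}/\lambda$ once $p\gg0$.

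Your third step, however, has a genuine error. You define $T:=H/(R_u(H)\cdot H^{der})$, a \emph{quotient} torus, and then treat $T$ as a subgroup of $G\leq GL(d)_{\F_p}$ acting on $\F_p^d$ with weights $\chi_1,\dots,\chi_r$. A quotient of $H$ does not embed in $G$, does not act on $\F_p^d$, and $Z_{G_{\F_p}}(T)$ is not defined, so the conclusion ``$Z_{G_{\F_p}}(V)=Z_{G_{\F_p}}(T)$'' is meaningless as written. The object you want is the connected centre $Z(L)^0$ of the Levi factor $L$: it \emph{is} a subtorus of $H\leq G$, it acts on $\F_p^d$, the isogeny $Z(L)^0\to T$ transfers your index estimate with bounded loss, and your regular-element argument then applies to $Z(L)^0$. (Alternatively, assemble the three steps into the stronger claim $\overline{U}^{Zar}=H_{\F_p}$, which avoids the regular-element search: steps~1--2 give $\overline{U}^{Zar}\supseteq R_u(H)_{\F_p}\cdot L^{der}_{\F_p}$, a point-count shows the image of $U$ in $T(\F_p)$ is Zariski dense, and the conclusion follows from $Z_{G_{\F_p}}(U)=Z_{G_{\F_p}}(\overline{U}^{Zar})$.) Relatedly, the reduction ``to the image $U_L$'' in step~1 should not be taken for granted, since $Z_G$ does not factor through the non-$G$-equivariant projection $H\to L$; the correct bridge is again $Z_{G}(U)=Z_{G}(\overline{U}^{Zar})$ combined with $\overline{U}^{Zar}\supseteq R_u(H)_{\F_p}$.
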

\begin{proof}Without loss of generality we may assume~$G=GL(d)$.
We define a scheme~$X\leq Y\times Y$, with~$Y=GL(d)_{\Z}$ by
\[
X=\{(g,h)\in GL(d)_{\Z}\times GL(d)_{\Z}|[g,h]=1, h\in H\}
\]
and denote by~$\phi: X\to Y$ the first projection. According to the Lemma~\ref{lemma schemes},
for every prime~$p$, and every~$g\in G(\ol{\F_p})$,
\[
\pi_0(H\cap Z_{G_{\ol{\F_p}}}(\{g\}))\leq \gamma.
\]

For~$p\gg0$, the~$\F_p$-group~$H_{\F_p}$ will be Zariski connected. (\citestacks[Lem. 37.28.5.]{055H} with~$y=\Spec(\Q)\in Y=\Spec(\Z)$ and~$X$ the schematic closure of~$H$ in~$GL(d)_{\Z}$).

From~\cite[Lem.~3.5]{N}, we have, for any~$g\in GL(d,\F_p)$ and~$H':=X_g=H\cap Z_{G_{\ol{\F_p}}}(\{g\})$,
\[
\#H'(\F_p)\leq (p+1)^{\dim(H')}\cdot \gamma
\]
and
\[
\#H(\F_p)\geq (p-1)^{\dim(H)}.
\]
Let~$\lambda(p)=\frac{1}{\gamma}\cdot (p-1) \cdot (\frac{p-1}{p+1})^{\dim(H)-1}$.
Let~$\lambda$ be given.
Since~$\lim_p \lambda(p)=+\infty$, there exists~$N$ such that for~$p\geq N$, we have~$\lambda(p)>\lambda$.

Let
\[
U\leq H(\F_p)
\]
be such that
\[
Z_{G_{\F_p}}(U)\neq Z_{G_{\F_p}}(H_{\F_p}).
\]
We have~$Z_{G_{\F_p}}(U)\geq  Z_{G_{\F_p}}(H_{\F_p})$.

Recall that~$G=GL(d)$.
We remark that~$Z_{G(\ol{\F_p})}(U)$ and~$Z_{G(\ol{\F_p})}(H_{\ol{\F_p}})$ are Zariski connected
because they are non empty Zariski open subsets~$A\cap GL(d,\ol{\F_p})$ in a subalgebra~$A\leq End(\ol{{\F_p}}^d)$.
For~$p-1> \left(\frac{p+1}{p-1}\right)^{\dim(G)}$ (cf.~\cite[Lem.~3.5]{N}), we will have
\[
\#Z_{G_{\F_p}}(U)({\F_p})> \#Z_{G_{\F_p}}(H_{\F_p})({\F_p})
\]
and there exists
\[
g\in Z_{G_{\F_p}}(U)({\F_p})\smallsetminus Z_{G_{\F_p}}(H_{\F_p})({\F_p}).
\]
We have then, with~$H'=X_g=Z_{G_{\F_p}}(\{g\})\cap H_{\F_p}$, which is defined over~$\F_p$,
\[
H'<H.
\]
Because~$H$ is connected, we have~$\dim(H')<\dim(H)$ and
\begin{multline*}
\#U\leq \#H'(\F_p)\leq \#\pi_0(H')\cdot \# H'^0(\F_p)\leq \gamma\cdot (p+1)^{\dim(H)-1}
\\
\leq 
\frac{1}{\lambda}\cdot (p-1)^{\dim(H)}\leq \frac{1}{\lambda}\cdot \#H(\F_p).
\end{multline*}
The Lemma follows.
\end{proof}

\begin{lemma}\label{lemma schemes}
Let~$\phi:X\to Y$ be a morphism of schemes of finite type over~$\Z$. 

Then there exists~$\gamma$ such that: for every field~$K$, and every~$y\in Y(K)$, the number of geometric connected components~$\#\pi_0(X_y)$ of the fibre~$X_y$ satisfies
\[
\#\pi_0(X_y)\leq \gamma.
\]

If~$\phi$ is flat, then~$y\mapsto \dim(X_y)$ is lower semicontinuous on~$Y$.

\end{lemma}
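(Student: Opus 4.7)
The plan is to prove the two assertions separately, each by reducing to standard facts about morphisms of finite type combined with Noetherian induction.

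For the first assertion, since $Y$ is of finite type over $\Z$, it is a Noetherian scheme, so it suffices to show that the function $\psi : y \mapsto \#\pi_0(X_y)$ (with $X_y$ the geometric fiber) is constructible on $Y$: a constructible function on a Noetherian scheme takes only finitely many values, hence is bounded by some $\gamma$. Constructibility is a known consequence of EGA~IV$_3$, 9.7.8, but a self-contained proof proceeds by Noetherian induction on $Y$. For any integral closed subscheme $Z \subseteq Y$ with generic point $\eta_Z$, generic flatness yields a dense open $U \subseteq Z$ over which $\phi$ becomes flat; shrinking $U$ further so that the geometric connected components of $X_{\eta_Z}$ all become rational over a finite \'etale cover $U' \to U$, these components spread out to a finite decomposition of $\phi^{-1}(U) \times_U U'$ with geometrically connected fibers, showing that $\psi$ is constant on $U$ equal to $\#\pi_0(X_{\eta_Z})$. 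Iterating on the closed complement $Z \setminus U$ and on each irreducible component of $Y$ yields the constructibility.

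For the second assertion, I would invoke two classical results: (i) a flat morphism locally of finite presentation is universally open (EGA~IV$_2$, 2.4.6), and (ii) for such a morphism with locally Noetherian target, the relative dimension function $x \mapsto \dim_x X_{\phi(x)}$ is locally constant on $X$ (Stacks Project, Tag~05F7). Given $y_0 \in Y$ and $d := \dim X_{y_0}$, choose $x_0 \in X_{y_0}$ with $\dim_{x_0} X_{y_0} = d$. By (ii), there is an open neighborhood $V \ni x_0$ in $X$ on which $\dim_x X_{\phi(x)} \equiv d$. By (i), $\phi(V) \subseteq Y$ is open and contains $y_0$, and for any $y \in \phi(V)$, choosing $x \in V \cap X_y$ gives $\dim X_y \geq \dim_x X_y = d$. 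Thus $\{y \in Y : \dim X_y \geq d\}$ is open in $Y$ for every $d$, which is exactly lower semicontinuity.

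There is no genuinely hard step; the arguments are essentially bookkeeping on top of standard EGA and Stacks Project results. The only minor subtlety in Part~1 is the passage to \emph{geometric} fibers, handled by the \'etale base change $U' \to U$ used to split all geometric connected components of $X_{\eta_Z}$ rationally before spreading out.
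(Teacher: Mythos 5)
Your proof is correct and takes essentially the same approach as the paper. For the first assertion the paper simply cites a Stacks Project lemma (Tag 055H, constancy of the number of geometric connected components of the fibres over a dense open of an irreducible component) and then applies Noetherian induction, exactly the structure you describe; your extra sketch (generic flatness, passage to a finite \'etale cover to rationalise the geometric components, spreading out) is just an outline of how that cited lemma is proved. For the second assertion the paper cites the Stacks Project lower-semicontinuity result for flat morphisms of finite presentation (Tag 0D4H), whereas you unfold its standard proof via universal openness of flat, finitely presented morphisms together with local constancy of $x \mapsto \dim_x X_{\phi(x)}$ on $X$ and then push forward through the open map. Both steps are mathematically sound, so the only difference is that you have expanded citations that the paper leaves as black boxes.
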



\begin{proof}If~$Y$ is non-empty, there exists a proper closed subset outside of which the  function~$y\mapsto\#\pi_0(X_y)$  is constant, according to\footnote{Applied to the generic point of an irreducible component of~$Y$. The latter exists because~$Y$
is noetherian.} \citestacks[Lemma 37.28.5.]{055H}. We conclude by noetherian induction.

The second assertion, on dimensions, is~\citestacks[Lemma 37.28.4.]{0D4H}.
\end{proof}

\begin{lemma}\label{lifting p-nilpotent elements}
Let~$\phi:U\to U'$ be a continuous epimorphism of profinite groups. Let~$p$ be a prime number.

Recall that an element~$u\in U$ is said to be "topologically~$p$-nilpotent" if~$\lim_{n\to\infty}u^{p^n}=1$. 
Equivalently, for every continuous map~$q:U\to F$ to a finite group~$F$, the order of~$q(u)$ is a power of~$p$.

Then every "topologically~$p$-nilpotent" $u'\in U'$ is of the form~$u'=\phi(u)$ for some "topologically~$p$-nilpotent"~$u\in U$.
\end{lemma}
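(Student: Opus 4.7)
The plan is to combine compactness of the fibre $\phi^{-1}(u')$ with a Sylow argument in finite quotients. For each open normal subgroup $N \trianglelefteq U$, let
\[
S_N := \{v \in \phi^{-1}(u') : vN \in U/N \text{ has } p\text{-power order}\}.
\]
Each $S_N$ is closed in the compact set $\phi^{-1}(u')$ (the set of $p$-power-order elements is a clopen subset of the discrete finite group $U/N$), and $N \subseteq N'$ implies $S_N \subseteq S_{N'}$, so the family is directed under intersection. By the finite intersection property, to exhibit an element of $\bigcap_N S_N$ it suffices to prove each $S_N$ is non-empty; any $u \in \bigcap_N S_N$ will then be topologically $p$-nilpotent, since every continuous homomorphism $q : U \to F$ to a finite group factors through some $U/N$.

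The heart of the argument is non-emptiness of $S_N$, which I would establish by a Sylow argument. Using that continuous surjections of profinite groups are open, $\phi(N)$ is open and normal in $U'$, and $\phi$ descends to a surjection $\bar\phi : U/N \twoheadrightarrow U'/\phi(N)$ of finite groups, with $\ker \bar\phi = (\ker\phi \cdot N)/N$. The image $\bar u'$ of $u'$ in $U'/\phi(N)$ has $p$-power order by hypothesis. Set $H := \bar\phi^{-1}(\langle \bar u'\rangle) \leq U/N$ and pick a Sylow $p$-subgroup $P \leq H$. Since $\bar\phi(H) = \langle \bar u'\rangle$ is a cyclic $p$-group while $[H:P]$ is coprime to $p$, the index $[\langle \bar u'\rangle : \bar\phi(P)]$ is simultaneously a $p$-power and coprime to $p$, forcing $\bar\phi(P) = \langle \bar u'\rangle$. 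Thus some $\bar v \in P$ satisfies $\bar\phi(\bar v) = \bar u'$, and $\bar v$ has $p$-power order as an element of the $p$-group $P$.

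It remains to lift $\bar v$ to a genuine element of $S_N$. Fix any $v_1 \in \phi^{-1}(u')$, which exists by surjectivity of $\phi$; then $\bar\phi(v_1 N) = \bar u' = \bar\phi(\bar v)$, so $\bar v = (v_1 N) \cdot \bar k$ for some $\bar k \in \ker \bar\phi = (\ker\phi \cdot N)/N$. Choosing a representative $\bar k = k N$ with $k \in \ker\phi$ and setting $v := v_1 k$, I get $\phi(v) = \phi(v_1) = u'$ and $v N = \bar v$, which has $p$-power order; hence $v \in S_N$. I do not foresee any real obstacle: the only non-formal input is the Sylow step, and all the rest is an exercise in profinite compactness together with the standard equivalence between topological $p$-nilpotency and having $p$-power image in every finite continuous quotient.
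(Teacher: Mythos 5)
Your argument is correct, and it takes a genuinely different route from the paper's. The paper exploits the structure of the profinite completion $\widehat{\Z} = \prod_\ell \Z_\ell$: starting from an arbitrary lift $v$ of $u'$, it extends $k\mapsto v^k$ to a continuous homomorphism $\pi:\widehat{\Z}\to U$, observes that $\phi\circ\pi$ kills the idempotents $1_\ell$ for $\ell\neq p$ because $u'$ is topologically $p$-nilpotent, and extracts the $p$-typical part $u:=\pi(1_p)$ directly. Your proof instead works level by level in the finite quotients $U/N$: a Sylow-subgroup argument in $H=\bar\phi^{-1}(\langle\bar u'\rangle)$ produces, for each open normal $N\trianglelefteq U$, a lift of $u'$ whose image in $U/N$ has $p$-power order, and the compactness of the fibre $\phi^{-1}(u')$ together with the fact that the sets $S_N$ are clopen and directed (via $S_{N_1\cap N_2}\subseteq S_{N_1}\cap S_{N_2}$) yields a common element. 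Both approaches are clean. The paper's is a one-line conceptual construction once you know the decomposition of $\widehat{\Z}$ and the characterization $\gamma_p(v)$ of the $p$-part of a procyclic group; yours is more elementary in its ingredients (finite Sylow theory plus the finite-intersection property) and makes the role of the equivalent characterization of topological $p$-nilpotency --- $p$-power order in every finite continuous quotient --- completely explicit, which some readers may find clarifying. The Sylow index computation you sketch is correct: if $|\langle\bar u'\rangle|=p^a$ and $|\ker(\bar\phi|_H)|=p^b m$ with $p\nmid m$, then $|P|=p^{a+b}$, $|P\cap\ker(\bar\phi|_H)|\leq p^b$, hence $|\bar\phi(P)|\geq p^a$ and $\bar\phi(P)=\langle\bar u'\rangle$.
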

\begin{proof}Let~$u'\in U'$  be arbitrary. As~$\phi$ is surjective, there exists~$v\in U$ such that~$\phi(v)=u'$. 
The homomorphism~$\pi:k\to v^k:\Z\to U$ extends uniquely to a continuous homomorphism~$\widehat{\Z}\to U$ on the profinite completion~$\widehat{\Z}$ of~$\Z$. 

Let~$1\in \widehat{\Z}$ be the multiplicative unit, and, for a prime number~$\ell$, let~$1_\ell\in \Z_\ell\subseteq \widehat{\Z}$ be the multiplicative unit of the~$\ell$-adic factor~$\Z_\ell$. We note that~$1=\lim_{n\to +\infty}\sum_{\ell\leq n}1_\ell$. It follows that~$v=\lim_{n\to +\infty}\sum_{\ell\leq n}\pi(1_\ell)$.

As~$u'$ is "topologically~$p$-nilpotent", the map~$\phi\circ\pi:k\to u'^k:\Z\to U'$ factors through~$\Z\to \Z_p\to U'$. 
The closed subgroup~$\ol{u'^\Z}\leq U'$ generated by~$u'$ is thus a quotient of~$\Z_p$.

Note that, for~$\ell\neq p$, the only continuous additive homomorphism~$\Z_\ell\to \Z_p$ or~$\Z_\ell\to \Z/(p^k)$ maps~$1_\ell$ to~$0$. We deduce that for~$\ell\neq p$, the morphism~$\phi\circ \pi:\widehat{\Z}\to U\to U'$ maps~$1_\ell$ to~$u'^0=1$. It follows that, for~$n\geq p$, we have~$\phi\circ \pi(1_p)=\phi\circ \pi(\sum_{\ell\leq n}1_\ell)$. But~$\lim_{n\to\infty}\phi\circ \pi(\sum_{\ell\leq n}1_\ell)=\phi\circ \pi(1)=u'$. We deduce that
\[
\phi\circ\pi(1_p)=u'.
\]
Since~$1_p\in\Z_p$, it is topologically~$p$-nilpotent in~$\Z_p$. Thus~$u:=\pi(1_p)$ is topologically~$p$-nilpotent in~$U$.
We have~$u'=\phi(u)$ where~$u$ is topologically~$p$-nilpotent in~$U$. This concludes the proof.
\end{proof}

\begin{corollary}\label{Sylow}
Let~$\phi:U\to U'$ be an epimorphism of finite groups and, for some prime~$p$, denote by~$U^\dagger$, resp.~$U'^\dagger$ be the subgroup generated by elements of order a power of~$p$.

Then
\[
\phi(U^\dagger)=U'^{\dagger}.
\]
\end{corollary}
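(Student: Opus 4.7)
The plan is to deduce the corollary directly from Lemma \ref{lifting p-nilpotent elements}. Since $U$ and $U'$ are finite, they are profinite with the discrete topology, and for such a group an element $u$ is topologically $p$-nilpotent in the sense of that lemma (i.e.\ $\lim_n u^{p^n}=1$) if and only if the order of $u$ is a power of $p$. Hence $U^\dagger$ and $U'^\dagger$ as introduced in the statement coincide with the subgroups generated by topologically $p$-nilpotent elements, and we are in the exact setting of Lemma \ref{lifting p-nilpotent elements}.

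One inclusion is essentially free. If $u\in U$ has order a power of $p$, then the order of $\phi(u)$ divides the order of $u$ and is therefore also a power of $p$, so $\phi(u)\in U'^\dagger$. As $U^\dagger$ is generated by such $u$, I conclude $\phi(U^\dagger)\leq U'^\dagger$.

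For the reverse inclusion, I would argue by lifting a generating set. Let $u'\in U'$ be of $p$-power order. By Lemma \ref{lifting p-nilpotent elements} applied to the (automatically continuous, surjective) homomorphism $\phi\colon U\to U'$ of profinite groups, there exists a topologically $p$-nilpotent $u\in U$ with $\phi(u)=u'$; by the equivalence recalled above, $u$ has $p$-power order in $U$, hence $u\in U^\dagger$. Running this over a generating family of $U'^\dagger$ made of $p$-power-order elements gives $U'^\dagger\leq \phi(U^\dagger)$, completing the proof.

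There is no real obstacle here: all of the substance is contained in the lifting lemma, whose proof uses the decomposition $\widehat{\Z}=\prod_\ell \Z_\ell$ to extract the $p$-primary part of a lift. The present corollary is simply the translation into the language of finite groups, which is the form actually used in Section \ref{sec:bounds} (for instance in Lemma \ref{lem:4.1} and the analysis of $U(p)^\dagger$).
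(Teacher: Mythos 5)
Your proof is correct and follows exactly the paper's intended route: the paper proves Corollary~\ref{Sylow} by stating that it follows immediately from Lemma~\ref{lifting p-nilpotent elements}, and you have merely spelled out the (straightforward) translation that for a discrete finite group, topological $p$-nilpotence is the same as having $p$-power order, plus the easy forward inclusion.
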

\begin{proof}This follows immediately from Lemma~\ref{lifting p-nilpotent elements}.
\end{proof}
%
\begin{lemma}\label{Lemma extension}For every~$n\in\Z_{\geq1}$, there exists~$e(n)$ such that for every
short exact sequence of finite groups
\[
1\to N\to G \xrightarrow{\pi} H\to 1
\]
such that~$\#N\leq n$ and~$H$ is abelian,
there exists an abelian subgroup~$H''\leq G$ 
such that
\begin{equation}\label{Lemma extension CCL}
\forall~h \in  H, e(n)\cdot h\in \pi(H'').
\end{equation}
\end{lemma}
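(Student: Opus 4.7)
The plan is to take $K := C_G(N)$, the centralizer of $N$ in $G$, and set $H'' := Z(K)$. For the required constant one may take $e(n) := n! \cdot L(n)$, where $L(n)$ is the least common multiple of $1,2,\dots,n$; more generally any common multiple of all $d \cdot n!$ with $1 \leq d \leq n$ works.

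First I would establish the structural reductions. The conjugation action $G \to \operatorname{Aut}(N)$ has kernel $K$, so $[G:K] \leq |\operatorname{Aut}(N)| \leq n!$. Since $N$ is normal in $G$, so is $K$, and $K \cap N = Z(N)$. The subgroup $\pi(K) = KN/N \leq H$ is isomorphic to $K/Z(N)$, which is abelian (being a subgroup of the abelian group $H$); therefore $[K,K] \leq K \cap N = Z(N)$. Because $K$ centralizes $N$, it centralizes $Z(N)$ in particular, so $Z(N) \leq Z(K)$, and combining these yields $[K,K] \leq Z(K)$.

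The main step is the following. Since $[K,K] \leq Z(K)$, the commutator map $b(x,y) := [x,y] \colon K \times K \to Z(N)$ is biadditive and alternating; in particular $[k^m,y] = [k,y]^m$ for every $k, y \in K$ and every integer $m$. Applying this with $m = d := \exp(Z(N))$, and noting $d \leq |Z(N)| \leq n$, gives $[k^d,y] = 1$ for all $y \in K$, whence $k^d \in Z(K) = H''$.

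To conclude, fix any $h \in H$. Since $[H : \pi(K)] = [G : KN] \leq [G:K] \leq n!$, we can pick $k \in K$ with $\pi(k) = n! \cdot h$; then $k^d \in H''$ and $\pi(k^d) = d \cdot n! \cdot h$ lies in $\pi(H'')$. Since $e(n)$ is divisible by every $d \cdot n!$ with $d \leq n$, we obtain $e(n) \cdot h \in \pi(H'')$ for every $h$ and every admissible extension. There is no substantive obstacle in this argument; the only point to observe is that passing from $G$ to the centralizer $K$ turns the commutator into a biadditive form valued in a group of exponent at most $n$, after which raising to the $d$-th power kills it.
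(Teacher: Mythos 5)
Your proof is correct and takes essentially the same route as the paper. Both pass to the centralizer $K = C_G(N) = \ker(G \to \operatorname{Aut}(N))$, of index at most $n!$, and both hinge on the same observation that $[K,K] \le Z(N) \le Z(K)$ makes the commutator a bilinear pairing into a group of exponent at most $n$, so bounded powers of elements of $K$ become central. The only cosmetic differences are that you take $H'' = Z(K)$ directly whereas the paper takes $H''$ to be the subgroup generated by $\gamma$-th powers (and then observes this sits inside $Z(K)$), you use $d = \exp(Z(N))$ where the paper uses $\gamma = \#Z(N)$, and your bookkeeping of the constant $e(n) = n!\cdot\operatorname{lcm}(1,\dots,n)$ is made more explicit.
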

%
\begin{proof} Let~$\rho:G\to\operatorname{Aut}(N)$ be the adjoint action on its normal subgroup~$N$.
Then~$G'=\ker \rho$ has index at most~$\#\operatorname{Aut}(N)\leq (\#N)!$ and we may replace~$G$
with~$G'$ and~$H$ with~$\pi(H)$, that is: we assume the extension of~$H$ by~$N$ is a central 
extension.

Because~$H$ is abelian, the commutator~$n=[a,b]$ of any two~$a,b\in G$ is in~$N$.
We have~$ab=nba$, and, for~$i\in\Z_{\geq 0}$,
\[
a\cdot b^{i+1}=n\cdot b\cdot a\cdot b^i=\ldots=(n\cdot b)^{i+1}\cdot a.
\]
Because~$N$ is central, we deduce
\[
a\cdot b^{i}=n^i b^{i}\cdot a.
\]
In, particular, when~$i=\gamma:=\#N$, we have
\begin{equation}\label{Lemma extension eq}
[a,b^\gamma]=0.
\end{equation}
Let~$H''\leq G$ be the subgroup  generated by the~$\{g^\gamma|g \in G\}$. Then~$H'=\pi(H'')$ is the subgroup
generated by~$\{\gamma\cdot h |h\in H\}$. Thus~\eqref{Lemma extension CCL} is satisfied.

By~\eqref{Lemma extension eq}, the subgroup~$H''$ has a generating sets made of  elements which are central elements of~$G$. Thus~$H''$ is contained is the centre of~$G$. In particular~$H''$ is abelian.

Lemma~\ref{Lemma extension} is proved.
\end{proof}

%
%
\begin{corollary}\label{coro extension}
If~$H$ is generated by~$k$ elements, we have
\[[H:\pi(H'')]\leq e(\#N)^k.\]
\end{corollary}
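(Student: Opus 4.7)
The plan is to deduce this quickly from Lemma~\ref{Lemma extension}. By that lemma, the subgroup $\pi(H'') \leq H$ contains $e(n) \cdot h$ for every $h \in H$, where $n = \#N$. In other words, writing $H$ additively (it is abelian), we have $e(n) \cdot H \leq \pi(H'')$, and therefore
\[
[H : \pi(H'')] \leq [H : e(n) \cdot H].
\]

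So the task reduces to bounding the index of $e(n) \cdot H$ in $H$ when $H$ is generated by $k$ elements. The quotient $H / (e(n) \cdot H)$ is an abelian group generated by the images of $k$ generators, each of which is annihilated by $e(n)$. Hence $H / (e(n) \cdot H)$ is a quotient of $(\mathbb{Z}/e(n)\mathbb{Z})^k$, and in particular
\[
[H : e(n) \cdot H] \leq e(n)^k.
\]
Combining the two inequalities yields the claimed bound $[H : \pi(H'')] \leq e(\#N)^k$. There is no real obstacle here; the corollary is just the conjunction of Lemma~\ref{Lemma extension} with the elementary observation about finitely generated abelian groups modulo a fixed exponent.
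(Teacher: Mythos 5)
Your proof is correct and is precisely the deduction the paper intends: the corollary is stated without proof immediately after Lemma~\ref{Lemma extension}, and your argument — that $e(\#N)\cdot H\leq \pi(H'')$ by the lemma, so the index is bounded by $[H:e(\#N)\cdot H]\leq e(\#N)^k$ since $H/(e(\#N)\cdot H)$ is a quotient of $(\Z/e(\#N)\Z)^k$ — is the evident route.
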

We used the following form of Goursat's Lemma.
\begin{lemma}[Goursat's Lemma]\label{Goursat}
Let~$U\leq G_1\times G_2$ be a subgroup, and~$U_1$, $U_2$ be its projections, and define~$U'_1=U\cap(G_1\times\{1\})$ 
and~$U'_2=U\cap(\{1\}\times G_2)$. Then~$(U_1\times\{1\})/U'_1$ and~$(\{1\}\times U_2)/U'_2$ are isomorphic, and hence
\[
\abs{(U_1\times\{1\})/U'_1}=\abs{(\{1\}\times U_2)/U'_2}.
\]
\end{lemma}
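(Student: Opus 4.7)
The plan is to construct an explicit isomorphism $\phi : U_1/U'_1 \to U_2/U'_2$ out of the incidence relation defined by $U$, and then conclude by the first isomorphism theorem (and pass to cardinalities). The identification $(U_1\times\{1\})/U'_1 \simeq U_1/U'_1$ and likewise on the other side is canonical, so it suffices to produce the isomorphism at the level of $U_1$ and $U_2$.

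First I would check that $U'_1$ is normal in $U_1$ (and by symmetry $U'_2 \trianglelefteq U_2$). Given $u_1 \in U_1$, pick $u_2 \in U_2$ with $(u_1,u_2) \in U$, and take $n \in U'_1$, i.e.\ $(n,1) \in U$. Then
\[
(u_1 n u_1^{-1},\, 1) = (u_1, u_2)(n,1)(u_1,u_2)^{-1} \in U,
\]
so $u_1 n u_1^{-1} \in U'_1$, proving normality.

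Next I would define $\phi : U_1 \to U_2/U'_2$ by choosing, for each $u_1 \in U_1$, some $u_2 \in U_2$ with $(u_1,u_2) \in U$ and setting $\phi(u_1) := u_2 U'_2$. Well-definedness: if $(u_1,u_2)$ and $(u_1,u'_2)$ both lie in $U$, then $(1,u_2^{-1} u'_2) \in U$, hence $u_2^{-1} u'_2 \in U'_2$. The map $\phi$ is a homomorphism, since $(u_1,u_2),(u'_1,u'_2) \in U$ implies $(u_1 u'_1, u_2 u'_2) \in U$. It is surjective because every $u_2 \in U_2$ occurs as the second coordinate of some element of $U$.

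Finally I would identify the kernel: $\phi(u_1) = U'_2$ iff there exists $(u_1,u_2) \in U$ with $u_2 \in U'_2$, iff $(1,u_2) \in U$ for this $u_2$, iff $(u_1,1) = (u_1,u_2)(1,u_2)^{-1} \in U$, iff $u_1 \in U'_1$. Thus $\ker\phi = U'_1$, and the first isomorphism theorem yields $U_1/U'_1 \simeq U_2/U'_2$. Taking cardinalities gives the displayed equality. There is no substantive obstacle; the entire argument is the standard Goursat construction and amounts to unwinding definitions.
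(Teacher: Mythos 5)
Your proof is correct and is the standard argument for Goursat's Lemma. The paper itself states this lemma without proof, treating it as classical, so there is nothing to compare against; your construction of $\phi: U_1 \to U_2/U'_2$ by the incidence relation, the check that $U'_1$ and $U'_2$ are normal in $U_1$ and $U_2$, the identification of $\ker\phi = U'_1$, and the application of the first isomorphism theorem together constitute a complete and correct proof.
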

\section{Reductive norm estimates from residual stability}\label{sec:reductive}

\subsection{Standing hypotheses}\label{standing hyp}
Let~$F\leq G\leq GL(d)$ be 
reductive groups over~$\Q_p$. The ultrametric absolute value is denoted by~$\abs{~}:\CC_p\to \R_{\geq0}$ and the norm on~${\CC_p}^d$ is denoted by
\[
\norm{(v_i)_{i=1}^d}=\max\{\abs{v_1};\ldots;\abs{v_d}\}.
\]
The $\Q_p$-algebraic group~$GL(d)$ has a model~$GL(d)_{\Z_p}$, which induces models~$F_{\Z_p}$ and~$G_{\Z_p}$ over~$\Z_p$.
We denote by~$F_{\F_p}$ and~$G_{\F_p}$ their special fibres, which are algebraic groups over~$\F_p$.

We assume that, in the sense\footnote{An equivalent property is that~$F_{\F_p}$ and~$G_{\F_p}$
are connected reductive algebraic groups.} of~\cite[\S3.8]{Tits}
\begin{equation}\label{hyp:hyp}
\text{$F_{\Z_p}$ and~$G_{\Z_p}$ are ``hyperspecial''	.}
\end{equation}
\setcounter{secnumdepth}{4}
\subsubsection{Some consequences} We review some constructions  and some properties that hold under hypotheses~\eqref{hyp:hyp},
and will be needed later.

\paragraph{}
We consider a maximal torus~$T\leq G_{\ol{\Q_p}}$, a basis~$\Z^d\simeq X(T)$. We denote the set of weights
of the representation~$\rho:T\to G_{\ol{\Q_p}}\to GL(d)_{\ol{\Q_p}}$ by
\[
\Sigma(\rho)\subseteq X(T)
\]
and the weight decomposition of~$V={{\ol{\Q_p}}}^d$ under the action of~$T$, by
\begin{equation}\label{eigen decomp}
{{\ol{\Q_p}}}^d=\bigoplus_{\chi\in\Sigma(\rho)} V_\chi\text{ where }V_\chi:=\{v\in{{\ol{\Q_p}}}^d|\forall t\in T({\ol{\Q_p}}), t\cdot v=\chi(t)\cdot v\}.
\end{equation}
\paragraph{Remark}\label{rem set weights}
 For any other maximal torus~$T'$ there is a conjugation~$t\mapsto gtg^{-1}:T\mapsto T'$ in~$G({\ol{\Q_p}})$.
We deduce a set~$\Sigma(T')$ corresponding to~$\Sigma(T)$. The resulting set~$\Sigma(T)$ does not depend on the choice of
the conjugating element. The weight spaces~$V_\chi$ in the decomposition~\eqref{eigen decomp} depend on~$T$.
\paragraph{}\label{Good torus}
From\footnote{With~$\Omega=\{x_0\}$ if~$x_0\in\BT(G/L)$ is the fixed point of~$G_{\Z_p}(\Z_p)$.}~\cite[\S3.5]{Tits} we know that the induced model~$T_{\ol{\Z_p}}$ 
has good reduction, i.e.~$T_{\ol{\F_p}}$ is a torus, and that we have
\[
X(T)\simeq X(T_{\ol{\F_p}}).
\]
This also implies, c.f. e.g.~\cite[Prop.\,5]{Sesh} that~\eqref{eigen decomp} is compatible with integral structures:
\[
\ol{\Z_p}^d=\bigoplus \Lambda_{\chi}\text{ where }\Lambda_{\chi}:=\ol{\Z_p}^d\cap V_\chi;
\]
and that we have a corresponding weight decomposition
\[
\ol{\F_p}^d=\bigoplus \ol{V}_{\chi}\text{ where }\ol{V}_{\chi}:=\Lambda_{\chi}\tens\ol{\F_p}.
\]

\paragraph{}
There is a Cartan decomposition~\cite[4.4.3]{BT72} (see also\footnote{See~\cite[\S3 and \S3.3]{Tits} for assumptions of~\cite[3.3.3]{Tits}.}~\cite[3.3.3]{Tits}), for~$L/\Q_p$ a finite extension, and~$T_L$ a maximally\footnote{A maximal torus containing a maximal split torus.} split torus of~$G/L$,
\begin{equation}\label{Cartan}
G(L)=G_{\Z_p}(O_L)T_L(L)G_{\Z_p}(O_L).
\end{equation}
and consequently over~$\ol{\Q_p}$, when~$T$ is a maximal torus,
\begin{equation}\label{Cartanbar}
G(\ol{\Q_p})=G_{\Z_p}(\ol{\Z_p})T(\ol{\Q_p})G_{\Z_p}(\ol{\Z_p}).
\end{equation}

\subsection{Main statement} The following theorem can be seen as a refined more precise version of the functoriality
of local heights used in~\S\ref{every prime}. 
\begin{theorem}[Local relative stability estimates]\label{thm:compare reductive} Under hypotheses from~\S\ref{standing hyp},
let~$v,v'\in{\Z_p}^d$ be non zero vectors, denote by~$\ol{v},\ol{v'}\in {\FF_p}^d$ their reduction, and assume that
\begin{enumerate}
\item \label{test} the orbits~$G_{\Q_p}\cdot v,G_{\QQ_p}\cdot v'\subseteq \AAA_{\QQ_p}^d$ are closed subvarieties;
\item  the stabiliser groups~$F_v:=\Stab_G(v),F_{v'}:=\Stab_G(v')$ satisfy
\[
F_v=F_{v'}=F;
\]
\end{enumerate}
and that
\begin{enumerate}
\item the orbits~$G_{\FF_p}\cdot \ol{v},G_{\FF_p}\cdot \ol{v'}\subseteq \AAA_{\FF_p}^d$ are closed subvarieties;
\item  the stabiliser groups~$F_{\ol{v}}:=\Stab_G(v),F_{\ol{v'}}:=\Stab_G(v')$ satisfy, as group schemes\footnote{It amounts to the property  that~$F_{\ol{v}}$ and~$F_{\ol{v'}}$ are smooth.},
\begin{equation}\label{Hyp 51}
F_{\ol{v}}=F_{\ol{v'}}=F_{\FF_p}.
\end{equation}
\end{enumerate}
We define two functions~$G(\CC_p)\to \R$ given by
\[
H_{v}:g\mapsto \max\{1;\norm{g\cdot v}\}\text{ and }
H_{v'}:g\mapsto \max\{1;\norm{g\cdot v'}\}.
\]
Then the functions~$h_v=\log H_v$ and~$h_{v'}=\log H_{v'}$ satisfy
\begin{equation}\label{log equiv on reductive}
h_v\leq C\cdot h_{v'}\text{ and }h_{v'}\leq C\cdot h_v,
\end{equation}\label{reductive theorem final estimate}
in which~$C=C(\Sigma(\rho))$ depends only on the set of weights of~$\rho$ (cf.~\ref{rem set weights}).
\end{theorem}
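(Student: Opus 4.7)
The plan is to reduce the inequalities~\eqref{log equiv on reductive} to a comparison of piecewise affine convex functions on a maximal torus, where the common vanishing locus supplied by a $p$-adic Kempf-Ness argument together with slope bounds drawn from $\Sigma(\rho)$ will force the desired polynomial equivalence.

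First, I invoke the Cartan decomposition~\eqref{Cartanbar}: every $g\in G(\ol{\Q_p})$ factors as $g=k_1 t k_2$ with $k_1,k_2\in G_{\Z_p}(\ol{\Z_p})$ and $t\in T(\ol{\Q_p})$ for a fixed maximal torus $T$. Since $G_{\Z_p}(\ol{\Z_p})$ preserves the lattice $\ol{\Z_p}^d$ and hence the max-norm, one has $\|g\cdot v\|=\|t\cdot (k_2 v)\|$, and using the integral weight decomposition recalled in~\S\ref{Good torus},
\[
h_v(g)=\max\Bigl(0,\;\max_{\chi\in\Sigma(\rho)}\bigl(-v_p(\chi(t))+\log\|(k_2v)_\chi\|\bigr)\Bigr),
\]
where $v_p$ denotes the $p$-adic valuation, with a formally identical expression for $h_{v'}(g)$. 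As $k_2$ ranges over the compact group $G_{\Z_p}(\ol{\Z_p})$, the additive constants $\log\|(k_2v)_\chi\|$ and $\log\|(k_2v')_\chi\|$ remain bounded uniformly. This reduces the problem to a uniform comparison of $T$-valued functions.

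Second, I apply the $p$-adic Kempf-Ness theorem~\ref{pKN} to each of $v$ and $v'$. The closedness of the orbits over $\Q_p$ together with residual stability~\eqref{Hyp 51} (closedness of the reduced orbits plus smoothness of the common reduced stabilizer $F_{\F_p}$) identifies the integral orbit $\{g\in G(\ol{\Q_p}):g\cdot v\in\ol{\Z_p}^d\}$ with $G_{\Z_p}(\ol{\Z_p})\cdot F(\ol{\Q_p})$, and the same identification holds with $v'$ in place of $v$. Consequently the two zero loci $\{h_v=0\}$ and $\{h_{v'}=0\}$ coincide, giving a common base of comparison. Combining this with the first step, and applying the Cartan decomposition~\eqref{Cartan} inside the hyperspecial reductive $F$, the inequalities reduce to comparing two nonnegative piecewise affine convex functions on the lattice $X_\ast(T)/X_\ast(T\cap F)$, both vanishing on the image of $X_\ast(T\cap F)$ and with slopes drawn from the finite set $\Sigma(\rho)\cup(-\Sigma(\rho))$. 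The slope and convexity machinery of~\S\ref{sec:slopes} then yields a constant $C(\Sigma(\rho))$ depending only on this finite set of slopes such that the two functions bound each other up to $C(\Sigma(\rho))$, giving~\eqref{log equiv on reductive}.

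The main obstacle is the coupling between the torus variable $t$ and the compact variable $k_2$: as $k_2$ varies, the weight-space components $(k_2v)_\chi$ can a priori reduce to zero modulo $p$, which would introduce spurious cancellations and break the slope comparison. Residual stability is precisely the input that rules this out: because $F_{\ol v}=F_{\F_p}$ is smooth and the reduced orbit is closed, any direction $\chi$ along which such degeneration occurs must already lie in the common zero cone and therefore contribute nothing to $h_v$, so it cannot distort the slope inventory. Making this argument uniform in both $k_2$ and $p$ is the technical heart of the proof, and is exactly what guarantees that the constant in~\eqref{log equiv on reductive} depends on $\Sigma(\rho)$ alone.
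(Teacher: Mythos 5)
Your overall architecture — Cartan decomposition, the $p$-adic Kempf–Ness identification of vanishing loci, and the slope/convexity estimates of~\S\ref{sec:slopes} — matches the paper's proof. Two concrete gaps, however, would stop the argument as written.

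First, the assertion that ``the additive constants $\log\|(k_2v)_\chi\|$ remain bounded uniformly'' as $k_2$ ranges over $G_{\Z_p}(\ol{\Z_p})$ is false. They are bounded above (by~$0$, since $v\in\Z_p^d$ and $G_{\Z_p}(\ol{\Z_p})$ preserves the lattice), but they are not bounded below: a weight component $(k_2v)_\chi$ can shrink arbitrarily or vanish as $k_2$ varies. If your reduction to the torus leans on a two-sided bound for the~$\mu$ data, that step does not go through. What actually saves the day is Proposition~\ref{prop slopes comparison}, whose constants $c(\Sigma),c'(\Sigma)$ are \emph{uniform in the offset $\mu$} — the comparison is controlled by the convex set~$C_\mu$, which is where Theorem~\ref{pKN} gives $C_\mu=C_{\mu'}$. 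Nothing needs to be bounded below. The paper avoids the whole issue cosmetically by conjugating the maximal torus so that the varying data is the torus (and the weight lattice moves by a $\Z$-linear isomorphism, leaving $\Sigma$ unchanged) rather than the vector; this removes the temptation to control~$\mu$ itself.

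Second, and more dangerously, your framing suggests you would be content with an inequality that carries an additive error. The paper insists that $H_v\leq H_{v'}^C$ hold with multiplicative constant~$A=1$, precisely because these bounds are multiplied over all primes in the Galois estimates; any $p$-dependent constant $A(p)>1$ would accumulate to an uncontrolled factor $\prod_p A(p)$. If you absorb the $\log\|(k_2v)_\chi\|$ fluctuations as an additive slack, you obtain a weaker statement that is not usable downstream. The slope comparison via Corollary~\ref{coro slopes}, applied to $C_\mu=C_{\mu'}$, gives the exact form $h_v\leq C\cdot h_{v'}$ with no error term, and this must be preserved. Finally, the reduction to ``piecewise affine convex functions on $X_\ast(T)/X_\ast(T\cap F)$'' is not what the paper does: the common vanishing locus $C_\mu$ is a convex cone in $A_T$ cut out by the affine forms $\omega_\chi+\mu(\chi)$, containing but generally strictly larger than $X_\ast(T\cap F)\tens\R$, and identifying it with the latter would be incorrect.
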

\noindent In our proof, the quantity~$C(\Sigma)$ will depend upon the choice of
an invariant euclidean metric ``in the root system'' of~$G$, and there are canonical choices
of such metrics. The hypothesis~\eqref{Hyp 51} can be replaced by the weaker hypothesis 
in~\eqref{pKN flat}.

Several features that are important to our strategy. 
\begin{itemize}
\item The quantity~$C$ only depends on the weights of~$\rho$. Thus, when~$\rho$ comes from a representation
defined over~$\QQ$, this~$C$ does not depend on the prime~$p$.
\item The inequality does not need an additive constant: we have
\[
H_v\leq A\cdot {H_{v'}}^C
\]
with~$A=1$. Thus, when we multiply the inequalities over infinitely many primes, we don't accumulate an uncontrolled multiplicative factor~$\prod_p A(p)$.
\item The estimate~\eqref{log equiv on reductive} depends upon~$v$ only through its stabiliser group~$F$.
This is precisely information about the stabilisers that we deduce from Tate conjecture.
\end{itemize}
\subsection{Proof}We use the notions and notations of~\S6.4. 
Because~$G_{\Z_p}(\ol{\Z_p})\leq GL(d,\ol{\Z_p})$ acts isometrically on~$\ol{\Z_p}^d$, the functions~$h_v$
and~$h_{v'}$ are left~$G_{\Z_p}(\ol{\Z_p})$-invariant.

Choose an arbitrary~$g\in G(\ol{\Q_p})$. It is sufficient to prove~\eqref{CCL proof KN} (see below) with this element~$g$,
as the other inequality in~\eqref{log equiv on reductive} can be deduced after swapping~$v$ and~$v'$.

Let~$T\leq G$ be a maximal torus defined over~$\ol{\Q_p}$. We endow~$A_T$, defined in~\eqref{defi appartment}, with a canonical euclidean 
distance~$d(~,~)=d_G(~,~)$, invariant under~$N_G(T)$ and depending only on~$G$ (using e.g.~\cite[LIE VI.12]{BBK}). We denote~$\Sigma(\rho)$
the set of weights of the action~$T\to G\xrightarrow{\rho} GL(n)$. We denote~$\gamma(\Sigma(\rho))$ the quantity from 
Prop.~\ref{coro slopes}, which does not depend on the maximal torus~$T$ up to conjugation, and only on the weights of~$\rho$.

Because~$G_{\Z_p}$ is hyperspecial,
there is a Cartan decomposition~\eqref{Cartan}. Thus there are some~$t'\in T(\ol{\Q_p})$ and~$k\in G_{\Z_p}(\ol{\Z_p})$
such that
\[
G_{\Z_p}(\ol{\Z_p})\cdot g=G_{\Z_p}(\ol{\Z_p})\cdot t
\] with~$t=k t' k^{-1}$. 

We may thus assume~$g=t$. We may write, as in~\eqref{hv to hmu}
\[
h_v\restriction_{T(\CC_p)}=h_{\mu}\circ a_T\text{ and }h_{v'}\restriction_{T(\CC_p)}=h_{\mu'}\circ a_T.
\]

According to Proposition~\ref{prop slopes comparison}, we have
\[
c(\Sigma(v))\cdot d(a,C_\mu)\leq h_{\mu}(a)\text{ and }h_{\mu'}(a)\leq c'(\Sigma(v'))\cdot d(a,C_{\mu'}).
\]
Thanks to hypotheses of~Theorem~\ref{thm:compare reductive} we may apply 
 Theorem~\ref{pKN} and~\eqref{gF fv} (we note that~$h_v(t)=0$  if and only if~$t\cdot v\in \ol{\Z_p}^d$). Thus,
\begin{multline*}
\set{t\in T(\ol{\Q_p})}{h_v(t)=0}
\\=\set{t\in T(\ol{\Q_p})}{tF\in (G/F)(\ol{\Z_p})}
\\=T(\ol{\Q_p})\cap (G(\ol{\Z_p})\cdot F(\ol{\Q_p}))
\\=\set{t\in T(\ol{\Q_p})}{h_{v'}(t)=0}.
\end{multline*}
Let~$C^\Q_{\mu}$ and~$C^\Q_{\mu'}$ be defined as in Lemma~\ref{C Q density}.
As the valuation group~$\Gamma(\ol{\Q_p})$ is~$\Q$, we deduce~$C^\Q_{\mu}=C^\Q_{\mu'}$,
and, by Lemma~\ref{C Q density}, we deduce
\[
C:=C_\mu=C_{\mu'}.
\]
Applying Corollary~\ref{coro slopes}, we conclude

\begin{multline}\label{CCL proof KN}
h_{v'}(g)=h_{v'}(t)=h_{\mu'}\circ a_T(t)\\
\leq \gamma(\Sigma(\rho))\cdot h_\mu\circ a_T(t)=\gamma(\Sigma(\rho))\cdot h_{v}(t)=\gamma(\Sigma(\rho))\cdot h_{v}(g).
\end{multline}

\subsection{Norms on Toric orbits and the Apartment}\label{appartments}
For a torus~$T$ over an ultrametric extension~$L/\Q_p$, the associated ``apartment'' is defined as
\begin{equation}\label{defi appartment}
A_T=A_{T/L}=Y(T/L)\tens\R\simeq \Hom(X(T),\R)
\end{equation}
where~$Y(T)=Y(T/L):=\Hom(GL(1)_L,T)$ and~$X(T)=X(T/L):=\Hom(T,GL(1)_L)$ are the group of cocharacters and characters,
and are~$\Z$-linear dual to each other.

Then the pairing
\[
(t,\chi)\mapsto \log_p\abs{\chi(t)}:T(L)\times X(T)\to \R.
\]
induces a map
\begin{equation}\label{T to A}
a_T:T(L)\to A_T,
\end{equation}

Denote by~$\Z\leq \Gamma_L:=\log_p\abs{L^\times}\leq \R$ the valuation group of~$L$. 

When~$T$ has a model over~$L$ which is a torus~$T_{O_L}$ over~$O_L$, the map~$a_T$ factors as
\[
T(L)\twoheadrightarrow \frac{T(L)}{T_{O_L}(O_L)}\xrightarrow{\sim} Y(T)\tens\Gamma_L\hookrightarrow A_T.
\]

For a character~$\chi\in X(T)$ the function
\[
\log_p\abs{\chi}:T(L)\xrightarrow{\chi} L^\times\xrightarrow{\abs{~}} \R_{>0}\xrightarrow{\log_p} \R
\]
passes to the quotient to~$\frac{T(L)}{T_{O_L}(O_L)}$ and extends to a~$\R$-linear form which we
denote by
\[
\omega_\chi:A_T\to \R,
\] 
which is also the one deduced from~$A_T\simeq \Hom(X(T),\R)$.

Assume~$T\leq GL(n)$ is a torus over~$L$ with good reduction: denoting the eigenspace decomposition
of~$L^n$ for the action of~$T$ by
\[
L^n=\sum_{\chi \in X(T)} V_\chi
\]
we have (\ref{Good torus}, \cite[Prop.\,5]{Sesh})
\begin{equation}\label{integral eigen}
{O_L}^n=\sum_{\chi \in X(T)} V_\chi\cap {O_L}^n.
\end{equation}
It follows, denoting by~$\norm{~}$ the standard norm on~$L^n$, that, for~$v\in L^n$,
\begin{equation}\label{norm tore}
\norm{v}=\max\{0\}\cup\{\norm{v_\chi}~|~{\chi\in X(T)}\}.
\end{equation}
We denote by~$\Sigma(T)\subseteq X(T)$ the set of weights for the action of~$T$, and denote 
by
\[
\Sigma(v)=\{\chi\in X(T)~|~v_\chi\neq 0\}\subseteq \Sigma(T),
\]
and, if~$v\in{O_L}^n$, we define
\[
\ol{\Sigma}(v)=\{\chi\in X(T)~|~\norm{v_\chi}=1\}\subseteq \Sigma(v)
\]
and a function~$\mu:\Sigma(v)\to \R_{\leq0}$ given by
\[
\mu(\chi)=\log_p \norm{v_\chi}.
\]
The functions~$H_v:T(\ol{\Q_p})\to\R_{\geq 0}$ and~$h_v=\log(H_v)$
defined by
\[
H_v(t)=\max\{1;\norm{t\cdot v}\}
\]
can be computed from the formula
\begin{equation}\label{hv to hmu}
h_v=h_\mu\circ a_T\text{ with }h_\mu(a):=\max\{0\}\cup\set{\omega_\chi(a)+\mu(\chi)}{\chi \in \Sigma(v)}.
\end{equation}
\begin{lemma}\label{C Q density}
Define
\[
C_\mu=\set{a\in A_T}{h_{\mu}(a)=0}\text{ and }A_T^\Q=Y(T)\tens\Q\subseteq A_T.
\]
Then~$C_\mu^\Q:=C_\mu\cap A_T^\Q$ satisfies
\[
C_\mu=\ol{C_\mu^\Q}.
\]
\end{lemma}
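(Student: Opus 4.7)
The plan is to recognise $C_\mu$ as a rational convex polyhedron in $A_T$ and invoke density of rational points in rational polyhedra. Unpacking~\eqref{hv to hmu}, we have
\[
C_\mu=\set{a\in A_T}{\omega_\chi(a)+\mu(\chi)\leq 0 \text{ for every }\chi\in\Sigma(v)},
\]
a finite intersection of closed affine half-spaces.

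I would next verify that all defining inequalities have coefficients in $\Q$ with respect to the $\Q$-structure $A_T^\Q=Y(T)\tens\Q$. The linear form $\omega_\chi$ on $A_T=\Hom(X(T),\R)$ is nothing but evaluation at $\chi\in X(T)$; since $X(T)$ is the $\Z$-dual of $Y(T)$, this form maps $A_T^\Q$ into $\Q$. The constants $\mu(\chi)=\log_p\norm{v_\chi}$ belong to $\Q$ since the value group of $\ol{\Q_p}^\times$ under $\log_p|\cdot|$ is $\Q$ (note also that, thanks to the integral eigendecomposition~\eqref{integral eigen}, each $v_\chi\in\ol{\Z_p}^d$ so $\mu(\chi)\leq 0$, which in particular shows $0\in C_\mu^\Q$ and thereby ensures $C_\mu^\Q\neq\emptyset$ as soon as $C_\mu\neq\emptyset$).

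The conclusion is then the standard fact that a nonempty polyhedron defined over $\Q$ in a $\Q$-rational affine space is the closure of its rational points. Concretely, by the Minkowski--Weyl theorem one can write $C_\mu=\operatorname{conv}(V)+\operatorname{cone}(R)$ for finite sets $V,R\subset A_T^\Q$ of $\Q$-rational vertices and extreme rays; any $a\in C_\mu$ is then a convex--conic combination whose coefficients can be approximated by rationals, producing rational points of $C_\mu$ converging to $a$. (Alternatively, since $0\in C_\mu^\Q$, for any $a\in C_\mu$ the segment $[0,a]$ lies in $C_\mu$ and can be approximated by rational points of $A_T^\Q$ lying on rational segments inside $C_\mu$.) No substantive obstacle is expected: the content of the lemma is purely the rationality bookkeeping for the defining inequalities, after which the density is classical.
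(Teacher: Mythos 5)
Your proof is correct and follows the same approach the paper takes, which it states in a single sentence: $C_\mu$ is cut out by affine forms $\omega_\chi + \mu(\chi)$ that are defined over $\Q$ with respect to the $\Q$-structure $A_T^\Q$, and density then follows. You have simply spelled out the two ingredients the paper leaves implicit, namely the rationality check (that $\omega_\chi$ is integral on $Y(T)$ and that $\mu(\chi)\in\Q$ because the valuation group of $\ol{\Q_p}$ is $\Q$) and the classical fact that a nonempty rational polyhedron is the closure of its rational points; your observation that $0\in C_\mu^\Q$ (since $\mu\le 0$) cleanly settles nonemptiness.
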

Lemma~\ref{C Q density} holds because the convex set~$C_\mu$ is constructed from affine forms~$\omega_{\chi}+\mu(\chi)$  on~$A_T$ 
which are \emph{defined over~$\QQ$}, with respect to the~$\QQ$-structure~$A_T^\QQ$.

\section{Residual stability and {$p$-adic} Kempf-Ness Theorem} \label{sec:pKN}

The estimates of Th.~\ref{thm:compare reductive} rely on the following result which we believe to be of independent interest. This is an analogue of  a theorem of Kempf-Ness~(\cite[Th. 0.1 b)]{KN}) in the context~\cite{B92} of~$p$-adic Mumford's stability. It relies on a careful analysis of the reduction of models of homogeneous spaces given by the invariant theory~\cite{Sesh} or of closed orbits in a linear representation.

\begin{theorem}[$p$-adic Kempf-Ness Theorem]\label{pKN}
Let~$F_{\Z_p}\leq G_{\Z_p}\leq GL(n)_{\Z_p}$ be smooth reductive group schemes, such that~$F_{\Z_p}\to G_{\Z_p}\to GL(n)_{\Z_p}$ are closed immersions, and~$G_{\Z_p}$ is connected.

Let~$v\in{\Z_p}^n$. Denote by~${\ol{v}\in\F_p}^n$ its reduction and assume that
\begin{equation}\label{pKN flat}
\Stab_{G_{\Q_p}}(v)=F_{\Q_p}\text{ and }
\dim
(\Stab_{G_{\F_p}}(\ol{v}))=\dim (F_{\F_p}),
\end{equation}
(using Krull dimensions) and assume that the orbits
\begin{equation}\label{pKN stab}
G_{\Q_p}\cdot v\subseteq \A^n_{\Q_p}\text{ and }
G_{\F_p}\cdot\ol{v}\subseteq \A^n_{\F_p}
\end{equation}
are closed.

Then, for all~$g\in G(\ol{\Q_p})$, we have, denoting by~$\Z_p[G/F]:=\Z_p[G]\cap \Q_p[G]^{F}$
the algebra of $F$-invariant functions~$G\to\A^1$ defined over~$\Z_p$,
\begin{equation}\label{universal integral}
g\cdot v \in \ol{\Z_p}^n~\text{ if and only if }~\forall f\in \Z_p[G/F], f(g)\in\ol{\Z_p}.
\end{equation}
Moreover,~$ \Spec(\Z_p[G/F])$ is smooth over~$\Z_p$, and we have
\begin{equation}\label{KN CCL}
(G(\ol{\Q_p})\cdot v)\cap\ol{\Z_p}^n=G(\ol{\Z_p})\cdot v.
\end{equation}
\end{theorem}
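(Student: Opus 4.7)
The strategy is to realize $X:=\Spec\Z_p[G/F]$ as a smooth affine integral model of the orbit $G_{\Q_p}\cdot v\subseteq\A^n_{\Q_p}$ by $p$-adic GIT, and then to show that the induced orbit morphism $\phi:X\to\A^n_{\Z_p}$ is a closed immersion; the three conclusions then follow by the valuative criterion on the valuation ring $\ol{\Z_p}$. First, since $F_{\Z_p}$ is smooth reductive over $\Z_p$ and acts freely on $G_{\Z_p}$ by right translation, Seshadri's theorem~\cite{Sesh} (in its relative form over $\Z_p$) yields that $\Z_p[G/F]=\Z_p[G]^F$ is a finitely generated $\Z_p$-algebra, that $X$ is smooth affine over $\Z_p$, and that the quotient map $\pi:G_{\Z_p}\to X$ is an \'etale-locally trivial $F_{\Z_p}$-torsor. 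Formation of invariants commutes with flat base change here, so $X_{\Q_p}\simeq G_{\Q_p}/F_{\Q_p}$ and $X_{\F_p}\simeq G_{\F_p}/F_{\F_p}$. This establishes the smoothness assertion in the theorem, and the smoothness of $\pi$ gives surjectivity of $G(\ol{\Z_p})\to X(\ol{\Z_p})$.

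Next, the $F$-invariant orbit morphism $g\mapsto g\cdot v$ factors through $\pi$ as $\phi:X\to\A^n_{\Z_p}$, and I would show $\phi$ is a closed immersion. On the generic fiber, $\phi_{\Q_p}$ is the composition of an isomorphism $G_{\Q_p}/F_{\Q_p}\simeq G_{\Q_p}\cdot v$ (closed orbit with exact stabilizer) with the closed embedding of the orbit into $\A^n_{\Q_p}$. On the special fiber, the dimension hypothesis $\dim\Stab_{G_{\F_p}}(\bar v)=\dim F_{\F_p}$ combined with smoothness of $F_{\Z_p}$ is meant to promote the inclusion $F_{\Z_p}\subseteq\Stab_{G_{\Z_p}}(v)$ to equality: the stabilizer scheme has smooth equidimensional fibers, which forces it to be $\Z_p$-flat and hence equal to the unique flat closure of its generic fiber in $G_{\Z_p}$, namely $F_{\Z_p}$ itself. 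Then $\phi_{\F_p}$ is also the composition of an isomorphism $G_{\F_p}/F_{\F_p}\simeq G_{\F_p}\cdot\bar v$ with a closed embedding, and a Nakayama argument on the cokernel of $\Z_p[x_1,\ldots,x_n]\to\Z_p[G/F]$ promotes ``closed immersion on both fibers'' to $\phi$ itself being a closed immersion over $\Z_p$.

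With $\phi$ a closed immersion, the conclusions follow directly. Writing $[g]\in X(\ol{\Q_p})$ for the image of $g\in G(\ol{\Q_p})$, we have $g\cdot v=\phi([g])$, so $g\cdot v\in\ol{\Z_p}^n$ iff $[g]\in X(\ol{\Z_p})$ iff $f(g)\in\ol{\Z_p}$ for every $f\in\Z_p[G/F]$, by the valuative criterion for the affine scheme $X$ with values in the valuation ring $\ol{\Z_p}$; this is~\eqref{universal integral}. For~\eqref{KN CCL}: if $g\cdot v\in\ol{\Z_p}^n$ then $[g]\in X(\ol{\Z_p})$, and lifting to some $g'\in G(\ol{\Z_p})$ via the smoothness of $\pi$ gives $g'\cdot v=\phi([g'])=g\cdot v\in G(\ol{\Z_p})\cdot v$.

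The main obstacle is the middle step --- specifically, promoting the Krull-dimension hypothesis on $\Stab_{G_{\F_p}}(\bar v)$ to the scheme-theoretic identification $\Stab_{G_{\Z_p}}(v)=F_{\Z_p}$. This is the substantive input from $p$-adic Mumford stability~\cite{B92}, analogous to the Kempf-Ness unique closest-point argument in the archimedean case, and is where careful use of the hypotheses (including the smoothness of the stabilizers over $\F_p$) is essential; the contribution of Gabber acknowledged in the introduction presumably enters at this step.
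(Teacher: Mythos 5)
The overall architecture of your proposal — using Seshadri's reductive GIT over $\Z_p$ to get a smooth affine model $X=\Spec\Z_p[G/F]$ with known fibers, comparing it to the orbit, and reading off the conclusions via a valuative criterion plus smoothness of $\pi:G_{\Z_p}\to X$ — is in the same spirit as the paper's. But the middle step, where you assert that $\phi:X\to\A^n_{\Z_p}$ is a \emph{closed immersion}, is a genuine gap, and it fails precisely where the theorem's hypotheses are weakest. The hypothesis on the special fiber is only $\dim\Stab_{G_{\F_p}}(\ol v)=\dim F_{\F_p}$; it does not force $\Stab_{G_{\F_p}}(\ol v)=F_{\F_p}$ as group schemes (the paper's remarks after the theorem make exactly this point — the hypothesis only pins down $\Stab_{G_{\F_p}}(\ol v)^{0,\mathrm{red}}$). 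Consequently $\phi_{\F_p}:G_{\F_p}/F_{\F_p}\to G_{\F_p}\cdot\ol v$ is a dominant quasi-finite morphism, but it can have degree $>1$ (if $F_{\F_p}\subsetneq\Stab_{G_{\F_p}}(\ol v)$ with equal dimension) and the target $S_{\F_p}$, being the special fiber of a schematic closure, can be non-reduced even though the generic fiber is a reduced homogeneous space. So ``closed immersion on both fibers'' is not available, and your claim that $\Stab_{G_{\Z_p}}(v)=F_{\Z_p}$ does not follow from the stated hypotheses (the paper proves such an identity only for an auxiliary vector $\lambda$ coming from a different embedding of $G/F$, not for the given $v$).

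The Nakayama step is also circular as written: to conclude that $\Z_p[x_1,\ldots,x_n]\to\Z_p[G/F]$ is surjective from surjectivity on the two fibers, you need the cokernel to be a finitely generated module over the source, i.e.\ you need the map to already be module-finite (integral); but finiteness is essentially the conclusion you are after, and without it one only learns that the cokernel $C$ satisfies $C=pC$ and $C[1/p]=0$, which does not force $C=0$ (compare $C=\Q_p/\Z_p$). The paper avoids both problems by aiming lower: it shows that $G/F\to S$ is an \emph{integral} morphism — in fact the normalization of $S$ — via Zariski's Main Theorem applied to the quasi-finite map $G/F\to S$, together with surjectivity of $G(\ol{\F_p})\to(G/F)(\ol{\F_p})$ to force the open immersion $G/F\hookrightarrow\wt S$ to be onto. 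Integrality is exactly what Prop.~\ref{prop:integral lift} needs, and it is strictly weaker than a closed immersion, which in general fails here. Your final reduction from ``closed immersion'' to the three conclusions is fine; the issue is that the intermediate assertion is not true under the stated hypotheses.
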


\subsubsection*{Remarks}
Some of the assumptions can be rephrased as follows. 

The~$\Q_p$-algebraic groups~$F$ and~$G$ are reductive, the compact subgroups
~$F_{\Z_p}(\Z_p)\leq F(\Q_p)$ and~$G_{\Z_p}(\Z_p)\leq G(\Q_p)$ are hyperspecial subgroups, and we have~$F_{\Z_p}(\Z_p)=F(\Q_p)\cap GL(n,\Z_p)$ and~$G_{\Z_p}(\Z_p)=G(\Q_p)\cap GL(n,\Z_p)$. 

The property~\eqref{pKN stab} is related to semi-stability and residual semi-stability of the vector~$v$ in the sense of~\cite{B92}.

In~\eqref{pKN flat}, the assumption on dimensions means that~$\Stab_{G_{\F_p}}(\ol{v})^{0,red}$ (the reduced subgroup of the neutral component)
is equal to~$(F_{\F_p})^0$. Equivalently~$\Stab_{G_{\F_p}}(\ol{v})^{0}(\ol{\F_p})=F^0(\ol{\F_p})$. 
This is implied by the stronger condition
\begin{equation}\label{red scheme identity}
\Stab_{G(\ol{\F_p})}(\ol{v})=F(\ol{\F_p})
\end{equation}
and the stronger one
\begin{equation}\label{Scheme stab identity}
\Stab_{G_{\F_p}}(\ol{v})=F_{\F_p}\text{ as group schemes.}
\end{equation}
  


\subsubsection*{Proof of Theorem~\ref{pKN}} Let~$G/F=\Spec(\Z_p[G/F])$ and let~$S=\ol{G_{\Q_p}\cdot v}^{Zar(\A^n_{\Z_p})}$ be the Zariski closure of the orbit~$G_{\Q_p}\cdot v\subseteq \A^n_{\Q_p}$ in the affine scheme~$ \A^n_{\Z_p}$. We denote by~$gF\in(G/F)(\ol{\Q_p})$ the image of~$g\in G(\ol{\Q_p})$.


Then, for~$g\in G(\ol{\Q_p})$, we have~$g\cdot v\in \ol{\Z_p}^n\Leftrightarrow g\cdot v\in S(\ol{\Z_p})$, and we have~$gF\in (G/F)(\ol{\Z_p})\Leftrightarrow\forall f\in \Z_p[G/F], f(g)\in\ol{\Z_p}$. Thus~\eqref{universal integral} is equivalent to
\begin{equation}\label{gF fv}
gF\in (G/F)(\ol{\Z_p})\Leftrightarrow g\cdot v\in S(\ol{\Z_p}).
\end{equation}

Since~$G_{\Z_p}\leq GL(n)_{\Z_p}$ is a smooth reductive subgroup scheme, we may apply Lem.~\ref{lemma KN connected}. Using the assumptions~\eqref{pKN flat} and ~\eqref{pKN stab}, we may apply Lem.~\ref{Lemma S S'}.
Since~$F_{\Z_p}\leq G_{\Z_p}$ is a smooth reductive subgroup scheme, we may apply Lem.~\ref{Seshadri} and~\ref{Lemma integral}. Thus,  the map~$G/F\to S$ is integral. We may thus apply Prop.~\ref{prop:integral lift}, and \eqref{gF fv} follows. This proves~\eqref{universal integral}.

According to Lem.~\ref{platitude}, the scheme~$G/F$ is smooth over~$\Z_p$.

We prove~\eqref{KN CCL} by double inclusion. We trivially have~$G(\ol{\Z_p})\cdot v\subseteq (G(\ol{\Q_p})\cdot v)\cap \ol{\Z_p}^n$. 
We prove the other inclusion. Let~$g\in G(\ol{\Q_p})$ be such that~$g\cdot v \in S(\ol{\Z_p})$.
By~\eqref{universal integral}, we have~$gF\in (G/F)(\ol{\Z_p})$. By~Prop.~\ref{prop:Sesh GIT}, the map~$\pi:G\to G/F$ satisfies~$\pi(G(\ol{\F_p}))=(G/F)(\ol{\F_p})$. As the morphism~$\pi:G\to G/F$ is smooth, it is in particular flat. We may thus apply Prop.~\ref{flat lift 2} with~$y=gF$ and~$\ol{y}=\ol{gF}\in (G/F)(\ol{\F_p})\subseteq \pi(G(\ol{\F_p}))$. Thus there exists~$g'\in G(\ol{\Z_p})$ such that~$\pi(g')=gF$.
Thus~$g\cdot v=g'\cdot v\in G(\ol{\Z_p})\cdot v$. This proves the other inclusion.

This concludes the proof of Theorem~\ref{pKN}.

\subsection{Summary of the section} In the rest of~\S\ref{sec:pKN}, we develop results used in the proof above.

In~\S\ref{sec:orbit varieties}, we consider a smooth reductive group scheme~$G_{\Z_p}\leq GL(n)_{\Z_p}$ and we study the Zariski closure~$S=\ol{G_{\Q_p}\cdot v}^{Zar(\A^n_{\Z_p})}$ of an orbit of of~$G_{\Q_p}$.

We prove in Lemma~\ref{lemma KN connected} that the special fibre~$S_{\F_p}$ is connected by reduction to the case~$G=GL(1)$. Under an extra assumption of the form~\eqref{pKN flat} and~\eqref{pKN stab}, we prove in Lemma~\ref{Lemma S S'} that the reduced subscheme~$(S_{\F_p})^{red.}$  is a single~$G_{\F_p}$-orbit.

In~\S\ref{secGIT} we consider smooth reductive group schemes~$F_{\Z_p}\leq G_{\Z_p}\leq GL(n)_{\Z_p}$, and study the scheme~$G/F=\Spec(\Z_p[G/F])$. We prove that~$(G/F)_{\F_p}$ is reduced and is a single~$G_{\F_p}$-orbit. We use results from Seshadri's~\cite{Sesh}, we prove that~$G/F$ can be written in the form~$S$ as in~\S\ref{sec:orbit varieties}, and we prove the assumptions of Lem.~\ref{Lemma S S'} are satisfied.

In~\S\ref{sec:integral}, we consider~$S$ as in~Lem.~\ref{Lemma S S'}, and~$G/F$ as in~\ref{secGIT}. We prove that~$G/F$ is the normalisation of~$S$. From~\S\ref{sec:orbit varieties} and~\S\ref{secGIT}, the morphism~$G/F\to S$ is quasi-finite, and, by Zariski's main theorem,~$G/F$ is open in the normalisation~$\wt{S}$ of~$S$ in~$G/F$. We prove that~$G/F\to\wt{S}$ is surjective by applying Lem.~\ref{Lemma S S'} to~$\wt{S}$.

In~\S\ref{sec:flat smooth}, we prove that the morphisms~$G\to G/F$ and~$G/F\to \Spec(\Z_p)$ are smooth. This uses the  "critère de lissité fibre par fibre" and, for the~$\F_p$-fibre, uses~\S\ref{secGIT}.

In~\S\ref{sec:flat recalls} and~\S\ref{sec:integral lift}, we recall some consequences of flatness and integrality.
The \S\S\ref{sec:flat recalls}-\ref{sec:integral lift} do not depend on the rest of \S\ref{sec:pKN}.

\subsection{Orbit closure over~$\Z_p$}\label{sec:orbit varieties}
%
%
%
\begin{proposition}\label{prop:Aorbite}
Let~$G_{\Z_p}\leq GL(n)_{\Z_p}$ be a closed smooth reductive subgroup scheme.

Let~$A\subseteq \Z_p[G]$ be a~$G(\Z_p)$-stable subalgebra of
finite type over~$\Z_p$.

Then there exists a closed immersion~$G_{\Z_p}\to GL(m)_{\Z_p}$ and a vector~$\lambda\in {\Z_p}^m$
and a~$G_{\Z_p}$-equivariant isomorphism
\[
\Spec(A)\to \ol{G_{\Q_p}\cdot \lambda}^{Zar(\A^m_{\Z_p})}.
\]
\end{proposition}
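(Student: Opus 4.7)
The plan is to realise $\Spec(A)$ as the Zariski closure of an orbit in a linear representation of $G_{\Z_p}$, by exploiting the action of $G$ on its own coordinate ring $\Z_p[G]$.

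First I would produce both a representation and a closed immersion. Fix finitely many $\Z_p$-algebra generators $f_1,\ldots,f_k\in A$. Using that the comultiplication $\Delta:\Z_p[G]\to\Z_p[G]\otimes_{\Z_p}\Z_p[G]$ makes the regular corepresentation locally finite---each $f\in\Z_p[G]$ is contained in a smallest finitely generated $G$-stable $\Z_p$-submodule $V(f)$, namely the $\Z_p$-span of the $a_j$'s in any presentation $\Delta(f)=\sum_j a_j\otimes b_j$ with $\{b_j\}$ linearly independent---I set $V_0=\sum_{i=1}^k V(f_i)$. This is a finitely generated $G$-stable $\Z_p$-submodule of $\Z_p[G]$ containing the $f_i$; being torsion-free over the PID $\Z_p$, it is free of some rank $m_0$. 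The $G$-stability of $A$, assumed only for $G(\Z_p)$-points, upgrades to scheme-theoretic $G$-stability of $A$ by Zariski density of $G(\Z_p)$ in $G_{\Q_p}$ together with the $\Z_p$-flatness of $A\subseteq\Z_p[G]$; consequently $V_0\subseteq A$. To secure a closed immersion $G_{\Z_p}\to GL(m)_{\Z_p}$, I set $V=V_0\oplus W$ with $W=\Z_p^n$ the standard representation from $G_{\Z_p}\hookrightarrow GL(n)_{\Z_p}$, so $V$ is free of rank $m=m_0+n$; then $G_{\Z_p}\to GL(V)\simeq GL(m)_{\Z_p}$ is a closed immersion because its composition with the projection $GL(V)\to GL(W)=GL(n)_{\Z_p}$ recovers the hypothesised closed immersion.

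Now let $\lambda\in V^*\simeq\Z_p^m$ be the linear form equal to $\mathrm{ev}_1:V_0\to\Z_p$, $f\mapsto f(1)$, on the $V_0$ summand and zero on $W$. The orbit map $\mu:G_{\Z_p}\to V^*=\A^m_{\Z_p}$, $g\mapsto g\cdot\lambda$, corresponds on rings to the $\Z_p$-algebra homomorphism $\operatorname{Sym}(V)\to\Z_p[G]$ extending the inclusion $V_0\subseteq A\subseteq\Z_p[G]$ and killing $W$; its image is precisely the subalgebra generated by $V_0$, namely $A$. Hence $\mu$ factors as $G_{\Z_p}\xrightarrow{\bar{\mu}}\Spec A\xrightarrow{\iota}\A^m_{\Z_p}$ with $\iota$ a $G$-equivariant closed immersion and $\bar{\mu}$ scheme-theoretically dominant (since $A\hookrightarrow\Z_p[G]$ is injective). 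As $A$ is $\Z_p$-flat (a subring of the torsion-free $\Z_p[G]$), $\Spec A\subseteq\A^m_{\Z_p}$ equals the Zariski closure of its own generic fiber $\Spec(A\otimes\Q_p)$, which in turn is the scheme-theoretic image of $\mu_{\Q_p}$, namely $\overline{G_{\Q_p}\cdot\lambda}^{Zar(\A^m_{\Q_p})}$. This yields the desired $G$-equivariant identification $\Spec A\simeq\overline{G_{\Q_p}\cdot\lambda}^{Zar(\A^m_{\Z_p})}$.

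The main obstacle is the construction of $V_0$: verifying that the $\Z_p$-span of the $a_j$'s is actually $G$-stable (not merely $G(\Z_p)$-stable) relies on coassociativity $(\Delta\otimes\mathrm{id})\Delta(f)=(\mathrm{id}\otimes\Delta)\Delta(f)$, which forces $\Delta(a_j)\in V(f)\otimes_{\Z_p}\Z_p[G]$; and the upgrade from $G(\Z_p)$- to $G$-stability of $A$ is needed to ensure $V_0\subseteq A$.
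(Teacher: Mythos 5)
Your proposal takes essentially the same route as the paper: produce a finite free $G_{\Z_p}$-stable $\Z_p$-submodule of $A$ containing algebra generators, form the orbit closure of the "evaluation at the identity" vector, use $\Z_p$-flatness of $A$ to identify this closure with $\Spec A$, and take a direct sum with the standard faithful representation so that $G_{\Z_p}\to GL(m)_{\Z_p}$ is a closed immersion. The last three steps match the paper almost verbatim.

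The one place you genuinely diverge, and where there is a gap, is in how the finite submodule is found inside $A$. You build $V_0$ directly over $\Z_p$ as a $\Z_p[G]$-subcomodule and then need $V_0\subseteq A$, which requires the (infinite rank, possibly non-saturated) subalgebra $A$ itself to be a $\Z_p[G]$-subcomodule. You justify this by "Zariski density of $G(\Z_p)$ in $G_{\Q_p}$ together with $\Z_p$-flatness of $A$." Density only delivers that $A\tens\Q_p$ is a $\Q_p[G]$-subcomodule; flatness does not control the saturation of $A$ in $\Z_p[G]$. Concretely, for $f\in A$ with $\Delta(f)=\sum_j a_j\tens b_j$ and $\{b_j\}$ a $\Z_p$-basis of a saturated submodule, the hypothesis gives $\sum_j b_j(g)\,a_j\in A$ for every $g\in G(\Z_p)$, but the $\Z_p$-span of the evaluation vectors $\{(b_j(g))_j : g\in G(\Z_p)\}$ may be a proper sublattice of the free module on the $b_j$ (this is precisely the failure of $G(\F_p)$ to be Zariski dense in $G_{\F_p}$, which can happen for small $p$, e.g. $G=\mathbb{G}_m$), so one cannot directly conclude $a_j\in A$. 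The paper avoids this by going over $\Q_p$ first: Springer's \cite[Prop.~2.3.6]{SpringerLAG} produces a finite-dimensional $\Q_p[G]$-subcomodule $V\subseteq A\tens\Q_p$ containing the generators, and $\Lambda:=A\cap V$ then lies in $A$ automatically; the only thing that needs checking is that the finite free $G(\Z_p)$-stable lattice $\Lambda$ is a $\Z_p[G]$-comodule, a statement about lattices in finite-dimensional representations rather than about $A$, and is what the hypothesis that $G_{\Z_p}$ is a smooth reductive group scheme is used for. The underlying claim that $A$ is a subcomodule is in fact a consequence of this lattice fact (apply it to $A\cap V'$ for each finite-dimensional $V'\ni f$), but the argument you offer does not establish it; you have correctly identified the crux but not closed it.
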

\begin{proof}As~$A$ is of finite type over~$\Z_p$, there exists a finite generating family~$a_1,\ldots,a_N\in A$.
As~$G_{\Z_p}$ is flat over~$\Z_p$, we have~$\Z_p[G]\subseteq \Q_p[G]$.
By~\cite[Prop. 2.3.6]{SpringerLAG}, there exists a finite dimensional~$G(\Q_p)$-stable subspace~$V\leq \Q_p[G]$ containing~$\{a_1;\ldots;a_N\}$. As~$A\tens\Q_p\leq \Q_p[G]$ is~$G(\Q_p)$-invariant, we may replace~$V$ by~$V\cap A\tens\Q_p$. Then~$\Lambda:=A\cap V$ is a~$\Z_p$-lattice in~$V$. As both~$A$ and~$V$ are~$G(\Z_p)$-stable,~$\Lambda$ is~$G(\Z_p)$-stable.
There exists a finite $\Z_p$-linear basis~$v_1,\ldots,v_d\in \Lambda$. As~$a_1,\ldots,a_N\in A\cap V$, the family~$v_1,\ldots,v_d\in \Lambda$ generates the algebra~$A$. Let~$\Z_p[X_1,\ldots,X_d]$ denote the polynomial algebra.
The action of~$G(\Z_p)$ on~$\Lambda$ induces an action of~$G_{\Z_p}$ on~$\Spec(\Z_p[X_1,\ldots,X_d])$. The morphism
\[
X_i\mapsto v_i:
\Z_p[X_1,\ldots,X_d]\to A
\]
is surjective. It corresponds to a closed embedding~$\Spec(A)\to \A^d_{\Z_p}$. By construction, this embedding is~$G_{\Z_p}$ equivariant. 

As~$A\subseteq \Q_p[G]$, the corresponding morphism~$G_{\Q_p}\to \Spec(A)$ is dominant. Let~$a\in \Spec(A)$ be the image of~$1\in G(\Q_p)$.  It follows that~$\Spec(A)$ is the Zariski closure of the orbit of~$G(\Q_p)\cdot a$. As~$\Spec(A)$ is a closed subscheme of~$\A^d_{\Z_p}$, we have~$\Spec(A)=\ol{G_{\Q_p}\cdot a}^{Zar(\A^d_{\Z_p})}$.

The direct sum of the action of~$G_{\Z_p}$ on~$\A^d_{\Z_p}$ with the representation~$G_{\Z_p}\to GL(n)_{\Z_p}$   is a closed immersion~$G_{\Z_p}\to GL(d+n)_{\Z_p}$. The proposition follows for~$m=d+n$, and for~$\lambda=a\oplus 0$.
\end{proof}

\subsubsection{Connectedness and Special fibre}
\begin{lemma}\label{lemma KN connected}
Let~$G_{\Z_p}\leq GL(n)_{\Z_p}$ be a closed smooth reductive subgroup scheme, 
 let~$v\in \ol{\Z_p}^n$
 and let
\begin{equation}\label{defi schematic closure S}
S=\ol{G_{\ol{\Q_p}}\cdot v}^{Zar(\A^n_{\ol{\Z_p}})}
\end{equation}
be the schematic closure of~${G_{\ol{\Q_p}}\cdot v}\subseteq \A^n_{\ol{\Z_p}}$ in~$\A^n_{\ol{\Z_p}}$.
Then~$S_{\ol{\F_p}}$ is connected.
\end{lemma}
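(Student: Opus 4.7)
The plan is to exploit $G_{\overline{\Z_p}}$-equivariance together with the Cartan decomposition to reduce to the case $G=GL(1)$, which can be handled by direct computation of the orbit closure.

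First, $S$ is integral and flat over $\overline{\Z_p}$: flatness is immediate from the definition of schematic closure, and irreducibility of $S_{\overline{\Q_p}}$ follows from the geometric connectedness of $G_{\overline{\Q_p}}$ (as a smooth reductive group scheme in the sense of SGA 3). Moreover $S$ is stable under the $G_{\overline{\Z_p}}$-action on $\mathbb{A}^n_{\overline{\Z_p}}$, since the generic-fibre orbit is $G_{\overline{\Q_p}}$-stable and schematic closure is functorial. Consequently $S_{\overline{\F_p}}$ is a $G_{\overline{\F_p}}$-stable closed subscheme of $\mathbb{A}^n_{\overline{\F_p}}$, and the orbit $G_{\overline{\F_p}}\cdot\overline{v}$, being the image of the connected scheme $G_{\overline{\F_p}}$, lies in a single connected component $C$ of $S_{\overline{\F_p}}$. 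It remains to prove $C=S_{\overline{\F_p}}$.

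By the Cartan decomposition \eqref{Cartanbar} for the hyperspecial $G_{\overline{\Z_p}}$, any $g\in G(\overline{\Q_p})$ writes as $g=k_1\,t\,k_2$ with $k_1,k_2\in G_{\overline{\Z_p}}(\overline{\Z_p})$ and $t$ in a fixed maximal torus $T\leq G_{\overline{\Q_p}}$. Setting $w=k_2\cdot v\in\overline{\Z_p}^n$, this gives $g\cdot v=k_1\cdot(t\cdot w)$, so $S$ is covered by the $G_{\overline{\Z_p}}(\overline{\Z_p})$-translates of the $T$-orbit closures $S_{T,w}:=\overline{T\cdot w}^{Zar(\mathbb{A}^n_{\overline{\Z_p}})}$, as $w$ ranges over $G_{\overline{\Z_p}}(\overline{\Z_p})\cdot v$. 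Since $T$ splits over $\overline{\Q_p}$ as $GL(1)^r$, and the $T$-orbit closures decompose iteratively along a basis of cocharacters of $T$, the connectedness of $S_{T,w,\overline{\F_p}}$ reduces to the one-parameter case $G=GL(1)$.

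For $G=GL(1)$ acting on $\mathbb{A}^n_{\overline{\Z_p}}$ with integer weights $(w_1,\ldots,w_n)$, writing $v=\sum v_i$ in the weight decomposition, the orbit is the parametrised curve $t\mapsto(t^{w_1}v_1,\ldots,t^{w_n}v_n)$, and its schematic closure in $\mathbb{A}^n_{\overline{\Z_p}}$ is cut out by explicit binomial equations whose form depends on the $p$-adic valuations of the $v_i$. A direct case analysis on the signs of the $w_i$ and the valuations of the $v_i$ shows that the special fibre is a union of weight-coordinate subvarieties all meeting at a common fixed point (or, in degenerate cases, is $\mathbb{G}_m$ or a single point), hence connected. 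Combining this with the reduction above, each $S_{T,w,\overline{\F_p}}$ is connected and contains $\overline{w}=\overline{k_2}\cdot\overline{v}\in G_{\overline{\F_p}}\cdot\overline{v}\subseteq C$; so it lies in $C$, as do all its $G_{\overline{\Z_p}}(\overline{\Z_p})$-translates, and their union covers $S_{\overline{\F_p}}$, yielding $S_{\overline{\F_p}}=C$. The main technical obstacle is to upgrade the Cartan covering from a pointwise statement on $\overline{\Q_p}$-points to a scheme-theoretic covering of $S_{\overline{\F_p}}$; this requires flatness of $S$ and density of its $\overline{\Z_p}$-points on the integral scheme $S$ to transfer connectedness of the pieces to the entire special fibre.
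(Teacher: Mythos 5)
Your overall strategy matches the paper's: use flat lifting to identify $\ol{\F_p}$-points of $S$ with reductions of integral points of the orbit, use the Cartan decomposition to peel off hyperspecial maximal compacts, and reduce the connectedness to a rank-one ($GL(1)$) computation. The flat-lifting remark at the end is indeed the crucial technical point and the paper's proof relies on it in the same way. The base case analysis, though stated more vaguely than the paper's explicit decomposition $S_{\ol{\F_p}}(\ol{\F_p})=S^-\cup\{\ol{v_0}\}\cup S^+$, is conceptually the same.

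The step that does not work as you have written it is the reduction from the full torus $T\simeq GL(1)^r$ to $GL(1)$ by ``decomposing iteratively along a basis of cocharacters''. If you try to connect $\ol{x}=\ol{t\cdot w}$ to $\ol{w}$ through intermediate points $t_r\cdot w,\ t_{r-1}t_r\cdot w,\dots$ along a basis of $Y(T)$, there is no reason these intermediate vectors should lie in $\ol{\Z_p}^n$; if an intermediate point leaves the integral lattice, the corresponding piece of the chain is not visible in the special fibre of the relevant orbit closure, and the connectedness argument breaks. The paper avoids this entirely by exploiting the divisibility of the value group of $\ol{\Q_p}$: for the \emph{specific} element $t\in T(\ol{\Q_p})$ one is trying to control, one can find a \emph{single} cocharacter $y\colon GL(1)\to T$, a scalar $\lambda\in\ol{\Q_p}^\times$ and a unit $u\in T(\ol{\Z_p})$ with $y(\lambda)=u\cdot t$; this rewrites $x=g\cdot v$ as $(ku^{-1})\cdot y(\lambda)\cdot v$ with $ku^{-1}\in G(\ol{\Z_p})$, so $\ol{x}$ is connected to $\ol{v}$ by a two-step chain (one $G_{\ol{\F_p}}$-orbit, then one $GL(1)$-orbit closure) without any iteration and without intermediate-integrality problems. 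So there is a genuine gap: the proposal needs the single-cocharacter observation (or some substitute) to make the torus step rigorous; as stated, the iterative reduction is not a proof.
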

We first treat the case~$T=G= GL(1)$.
\begin{proof}If~$S_{\ol{\F_p}}$ is a closed orbit of~$T$, then it is connected, as it is the image of~$GL(1)$, which is connected. We will show that, otherwise, we can decompose~$S_{\ol{\F_p}}$ under the form
\begin{equation}\label{decompo tore en + 0 -}
S_{\ol{\F_p}}(\ol{\F_p})=S^-\cup\{\ol{v_0}\}\cup S^+
\end{equation}
where each of~$S^-$ and~$S^+$ is either empty or of the form~$X=T(\ol{\F_p})\cdot\ol{w}$
with~$\{\ol{v_0}\}\in \ol{X}^{Zar}$. For every~$\ol{w}\in\ol{\F_p}^n$, because~$T=GL(1)$ is connected, so is~$T\cdot \ol{w}$, and so is its
Zariski closure. It follows that~$S^-$ and~$S^+$ are contained in the connected component of~$\ol{v_0}$, and finally that~$S_{\ol{\F_p}}$ is
the connected component of~$\ol{v_0}$.

By~Prop.~\ref{flat lift}, a point in~$S(\ol{\F_p})$ is of the form
\[
\ol{x}\text{ with }x=t\cdot v\in\ol{\Z_p}^n\text{ and }t\in T(\ol{\Q_p}).
\]

We identify~$X(T):=\Hom(GL(1),GL(1))$ with~$\Z$ and denote by
\[
v=\sum_{k\in\Z} v_k
\]
the eigendecomposition of~$v$ for the action of~$T$. Then~$x=t\cdot v=\sum_{k\in \Z} t^k\cdot v_k$,
and, by~\eqref{norm tore}, 
\begin{equation}\label{eq t x entier}
\norm{x}=\max\{\abs{t}^k\cdot \norm{v_k}\}\leq 1.
\end{equation}
Define
\[
c=\max_{k< 0} \norm{v_k}^{-1/k}\in[0;1]\text{ and }
c'=\min_{k> 0} \norm{v_k}^{-1/k}\in[1;+\infty].
\]
(Observe that, if~$v_k=0$ for all~$k<0$, then~$c=0$, and, if~$v_k=0$ for all~$k>0$, then~$c'=+\infty$.)

For~$t\in T(\ol{\Q_p})$ we have~$t\cdot v\in\ol{\Z_p}^n$ if and only if~$c\leq \abs{t}\leq c'$.
We define
\begin{eqnarray*}
T_-&=&\set{t\in T(\ol{\Q_p})}{\abs{t}=c}\\
T_0&=&\set{t\in T(\ol{\Q_p})}{c<\abs{t}<c'}\\
T_+&=&\set{t\in T(\ol{\Q_p})}{\abs{t}=c'}.
\end{eqnarray*}
and
\[
S^-=\set{\ol{t\cdot v}}{t\in T_-}\text{ and }
S^+=\set{\ol{t\cdot v}}{t\in T_+}.
\]
and
\[
v_-=\sum_{k< 0} v_k\text{ and }v_+=\sum_{k> 0} v_k.
\]
If~$c=0$, then~$T_-=S^-=\emptyset$. Otherwise, let us pick~$u\in T_-$.
Assume first~$c\neq c'$. Thus~$c<c'$, and~$u\cdot v^+=v^+=0$, or else~$c'<+\infty$, and, for some~$u'\in T_+\neq \emptyset$, we have
\[
\norm{u\cdot v_+}=\max_{k>0}{c^k}\cdot \norm{ v_k}<
\max_{k\in \Z}{c'^k}\cdot \norm{ v_k}=\norm{u'\cdot v}\leq 1.
\]
We then have~$\ol{u\cdot v_+}=0$ and
\[
\ol{w}:=\ol{u\cdot v}=\ol{u\cdot v_-+v_0}
\]
We then have
\[
S^-=T(\ol{\F_p})\cdot \ol{w}
\]
Because the weights of~$\ol{u\cdot v_-}$ are negative we have 
\[
\lim_{\ol{t}\to +\infty} \ol{t}\cdot\ol{u\cdot v_-+v_0}= \ol{u\cdot 0+v_0}=\ol{v_0},
\]
where limits are understood in the sense of the Hilbert-Mumford criterion, as in~\cite[Lem.\,1.3]{Kempf}.

Thus~$\{v_0\}\in \ol{S^-}^{Zar}$. The case of~$S^+$ is treated similarly
and we have obtained~\eqref{decompo tore en + 0 -} with the desired properties.

We now treat the remaining case~$c=c'$. We then have
\[
S_{\ol{\F_p}}(\ol{\F_p})=S^+=S^-=T(\ol{\F_p})\cdot\ol{v}.
\]
(This is then a closed orbit of~$T_{\ol{\F_p}}$, as~$S_{\ol{\F_p}}$ is closed).
\end{proof}
We reduce Lemma~\ref{lemma KN connected} to the case of a torus~$GL(1)\simeq T\leq G$.
\begin{proof}It is enough to prove that for an arbitrary~$\ol{x}\in S(\ol{\F_p})$,
this~$\ol{x}$ and~$\ol{v}$ belong to the same connected component of~$S_{\ol{\F_p}}$.

We may find~$x\in S(\ol{\Q_p})\cap\ol{\Z_p}^n$ with reduction~$\ol{x}$.
There exists~$g\in G(\ol{\Q_p})$ with~$g\cdot v=x$. From a Cartan decomposition~$g=k_1t'k_2\in G(\ol{\Z_p})\cdot T'(\ol{\Q_p})\cdot G(\ol{\Z_p})$ as in~\eqref{Cartanbar}, we construct~$k:=k_1k_2\in G(\ol{\Z_p})$ and a
maximal torus~$T:={k_2}^{-1} T' k_2\leq G$ and~$t={k_2}^{-1} t' k_2\in T(\ol{\Q_p})$ with~$k\cdot t=g$.
The torus~$T$ has good reduction by~\ref{Good torus}. There exists~$y:GL(1)\to T$ defined over~$\ol{\Q_p}$ and~$u\in T(\ol{\Z_p})$
and~$\lambda\in \ol{\Q_p}^\times$ with~$y(\lambda)=u\cdot t$. 
Because~$G_{\F_p}$ is connected and~${S}_{\ol{\F_p}}$ is~$G_{\ol{\F_p}}$-invariant, the orbit~$G_{\F_p}(\ol{\F_p})\cdot\ol{x}$ is connected and contained in~$S_{\ol{\F_p}}$. Thus~$\ol{x}$ and~$\ol{x'}={\ol{(k\cdot u)}^{\phantom{l}}}^{-1}\cdot \ol{x}$
lie in the same connected component of~$S_{\ol{\F_p}}$. We may thus replace~$\ol{x}$ by~$\ol{x'}$ 
and~$x$ by~$k\cdot u^{-1}\cdot x$, and~$g$ by~$y(\lambda)$. 

We have~$x\in GL(1)(\ol{\Q_p})\cdot v\cap \ol{\Z_p}^n$ and thus
\[
\ol{v},\ol{x}\in S_T(\ol{\F_p})\text{ with }S_T:=\ol{T\cdot v}^{Zar(\A^n_{\ol{\Z_p}})}\subseteq S.
\]
From the previous~$GL(1)$ case,~$S_T$ is connected. Thus~$\ol{x}$ and~$\ol{v}$ lie in the same connected component.
\end{proof}
\begin{lemma}\label{Lemma S S'} In the situation of Lemma~\ref{lemma KN connected},
we assume that
\begin{equation}\label{S S' orbite fermee}
\text{ the orbit $G_{\F_p}\cdot \ol{v}$ is Zariski closed in~$\A^n_{\F_p}$}
\end{equation}
and denote by~$S'$ the corresponding reduced subscheme of~$\A^n_{\F_p}$. 
We assume furthermore that, using Krull dimension,
\begin{equation}\label{S S' dimension}
\dim \Stab_{G_{\Q_p}}(v)=\dim \Stab_{G_{\F_p}}(\ol{v})
\end{equation}
Then
\[
S'=(S_{\F_p})^{red.}.
\]
\end{lemma}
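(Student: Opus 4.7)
\emph{Proof plan.} The plan is to combine flatness of the schematic closure $S$ over $\ol{\Z_p}$ with the connectedness output of Lemma~\ref{lemma KN connected}, and then use that $S'$ is a single $G_{\F_p}$-orbit to rule out any extra irreducible component of the reduced special fibre.

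First I would check that $S_{\F_p}$ and $S'$ have the same dimension. Since $S$ is the schematic closure of a $\ol{\Q_p}$-subscheme inside $\A^n_{\ol{\Z_p}}$, it is flat over $\ol{\Z_p}$ (being torsion-free over a valuation ring). Applying Krull's Hauptidealsatz to the non-zerodivisor $p \in \ol{\Z_p}[S]$, every irreducible component of $(S_{\F_p})^{red}$ has codimension one in $S$, hence dimension $d := \dim S_{\ol{\Q_p}}$. The orbit-stabilizer formula, together with smoothness of $G_{\Z_p}$ (so that $\dim G_{\Q_p} = \dim G_{\F_p}$) and the hypothesis~\eqref{S S' dimension}, then yields
\[
d = \dim G_{\Q_p} - \dim \Stab_{G_{\Q_p}}(v) = \dim G_{\F_p} - \dim \Stab_{G_{\F_p}}(\ol{v}) = \dim S'.
\]

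Now $S' = G_{\F_p} \cdot \ol{v}$, being a single orbit through a point of $S_{\F_p}$ and being $G_{\F_p}$-stable, sits inside $(S_{\F_p})^{red}$; being irreducible of maximal dimension $d$, it must be one of the irreducible components. Let $Z$ denote the union of all the remaining components. Both $S'$ and $Z$ are closed and $G_{\F_p}$-stable (the latter because the connected group $G_{\F_p}$ fixes every irreducible component of $(S_{\F_p})^{red}$). Lemma~\ref{lemma KN connected} guarantees that $S_{\F_p}$ is connected, so if $Z \neq \emptyset$ then $S' \cap Z \neq \emptyset$; choosing any $z$ in this intersection, the identity $S' = G_{\F_p} \cdot z$ combined with $G_{\F_p}$-invariance of $Z$ forces $S' \subseteq Z$. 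But then $S'$ would be an irreducible closed subset of $Z$ having the same dimension as each component of $Z$, hence it would equal one of them, a contradiction. Therefore $Z = \emptyset$ and $S' = (S_{\F_p})^{red}$.

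The main obstacle I anticipate is making the dimension bookkeeping rigorous over the non-Noetherian ring $\ol{\Z_p}$: one should either descend to a finitely generated subring of $\ol{\Z_p}$, where standard dimension theory ensures the fibres of a flat finite-type morphism are equidimensional of the expected dimension, or invoke a version of Hauptidealsatz adapted to valuation rings.
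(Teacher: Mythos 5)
Your proof is correct, and it arrives at the conclusion by a route that is recognisably the same in spirit but differs in the final step. Both you and the paper establish $\dim S' = \dim S_{\F_p}$ from flatness and the orbit–stabiliser dimension count, then exploit connectedness of $S_{\F_p}$ (from Lemma~\ref{lemma KN connected}) together with the fact that $S'$ is a single $G_{\F_p}$-orbit. The paper then argues topologically: $S'$ has a nonempty open interior in $S_{\F_p}$ (being an irreducible component of maximal dimension), hence by $G_{\F_p}$-homogeneity $S'$ is open as well as closed, and connectedness forces $S' = (S_{\F_p})^{red}$. You instead decompose $(S_{\F_p})^{red} = S' \cup Z$ and derive a contradiction from $S' \cap Z \neq \emptyset$: since $Z$ is $G_{\F_p}$-stable and $S'$ is a single orbit, any common point forces $S' \subseteq Z$, which is impossible for a distinct irreducible component. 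Your version sidesteps the constructibility/open-subset discussion and is arguably a bit cleaner; note also that the clause ``having the same dimension as each component of $Z$'' is not actually needed — $S'$ being an irreducible component that is also contained in a different component $Z_i$ is already the contradiction, regardless of dimensions.

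Two remarks on your dimension step. First, you invoke Krull's Hauptidealsatz over $\ol{\Z_p}$, which is not Noetherian; you correctly flag this, and the cleanest resolution (which is what the paper does implicitly) is to work over $\Z_p$ from the start: $S$ is then a domain of finite type over the DVR $\Z_p$, and either Hauptidealsatz together with catenarity, or more simply the paper's Lemma~\ref{lemma schemes} (lower semicontinuity of $y \mapsto \dim X_y$ for flat finite-type morphisms), gives $\dim S_{\F_p} \leq \dim S_{\Q_p}$. Second, your Hauptidealsatz route would actually give equidimensionality of $S_{\F_p}$, which is stronger than what the paper proves and stronger than what either argument needs; the inequality $\dim S_{\F_p} \leq \dim S_{\Q_p}$ combined with $S' \subseteq S_{\F_p}$ and the orbit–stabiliser count for $S'$ already forces equality of all three dimensions.
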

\begin{proof}By construction~$S$ is the Zariski closure of its generic fiber. Hence, by Prop.~\ref{flat lift}, it is flat over~$\Z_p$. According to Lemma~\ref{lemma schemes} we have
\begin{equation}\label{semi dim S}
\dim S_{\F_p}\leq \dim S_{\Q_p}
\end{equation}
From~\eqref{dim formula}, we have
\begin{eqnarray}
\dim S_{\Q_p} &= &\dim G_{\Q_p} - \dim \Stab_{G_{\Q_p}}(v),\\
\dim S'_{\phantom{\Q_p}} &= &\dim G_{\F_p} - \dim \Stab_{G_{\F_p}}(\ol{v}).
\end{eqnarray}
We deduce~$\dim(S')\geq \dim(S_{\F_p})$. Because~$S'\subseteq S_{\F_p}$
we have actually
\[
\dim(S')=\dim(S_{\F_p}).
\]
Thus,~$S'$ contains a generic point of one 
irreducible component of~$S$, and thus\footnote{This~$S'$ is of finite type over~$S$ 
and its image will be constructible.} contains a non empty open subset 
of~$S$. Because~$S'$ is closed 
in~$\A^n_{\F_p}$, it is closed in~$S_{\F_p}$. Thus~$S'$ contains a connected
component of~$S_{\F_p}$, and because~$S_{\F_p}$ is connected and~$S'$ 
is reduced,
\[
S'=(S_{\F_p})^{red}.\qedhere
\]
\end{proof}
\begin{corollary}\label{cor:T S}Let~$S$ be as in Lemma~\ref{Lemma S S'}.
 Let~$T$ be a~$\F_p$-algebraic variety on which~$G_{\F_p}$ acts,
and let
\[
T\to S_{\F_p}
\]
be a~$G_{\F_p}$-equivariant morphism. Then, for every~$t\in T(\ol{\F_p})$,
\[
\dim \Stab_{G_{\ol{\F_p}}}(t)\leq \dim \Stab_{G_{\F_p}}(\ol{v})
\]
\end{corollary}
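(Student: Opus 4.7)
The plan is to deduce Corollary~\ref{cor:T S} directly from Lemma~\ref{Lemma S S'} together with the~$G$-equivariance of the map~$T\to S_{\F_p}$. The idea is that the image of~$t$ lies in the orbit~$G_{\F_p}\cdot\ol{v}$, so its stabiliser in~$G_{\ol{\F_p}}$ is conjugate to~$\Stab_{G_{\ol{\F_p}}}(\ol{v})$, and the stabiliser of~$t$ itself is contained in it.

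First, denote the given~$G_{\F_p}$-equivariant morphism by~$\phi\colon T\to S_{\F_p}$, and let~$t\in T(\ol{\F_p})$. Set~$s:=\phi(t)\in S_{\F_p}(\ol{\F_p})$. Since~$\phi$ is~$G_{\F_p}$-equivariant, one has the inclusion of algebraic stabilisers
\[
\Stab_{G_{\ol{\F_p}}}(t)\ \subseteq\ \Stab_{G_{\ol{\F_p}}}(s),
\]
and in particular~$\dim\Stab_{G_{\ol{\F_p}}}(t)\leq \dim\Stab_{G_{\ol{\F_p}}}(s)$.

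Next I would use Lemma~\ref{Lemma S S'}. Since the underlying sets of~$S_{\F_p}$ and~$(S_{\F_p})^{red}$ coincide, and~$(S_{\F_p})^{red}=S'=G_{\F_p}\cdot\ol{v}$ by that lemma, we have
\[
S_{\F_p}(\ol{\F_p})\ =\ S'(\ol{\F_p})\ =\ G(\ol{\F_p})\cdot\ol{v}.
\]
Thus there exists~$g\in G(\ol{\F_p})$ with~$s=g\cdot\ol{v}$, and consequently
\[
\Stab_{G_{\ol{\F_p}}}(s)=g\,\Stab_{G_{\ol{\F_p}}}(\ol{v})\,g^{-1},
\]
so these two stabiliser group schemes have the same dimension. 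Combining with the previous inequality yields
\[
\dim\Stab_{G_{\ol{\F_p}}}(t)\ \leq\ \dim\Stab_{G_{\ol{\F_p}}}(s)\ =\ \dim\Stab_{G_{\ol{\F_p}}}(\ol{v})\ =\ \dim\Stab_{G_{\F_p}}(\ol{v}),
\]
the last equality because Krull dimension is invariant under the base change~$\F_p\to\ol{\F_p}$. This gives the claimed bound.

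There is no real obstacle here: the only subtle point is to invoke Lemma~\ref{Lemma S S'} to ensure that the~$\ol{\F_p}$-points of~$S_{\F_p}$ are \emph{set-theoretically} exhausted by a single~$G$-orbit; once this is in hand, the conclusion is immediate from equivariance and conjugation-invariance of stabiliser dimensions.
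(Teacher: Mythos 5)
Your proof is correct and follows exactly the same route as the paper's: pass to the image $s=\phi(t)$ using equivariance, invoke Lemma~\ref{Lemma S S'} to realise $s$ as $g\cdot\ol{v}$, and conclude by conjugation-invariance of stabiliser dimension together with invariance of Krull dimension under base change to $\ol{\F_p}$.
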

\begin{proof}Let~$s\in S(\ol{\F_p})$ be the image of~$t$. We have~$\Stab_{G_{\ol{\F_p}}}(t)\leq \Stab_{G_{\ol{\F_p}}}(s)$. By Lemma~\ref{Lemma S S'}, we have~$G(\ol{\F_p})\cdot \ol{v}=S(\ol{\F_p})$.
Thus there exists~$g\in G(\ol{\F_p})$ such that~$g\cdot \ol{v}=s$. Thus~$\Stab_{G_{\ol{\F_p}}}(s)=g\Stab_{G_{\ol{\F_p}}}(\ol{v})g^{-1}$. We deduce
\[
\dim\Stab_{G_{\ol{\F_p}}}(t)\leq \dim\Stab_{G_{\ol{\F_p}}}(s)=\dim\Stab_{G_{\ol{\F_p}}}(\ol{v})=\dim\Stab_{G_{\F_p}}(\ol{v}).\qedhere
\]
\end{proof}

\subsection{Reductive GIT quotient over~$\Z_p$}\label{secGIT}
Let~$F_{\Z_p}\leq G_{\Z_p}\leq GL(n)_{\Z_p}$ be smooth reductive closed subgroup schemes. 
 We define, following~\cite{Sesh},
\begin{equation}\label{defi G sur F}
\Z_p[G/F]=\Z_p[G]\cap \Q_p[G]^{F(\Q_p)}\text{ and }G/F:= \Spec(\Z_p[G/F]).
\end{equation}

We have~$\F_p[G_{\F_p}]\simeq\Z_p[G_{\Z_p}]\tens\F_p$. Thus there exists an homomorphism~$\Z_p[G_{\Z_p}]^{F}\tens\F_p\to\F_p[G_{\F_p}]^{F_{\F_p}}$, and hence a morphism
\begin{equation}\label{G/F Fp to G/F}
G_{\F_p}/F_{\F_p}\to (G/F)_{\F_p}.
\end{equation}
\begin{lemma}\label{Seshadri}
Let~$F_{\Z_p}\leq G_{\Z_p}\leq GL(n)_{\Z_p}$ be smooth reductive closed subgroup schemes.

Then~$(G/F)_{\F_p}$ is reduced: we have
\begin{equation}\label{proof:G/F reduced}
(G/F)_{\F_p}=((G/F)_{\F_p})^{red}.
\end{equation}

The map
\[
G_{\F_p}/F_{\F_p}\to G/F
\]
from~\eqref{G/F Fp to G/F} induces an isomorphism 
\begin{equation}\label{Sesh iso}
G_{\F_p}/F_{\F_p}\simeq ((G/F)_{\F_p})^{red}= (G/F)_{\F_p}.
\end{equation}
\end{lemma}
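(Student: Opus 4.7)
\emph{Plan.} The strategy is to apply Seshadri's theorem on the base-change behaviour of invariants for reductive group schemes~\cite{Sesh}. First, I would reconcile the ``analytic'' definition $\Z_p[G/F] := \Z_p[G] \cap \Q_p[G]^{F(\Q_p)}$ with the scheme-theoretic invariant subring $\Z_p[G]^{F_{\Z_p}}$ (defined as the equaliser of the coaction and the projection $\Z_p[G] \rightrightarrows \Z_p[G] \otimes_{\Z_p} \Z_p[F]$). Flatness of $\Z_p[G]$ over $\Z_p$ gives $\Z_p[G]^{F_{\Z_p}} = \Z_p[G] \cap \Q_p[G]^{F_{\Q_p}}$, while Zariski-density of $F(\Q_p)$ inside the smooth connected reductive $\Q_p$-group $F_{\Q_p}$ gives $\Q_p[G]^{F_{\Q_p}} = \Q_p[G]^{F(\Q_p)}$. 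Hence the two notions agree.

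Second, since $F_{\Z_p}$ is a smooth reductive group scheme over $\Z_p$, it is geometrically reductive in Seshadri's sense, and since $\Z_p[G]$ is $\Z_p$-flat, Seshadri's base-change theorem~\cite{Sesh} yields a canonical isomorphism
\[
\Z_p[G]^{F_{\Z_p}} \otimes_{\Z_p} \F_p \xrightarrow{\sim} \bigl(\Z_p[G] \otimes_{\Z_p} \F_p\bigr)^{F_{\F_p}} = \F_p[G_{\F_p}]^{F_{\F_p}}.
\]
Taking $\Spec$ and combining with the first step, this reads as the desired isomorphism $G_{\F_p}/F_{\F_p} \xrightarrow{\sim} (G/F)_{\F_p}$ of~\eqref{Sesh iso}, with the source being precisely the morphism~\eqref{G/F Fp to G/F}.

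Third, for the reducedness assertion~\eqref{proof:G/F reduced}: smoothness of $G_{\Z_p}$ over $\Z_p$ implies that $G_{\F_p}$ is smooth over $\F_p$, so its affine coordinate ring $\F_p[G_{\F_p}]$ is reduced. The invariant subring $\F_p[G_{\F_p}]^{F_{\F_p}}$ is therefore also reduced (being a subring of a reduced ring). Via the isomorphism of the second step, $(G/F)_{\F_p}$ is reduced, which is~\eqref{proof:G/F reduced}, and the two equalities $G_{\F_p}/F_{\F_p} \simeq ((G/F)_{\F_p})^{red} = (G/F)_{\F_p}$ assembled in~\eqref{Sesh iso} follow.

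The main obstacle is locating and applying Seshadri's base-change result in the precise form above; the hypothesis that $F_{\Z_p}$ is ``reductive'' in Seshadri's sense is standard for smooth reductive group schemes, but one must verify that it gives commutation of invariants with the non-flat base change $\Z_p \to \F_p$ (flatness of the acted-upon algebra $\Z_p[G]$, not of the base change, is what Seshadri exploits). A secondary technical point is the identification in the first step: one has to check that the naive definition of invariants coincides with the scheme-theoretic one, which in turn requires that the action of $F_{\Z_p}$ on $\Z_p[G]$ be suitably well-behaved (locally finite and semilinear), a property which is automatic for the right regular representation of a flat affine group scheme on its coordinate ring.
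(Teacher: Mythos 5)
The central step of your argument is the claimed isomorphism $\Z_p[G]^{F_{\Z_p}} \otimes_{\Z_p} \F_p \xrightarrow{\sim} \F_p[G_{\F_p}]^{F_{\F_p}}$, attributed to a ``Seshadri base-change theorem''. But this isomorphism \emph{is} the content of Lemma~\ref{Seshadri} (take $\Spec$), and Seshadri's \emph{Geometric reductivity over arbitrary base} does not prove it. What Seshadri's paper gives---and what the present paper uses, via Prop.~\ref{prop:Sesh GIT}---is the bijection on geometric points $G(k)/F(k) \simeq (G/F)(k)$ for algebraically closed $k$, a consequence of geometric reductivity. Geometric reductivity says that for $\bar a \in \F_p[G_{\F_p}]^{F_{\F_p}}$ some \emph{power} $\bar a^N$ lies in the image of $\Z_p[G]^F \otimes \F_p$, not that $\bar a$ itself does; formation of invariants under a reductive but not linearly reductive group scheme does not in general commute with the non-flat base change $\Z_p \to \F_p$. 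You flag this yourself as ``the main obstacle'', and it is exactly the nontrivial part of the lemma: if it were available off the shelf, most of \S\ref{secGIT}--\S\ref{sec:flat smooth} could be dispensed with. Your reducedness argument is downstream of this unestablished isomorphism and therefore also incomplete.

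The paper's route is necessarily longer. Reducedness is proved independently of the isomorphism (Lemma~\ref{lem:G/F reduced}): $\Z_p$-flatness of $\Z_p[G]$ gives $p\Z_p[G] \cap \Z_p[G]^F = p\Z_p[G]^F$, so $\Z_p[G]^F \otimes \F_p$ injects into the reduced ring $\F_p[G_{\F_p}]$, and a subring of a reduced ring is reduced. For~\eqref{Sesh iso}, the paper realises $G/F$ as the orbit closure $\ol{G\cdot\lambda}$ in an affine space (Prop.~\ref{prop:Aorbite}), combines the geometric-points bijection with a dimension count (via Lem.~\ref{Lemma S S'}) to identify $((G/F)_{\F_p})^{red}$ with the single closed orbit $G_{\F_p}\cdot\ol{\lambda}$, and then uses the fibre-wise flatness criterion to show $\Stab_{G_{\Z_p}}(\lambda) = F_{\Z_p}$, so that the $\F_p$-stabiliser is exactly $F_{\F_p}$ and the orbit is $G_{\F_p}/F_{\F_p}$. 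To salvage your shorter argument you would have to supply an independent proof of the base-change isomorphism; in practice that leads back to essentially the same flatness and stabiliser analysis.
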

We prove~\eqref{proof:G/F reduced} in~\ref{sec:G/F reduced}, and prove~\eqref{Sesh iso} in~\ref{sec:G/F iso} .
\subsubsection{Proof of~\eqref{proof:G/F reduced} of Lemma~\ref{Seshadri}}\label{sec:G/F reduced}
  We deduce~\eqref{proof:G/F reduced} from Corollary~\ref{cor:G/F reduced} below.
\begin{lemma}\label{lem:G/F reduced}
Let~$A$ be a flat~$\Z_p$-algebra such that~$A\tens\F_p$ is reduced. Let~$\Gamma$ be a group of automorphisms of the~$\ol{\Q_p}$-algebra~$A\tens \ol{\Q_p}$. Then~$(A^\Gamma)\tens \F_p$ is reduced.
\end{lemma}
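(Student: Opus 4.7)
The plan is to show that any nilpotent element $\bar x \in (A^\Gamma)\tens\F_p$ vanishes, by lifting it to $A^\Gamma$, using reducedness of $A\tens\F_p$ to divide by $p$ in $A$, and then using flatness of $A$ over $\Z_p$ to check that the quotient still lies in $A^\Gamma$.

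More precisely, first I would note that $A^\Gamma$ denotes $A\cap(A\tens\ol{\Q_p})^\Gamma$ (which makes sense since $A\hookrightarrow A\tens\ol{\Q_p}$ by flatness), and that $(A^\Gamma)\tens_{\Z_p}\F_p = A^\Gamma/pA^\Gamma$. Let $x\in A^\Gamma$ represent a nilpotent class, so that $x^n\in pA^\Gamma\subseteq pA$ for some $n\geq 1$. Then the image of $x$ in $A\tens\F_p$ is nilpotent, hence zero by the reducedness assumption, so $x\in pA$. By flatness of $A$ over $\Z_p$, multiplication by $p$ on $A$ is injective, so there is a unique $x_1\in A$ with $x=px_1$.

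The key step is to verify that $x_1\in A^\Gamma$. For any $\gamma\in\Gamma$, the element $x_1$, viewed inside $A\tens\ol{\Q_p}$, has a well-defined image $\gamma(x_1)$; since $\gamma$ fixes $x$ and acts $\ol{\Q_p}$-linearly, we have $p\gamma(x_1)=\gamma(px_1)=\gamma(x)=x=px_1$ in $A\tens\ol{\Q_p}$. Because $A\tens\ol{\Q_p}$ is a $\Q_p$-vector space, multiplication by $p$ is injective, hence $\gamma(x_1)=x_1$. Thus $x_1\in A\cap(A\tens\ol{\Q_p})^\Gamma=A^\Gamma$, so $x=px_1\in pA^\Gamma$ and its class in $(A^\Gamma)\tens\F_p$ is zero.

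The argument is essentially a direct flatness-plus-reducedness manipulation; I do not anticipate a serious obstacle. The only point to handle carefully is the interpretation of $A^\Gamma$ and of $\gamma(x_1)$, which requires the flat inclusion $A\hookrightarrow A\tens\ol{\Q_p}$ to be fixed from the start so that $\Gamma$-invariance of elements of $A$ is unambiguous.
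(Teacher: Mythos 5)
Your proof is correct and follows essentially the same approach as the paper: the key step in both is to observe that if $a\in A^\Gamma\cap pA$, the unique $b\in A$ with $pb=a$ (which exists by flatness) is automatically $\Gamma$-invariant, since $p\gamma(b)=\gamma(a)=a=pb$ and multiplication by $p$ is injective. The paper packages this as injectivity of the reduction map $A^\Gamma/pA^\Gamma\to A/pA$, so that $(A^\Gamma)\tens\F_p$ is a subalgebra of the reduced algebra $A\tens\F_p$; you instead chase a nilpotent element directly, but the arithmetic is identical and both are valid.
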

\begin{proof}[Proof of Lemma~\ref{lem:G/F reduced}]
Let~$a\in pA\cap A^\Gamma$. As~$A$ is flat over~$\Z_p$, there exists a unique~$b$ such that~$p\cdot b=a$. For~$\gamma\in \Gamma$, we have~$p\cdot \gamma(b)=\gamma(a)=a$. Thus~$b=\gamma(b)$.
Thus~$b$ is~$\Gamma$-invariant. We deduce that~$pA\cap A^\Gamma\subseteq pA^\Gamma$. Equivalently the map
\[
A^\Gamma/pA^\Gamma\to A/pA
\]
is injective. Thus~$A^\Gamma\tens \F_p$ is isomorphic to a subalgebra of the reduced algebra~$A\tens\F_p$. This implies that~$A^\Gamma\tens \F_p$ is reduced.
\end{proof}
\begin{corollary}\label{cor:G/F reduced}
Let~$G$ be a smooth linear group scheme over~$\Z_p$, and let~$F\leq G$ be a subgroup scheme.
Then~$\Z_p[G]^F\tens\F_p$ is a reduced algebra.
\end{corollary}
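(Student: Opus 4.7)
The plan is to apply Lemma~\ref{lem:G/F reduced} with $A := \Z_p[G]$ and $\Gamma := F(\Q_p)$, where $\Gamma$ acts on $A \tens_{\Z_p} \ol{\Q_p} \simeq \ol{\Q_p}[G_{\ol{\Q_p}}]$ through its natural action on $G$ (say, by right translation, extended $\ol{\Q_p}$-linearly to an action by $\ol{\Q_p}$-algebra automorphisms).

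First I would verify the two hypotheses of the lemma. Flatness of $A = \Z_p[G]$ over $\Z_p$ follows from the smoothness of $G$ over $\Z_p$. Reducedness of $A \tens \F_p = \F_p[G_{\F_p}]$ follows from $G_{\F_p}$ being smooth over $\F_p$, hence (geometrically) reduced. Both hypotheses are therefore immediate from the standing assumption that $G$ is a smooth linear group scheme over $\Z_p$.

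Next I would identify the invariant ring $A^\Gamma$ with the ring $\Z_p[G]^F$ of~\eqref{defi G sur F}. By construction $A^\Gamma = \Z_p[G] \cap \ol{\Q_p}[G_{\ol{\Q_p}}]^{F(\Q_p)}$. Since each $\gamma \in F(\Q_p)$ acts $\Q_p$-linearly and stabilises the $\Q_p$-subalgebra $\Q_p[G] \subseteq \ol{\Q_p}[G_{\ol{\Q_p}}]$, an element $a \in \Q_p[G]$ is fixed by $\gamma$ in $\ol{\Q_p}[G_{\ol{\Q_p}}]$ iff it is fixed in $\Q_p[G]$; hence $\ol{\Q_p}[G_{\ol{\Q_p}}]^{F(\Q_p)} \cap \Q_p[G] = \Q_p[G]^{F(\Q_p)}$, and therefore $A^\Gamma = \Z_p[G] \cap \Q_p[G]^{F(\Q_p)} = \Z_p[G]^F$.

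Combining these two steps, Lemma~\ref{lem:G/F reduced} applied to $(A, \Gamma)$ yields that $\Z_p[G]^F \tens_{\Z_p} \F_p = A^\Gamma \tens \F_p$ is reduced, which is the desired conclusion. There is no substantive obstacle: with $A$ and $\Gamma$ chosen naturally, the corollary is essentially a direct specialisation of the lemma, the only point requiring attention being the identification of the two descriptions of the invariant ring, which is a formal consequence of the $\Q_p$-linearity of the $F(\Q_p)$-action.
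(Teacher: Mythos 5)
Your proof is correct and follows the paper's argument essentially verbatim: the paper also proves this corollary by a direct application of Lemma~\ref{lem:G/F reduced} with~$A=\Z_p[G]$. The one cosmetic difference is that the paper takes~$\Gamma=F(\ol{\Q_p})$ rather than your~$\Gamma=F(\Q_p)$; your choice actually matches the definition in~\eqref{defi G sur F} more directly, and in the reductive, $\Z_p$-smooth setting where the corollary is applied the two invariant rings coincide, so this is immaterial.
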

\begin{proof}[Proof of Corollary~\ref{cor:G/F reduced}]
We apply Lem.~\ref{lem:G/F reduced} with~$A=\Z_p[G]$ and~$\Gamma=F(\ol{\Q_p})$.
\end{proof}

\subsubsection{Results from Geometric invariant theory over~$\Z_p$}
Before proving~\eqref{Sesh iso} of Lem.~\ref{Seshadri}, we recall some results from~\cite{Sesh}.
\begin{proposition}\label{prop:Sesh GIT} 
Let~$F_{\Z_p}\leq G_{\Z_p}\leq GL(n)_{\Z_p}$ be smooth reductive closed subgroup schemes.

Then, for every algebraically closed extension~$k$ of~$\Q_p$ or~$\F_p$, the map
\[
G_{\Z_p}\to G/F=\Spec(\Z_p[G]^F)
\]
induces bijections
\[
G(k)/F(k)\to (G/F)(k).
\]
\end{proposition}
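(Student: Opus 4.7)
The plan is to reduce the statement to two classical facts: first, that $\pi: G \to G/F$ is an $F$-torsor (for the right-translation action of $F$ on $G$), and second, that any torsor under a smooth affine group scheme over an algebraically closed field is trivial.

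First I would verify the base-change identity
\[
\Z_p[G]^F \otimes_{\Z_p} k \;\cong\; k[G_k]^{F_k}
\]
for every algebraically closed extension $k$ of $\Q_p$ or $\F_p$. Over characteristic zero this is immediate from the linear reductivity of $F_{\Q_p}$, together with the density of $F(\Q_p)$ in $F_{\Q_p}$ which makes the two notions of ``invariants'' coincide. In positive characteristic, this is precisely the content of Seshadri's universal GIT for smooth reductive group schemes \cite{Sesh}, resting ultimately on Haboush's geometric reductivity of $F_{\Z_p}$. Consequently $(G/F)_k = \Spec(k[G_k]^{F_k})$ is the classical categorical quotient in each fibre.

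Next, since $F \hookrightarrow G$ is a closed immersion of group schemes, the right-translation action of $F$ on $G$ is free with trivial stabilisers. Combined with the smoothness of $F_{\Z_p}$ and $G_{\Z_p}$ and the base-change property above, standard descent theory (cf.~SGA~3) implies that $\pi: G \to G/F$ is an $F$-torsor for the fppf topology, hence faithfully flat and smooth, with scheme-theoretic fibres exactly the $F$-orbits. Given any point $x \in (G/F)(k)$, the fibre $\pi^{-1}(x)$ is therefore an $F_k$-torsor over $\Spec(k)$; since $F_k$ is smooth affine and $k$ is algebraically closed, $H^1_{\mathrm{fppf}}(\Spec(k), F_k) = 0$, so this torsor is trivial and admits a $k$-point. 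This gives surjectivity of $G(k) \to (G/F)(k)$. Injectivity modulo $F(k)$ is immediate from the torsor structure: two $k$-points of $G$ share an image in $G/F$ iff they lie in a common $F(k)$-orbit.

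The main obstacle is establishing that $\pi$ is actually an $F$-torsor; once granted, the rest is routine. The characteristic-zero case is standard, but on the special fibre the compatibility of the scheme-theoretic invariants with base change — and hence the identification of $\Spec(\Z_p[G]^F)_{\F_p}$ with the fppf quotient $G_{\F_p}/F_{\F_p}$ — is delicate and crucially relies on the reductivity hypothesis on $F_{\Z_p}$ via Seshadri's results. An alternative route would be to construct the fppf quotient sheaf directly (which for free actions of smooth affine groups is representable by a smooth scheme) and then identify it with $\Spec(\Z_p[G]^F)$ using affineness and the universal property of categorical quotients; either way the reductive hypothesis on $F$ is what makes the identification possible in positive characteristic.
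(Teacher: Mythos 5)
The high-level plan — triviality of $F_k$-torsors over algebraically closed $k$, hence surjectivity of $G(k)\to (G/F)(k)$, plus the torsor structure for injectivity — is a reasonable skeleton, but the crucial step is precisely the one you flag as ``the main obstacle,'' and the way you handle it contains a gap that is close to circularity within the paper's framework.

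The issue is your base-change claim $\Z_p[G]^F\otimes_{\Z_p}\F_p\cong \F_p[G_{\F_p}]^{F_{\F_p}}$. You attribute this to ``Seshadri's universal GIT, resting ultimately on geometric reductivity of~$F_{\Z_p}$.'' But geometric reductivity of $F_{\Z_p}$ does \emph{not} by itself give compatibility of invariants with arbitrary base change: what it gives is the weaker ``adequacy'' property (the natural map $A^F\otimes R'\to (A\otimes R')^{F_{R'}}$ is injective with the image containing suitable $p$-th powers). The genuine isomorphism $(G/F)_{\F_p}\cong G_{\F_p}/F_{\F_p}$ is exactly Lemma~\ref{Seshadri} in the paper, and the paper's proof of it (\S\ref{sec:G/F iso}) \emph{uses} Prop.~\ref{prop:Sesh GIT}. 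So if you take your step~(a) for granted, you have essentially assumed the statement to be proved. The same caveat applies to your global assertion that ``$\pi:G\to G/F$ is an $F$-torsor for the fppf topology, hence faithfully flat and smooth'': flatness of $\pi$ over $\Z_p$ is Lemma~\ref{platitude}, which in the paper comes after (and depends on) Prop.~\ref{prop:Sesh GIT} and Lemma~\ref{Seshadri}. You do not in fact need the global torsor property for the bijection on $k$-points (it suffices to work fibre by fibre), so that part is an over-claim rather than a fatal error, but it reveals the same logical reversal.

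What the paper actually does is quote Seshadri's theory of \emph{stability} and \emph{geometric quotients} over a base (\cite[Prop.\,6, \S{}II.1, and Prop.\,9 Cor.\,2(i), \S{}II.3]{Sesh}) applied to the right-translation action of $F$ on $G$. The hypotheses to check are elementary in this situation: each geometric orbit is a coset $gF$, hence closed; orbit closures of distinct orbits are disjoint; stabilisers are trivial; so every geometric point of $G$ is ``stable'' in Seshadri's sense. Seshadri's theorems then give directly that $\Spec(\Z_p[G]^F)$ is a geometric quotient, i.e.\ fibre-by-fibre the $k$-points are orbits. This gives the bijection $G(k)/F(k)\to (G/F)(k)$ with no detour through torsors or base-change of invariants. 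The base-change statement and the torsor structure are \emph{consequences} of this, not prerequisites. If you wanted to salvage your outline, the honest move would be to deduce (a) first as a corollary of Seshadri's geometric-quotient theorem (i.e.\ from the very proposition you are trying to prove), at which point steps (b)--(c) become a restatement; or else to supply an independent argument for (a), which would require work beyond a citation.
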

This is an application of~\cite[Prop.\,6, \S{II.1}, Prop.\,9 Cor.\,2 (i) \S{II.3}]{Sesh} with~$X=G$ and~$Y=G/F$. 
 In our case, every geometric point of~$X$ is ``stable'' in the sense of~\cite[\S{II}.1, Def.~1]{Sesh}, every
geometric orbit is closed, and the closures of two distinct orbits have an empty intersection.

%
%

\subsubsection{Proof of~\eqref{Sesh iso} of Lemma~\ref{Seshadri}}\label{sec:G/F iso}
Before proving~\eqref{Sesh iso} of Lemma~\ref{Seshadri}, let us  recall some facts. 

We have~$\Z_p[G/F]\tens\Q_p=\Q_p[G]^F$. Thus
\[
(G/F)_{\Q_p}=G_{\Q_p}/F_{\Q_p}
\]
and, by Lem.~\ref{Flat orbit lemma},
\begin{equation}\label{G/F Qp dim}
\dim(G/F)_{\Q_p}=\dim G_{\Q_p} - \dim F_{\Q_p}.
\end{equation}
As~$\Z_p[G/F]=\Z_p[G]^F\subseteq \Q_p[G]$ has no torsion,~$G/F$ is flat over~$\Z_p$, 
and, by~\citestacks[Lemma 37.30.4.]{0D4H},
\begin{equation}\label{G/F Qp dim bis}
\dim((G/F)_{\F_p})\leq \dim((G/F)_{\Q_p})=\dim(G_{\Q_p})-\dim(F_{\Q_p}).
\end{equation}

By~\citestacks[Prop. 10.162.16.]{0335},
we may apply~\cite[\S{II}.4, Th.~2]{Sesh}. We have
\begin{equation}\label{G/F type fini}
\text{ $\Z_p[G/F]$ is of finite type over~$\Z_p$. }
\end{equation}

\begin{proof}[Proof of Lemma~\ref{Seshadri}]

By~\eqref{G/F type fini}, we may apply Prop.~\ref{prop:Aorbite} to the algebra~$A:=\Z_p[G]^F\subseteq \Z_p[G]$,
and write
\[
\Spec(A)\simeq \ol{G\cdot \lambda}^{Zar(\A^m_{\Z_p})}.
\]

In order to apply Lem.~\ref{Lemma S S'} to~$\ol{G\cdot \lambda}^{Zar(\A^m_{\Z_p})}$,  we prove the assumptions~\eqref{S S' orbite fermee} and~\eqref{S S' dimension}.

By Prop.~\ref{prop:Sesh GIT} the map~\eqref{G/F Fp to G/F}
is surjective (on geometric points). 

This implies that
\[
G(\ol{\F_p})\to (G_{\F_p}/F_{\F_p})(\ol{\F_p}) \to (G/F)(\ol{\F_p})
\]
is surjective. Thus~$(G/F)_{\F_p}(\ol{\F_p})$ is a single $G(\ol{\F_p})$-orbit. 
By construction~$(G/F)_{\F_p}\subseteq \A^m_{\F_p}$ is closed. 
Thus~$G(\ol{\F_p})\cdot\ol{\lambda}$ is a closed orbit. This proves~\eqref{S S' orbite fermee}.

We thus have
\begin{equation}\label{eq:x}
\dim (G/F)_{\F_p}=\dim G_{\F_p}\cdot \ol{\lambda}=\dim G_{\F_p}-\dim \Stab_{G_{\F_p}}(\ol{\lambda}).
\end{equation}

By Prop.~\ref{prop:Sesh GIT} the map~\eqref{G/F Fp to G/F}
is injective (on geometric points) and thus quasi-finite. 
This implies
\[
\dim((G/F)_{\F_p})\geq 
\dim(G_{\F_p}/F_{\F_p})=\dim(G_{\F_p})-\dim(F_{\F_p})=\dim(G_{\Q_p})-\dim(F_{\Q_p}).
\]
Together with~\eqref{G/F Qp dim bis}, this implies~$\dim((G/F)_{\F_p})=\dim(G_{\Q_p})-\dim(F_{\Q_p})$.
By~\eqref{eq:x}, we have
\[\dim \Stab_{G_{\F_p}}(\ol{\lambda})=\dim(F_{\F_p})=\dim(F_{\Q_p}).\]
This implies~\eqref{S S' dimension}.

We may thus apply  Lem.~\ref{Lemma S S'}. This implies that
\[
(G/F)_{\F_p}=((G/F)_{\F_p})^{red.}=G_{\F_p}\cdot \ol{\lambda}.
\]
From Lemma~\ref{Flat orbit lemma}, we know that the morphisms
\[
G_{\ol{\Q_p}}\to (G/F)_{\ol{\Q_p}}=G_{\ol{\Q_p}}/F_{\ol{\Q_p}}\text{ and~}G_{\ol{\F_p}}\to (G/F)_{\ol{\F_p}}=G_{\ol{\F_p}}\cdot \ol{\lambda}
\]
are flat morphisms of algebraic varieties. We know that~$G_{\Z_p}$ is flat and smooth over~$\Z_p$ by hypothesis.

By the "Critère de platitude par fibre", (\cite[Part 2, \S5.6, Lem.~5.21, p.\,132]{FGA} or~\citestacks[Lem. 37.16.3.]{039D}) the morphism
\[
G\to G/F
\]
is flat. Consider~$\Stab_{G_{\Z_p}}(\lambda)\leq G_{\Z_p}$ a subgroup scheme over~$\Z_p$. Let~$\xi:\Spec(\Z_p)\to G/F$ be the morphism corresponding to~$\lambda\in (G/F)(\Z_p)$. Then~$\Stab_{G_{\Z_p}}(\lambda)\to \Spec(\Z_p)$ is the pullback of~$\pi:G\to G/F$ by~$\xi$. As~$G\to G/F$ is flat, the morphism~$\Stab_{G_{\Z_p}}(\lambda)\to \Spec(\Z_p)$ is flat. By (3) of Prop.~\ref{flat lift}, the generic fibre~$F_{\Q_p}=\Stab_{G_{\Q_p}}(\lambda)$ is Zariski dense in~$\Stab_{G_{\Z_p}}(\lambda)$.
Thus~
\[\Stab_{G_{\Z_p}}(\lambda)=F_{\Z_p}.\]
 Thus~$\Stab_{G_{\F_p}}(\lambda)=F_{\F_p}$, and is smooth. Thus
\[
(G/F)_{\F_p}=G_{\F_p}\cdot \ol{\lambda}=G_{\F_p}/F_{\F_p}.
\]
This proves~\eqref{Sesh iso} and concludes the proof of Lemma~\ref{Seshadri}.
\end{proof}


\subsection{Normalisation and Integrality}\label{sec:integral}

%
\begin{lemma}\label{Lemma integral} We keep the situation of Lemma~\ref{Lemma S S'}.

We assume that~$G_{\Q_p}\cdot v\subseteq \A^n_{\Q_p}$ is Zariski closed.

We assume that there exists smooth reductive closed subgroup scheme~$F_{\Z_p}\leq GL(n)_{\Z_p}$ such  that~$F_{\Q_p}=\Stab_{G_{\Q_p}}(v)$. We use the notations of~\S\ref{secGIT}.

Then the map~$G/F\to S$ identifies~$G/F$ with the normalisation of~$S$ (in its fraction field). 
\end{lemma}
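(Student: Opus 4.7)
The plan is to apply Zariski's main theorem to the canonical $G$-equivariant morphism $\pi\colon G/F \to S$ induced by $g\mapsto g\cdot v$, and identify $G/F$ with the normalisation $\wt{S}$ of $S$ in its fraction field. The scheme $G/F$ is flat over $\Z_p$ (its coordinate ring injects into $\Q_p[G]$, hence is torsion-free); its generic fibre is the smooth integral scheme $G_{\Q_p}/F_{\Q_p}$ and, by Lem.~\ref{Seshadri}, its special fibre is the smooth integral scheme $G_{\F_p}/F_{\F_p}$; the fibre-wise smoothness criterion (Lem.~\ref{platitude}) then gives $G/F$ smooth over $\Z_p$, hence integral and normal. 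The scheme $S$ is integral as the Zariski closure of the irreducible orbit $G_{\Q_p}\cdot v$, with $S_{\Q_p}=G_{\Q_p}\cdot v$. The morphism $\pi$ is birational because on the generic fibre it is the isomorphism $G_{\Q_p}/F_{\Q_p}\xrightarrow{\sim}G_{\Q_p}\cdot v=S_{\Q_p}$, which uses the hypothesis $F_{\Q_p}=\Stab_{G_{\Q_p}}(v)$, the closedness of the orbit, and the smoothness of stabilisers in characteristic zero.

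Next I would show that $\pi$ is quasi-finite. On the generic fibre this is an isomorphism; on the special fibre, Lemmas~\ref{Seshadri} and~\ref{Lemma S S'} identify $(G/F)_{\F_p}=G_{\F_p}/F_{\F_p}$ and $(S_{\F_p})^{\mathrm{red}}=G_{\F_p}\cdot\ol{v}$, so $\pi_{\F_p}$ is $G_{\F_p}$-equivariant between two single orbits and is quasi-finite by the equality $\dim\Stab_{G_{\F_p}}(\ol{v})=\dim F_{\F_p}$ from~\eqref{pKN flat}. Zariski's main theorem applied to this quasi-finite birational morphism $\pi$ with normal source then factors $\pi$ as an open immersion $G/F\hookrightarrow \wt{S}$ followed by the finite birational morphism $\wt{S}\to S$, where $\wt{S}$ is the normalisation of $S$ in its fraction field.

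It remains to prove that the open immersion $G/F\hookrightarrow \wt{S}$ is surjective. Its complement $Z\subseteq \wt{S}$ is closed and $G$-invariant, and lies in $\wt{S}_{\F_p}$ since $\pi_{\Q_p}$ is an isomorphism. Taking $\wt{v}\in (G/F)(\Z_p)\subseteq\wt{S}(\Z_p)$ to be the image of the identity coset, the orbit map $g\mapsto g\cdot\wt{v}$ exhibits $\Z_p[\wt{S}]$ as a $G(\Z_p)$-stable subalgebra of $\Z_p[G]$, injective because $G_{\Q_p}\cdot\wt{v}=(G/F)_{\Q_p}=\wt{S}_{\Q_p}$ is Zariski-dense in $\wt{S}$. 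Proposition~\ref{prop:Aorbite} then produces an equivariant closed embedding $\wt{S}\hookrightarrow\A^m_{\Z_p}$ realising $\wt{S}=\ol{G_{\Q_p}\cdot\wt{v}'}^{Zar(\A^m_{\Z_p})}$ with $\wt{v}'$ the image of $\wt{v}$. Applying Lem.~\ref{Lemma S S'} to this description of $\wt{S}$ then yields $(\wt{S}_{\F_p})^{\mathrm{red}}=G_{\F_p}\cdot\ol{\wt{v}'}$, a single $G_{\F_p}$-orbit entirely contained in the $G$-stable open $G/F$, forcing $Z=\emptyset$.

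The main obstacle will be verifying the hypotheses of Lem.~\ref{Lemma S S'} for $\wt{S}$ in the last step: the dimension equality $\dim\Stab_{G_{\Q_p}}(\wt{v}')=\dim\Stab_{G_{\F_p}}(\ol{\wt{v}'})$ requires combining the flatness of $\wt{S}/\Z_p$ (which gives $\dim\wt{S}_{\F_p}\leq \dim\wt{S}_{\Q_p}$) with the finiteness of $\wt{S}\to S$ (which forces $\Stab_{G_{\F_p}}(\ol{\wt{v}'})\subseteq \Stab_{G_{\F_p}}(\ol{v})$, of dimension $\dim F_{\F_p}$ by~\eqref{pKN flat}), while the Zariski-closedness of $G_{\F_p}\cdot\ol{\wt{v}'}$ in $\A^m_{\F_p}$ follows because this irreducible orbit attains the maximum dimension $\dim\wt{S}_{\F_p}$ and hence constitutes an irreducible component of the closed subscheme $\wt{S}_{\F_p}\subseteq\A^m_{\F_p}$.
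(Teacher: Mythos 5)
Your proposal follows the same overall route as the paper: identify $G/F$ as an open subscheme of the normalisation $\wt{S}$ of $S$ via Zariski's main theorem, using that $G/F$ is normal, $\pi:G/F\to S$ is quasi-finite and birational, and then prove the open immersion is surjective by applying Lemma~\ref{Lemma S S'} to $\wt{S}$ realised as an orbit closure via Prop.~\ref{prop:Aorbite}. A few of the intermediate steps are done differently but correctly: you deduce normality of $G/F$ from smoothness over $\Z_p$ (Lem.~\ref{platitude}), whereas the paper has a more elementary direct argument (any element of $\Q_p(G/F)$ integral over $\Z_p[G/F]$ is integral over the integrally closed $\Z_p[G]$, hence lies in $\Z_p[G]\cap\Q_p(G)^F=\Z_p[G/F]$); and you establish the dimension equality \eqref{S S' dimension} for $\wt{S}$ by a direct dimension count using $\Stab_{G_{\F_p}}(\ol{\wt{v}'})\subseteq\Stab_{G_{\F_p}}(\ol{v})$ together with flatness, where the paper invokes Cor.~\ref{cor:T S}. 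Both of these variants are fine.

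There is, however, a genuine gap in the final step. You claim the Zariski-closedness of $G_{\F_p}\cdot\ol{\wt{v}'}$ in $\A^m_{\F_p}$ follows because this orbit ``attains the maximum dimension $\dim\wt{S}_{\F_p}$ and hence constitutes an irreducible component of the closed subscheme $\wt{S}_{\F_p}$.'' This does not follow: an orbit of maximal dimension is locally closed, so it is the \emph{closure} of the orbit that is an irreducible component of $\wt{S}_{\F_p}$; the orbit itself could be a proper dense open subset of that component, with a nonempty boundary of strictly smaller dimension. The paper closes this gap (proof of claim~(3)) via Cor.~\ref{cor:T S}: any point~$t$ on the boundary would have $\dim\Stab_{G_{\ol{\F_p}}}(t)>\dim\Stab_{G_{\F_p}}(\ol{\wt{v}'})=\dim F_{\F_p}$, but $t$ maps $G$-equivariantly to $S_{\F_p}$, which by Lem.~\ref{Lemma S S'} is a single $G_{\F_p}$-orbit through $\ol{v}$ whose stabilizers all have dimension $\dim F_{\F_p}$, so $\dim\Stab_{G_{\ol{\F_p}}}(t)\leq\dim F_{\F_p}$, a contradiction. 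You need this (or an equivalent) stabilizer-dimension argument; the maximality of dimension alone does not give closedness of the orbit. A secondary, smaller omission: before invoking Prop.~\ref{prop:Aorbite} you need $\Z_p[\wt{S}]$ to be of finite type over $\Z_p$, which the paper justifies via $\Z_p$ being universally Japanese.
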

The map~$G/F\to S$ is obtained as follow. As~$G_{\Q_p}\cdot v$ is closed, and by definition of~$F$, we have~$G_{\Q_p}/F_{\Q_p}\simeq S_{\Q_p}$, or equivalently~$\Q_p[G]^F=\Q_p[S]$.
As~$S$ is flat, we have~$\Z_p[S]\subseteq \Q_p[S]$. As~$v\in{\Z_p}^n$ and~$G_{\Z_p}\leq GL(n)_{\Z_p}$ the map~$g\to g\cdot v$ is defined over~$\Z_p$. Thus~$\Z_p[S]\subseteq \Z_p[G]$. We thus have,
\[
\Z_p[S]\subseteq \Z_p[G]\cap \Q_p[G]^F=\Z_p[G/F].
\]

\begin{proof}[Proof of Lemma~\ref{Lemma integral}]Let us denote by~$\wt{\Z_p[S]}$ the integral closure of~$\Z_p[S]$ in~$\Z_p[G/F]$, and define~$\wt{S}=\Spec(\wt{\Z_p[S]})$. We denote by~$\pi:G/F\to S$ the morphism from the statement of Lemma~\ref{Lemma integral}, and denote by
\[
\wt{\pi}:G/F\to \wt{S}\text{ and }\nu:\wt{S}\to S
\]
the  morphisms given by the inclusions~$\Z_p[S]\subseteq\wt{\Z_p[S]}\subseteq \Z_p[G/F]$.

Let us prove that Lemma~\ref{Lemma integral} is the consequence of the following claims.
\begin{enumerate}
\item The scheme~$G/F$ is normal. \label{claim1}
\item The morphism~$\pi:G/F\to S$ is quasi-finite. \label{claim2}
\item The map~$G(\ol{\F_p})\to \wt{S}(\ol{\F_p})$ is surjective. \label{claim3}
\end{enumerate}
By~\ref{claim2} and~\eqref{G/F type fini}, we may apply Zariski's Main Theorem in the form
~\citestacks[Theorem 29.55.1 (Algebraic version of Zariski's Main Theorem)]{03GT}.
It follows that the map~$\wt{\pi}:G/F\to \wt{S}$ is an open immersion. Recall that the morphism~$(G/F)_{\Q_p}=G_{\Q_p}/F_{\Q_p}\to S_{\Q_p}$ is an isomorphism. As normalisation commutes with localisation,~$(G/F)_{\Q_p}$ is normal, and~$\wt{S}_{\Q_p}$ is the normalisation of~$S_{\Q_p}$ in~$(G/F)_{\Q_p}$. Thus, the morphism~$(G/F)_{\Q_p}\to \wt{S}_{\Q_p}$ is an isomorphism. 
By~\ref{claim3} the map~$G(\ol{\F_p})\to (G/F)(\ol{\F_p})\to \wt{S}(\ol{\F_p})$ is surjective. It follows that the map~$(G/F)(\ol{\F_p})\to \wt{S}(\ol{\F_p})$ is surjective. Thus, the open immersion~$\wt{\pi}:G/F\to \wt{S}$ is surjective, and it is thus an isomorphism.

Let~$\Z_p[S]^{norm}$ be the integral closure of~$\Z_p[S]$ in~$\Q_p(S)$. By~\ref{claim1}, the scheme~$G/F$ is normal. That is,~$\Z_p[G/F]$ is integrally closed in~$\Q_p(G/F)$. It follows that~$\Z_p[S]^{norm}\subseteq \Z_p[G/F]^{norm}=\Z_p[G/F]$. Thus~${\Z_p[S]}^{norm}$ is integral over~$\Z_p[S]$ and~${\Z_p[S]}^{norm}\subset \Z_p[G/F]$. This proves~${\Z_p[S]}^{norm}\subseteq \wt{\Z_p[S]}$. As~$S_{\Q_p}\simeq (G/F)_{\Q_p}$, we have~$\Q_p[S]=\Q_p[G/F]$ and~$\Q_p(S)=\Q_p(G/F)$. Thus~$\wt{\Z_p[S]}$ is integral over~$\Z_p[S]$ and~$\wt{\Z_p[S]}\subset \Q_p(S)$. This proves~$\wt{\Z_p[S]}\subseteq {\Z_p[S]}^{norm}$.

We conclude that~$G/F=\wt{S}$ and that~$\wt{S}$ is the normalisation of~$\wt{S}$ in its fraction field.

We have proved Lemma~\ref{Lemma integral} assuming the claims~\ref{claim1},~\ref{claim2} and~\ref{claim3}.
We now prove each of the claims.

\begin{proof}[Proof of claim~\ref{claim1}]
By assumption, the scheme~$G_{\Z_p}$ is smooth over the regular ring~$\Z_p$. Thus~$G_{\Z_p}$ is regular,
and normal, and~$\Z_p[G]$ is integrally closed in its fraction field~$\Q_p(G)$.

Let~$\lambda\in \Q_p(G/F)$ be integral over~$\Z_p[G/F]$. Then~$\lambda$ is integral over~$\Z_p[G]$: thus~$\lambda\in\Z_p[G]$. We also have~$\lambda\in\Q_p(G/F)\subseteq \Q_p(G)^F$. Therefore~$\lambda\in \Z_p[G]^F$.

This proves that~$\Z_p[G/F]$ is integrally closed in~$\Q_p(G/F)$, that is, that~$G/F$ is normal.
\end{proof}

\begin{proof}[Proof of claim~\ref{claim2}]
By assumption~\eqref{S S' dimension}, and by definition of~$F_{\Q_p}$, and as~$F_{\Z_p}$ is smooth over~$\Z_p$, we have
\[
\dim(\Stab_{G_{\F_p}}(\ol{v}))=\dim(\Stab_{G_{\Q_p}}(v))=\dim(F_{\Q_p})=\dim(F_{\F_p}).
\]
By~\eqref{dim formula}, we have~$\dim(S_{\F_p})=\dim(G_{\F_p})-\dim(\Stab_{G_{\F_p}}(\ol{v}))$.
By Lem.~\ref{Seshadri}, we have~$(G/F)_{\F_p}=G_{\F_p}/F_{\F_p}$, and by~\eqref{dim formula}, we have~$\dim(G_{\F_p}/F_{\F_p})=\dim(G_{\F_p})-\dim(F_{\F_p})$.

Thus~$\dim((G/F)_{\F_p})=\dim(S_{\F_p})$. 

Lem.~\ref{Lemma S S'} implies that the map~$G_{\F_p}\to S_{\F_p}$ is dominant and that~$S_{\F_p}$ is irreducible. 
Thus, the map~$(G/F)_{\F_p}\to S_{\F_p}$ is dominant. Lem.~\ref{Seshadri} implies that~$(G/F)_{\F_p}$ is irreducible.
By~\cite[3.22 (e), p.\,95]{Ha} (with~$e=0$), there is a non empty Zariski open subset~$U\subseteq S_{\F_p}$, such that for~$u\in U(\ol{\F_p})$ the fibre of~$(G/F)_{\F_p}\to S$ above~$u$ is of dimension~$0$. As, by Lem.~\ref{Lemma S S'}, the action of~$G(\ol{\F_p})$ on~$S(\ol{\F_p})$ is transitive, we may take~$U=S_{\F_p}$.

The claim follows. 
\end{proof}

\begin{proof}[Proof of claim~\ref{claim3}] Our goal is to prove that~$\wt{S}(\ol{\F_p})$ is a single~$G(\ol{\F_p})$-orbit.

By~\citestacks[Prop. 10.162.16.]{0335},~$\Z_p$ is universally Japanese. Thus the algebra~$\Z_p[S]$, which is an integral domain of finite type over~$\Z_p$, is Japanese.  Namely, the integral closure~$\wt{\Z_p[S]}$,  of~$\Z_p[S]$ in its field of fractions is of finite type over~$\Z_p$.

We apply Proposition~\ref{prop:Aorbite} and write accordingly~$\wt{S}\simeq \ol{G\cdot \lambda}^{Zar(\A^m_{\Z_p})}$ for some~$m\in\Z_{\geq0}$. 

We prove that the assumptions~\eqref{S S' orbite fermee} and~\eqref{S S' dimension} of Lemma~\ref{Lemma S S'} are satisfied. The claim will then follow from Lemma~\ref{Lemma S S'}.

By Cor.~\ref{cor:T S} for~$T=(\wt{S}_{\F_p})^{red}$ and~$t=\ol{\lambda}$, and by~\eqref{S S' dimension} for~$v\in S$, 
we have
\[
\dim \Stab_{G_{\F_p}}(\ol{\lambda})\leq \dim \Stab_{G_{\F_p}}(\ol{v})=\dim \Stab_{G_{\Q_p}}(v)=\dim F_{\Q_p}.
\]
As~$\wt{S}$ is integral over~$S$, we have~$\dim \wt{S}_{\F_p}\leq \dim S_{\F_p}$. Thus
\begin{multline}
\dim G_{\F_p}-\dim  \Stab_{G_{\F_p}}(\ol{\lambda})= \dim(G_{\F_p}\cdot \ol{\lambda})\leq \dim \wt{S}_{\F_p}
\\
\leq 
\dim S_{\F_p}=\dim G_{\F_p} - \dim \Stab_{G_{\F_p}}(\ol{v})=\dim G_{\F_p} - \dim F_{\F_p}.
\end{multline}
Thus~$\dim \Stab_{G_{\F_p}}(\ol{\lambda})=\dim F_{\F_p}=\dim F_{\Q_p}=\dim \Stab_{G_{\Q_p}}(\lambda)$.

This proves~\eqref{S S' dimension}.

Assume by contradiction that~$G_{\F_p}\cdot \ol{\lambda}$ is not closed in~$\wt{S}_{\F_p}$.
Then there exists~$t\in \ol{G_{\F_p}\cdot \ol{\lambda}}\smallsetminus G_{\F_p}\cdot \ol{\lambda}$.
Let~$T=\ol{G_{\F_p}\cdot t}^{red}$. 
Then~$T$ is~$G_{\F_p}$-stable. 
We have~$\dim(T)=\dim(G_{\F_p}\cdot t)=\dim(G_{\F_p})-\dim\Stab_{G_{\F_p}}(t)$, and~$\dim(T)\leq \dim \ol{G_{\F_p}\cdot \ol{\lambda}}\smallsetminus G_{\F_p}\cdot \ol{\lambda}\leq \dim (G_{\F_p}\cdot \ol{\lambda} )-1<\dim G_{\F_p}- \dim\Stab_{G_{\F_p}}(\ol{\lambda}) $.
Thus~$\dim\Stab_{G_{\F_p}}(t)\geq \dim\Stab_{G_{\F_p}}(\ol{\lambda})+1$. This contradicts Cor.~\ref{cor:T S} for~$t\in T=\ol{G_{\F_p}\cdot t}^{red}$.

This proves~\eqref{S S' orbite fermee}.

We may thus apply Lemma~\ref{Lemma S S'}. We deduce~$\wt{S}_{\F_p}=G_{\F_p}\cdot  \ol{\lambda}$.
This proves the claim.
\end{proof}
This concludes the proof of Lemma~\ref{Lemma integral}.
\end{proof}

\subsection{Flatness and Smoothness}\label{sec:flat smooth}
\begin{lemma}\label{platitude}Let~$F_{\Z_p}\leq G_{\Z_p}\leq GL(n)_{\Z_p}$ be a closed smooth reductive subgroup schemes.

Then the maps
\begin{equation}\label{univ orbit map}
G\to G/F\text{ and }G/F\to \Spec(\Z_p) 
\end{equation}
are smooth.
\end{lemma}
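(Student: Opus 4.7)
The plan is to apply the ``critère de lissité fibre par fibre'' (e.g.~\citestacks[Tag 01V9]{01V9}) to each of the two morphisms in~\eqref{univ orbit map} separately, using Lem.~\ref{Seshadri} to gain control of the special fibre.

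First I would treat~$G/F \to \Spec(\Z_p)$. Flatness is immediate: by construction~$\Z_p[G/F] = \Z_p[G] \cap \Q_p[G]^{F}$ embeds into~$\Q_p[G]$, hence is torsion-free, hence flat over~$\Z_p$. It is also of finite type (cf.~\eqref{G/F type fini}). The generic fibre~$(G/F)_{\Q_p} = G_{\Q_p}/F_{\Q_p}$ is the quotient of smooth reductive groups over the field~$\Q_p$, hence smooth. By the isomorphism~\eqref{Sesh iso} of Lem.~\ref{Seshadri}, the special fibre is~$G_{\F_p}/F_{\F_p}$, which is smooth over~$\F_p$ for the same reason. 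The fibre-by-fibre criterion for smoothness then gives that~$G/F$ is smooth over~$\Z_p$.

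For~$G \to G/F$, flatness has already been established in the course of proving Lem.~\ref{Seshadri} via the ``critère de platitude fibre par fibre''. Applying now the fibre-by-fibre criterion for smoothness, the question reduces, for~$\kappa \in \{\Q_p, \F_p\}$, to the smoothness of the quotient maps
\[
G_{\kappa} \longrightarrow G_{\kappa}/F_{\kappa}.
\]
Each is an~$F_\kappa$-torsor for the right action of the smooth group scheme~$F_\kappa$ (trivial fppf-locally, and in fact Zariski-locally on target for the groups at hand), and is therefore smooth.

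The essential non-formal ingredient in this argument is Lem.~\ref{Seshadri}, which guarantees that the special fibre of~$G/F$ is indeed the ``honest'' homogeneous space~$G_{\F_p}/F_{\F_p}$, and in particular is reduced and of the expected dimension; this is the step where the hyperspecial/reductivity hypotheses on the integral models intervene. Once this is in hand, no further geometric input is required: I expect no serious obstacle beyond assembling the fibre-by-fibre flatness/smoothness criteria and invoking~Lem.~\ref{Seshadri}.
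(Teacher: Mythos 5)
Your argument is correct, and it uses exactly the paper's key ingredient (Lem.~\ref{Seshadri}, to identify the special fibre of~$G/F$ with the homogeneous space~$G_{\F_p}/F_{\F_p}$), but the two proofs are organised differently. The paper makes a single application of the ``crit\`ere de lissit\'e fibre par fibre'' (EGA~IV, 17.8.2) to the composition~$G \to G/F \to \Spec(\Z_p)$: this gives smoothness of~$G\to G/F$, and then smoothness of~$G/F \to \Spec(\Z_p)$ on the image of~$G$, which is everything since the fibre maps are surjective. You instead handle the two morphisms independently, and in the opposite order: smoothness of~$G/F \to \Spec(\Z_p)$ directly (flatness from torsion-freeness over the DVR~$\Z_p$, finite presentation from~\eqref{G/F type fini}, and smooth fibres by Lem.~\ref{Seshadri} plus smoothness of homogeneous spaces over a field), and then the relative criterion for~$G \to G/F$. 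Your route for~$G/F \to \Spec(\Z_p)$ is slightly more direct in that it avoids the intermediate deduction via~$G\to G/F$ and 17.11.1.

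Two small remarks. First, when you invoke the relative fibre-by-fibre criterion for~$G \to G/F$, the hypothesis you need is flatness of the composite~$G \to \Spec(\Z_p)$ (which is given by assumption), not the prior flatness of~$G\to G/F$ obtained in the course of Lem.~\ref{Seshadri}; the flatness of~$G\to G/F$ is in fact part of the \emph{conclusion} of 17.8.2, so the remark is harmless but superfluous. Second, the parenthetical claim that the~$F_\kappa$-torsor~$G_\kappa \to G_\kappa/F_\kappa$ is Zariski-locally trivial ``for the groups at hand'' is neither needed nor clearly true for a general reductive~$F$; fppf- (or smooth-) local triviality together with smoothness of~$F_\kappa$ already suffices, or more simply one can invoke Lem.~\ref{Flat orbit lemma} as the paper does (flat, finitely presented, with geometrically regular fibres each isomorphic to~$F_\kappa$).
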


\begin{proof} We apply the "Critère de lissité par fibre"~\cite[17.8.2]{EGA44} with~$h=f\circ g:X\to Y\to S$ the morphisms~$G\to G/F\to \Spec(\Z_p)$.

By assumption~$G\to \Spec(\Z_p)$ is smooth, and in particular flat. For~$s=\Spec(\Q_p)$, the morphism~$X_s\to Y_s$ is~$G_{\Q_p}\to (G/F)_{\Q_p}\simeq G_{\Q_p}/F_{\Q_p}$. By Lem.~\ref{Seshadri}, for~$\ol{s}=\Spec(\F_p)$, the morphism~$g_{\ol{s}}:X_{\ol{s}}\to Y_{\ol{s}}$ is~$G_{\F_p}\to (G/F)_{\F_p}\simeq G_{\F_p}/F_{\F_p}$. By assumption~$F_{\Z_p}$ is smooth, and thus~$F_{\Q_p}$ and~$F_{\F_p}$ are smooth. By Lem.~\ref{Flat orbit lemma}, we deduce that~$g_s$ and~$g_{\ol{s}}$ are smooth morphisms.

Thus, by~\cite[17.8.2]{EGA44}, the morphism~$G\to G/F$ is smooth.

Recall that~$G\to \Spec(\Z_p)$ is smooth.  Thus, by~\cite[17.11.1 b)\,$\Rightarrow$\,a)]{EGA44}, the morphism~$G/F\to \Spec(\Z_p)$ is smooth on the image of~$G\to G/F$. 

Note that~$g_s$ and~$g_{\ol{s}}$ are surjective. Thus~$G/F\to \Spec(\Z_p)$ is smooth.
\end{proof}

\begin{lemma}\label{Flat orbit lemma}Over a field~$\kappa$
let~$G\leq GL(n)_\kappa$ be a algebraic subgroup (smooth closed group subscheme), and choose~$v\in\kappa^n$.
Then the map ``orbit through~$v$'' map
\begin{equation}\label{omega smooth?}
\omega:G\to G\cdot v
\end{equation}
is flat, where~$G\cdot v\simeq G/\Stab_G(v)$ is locally closed and endowed with a reduced scheme structure.

We have, using Krull dimension,
\begin{equation}\label{dim formula}
\dim(G\cdot v)=\dim(G)-\dim(\Stab_G(v)).
\end{equation}

If~$\Stab_G(v)$ is smooth as a group scheme\footnote{In practice~$\dim \Stab_{\mathfrak{g}}(v)=\dim \Stab_{G}(v)$.}, then~$\omega$ is a smooth map and~$G\cdot v$ is smooth (regular).
\end{lemma}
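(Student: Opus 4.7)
The plan is to identify the orbit $G\cdot v$ (with its reduced structure) with the quotient scheme $G/H$, where $H:=\Stab_G(v)$ is the scheme-theoretic stabiliser, and then invoke the standard theory of quotients of algebraic groups by closed subgroup schemes.

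First, since $G$ is a smooth affine algebraic group over the field $\kappa$ and $H\leq G$ is a closed subgroup scheme, the fppf-quotient $G/H$ is representable by a quasi-projective $\kappa$-scheme, and the canonical morphism $\pi:G\to G/H$ is faithfully flat (see, e.g., \cite[III.3, Th.~5.4]{DG}, or SGA~3, Exp.~VI, Th.~3.2). By $G$-equivariance under left translation, every fibre of $\pi$ is a translate of $H$, hence of pure dimension $\dim H$.

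Next, by the very definition of $H$ and the universal property of $\pi$, the orbit morphism $G\to \A^n_\kappa$, $g\mapsto g\cdot v$, factors through $\pi$ via a monomorphism $j:G/H\to \A^n_\kappa$, whose set-theoretic image is the orbit $G\cdot v$. This image is locally closed in $\A^n_\kappa$: it is constructible by Chevalley's theorem, and open in its closure by $G$-homogeneity (the complement in the closure is $G$-stable of strictly smaller dimension). Since $G$ is smooth hence reduced over $\kappa$, and $\pi$ is faithfully flat, $G/H$ is reduced; hence $j$ identifies $G/H$ with the reduced locally closed subscheme structure on $G\cdot v$. Consequently $\omega$ coincides with $\pi$, and is therefore faithfully flat. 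The dimension formula \eqref{dim formula} then follows from the flat dimension identity $\dim G=\dim(G/H)+\dim H$ applied to $\pi$.

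It remains to treat smoothness when $H$ is smooth over $\kappa$. Then the fibres of $\pi$, being translates of $H$, are smooth; combined with the faithful flatness and local finite presentation of $\pi$, this yields that $\omega=\pi$ is a smooth morphism. Since $G\to\Spec\kappa$ is smooth and $\pi$ is smooth and surjective, smoothness of $G/H\to\Spec\kappa$ then follows by fpqc descent along $\pi$ (equivalently, by the ``critère de lissité fibre par fibre'' applied to $G\to G/H\to\Spec\kappa$, in the spirit of Lemma~\ref{platitude}). The principal external input is the representability of $G/H$ together with the faithful flatness of $\pi$; once that is granted, the remainder is a formal deduction and constitutes no real obstacle.
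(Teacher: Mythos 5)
Your proof takes a genuinely different route from the paper's. The paper first invokes Borel's Orbit Lemma for local closedness, then applies generic flatness to $\omega$ directly (using that the orbit with its reduced structure is a reduced scheme) and propagates flatness over the whole orbit by $G$-equivariance; the quotient $G/H$ only enters at the very end, via a reference to SGA~3, for the smoothness statement. You instead begin by identifying the reduced orbit with $G/H$ and then read everything off the quotient map $\pi$. That is a legitimate strategy, but as written it has a gap precisely at the identification step.

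The problem is the sentence ``Since $G$ is smooth hence reduced over $\kappa$, and $\pi$ is faithfully flat, $G/H$ is reduced; hence $j$ identifies $G/H$ with the reduced locally closed subscheme structure on $G\cdot v$.'' That ``hence'' does not follow from reducedness alone. At that point you have a finite-type monomorphism $j:G/H\to\AAA^n_\kappa$ with reduced source whose set-theoretic image is a locally closed set $Z$, so $j$ factors through a surjective monomorphism $j':G/H\to Z^{\mathrm{red}}$; but a surjective finite-type monomorphism between reduced finite-type $\kappa$-schemes need not be an isomorphism. (Take $Y=\Spec\kappa[x,y]/(xy)$ and delete from its normalisation one of the two points above the node; the resulting bijective morphism to $Y$ is a non-flat monomorphism.) What rescues you is $G$-homogeneity of both $G/H$ and $Z^{\mathrm{red}}$: $j'$ is generically flat because $Z^{\mathrm{red}}$ is reduced, hence flat everywhere by equivariance, hence an open immersion (it is a flat monomorphism locally of finite presentation), hence an isomorphism. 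But that is exactly the ``generic flatness plus homogeneity'' argument the paper applies directly to $\omega$, so the detour through $G/H$ does not spare you the key step. Alternatively, cite the orbit--immersion theorem itself (DG~III, \S3, no.~5.2, or the analogous statement in SGA~3) rather than only the existence and faithful flatness of $\pi$, which is all that DG~III.3, Th.~5.4 gives you. Once $G/H\simeq(G\cdot v)^{\mathrm{red}}$ is in hand, the flatness of~$\omega$, the dimension formula, and (when $H$ is smooth) the smoothness of $\omega$ and of the orbit all follow as you describe.
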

\begin{proof} According to the Orbit Lemma~\cite[\S{I} 1.8]{BorelLAG},  the orbit~$G\cdot v$ is locally closed.

Because~$G\cdot v$ is reduced, by~\citestacks[Prop. 29.27.2]{052B}, there exists a non empty open subset~$U\subseteq G\cdot v$ such that~$\omega$ is flat above~$U$. As~$U$ is~$G$-invariant, the map~$\omega$ is flat everywhere.

We deduce~\eqref{dim formula} from the flat case of~\citestacks[Lem. 29.28.2.]{02JS} and~\citestacks[Lem. 29.29.3.]{02NL}, (using Krull dimension, cf.~\citestacks[Def. 5.10.1.]{0055}).  (One can also find~\eqref{dim formula} in~\cite[p.\,7]{GIT}).)

Concerning smoothness, see
~\cite[$\text{VI}_\text{B}$ Prop.~9.2 (xii) (and~V Th.~10.1.2)]{SGA31}.
\end{proof}

\subsection{Flatness and lifting of closed points}\label{sec:flat recalls}

Let~$\ol{\Q_p}$ be an algebraically closed algebraic extension of~$\Q_p$, and denote by~$\ol{\Z_p}$
the integral closure of~${\Z_p}$ in~$\ol{\Q_p}$.
We fix a ring homomorphism (the "reduction map")
\[
r:\ol{\Z_p}\to \ol{\F_p},
\]
and denote the induced morphism~$(x_1,\ldots,x_n)\mapsto (r(x_1),\ldots,r(x_n))$ by
\[
r^n:\A^n(\ol{\Z_p})=\ol{\Z_p}^n
\to 
\A^n(\ol{\F_p})=\ol{\F_p}^n.
\]

\begin{proposition}[cf.{~\cite[Prop. 14.5.6, Rem.~14.5.7]{EGA43}}]\label{flat lift}
Consider a Zariski closed  subscheme~$X\subseteq \A^n_{\ol{\Z_p}}$  of finite presentation, and denote by~$X_{red}$ its reduced subscheme.

Then the following properties are equivalent.
\begin{enumerate}
\item The scheme~$X_{red}$ is flat over~$\ol{\Z_p}$.
\item We have
\begin{equation}\label{FlatLiftRecallEq}
X(\ol{\F_p})
= 
r^n(X(\ol{\Z_p})).
\end{equation}
\item The generic fibre~$X_{\ol{\Q_p}}$ is Zariski dense in~$X$.
\item No irreducible component
of~$X$ is fully contained in~$\A^n_{\ol{\F_p}}$.
\end{enumerate}
\end{proposition}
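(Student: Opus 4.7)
The plan is to establish the four equivalences by working over the valuation ring $R=\ol{\Z_p}$, whose spectrum has only two points: the generic point $\eta$ with residue field $\ol{\Q_p}$ and the closed point $s$ with residue field $\ol{\F_p}$. The result is (up to passage from the trait $\Spec(\ol{\Z_p})$ to a more general valuation base) essentially \cite[14.5.6 and Rem.~14.5.7]{EGA43}, and I would cite that for the technical core while spelling out the reductions, as follows.

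First, for $(3)\Leftrightarrow(4)$, since $\Spec R=\{\eta,s\}$, any irreducible component $Z$ of $X$ either satisfies $Z\subseteq \A^n_{\ol{\F_p}}$ (when its generic point lies over $s$) or has $Z\cap X_{\ol{\Q_p}}$ Zariski dense in $Z$ (when its generic point lies over $\eta$, using that $X_{\ol{\Q_p}}$ is open in $X$ and that a non-empty open subset of an irreducible scheme is dense in it). Thus $X_{\ol{\Q_p}}$ is dense in $X$ iff no irreducible component of $X$ lies entirely over $s$.

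Second, for $(1)\Leftrightarrow(4)$, I use that over the valuation ring $R$ a finitely presented module is flat iff it is torsion-free (Raynaud-Gruson, or directly Bourbaki \emph{Alg.\ comm.}\ VI). Applied to the coherent sheaf $\mathcal{O}_{X_{red}}$, flatness over $R$ is equivalent to the uniformizer acting as a non-zero divisor on every stalk, i.e.\ to no associated prime of $\mathcal{O}_{X_{red}}$ lying over $s$. Since $X_{red}$ is reduced, its associated primes are exactly its minimal primes, so this is exactly the condition that no irreducible component of $X_{red}$ (equivalently of $X$, since the two share their underlying topological space) is contained in $\A^n_{\ol{\F_p}}$.

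Finally, for $(1)\Leftrightarrow (2)$, one direction is easy: if some component of $X$ lies inside $\A^n_{\ol{\F_p}}$, then taking $\ol{x}$ a generic point of that component produces an $\ol{\F_p}$-point that cannot be in the image of $r^n$, because any $y\in X(\ol{\Z_p})$ reduces into the closure of $X_{\ol{\Q_p}}$. The substantive direction is $(1)\Rightarrow(2)$: given $\ol{x}\in X(\ol{\F_p})=X_{red}(\ol{\F_p})$, we use flatness of $X_{red}$ over $R$ to pick an irreducible component $Z$ of $X_{red}$ through $\ol{x}$ whose image in $\Spec R$ is dense; then the local ring $\mathcal{O}_{Z,\ol{x}}$ contains $R$ and has residue field $\ol{\F_p}$, so we can choose a valuation $v$ on its fraction field extending the $p$-adic valuation on $R$ and centered at $\ol{x}$. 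Because $R$ is a maximally complete henselian valuation ring with algebraically closed residue field $\ol{\F_p}$ and divisible value group $\Q$, the valuation ring of $v$ contains $R$, is dominated by $\mathcal{O}_{Z,\ol{x}}$, and its residue field is $\ol{\F_p}$; this produces the desired $y\in X(\ol{\Z_p})$ with $r^n(y)=\ol{x}$.

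The main obstacle is the lifting step $(1)\Rightarrow(2)$, as it uses the henselian and maximal-completeness properties of $\ol{\Z_p}$ rather than smoothness of $X$; for a fully streamlined write-up I would simply invoke \cite[14.5.6]{EGA43}, which covers exactly this situation.
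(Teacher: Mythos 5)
Your bottom line — reducing to the cited result \cite[Prop.~14.5.6, Rem.~14.5.7]{EGA43} — matches the paper exactly; the paper's entire proof is that citation, plus a footnote observing that since $X$ is of finite presentation it descends to a noetherian model over some $O_K$, which is how the non-noetherian base is handled. Your sketches of $(3)\Leftrightarrow(4)$ and of $(1)\Leftrightarrow(4)$ (via flat $=$ torsion-free over a valuation ring) are sound, modulo the cosmetic point that $\ol{\Z_p}$ has no uniformizer — it is a non-discrete rank-one valuation ring, so you should phrase the criterion as torsion-freeness over $R$ rather than as a uniformizer being a non-zero divisor. In the easy direction $(4)\Rightarrow\neg(2)$, the generic point of a positive-dimensional component is not an $\ol{\F_p}$-point; you should take instead a closed point of that component avoiding the closure of $X_{\ol{\Q_p}}$, which exists for the same reason.

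The genuine gap is in $(1)\Rightarrow(2)$. First, the factual premise is wrong: $\ol{\Z_p}$ is \emph{not} maximally complete (spherically complete) — even its completion $\CC_p$ is not — so you cannot appeal to maximal completeness. Second, the domination direction is reversed: you want a valuation $v$ on $\mathrm{Frac}(\mathcal{O}_{Z,\ol{x}})$ whose ring \emph{dominates} $\mathcal{O}_{Z,\ol{x}}$, not the other way around. Third, and most seriously, even after fixing this you only obtain a point of $X$ with values in some valuation ring $V$ of a (typically much larger) field extension $L\supseteq\ol{\Q_p}$; nothing in the sketch explains how to convert this into a genuine $\ol{\Z_p}$-point. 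There is in general no retraction $V\to\ol{\Z_p}$ over $\ol{\Z_p}$. The actual route, which is precisely what EGA does, is to descend to a noetherian DVR $O_K$, find inside $X_0$ a one-dimensional integral closed subscheme through $\ol{x}_0$ dominating $\Spec O_K$ (cutting by hyperplanes / dimension theory), pass to the normalisation to get a DVR $V$ \emph{finite} over $O_K$ with both $\mathrm{Frac}(V)/K$ and $\kappa(V)/\F_q$ finite, and then embed $V\hookrightarrow\ol{\Z_p}$ compatibly with the reduction map. That dimension-reduction step is the whole content, and it is missing from your sketch.
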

\begin{proof}This is a translation of~\cite[Prop. 14.5.6, Rem.~14.5.7]{EGA43}.\footnote{Loc. cit. assumes the base is noetherian. Even though our base~$\ol{\Z_p}$ is not noetherian,~$X$ can be obtained by base change from a model over a ring of integers~$O_K$ of a finite extension~$K/\Q_p$. Then~Rem.~14.5.7 of loc. cit. applies.}
\end{proof}
\subsubsection{Remark}\label{rem:flat red} If~$X$ is flat over~$\ol{\Z_p}$, then~$X\to \Spec(\ol{\Z_p})$ is universaly open, by~\cite[Th.~(2.4.6)]{EGA42}, and thus~$X^{red}\to \Spec(\ol{\Z_p})$ is flat, by~\cite[Prop. 14.5.6, Rem.~14.5.7]{EGA43}.

\begin{proposition}\label{flat lift 2}
Let~$\pi:X\to Y$ be a morphism of reduced affine schemes of finite presentation over~$\ol{\Z_p}$.

Assume that~$\pi$ is flat. Then,  
\[
\forall y\in Y(\ol{\Z_p}), \ol{y}\in \pi(X(\ol{\F_p})) \Leftrightarrow y\in \pi(X(\ol{\Z_p})).
\]
\end{proposition}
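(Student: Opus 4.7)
The plan is straightforward. One direction is immediate: if $y = \pi(x)$ with $x \in X(\overline{\Z_p})$, then applying the reduction map $r$ gives $\overline{y} = \pi(\overline{x})$ with $\overline{x} \in X(\overline{\F_p})$.

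For the converse, I would form the base change $X_y := X \times_{Y,y} \Spec(\overline{\Z_p})$ along the morphism $y : \Spec(\overline{\Z_p}) \to Y$. Since $X$ and $Y$ are affine and of finite presentation over $\overline{\Z_p}$, the morphism $\pi$ is of finite presentation, and hence $X_y$ is again affine of finite presentation over $\overline{\Z_p}$, so may be realised as a Zariski closed subscheme of $\A^n_{\overline{\Z_p}}$ of finite presentation. Flatness of $\pi$ is preserved under base change, so $X_y \to \Spec(\overline{\Z_p})$ is flat; by Remark~\ref{rem:flat red}, $(X_y)^{\mathrm{red}}$ is then also flat over $\overline{\Z_p}$.

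The assumption $\overline{y} \in \pi(X(\overline{\F_p}))$ furnishes $\overline{x} \in X(\overline{\F_p})$ with $\pi(\overline{x}) = \overline{y}$, which by the universal property of the fibre product corresponds to an $\overline{\F_p}$-point of $X_y$. Invoking the equivalence (1)$\Leftrightarrow$(2) of Proposition~\ref{flat lift} applied to $X_y \subseteq \A^n_{\overline{\Z_p}}$, the flatness of $(X_y)^{\mathrm{red}}$ ensures that this $\overline{\F_p}$-point lies in $r^n(X_y(\overline{\Z_p}))$, i.e.\ it lifts to an $\overline{\Z_p}$-point of $X_y$. Composing with the canonical projection $X_y \to X$ yields the required $x \in X(\overline{\Z_p})$ with $\pi(x) = y$.

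The main (and essentially only) obstacle is the non-noetherianity of $\overline{\Z_p}$: one must work with finite presentation rather than finite type in order to close-embed $X_y$ into an affine space and legitimately quote Proposition~\ref{flat lift}. This is automatic from the hypothesis on $X$ and $Y$ together with the standard stability of finite presentation under base change, so the argument goes through without any further subtlety.
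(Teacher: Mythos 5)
Your proof is correct and follows essentially the same route as the paper: form the fibre $X_y$, use stability of flatness under base change together with Remark~\ref{rem:flat red} to conclude that $(X_y)^{\mathrm{red}}$ is flat over $\ol{\Z_p}$, and then lift the $\ol{\F_p}$-point via the $(1)\Leftrightarrow(2)$ equivalence of Proposition~\ref{flat lift}. Your remark about finite presentation being needed (rather than finite type) over the non-noetherian base $\ol{\Z_p}$ is a valid and welcome precision, even though the paper's proof leaves it implicit.
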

\begin{proof}If~$y=\pi(x)$, then~$\ol{y}=\pi(\ol{x})$. This proves one implication.

Let~$\ol{y}=\pi(\ol{x})$ with~$\ol{x}\in X(\ol{\F_p})$. Let~$X_y$ be the fibre of~$X$ over~$y$.
Then~$\ol{x}\in X_y(\ol{\F_p})$. We have~$x\in X_y(\ol{\Z_p})$ if and only if:~$x\in  X(\ol{\Z_p})$
and~$\pi(x)=y$.

Let~$Z=\Spec(\ol{\Z_p})$, and denote by~$z\in Z(\ol{\Z_p})$ the unique element. Let~$\xi:Z\to Y$ 
be the morphism such that~$\xi(z)=y$. Then~$X_y\to Z$ is the pullback of~$\pi:X\to Y$ by~$\xi$.
As~$\pi$ is flat, the morphism~$X_y\to Z$ is flat. This implies, by Remark~\ref{rem:flat red}, that~${X_y}^{red.}\to Z^{red.}=Z$ is flat. As~$X$ is affine of finite presentation, the fibre subscheme~$X_y$ is affine of finite presentation. By Prop.~\ref{flat lift}, there exists~$x\in X_y(\ol{\Z_p})$ such 
that its reduction in~$X_y(\ol{\F_p})$ is~$\ol{x}$. Thus~$y\in\pi(X(\ol{\Z_p}))$. We proved the second implication.
\end{proof}
\subsection{Integrality and lifting}\label{sec:integral lift}
\begin{proposition}\label{prop:integral lift}
 Let~$\pi:X\to Y$ be a morphism of affine schemes over~$\ol{\Z_p}$.

Assume that~$\pi$ is integral. Then
\[
\forall x\in X(\ol{\Q_p}), x\in X(\ol{\Z_p})\Leftrightarrow \pi(x)\in Y(\ol{\Z_p}).\\
\]
\end{proposition}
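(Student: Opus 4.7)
The plan is to translate the geometric statement into a short commutative algebra argument about integral extensions, then invoke the fact that $\overline{\Z_p}$ is integrally closed in $\overline{\Q_p}$.

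First, write $Y=\Spec(A)$ and $X=\Spec(B)$, so $\pi$ corresponds to a ring homomorphism $\pi^\sharp \colon A \to B$ which, by the integrality assumption on $\pi$, makes $B$ integral over $A$ in the sense that every $b \in B$ satisfies a monic equation $b^n + \pi^\sharp(a_{n-1})\, b^{n-1} + \cdots + \pi^\sharp(a_0) = 0$ for some $a_0,\ldots,a_{n-1} \in A$. A point $x \in X(\overline{\Q_p})$ is a ring map $\phi \colon B \to \overline{\Q_p}$, and $\pi(x) \in Y(\overline{\Q_p})$ is the composition $\phi \circ \pi^\sharp \colon A \to \overline{\Q_p}$.

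The implication $x \in X(\overline{\Z_p}) \Rightarrow \pi(x) \in Y(\overline{\Z_p})$ is tautological: if $\phi$ factors through $\overline{\Z_p}$, then so does $\phi \circ \pi^\sharp$. For the converse, assume $\phi(\pi^\sharp(A)) \subseteq \overline{\Z_p}$ and pick any $b \in B$. Applying $\phi$ to the integrality relation yields
\[
\phi(b)^n + \phi(\pi^\sharp(a_{n-1}))\, \phi(b)^{n-1} + \cdots + \phi(\pi^\sharp(a_0)) = 0
\]
in $\overline{\Q_p}$, and all the coefficients lie in $\overline{\Z_p}$ by hypothesis. Hence $\phi(b) \in \overline{\Q_p}$ is integral over $\overline{\Z_p}$. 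Since $\overline{\Z_p}$ is by definition the integral closure of $\Z_p$ in $\overline{\Q_p}$, it is integrally closed in its field of fractions $\overline{\Q_p}$, so $\phi(b) \in \overline{\Z_p}$.

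Since $b \in B$ was arbitrary, $\phi$ factors through $\overline{\Z_p}$, i.e.~$x \in X(\overline{\Z_p})$. There is no serious obstacle here: once one unpacks integrality as existence of a monic polynomial relation, the conclusion is immediate from the integral closedness of $\overline{\Z_p}$ in $\overline{\Q_p}$. No finiteness or noetherian hypotheses are needed.
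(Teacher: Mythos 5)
Your proof is correct and follows essentially the same argument as the paper: translate to a ring map $\phi\colon B\to\ol{\Q_p}$, use integrality of $B$ over $A$ to see that $\phi(B)$ is integral over $\phi(\pi^\sharp(A))\subseteq\ol{\Z_p}$, and conclude by the integral closedness of $\ol{\Z_p}$ in $\ol{\Q_p}$. The only difference is cosmetic — you spell out the monic relation explicitly rather than citing transitivity of integrality.
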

\begin{proof}If~$x\in X(\ol{\Z_p})$, then~$\pi(x)\in Y(\ol{\Z_p})$. We prove the other implication.
 
We write~$X=\Spec(B)$ and~$Y=\Spec(A)$ and~$\pi^*:A\to B$ the morphism corresponding to~$\pi$. Let~$x\in X(\ol{\Q_p})$, and let~$\phi:B\to \ol{\Q_p}$ be the corresponding morphism.

Assume~$\pi(x)\in Y(\ol{\Z_p})$. Then~$\phi\circ \pi^*(A)\subseteq \ol{\Z_p}$.
As~$B$ is integral over~$A$, the algebra~$\phi(B)$ is integral over~$\phi\circ\pi^*(A)$, and is integral over~$\ol{\Z_p}$.
As~$\ol{\Z_p}$ is integrally closed in~$\ol{\Q_p}$, we have~$\phi(B)\subseteq \ol{\Z_p}$. Thus~$x\in X(\ol{\Z_p})$.
\end{proof}

\section{Slopes weights estimates}\label{sec:slopes}

We consider an integer~$n\in\Z_{\geq0}$ and an Euclidean distance~$d(~,~)$ on~$\R^n$. The quantities~$c,c'$ and~$\gamma$ will implicitly also depend on~$d(~,~)$.

\begin{lemma}\label{lemma cvx 1}
Let~$\Sigma$ be a finite set of linear forms on~$\R^n$, let the function~$h_{\Sigma}:\R^n\to\R_{\geq0}$
be given by
\[
h_{\Sigma}(x)=\max_{\lambda\in\{0\}\cup\Sigma}\lambda(x),
\]
and define~$C=C(\Sigma):=\{x\in\R^n|h_{\Sigma}(x)=0\}$. 

Then there exist~$c(\Sigma),c'(\Sigma)\in\R_{>0}$ such that: for all~$x\in\R^n$ satisfying
\begin{equation}\label{mini to C}
d(x,C)=d(x,0)
\end{equation}
we have
\begin{equation}\label{cvx c c'}
c(\Sigma)\cdot d(0,x)\leq h_{\Sigma}(x)\leq c'(\Sigma)\cdot d(0,x).
\end{equation}
\end{lemma}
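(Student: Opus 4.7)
The upper bound is immediate: for every $\lambda \in \Sigma$, Cauchy--Schwarz gives $\lambda(x) \le \|\lambda\| \cdot d(0,x)$, and $0 \le d(0,x)$, so taking the maximum yields $h_\Sigma(x) \le c'(\Sigma) \cdot d(0,x)$ with $c'(\Sigma) = \max_{\lambda\in\Sigma} \|\lambda\|$ (where $\|\lambda\|$ is the dual norm of $\lambda$ with respect to $d$). So the substance of the lemma is the lower bound.

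For the lower bound, the plan is a homogeneity plus compactness argument. The key observation is that $C$ is a closed convex cone: indeed, $C$ is the intersection of the closed half-spaces $\{\lambda \le 0\}$ for $\lambda \in \Sigma$, and each such half-space is a cone containing $0$. Consequently, for $t > 0$ and arbitrary $x \in \R^n$, the scaling map $y \mapsto ty$ is a bijection $C \to C$ and a homothety of ratio $t$, so $d(tx,C) = t\cdot d(x,C)$ and $d(tx,0) = t \cdot d(x,0)$. Thus the condition \eqref{mini to C} is preserved under positive scaling, and so is the inequality to be proved since $h_\Sigma(tx) = t\cdot h_\Sigma(x)$.

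By this homogeneity, it suffices to prove the lower bound for $x$ on the unit sphere $S = \{x \in \R^n : d(0,x) = 1\}$. Let
\[
A = \{x \in S : d(x,C) = d(x,0)\}.
\]
The set $A$ is closed in $S$ (as $x \mapsto d(x,C)$ is continuous) and $S$ is compact, so $A$ is compact. Moreover, $A$ is disjoint from $C$: if $x \in A \cap C$ then $d(x,C) = 0$, forcing $d(x,0) = 0$, contradicting $x \in S$. Since $h_\Sigma$ vanishes exactly on $C$ and is continuous, the restriction $h_\Sigma|_A$ is continuous, positive, and attains its infimum on the compact set $A$; call this infimum $c(\Sigma) > 0$. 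Then for any $x \ne 0$ satisfying \eqref{mini to C}, writing $x = t\cdot u$ with $t = d(0,x)$ and $u = x/t \in A$, we obtain $h_\Sigma(x) = t\cdot h_\Sigma(u) \ge c(\Sigma)\cdot d(0,x)$, which extends trivially to $x=0$.

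There is no serious obstacle here: once one notices that $C$ is a closed convex cone so that the hypothesis \eqref{mini to C} is scale-invariant, the whole argument is a standard compactness/positivity reduction to the unit sphere. The only mild point to check carefully is the disjointness $A \cap C = \emptyset$, which rules out the degenerate case and is what makes the strictly positive constant $c(\Sigma)$ exist.
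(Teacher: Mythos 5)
Your proof is correct and follows essentially the same homogeneity-plus-compactness strategy as the paper: both reduce to the unit sphere using that $C$ is a closed convex cone (so the hypothesis $d(x,C)=d(x,0)$ is scale-invariant), and both obtain $c(\Sigma)>0$ as the minimum of $h_\Sigma$ over the compact set of unit vectors satisfying that hypothesis (your $A$ is the paper's $K=C^\perp\cap S$), with the same disjointness-from-$C$ argument for strict positivity. The only cosmetic difference is that you establish the upper bound directly via Cauchy--Schwarz rather than as the max over the compact set; both are fine.
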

\begin{proof}Let~$x\in \R^n$ be arbitrary such that~$d(x,C)=d(x,0)$. Let~$x'=\gamma\cdot x$ 
with~$\gamma\in\R_{>0}$. For~$v\in \R^n$ we have~$h_\Sigma(\gamma\cdot v)=\gamma\cdot h_\Sigma(v)$.
It follows that~$\gamma\cdot C=C$. For every~$c'\in C$, we have~$c:=c'/\gamma\in C$.
We have
\[
d(x',c')=d(\gamma\cdot x,\gamma\cdot c)=\gamma\cdot d(x,c)\geq \gamma\cdot d(x,0)=d(\gamma\cdot x,0)=d(x',0).
\]
%

Thus~$d(0,x')=d(C,x')$. Since the inequalities~\eqref{cvx c c'} are homogeneous, we may assume~$x\neq0$, and substituting~$x$ with~$x':=x/d(0,x)$, we may assume that
\begin{equation}\label{sphere}
d(0,x)=1.
\end{equation}
We can rewrite the condition~\eqref{mini to C} as
\begin{equation}\label{mini to c}
\forall c\in C,~d(0,x)\leq d(c,x).
\end{equation}
The set~$C^\bot:=\set{x\in\R^n}{d(0,x)=d(C,x)}$ is an intersection of affine half-spaces, and is a closed set~$C^\bot\subseteq \R^{n}$ (it is the \emph{polar dual cone} to~$C$). The intersection~$K$ of~$C^\bot$ with the unit sphere~$\set{x\in\R^n}{d(0,x)=1}$ is thus a compact set. 
We have~$x\in K$, by~\eqref{mini to C} and~\eqref{sphere}.

The continuous function~$h_{\Sigma}$ has a minimum value and maximum value on the compact~$K$, which we denote by
\begin{equation}\label{def cvx c c'}
c(\Sigma):=\min_{k\in K}h_{\Sigma}(k)\text{ and }c'(\Sigma):=\max_{k\in K}h_{\Sigma}(k).
\end{equation}
By definition,~\eqref{cvx c c'} is satisfied and we have~$0\leq c(\Sigma)\leq c'(\Sigma)<+\infty$. 
It will be enough to prove~$0<c(\Sigma)$. 

Assume by contradiction that~$c(\Sigma)= 0$ and choose~$k\in K$ such that~$h_{\Sigma}(k)=0$.
Then~$k\in C$. From~\eqref{mini to c} for~$x=c=k$, we deduce~$d(0,k)\leq d(k,k)=0$, contradicting~\eqref{sphere}.
\end{proof}
\begin{proposition}\label{prop slopes comparison}
We keep the setting of Lemma~\ref{lemma cvx 1}. 

Let us fix a map~$\mu:\Sigma\to\R_{\leq 0}$ and let~$h_{\mu}:\R^n\to \R_{\geq0}$ be defined by
\[
h_{\mu}(x)=\max\{0;\max_{\lambda\in\Sigma}\lambda(x)+\mu(\lambda)\}.
\]
Define~$C=C_{\mu}=\{x\in\R^n|h_{\mu}(x)=0\}$.

Then, 
\begin{equation}\label{cvx affine}
\forall x\in \R^n, c(\Sigma)\cdot d(C,x)\leq h_{\mu}(x)\leq c'(\Sigma)\cdot d(C,x).
\end{equation}
\end{proposition}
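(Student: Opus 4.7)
The plan is to reduce Prop.~\ref{prop slopes comparison} to Lem.~\ref{lemma cvx 1} by projecting~$x$ onto~$C_\mu$ and analysing the displacement vector. If~$x\in C_\mu$ then both sides vanish, so I may assume~$x\notin C_\mu$. Since~$\mu\le 0$ on~$\Sigma$ we have~$0\in C_\mu$, so~$C_\mu$ is a non-empty closed convex polyhedron; let~$c$ denote the unique euclidean projection of~$x$ onto~$C_\mu$, set~$y:=x-c$ so that~$\norm{y}=d(C_\mu,x)$, and write~$\Sigma_c:=\{\lambda\in\Sigma:\lambda(c)+\mu(\lambda)=0\}$ for the set of constraints active at~$c$.

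The key geometric observation is that~$y$ lies in the polar of the tangent cone~$C(\Sigma_c)=\{v\in\R^n:\lambda(v)\le 0\text{ for all }\lambda\in\Sigma_c\}$. Indeed, for any~$v\in C(\Sigma_c)$ one has~$c+tv\in C_\mu$ for all sufficiently small~$t>0$, using~$\lambda(c+tv)+\mu(\lambda)=t\lambda(v)\le 0$ for active~$\lambda$ and the strict inequality~$\lambda(c)+\mu(\lambda)<0$ for slack~$\lambda$. The variational characterisation~$\langle y,c'-c\rangle\le 0$ of the projection applied with~$c':=c+tv$ then yields~$\langle y,v\rangle\le 0$. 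The expansion~$\norm{y-v}^2=\norm{y}^2-2\langle y,v\rangle+\norm{v}^2\ge\norm{y}^2$ then shows~$d(y,C(\Sigma_c))=\norm{y}=d(y,0)$, so~$y$ satisfies hypothesis~\eqref{mini to C} of Lem.~\ref{lemma cvx 1} relative to~$\Sigma_c$; the inclusion~$C(\Sigma)\subseteq C(\Sigma_c)$ shows that the same hypothesis holds relative to~$\Sigma$ itself.

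For the upper bound, I observe that every~$\lambda\in\Sigma$ satisfies~$\lambda(c)+\mu(\lambda)\le 0$, hence~$\lambda(x)+\mu(\lambda)=\lambda(c)+\mu(\lambda)+\lambda(y)\le \lambda(y)$; taking the max with~$0$ gives~$h_\mu(x)\le h_\Sigma(y)$. Lem.~\ref{lemma cvx 1} applied to~$\Sigma$ and~$y$ then yields~$h_\mu(x)\le h_\Sigma(y)\le c'(\Sigma)\,\norm{y}=c'(\Sigma)\,d(C_\mu,x)$.

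For the lower bound, which is the more delicate direction and the main obstacle, I restrict to the active constraints. For~$\lambda\in\Sigma_c$ the equality~$\lambda(c)+\mu(\lambda)=0$ gives~$\lambda(y)=\lambda(x)+\mu(\lambda)\le h_\mu(x)$, whence~$h_{\Sigma_c}(y)\le h_\mu(x)$. Applying Lem.~\ref{lemma cvx 1} with the subset~$\Sigma_c$ yields~$c(\Sigma_c)\,\norm{y}\le h_{\Sigma_c}(y)\le h_\mu(x)$. Since~$\Sigma_c$ ranges over finitely many subsets of~$\Sigma$, redefining~$c(\Sigma):=\min_{\Sigma'\subseteq\Sigma}c(\Sigma')>0$ gives the uniform bound~$c(\Sigma)\,d(C_\mu,x)\le h_\mu(x)$. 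The subtle point is that in general~$h_\mu(x)\neq h_\Sigma(y)$: a constraint~$\lambda$ that is slack at~$c$ can nevertheless attain the maximum in~$h_\mu(x)$, because~$\lambda(y)$ may far exceed~$\max_{\lambda'\in\Sigma_c}\lambda'(y)$. This is precisely why the lower bound must be obtained through the active subset~$\Sigma_c$ rather than through~$\Sigma$, and why taking a minimum of~$c(\Sigma')$ over all subsets is necessary to land on a constant depending only on~$\Sigma$.
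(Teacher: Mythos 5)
Your proof is correct and reaches the same reduction to Lemma~\ref{lemma cvx 1} at the projection point, but implements it differently. The paper translates coordinates so that the projection~$x_0$ becomes the origin, defining~$\mu'(\lambda)=\lambda(x_0)+\mu(\lambda)$, and then shows~$d(x,\ol{C})=d(x,0)$ for the cone cut out by the active set~$\ol{\Sigma}:=\{\lambda:\mu'(\lambda)=0\}$ via a scaling argument (for~$b\in\ol{C}$, take~$a=\lambda b$ small enough to lie in~$C_\mu$ and use the hypothesis); you instead invoke the variational characterisation of euclidean projections, proving directly that~$y=x-c$ lies in the polar of the recession cone~$C(\Sigma_c)$, which avoids both the change of variables and the small-vector identity~$h_\mu(a)=h_{\ol{\Sigma}}(a)$. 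You are also more explicit than the paper about the asymmetry between the two inequalities: the upper bound~$h_\mu(x)\le h_\Sigma(y)$ works with all of~$\Sigma$, but the lower bound must pass through the active subset~$\Sigma_c$ since a constraint slack at~$c$ can still dominate~$h_\mu(x)$; the paper compresses this into ``Applying Lemma~\ref{lemma cvx 1} to~$\ol{\Sigma}$'' without pointing out that the two sides require different subsets, so your account fills a genuine expository gap. Your final step of minimising~$c(\Sigma')$ over~$\Sigma'\subseteq\Sigma$ matches the paper's remark that~$\ol{\Sigma}$ ranges over finitely many subsets of~$\Sigma$.
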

\begin{proof}
Define~$\ol{\Sigma}:=\{\lambda\in\Sigma|\mu(\lambda)=0\}$ and~$\ol{C}=\{x\in\R^n|h_{\ol{\Sigma}}(x)=0\}$.
We have~$h_{\ol{\Sigma}}\leq h_{\mu}$ and thus
\[
C\subseteq \ol{C}
\]

In a first step we prove~\eqref{cvx affine} with the extra condition
\begin{equation}\label{cvx extra}
d(x,C)=d(x,0)\text{ (that is:~$\forall c\in C, d(x,c)\geq d(x,0)$)}. 
\end{equation}

Let~$\norm{~}$ denote the euclidean norms induced by~$d(~,~)$ on~$\R^n$ and its dual.
For~$a\in \R^n$ we have 
\[
\max_{\sigma\in\Sigma}\sigma(a)\leq \norm{a}\cdot \max_{\sigma\in\Sigma}\norm{\sigma}.
\]
Define~$\mu_0:=\max\set{\mu(\sigma)}{\sigma\in\Sigma\smallsetminus\ol{\Sigma}}<0$.
Then, if~$a\in\R^n$ satisfies
\begin{equation}\label{petit a}
\norm{a}\cdot \max_{\sigma\in\Sigma}\norm{\sigma}\leq-\mu_0,
\end{equation}
we have
\begin{equation}\label{petit a et negatif}
h_{\mu}(a)-h_{\ol{\Sigma}}(a)\leq 0,\text{ and thus }h_{\mu}(a)=h_{\ol{\Sigma}}(a).
\end{equation}
Let us prove that~$d(x,\ol{C})=d(x,0)$.
\begin{proof}We want to prove that for an arbitrary~$b\in \ol{C}$ we have
\begin{equation}\label{cvx claim 1}
d(x,b)\geq d(x,0).
\end{equation}
Let~$\lambda\in\R_{>0}$ be sufficiently small so that~$a:=\lambda\cdot b$ satisfies~\eqref{petit a}.
We deduce from~\eqref{petit a et negatif} that~$a\in C$, and from~\eqref{cvx extra} that
\[
d(x,a)\geq d(x,0).
\]
Equivalently, denoting by~$(~,~)$ the euclidean scalar product,~$(a,x)\leq 0$.
It follows~$(b,x)=\lambda\cdot (a,x)\leq 0$ and, equivalently,~\eqref{cvx claim 1}.
\end{proof}
Applying Lemma~\ref{lemma cvx 1} to~$\ol{\Sigma}$, we deduce~\eqref{cvx affine} under the assumption~\eqref{cvx extra}.

Note that~$\ol{\Sigma}$ is a subset of the finite set~$\Sigma$. Thus, there are only finitely many possibilities for~$\ol{\Sigma}$. As a consequence, we may assume, when applying Lemma~\ref{lemma cvx 1},  that~$c(\ol{\Sigma})$ and~$c'(\ol{\Sigma})$ do not depend on~$\ol{\Sigma}$.

We now reduce the general case to the first step, by a translation of the origin of~$\R^n$.

Let~$x_0\in C$ be such that
\[
d(x,C)=d(x,x_0)
\]
and define
\[
\mu'(\lambda)=\lambda(x_0)+\mu(\lambda),
\]
so that
\[
h_{\mu'}(y)=h_\mu(y+x_0).
\]
and~$C_{\mu'}=C_{\mu}-x_0$. Thus
\[
d(x-x_0,C_{\mu'})=d(x-x_0,C_{\mu}-x_0)=d(x,C_{\mu})=d(x,x_0)=d(x-x_0,0).
\]
From~$x_0\in C_\mu$, we deduce~$h_{\mu'}(0)=h_\mu(x_0)=0$ and~$\forall\sigma\in\Sigma,\mu'(\sigma)\leq 0$.

Then~\eqref{cvx affine} for~$x$ follows from the first step applied to~$x-x_0$.
\end{proof}
Defining
\[
\gamma(\Sigma_0)=
\frac{\max\set{c'(\Sigma)}{\Sigma\subseteq \Sigma_0}}
{\hspace{2.2pt}\min\set{\phantom{{}'}c(\Sigma)}{\Sigma\subseteq \Sigma_0}}
\] 
we deduce the following.
\begin{corollary}\label{coro slopes}
Let~$\Sigma_0$ be a finite set of linear forms on~$\R^n$. There exists~$\gamma(\Sigma_0)\in\R_{>0}$
such that for~$\Sigma,\Sigma'\subseteq \Sigma_0$
and~$\mu:\Sigma\to\R_{\leq0}$ and~$\mu':\Sigma'\to\R_{\leq0}$ such that~$C_{\mu}=C_{\mu'}$,
we have
\[
\forall x\in \R^n, h_{\mu}(x)\leq \gamma(\Sigma_0)\cdot h_{\mu'}(x).
\]
\end{corollary}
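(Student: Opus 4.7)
The plan is to deduce Corollary~\ref{coro slopes} as a direct consequence of Proposition~\ref{prop slopes comparison}, exploiting the hypothesis $C_\mu = C_{\mu'}$ to transfer estimates between the two height functions through a common geometric quantity, namely the distance to this shared convex set.

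First I would apply the upper bound of~\eqref{cvx affine} to $h_\mu$: for every $x \in \R^n$,
\[
h_\mu(x) \leq c'(\Sigma) \cdot d(C_\mu, x).
\]
Next, I would apply the lower bound of~\eqref{cvx affine} to $h_{\mu'}$: for every $x \in \R^n$,
\[
c(\Sigma') \cdot d(C_{\mu'}, x) \leq h_{\mu'}(x).
\]
Since the hypothesis $C_\mu = C_{\mu'}$ gives $d(C_\mu, x) = d(C_{\mu'}, x)$, combining the two inequalities yields
\[
h_\mu(x) \leq \frac{c'(\Sigma)}{c(\Sigma')} \cdot h_{\mu'}(x).
\]

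Finally, to produce a constant $\gamma(\Sigma_0)$ depending only on $\Sigma_0$, I would use that $\Sigma_0$ has only finitely many subsets, so the formula
\[
\gamma(\Sigma_0) := \frac{\max\{c'(\Sigma) : \Sigma \subseteq \Sigma_0\}}{\min\{c(\Sigma) : \Sigma \subseteq \Sigma_0\}}
\]
(as given just before the statement) is a well-defined positive real number, and it dominates every ratio $c'(\Sigma)/c(\Sigma')$ arising above. There is no real obstacle here: the only subtlety is ensuring that $c(\Sigma) > 0$ for every subset, but this is precisely what Lemma~\ref{lemma cvx 1} provides via the compactness of the polar sphere $K$ (the proof by contradiction at the end of that lemma shows strict positivity). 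The finiteness of the collection of subsets then turns the finitely many positive ratios into a single uniform constant.
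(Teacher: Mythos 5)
Your proof is correct and follows exactly the implicit argument of the paper: apply the upper bound of Proposition~\ref{prop slopes comparison} to $h_\mu$, the lower bound to $h_{\mu'}$, use $C_\mu = C_{\mu'}$ to cancel the common distance factor, and then take the ratio over the finitely many subsets of $\Sigma_0$, which is precisely what the paper's displayed formula for $\gamma(\Sigma_0)$ encodes. Your observation that strict positivity of $c(\Sigma)$ rests on the compactness argument at the end of Lemma~\ref{lemma cvx 1} is also the right justification.
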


\appendix

\section{On Complete reducibility and closed orbit of Lie algebras over~$\F_p$}\label{AppA}
\begin{lemma}Let~$V$ be a finite dimensional vector space over a field~$K\neq \F_2$, and let~$G'\leq H\leq G\leq GL(V)$ be subgroups.

Assume that~$V$ is semisimple as a representation of~$G$ and as a representation of~$G'$,
and that
\[
Z_{GL(V)}(G)=Z_{GL(V)}(G').
\]
Then~$V$ is semisimple as a representation of~$H$ and
\[
Z_{GL(V)}(H)=Z_{GL(V)}(G).
\]
\end{lemma}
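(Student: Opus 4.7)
The plan is to pass from the groups to the $K$-subalgebras they generate in $\End(V)$ and apply the bicommutant (Jacobson density) theorem. Let $B \subseteq \End_K(V)$ be the $K$-subalgebra generated by $G$, and $B'$ the one generated by $G'$. Then $V$ is semisimple as a $G$-module (resp.\ $G'$-module) iff it is semisimple as a $B$-module (resp.\ $B'$-module), and the faithful semisimple $B$-module $V$ forces $B$ to be a semisimple $K$-algebra; likewise for $B'$. Consequently, $A := Z_{\End(V)}(B) = Z_{\End(V)}(G)$ and $A' := Z_{\End(V)}(B') = Z_{\End(V)}(G')$ are both finite-dimensional semisimple $K$-algebras, as commutants of semisimple subalgebras of $\End_K(V)$. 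The inclusions $G' \leq H \leq G$ give $B' \subseteq K[H] \subseteq B$ and, by passing to centralisers,
\[
A \subseteq Z_{\End(V)}(H) \subseteq A'.
\]

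First I would upgrade the hypothesis $Z_{GL(V)}(G) = Z_{GL(V)}(G')$, which reads $A^\times = (A')^\times$, to the equality $A = A'$ of algebras. The key ingredient is the sublemma that a finite-dimensional semisimple $K$-algebra $R$ is $K$-spanned by its unit group $R^\times$ as soon as $K \neq \mathbb{F}_2$. Granting this, $A' = K \cdot (A')^\times = K \cdot A^\times \subseteq A$, and combined with $A \subseteq A'$ this gives $A = A'$. The sandwich above then forces $Z_{\End(V)}(H) = A$, hence $Z_{GL(V)}(H) = A^\times = Z_{GL(V)}(G)$, which is the second assertion of the lemma.

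For the semisimplicity of $V$ as an $H$-module, I invoke the bicommutant theorem (as cited elsewhere in the paper, \cite[Alg.\ VIII, \S12]{BBKA8}): a faithful semisimple action gives $B = Z_{\End(V)}(A)$ and $B' = Z_{\End(V)}(A')$. The equality $A = A'$ just established therefore yields $B = B'$, and the sandwich $B' \subseteq K[H] \subseteq B$ collapses to $K[H] = B$. Since $V$ is semisimple as a $B$-module by hypothesis, it is semisimple as a $K[H]$-module, i.e.\ as an $H$-module.

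The main obstacle is the sublemma ``$R = K \cdot R^\times$''. By Wedderburn I would reduce to $R = M_n(D)$ for a division $K$-algebra $D$: one first checks that each central idempotent of $R$ lies in $K\cdot R^\times$ using a scalar $\alpha \in K \smallsetminus\{0,1\}$ to separate factors (this already requires $K \neq \mathbb{F}_2$), and then treats each $M_n(D)$ separately. The case $n=1$ is trivial since $D \cup\{0\} = D$; for $n \geq 2$, the unipotent units $I + d\,E_{ij}$ ($i\neq j$) put every off-diagonal matrix unit $d\,E_{ij}$ in the span, while for the diagonal entries one uses either $E_{ii} - E_{jj}$ together with $I$ when $\operatorname{char}(K) \neq 2$, or the unit $\operatorname{diag}(\alpha, 1,\ldots,1)$ with $\alpha\in K \smallsetminus\{0,1\}$ to produce $(\alpha-1)E_{11}$ when $\operatorname{char}(K) = 2$. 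The exclusion of $\mathbb{F}_2$ is sharp here: $M_2(\mathbb{F}_2)$ is genuinely not $\mathbb{F}_2$-spanned by $GL_2(\mathbb{F}_2)$, which is the reason for the hypothesis $K \neq \mathbb{F}_2$ in the statement.
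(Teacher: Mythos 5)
Your proof is correct in substance but takes a genuinely different and noticeably heavier route than the paper's. The paper argues in one stroke: given an $H$-stable $W\leq V$, pick a $G'$-stable complement $W'$ (semisimplicity of $V$ over $G'$), let $\pi$ be the associated $G'$-equivariant projector, and set $g=\pi+\lambda(1-\pi)$ for some $\lambda\in K\smallsetminus\{0;1\}$. Then $g\in Z_{GL(V)}(G')=Z_{GL(V)}(G)\leq Z_{GL(V)}(H)$, so $\pi=\frac{1}{1-\lambda}(g-\lambda\cdot 1_V)$ is $H$-equivariant and $W'=\ker\pi$ is an $H$-stable complement; the centraliser equality then follows from the trivial sandwich $Z_{GL(V)}(G)\leq Z_{GL(V)}(H)\leq Z_{GL(V)}(G')$. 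Your version replaces this by the full Artin--Wedderburn and double-centraliser apparatus, reducing to the sublemma that a finite-dimensional semisimple $K$-algebra with $K\neq\F_2$ is $K$-spanned by its units. Both are valid; it is worth noting that the paper's $\pi\mapsto\pi+\lambda(1-\pi)$ trick \emph{is} precisely the idempotent-separation step of your sublemma, applied directly without packaging it as a structure theorem, which is what makes the paper's proof so much shorter.

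One factual slip in your closing remark: $M_2(\F_2)$ \emph{is} spanned over $\F_2$ by $GL_2(\F_2)$. Indeed $(I+E_{12})-I=E_{12}$, $(I+E_{21})-I=E_{21}$, the unit $E_{11}+E_{12}+E_{21}$ yields $E_{11}$ after subtracting $E_{12}+E_{21}$, and $E_{22}=I-E_{11}$; so all matrix units lie in the span. More generally the matrix step of your sublemma works over every field, including $\F_2$. What actually fails at $K=\F_2$ is exactly the idempotent-separation step you correctly isolate in the preceding sentence: e.g.\ $R=\F_2\times\F_2$ has $R^\times=\{(1,1)\}$, whose $\F_2$-span is only the diagonal. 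So your identification of where $K\neq\F_2$ enters is right; only the stated ``sharpness'' example is wrong, and it should be replaced by a product algebra such as $\F_2\times\F_2$.
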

\begin{proof}Let~$W\leq V$ be a~$H$-stable vector subspace.
Then~$W$ is~$G'$-invariant. By assumption, there exists~$W'\leq V$
a supplementary~$G'$-invariant subspace. Let~$\pi$ be the projector with image~$W$ and kernel~$W'$,
and choose~$\lambda\in K\smallsetminus\{0;1\}$. Define~$g=\pi+\lambda\cdot (1_V-\pi)\in \End(V)$.
Then
\[
g\in 
Z_{GL(V)}(G')=Z_{GL(V)}(G)\leq
Z_{GL(V)}(H).
\]
Thus~$g$ is~$H$-equivariant, and thus~$\pi=\frac{1}{1-\lambda}\cdot (g-\lambda \cdot 1_V)$ is~$H$-equivariant.
Thus~$W'=\ker(\pi)$ is~$H$-invariant, and is supplementary to~$W$.

Since~$W$ is arbitraty,~$V$ is semisimple as a representation of~$H$.

Finally, using~$G'\leq H\leq G$, we get
\[
Z_{GL(V)}(H)\leq Z_{GL(V)}(G')=Z_{GL(V)}(G)\leq Z_{GL(V)}(H).\qedhere
\]
\end{proof}

\subsubsection{}\label{AppA:def:cr}
We recall that (see~\cite[\S3.2]{SCR}) for an algebraic group~$G$ over a field~$k$, an abstract subgroup~$\Gamma\leq G(\ol{k})$ \emph{is~$G$-cr}, resp. a $k$-algebraic subgroup~$H\leq G$ \emph{is~$G$-cr} if and only if: for every $k$-parabolic subgroup~$P\leq G$ such that~$\Gamma\leq P(\ol{k})$, resp. such that~$H(\ol{k})\leq P(\ol{k})$, there exists a Levi subgroup~$L\leq P$ such that~$\Gamma\leq L(\ol{k})$, resp. such that~$H(\ol{k})\leq L(\ol{k})$.

\begin{lemma}\label{lem:Gcr}
Let~$G$ be a reductive group over ${\ol{\F_p}}$, and let~$H\leq L\leq M\leq G$ be $\ol{\F_p}$-algebraic subgroups.

Assume that~$H$ and~$M$ are~$G$-cr 
and that
\[
Z_{G}(H)=Z_{G}(M).
\]

Then, for every parabolic subgoup~$P\leq G$ defined over~$\ol{\F_p}$, we have
\[
H\leq P\Leftrightarrow L\leq P\Leftrightarrow M\leq P.
\]
\end{lemma}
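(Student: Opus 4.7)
The implications $M\leq P\Rightarrow L\leq P\Rightarrow H\leq P$ are tautological since $H\leq L\leq M$ by hypothesis. The content of the lemma is the reverse implication $H\leq P\Rightarrow M\leq P$; once this is established, $L\leq M\leq P$ follows, completing the chain.

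So assume $H\leq P$. Since $H$ is $G$-cr (cf.~\S\ref{AppA:def:cr}), there exists a Levi subgroup $L_P\leq P$ such that $H\leq L_P$. The plan is to exploit the standard structural fact that any Levi subgroup of a parabolic of a connected reductive group is the centraliser of its connected centre: we have
\[
L_P = Z_G(Z(L_P)^0),
\]
where $Z(L_P)^0$ is a central torus in $L_P$.

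Because $H\leq L_P$, the torus $Z(L_P)^0\leq Z(L_P)$ centralises $H$, that is, $Z(L_P)^0\leq Z_G(H)$. Invoking the hypothesis $Z_G(H)=Z_G(M)$, we conclude that $Z(L_P)^0$ centralises $M$ as well, hence
\[
M\leq Z_G(Z(L_P)^0) = L_P\leq P.
\]
This yields $M\leq P$, and the chain of equivalences follows.

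The argument is essentially a one-line application of the characterisation of Levi subgroups as centralisers of central tori, combined with the defining property of $G$-complete reducibility for~$H$; no serious obstacle is expected. The hypothesis that $M$ itself is $G$-cr is not needed for this particular statement (one only uses $H\leq M$ and $Z_G(H)=Z_G(M)$), although it is natural in the broader context of Cor.~\ref{cor:Gcr} in which this lemma is applied.
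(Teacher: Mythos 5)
Your proof is correct and follows essentially the same route as the paper: use $G$-complete reducibility of $H$ to find a Levi subgroup $Q\leq P$ containing $H$, then apply the fact that $Q=Z_G(Z(Q))$ together with $Z_G(H)=Z_G(M)$ to force $M\leq Q\leq P$. The only cosmetic difference is that the paper works with the full centre $Z(Q)$ rather than the connected centre $Z(Q)^0$; both versions of the Levi-as-centraliser fact hold and either suffices. Your closing remark that $G$-cr of $M$ is not used in this lemma is accurate (the paper's proof also does not invoke it here; it is needed only in Cor.~\ref{cor:Gcr}).
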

\begin{proof}
It suffices to prove that, for every parabolic subgroup~$P\leq G$, we have
\[
H\leq P\Rightarrow M\leq P.
\]
Let~$P\leq G$ be a parabolic subgroup such that~$H\leq P$. Since~$H$ is~$G$-cr, there exists, by definition,
a Levi subgroup~$Q\leq G$ such that~$H\leq Q$.  Let~$Z(Q)$ be the center of~$Q$. Then~$Z(Q)\leq Z_G(Q)\leq Z_{G}(H)=Z_{G}(M)$. Thus~$M\leq Z_G(Z(Q))$. Since~$Q$ is the Levi subgroup of a parabolic subgroup, we have~$Z_G(Z(Q))=Q$ (\cite[proof of 16.1.1., p.\,269]{SpringerLAG}. We conclude~$M\leq Z_G(Z(Q))=Q\leq P$.
\end{proof}
\begin{corollary}\label{cor:Gcr}
The subgroup~$L$ is~$G$-cr.
\end{corollary}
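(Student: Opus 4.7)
The plan is to directly adapt the argument from the proof of Lemma~\ref{lem:Gcr}. To check that $L$ is $G$-cr, I need to take an arbitrary parabolic subgroup $P \leq G$ defined over $\overline{\mathbb{F}_p}$ with $L \leq P$, and produce a Levi subgroup $Q \leq P$ with $L \leq Q$.

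Since $H \leq L \leq P$ and $H$ is $G$-cr by hypothesis, the definition in~\S\ref{AppA:def:cr} furnishes a Levi subgroup $Q \leq P$ with $H \leq Q$. The next step is to upgrade this to $L \leq Q$. This is where the centraliser hypothesis $Z_G(H) = Z_G(M)$ enters: exactly as in the proof of Lemma~\ref{lem:Gcr}, the centre $Z(Q)$ satisfies $Z(Q) \leq Z_G(Q) \leq Z_G(H) = Z_G(M)$, so $M \leq Z_G(Z(Q))$, and since $Q$ is a Levi of a parabolic, $Z_G(Z(Q)) = Q$ (cf.\ \cite[16.1.1]{SpringerLAG}). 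Therefore $M \leq Q$, and in particular $L \leq M \leq Q$.

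Since the parabolic $P \geq L$ was arbitrary, this exhibits, for every such $P$, a Levi $Q$ of $P$ containing $L$; by the definition recalled in~\S\ref{AppA:def:cr}, this means $L$ is $G$-cr. There is no real obstacle here: the corollary is essentially a free consequence of Lemma~\ref{lem:Gcr}, because the very argument used to deduce $M \leq P$ from $H \leq P$ simultaneously produces a common Levi of $P$ containing both $H$ and $M$, hence containing $L$ as well.
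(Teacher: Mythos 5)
Your proof is correct and takes essentially the same approach as the paper: both start from the Levi $Q$ of $P$ that contains $H$ (furnished by $H$ being $G$-cr) and then push $M$ into $Q$ via the centraliser identity. The only difference is presentational — the paper writes $Q = P \cap P'$ and invokes Lemma~\ref{lem:Gcr} for the opposite parabolic $P'$ to get $M \leq P'$, whereas you re-run the $Z(Q) \leq Z_G(H) = Z_G(M)$, $Z_G(Z(Q)) = Q$ argument inline to get $M \leq Q$ directly; the content is identical.
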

\begin{proof}Let~$P\leq G$ be a parabolic subgroup such that~$L\leq P$. We have~$H\leq P$, and, since~$H$ is~$G$-cr,
there exists an opposite parabolic subgroup~$P'$ such that, with~$Q=P\cap P'$, we have~$H\leq Q$. By Lem.~\ref{lem:Gcr},
we have~$M\leq P'$, and thus~$L\leq M\leq P'$. 

Thus~$L$ is contained in the Levi subgroup~$Q\leq P$, and since~$P$ is arbitrary,~$L$ is, by definition,~$G$-cr.
\end{proof}

\begin{proposition}\label{propMNich}
Let~$M\leq G\leq GL(n)$ be connected reductive algebraic subgroups defined over~$\Q$.  We denote by~$M_{\F_p}\leq G_{\F_p}$ the~$\F_p$-algebraic groups induced by the model~$GL(n)_{\Z}$.
There exists~$c(M,G)$ such for every~$p\geq c(M,G)$ the following holds. 

Let~$V\leq M(\F_p)$ and define~$V^\dagger$ defined as in~Rem.~\ref{rem:Tate} \ref{rem2}.
We also view~$V$ and~$V^\dagger $ as~$0$-dimensional algebraic subgroups of~$M_{\F_p}$.
We assume
\[
V\leq V^\dagger\cdot Z(M_{\F_p}).
\]
and~$Z_{G_{\F_p}}(V)=Z_{G_{\F_p}}(M_{\F_p})$, and that the action of~$V$ on~${\F_p}^n$ is semisimple.

Let~$W$ be the algebraic group associated to~$V^\dagger$ by Nori (\cite{N}), and~$L=W\cdot Z(M)$, and let~$\mathfrak{l}\leq\mathfrak{g}_{\F_p}$ be the Lie algebra of~$L$.

Then~$L_{\ol{\F_p}}$ is~$G_{\ol{\F_p}}$-cr and~$\mathfrak{l}\tens{\ol{\F_p}}\leq \mathfrak{g}_{\ol{\F_p}}$ is~$G_{\ol{\F_p}}$-cr (in the sense of~\cite{McN}).
\end{proposition}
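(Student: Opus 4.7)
The plan is to verify the hypotheses of Corollary~\ref{cor:Gcr} for the chain $V \leq L \leq M$ of $\ol{\F_p}$-algebraic subgroups of $G_{\ol{\F_p}}$ to obtain the group-theoretic conclusion, then to pass from the group $L$ to its Lie algebra $\mathfrak{l}$ via McNinch's comparison between $G$-complete reducibility for connected reductive subgroups and for their Lie algebras.

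First I would establish the structural facts about $L$ and the equality of centralisers. For $p$ larger than a constant depending only on $n$, Nori's theory applies: the algebraic subgroup $W \leq M_{\F_p}$ is connected, satisfies $W(\F_p)^\dagger = V^\dagger$, and the hypothesis of semisimplicity of the $V$-action on $\F_p^n$ forces $W$ to be semisimple, by Remark~\ref{rem:Tate}(\ref{rem2}). Hence $L = W \cdot Z(M_{\F_p})$ is a connected reductive subgroup of $M_{\F_p}$. The inclusions $V \leq V^\dagger \cdot Z(M_{\F_p}) \leq W(\F_p) \cdot Z(M_{\F_p}) \leq L(\F_p)$ give $V \leq L$ as subgroup schemes, whence $Z_{G_{\F_p}}(L) \leq Z_{G_{\F_p}}(V) = Z_{G_{\F_p}}(M_{\F_p})$; conversely, $L \leq M$ gives the reverse inclusion, so that $Z_{G_{\F_p}}(L) = Z_{G_{\F_p}}(M_{\F_p})$.

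Next I would verify that both $V$ and $M$ are $G_{\ol{\F_p}}$-cr. For $V$: the semisimplicity of its action on $\F_p^n$ makes $V$ a $GL(n)$-cr subgroup, and the transfer principle of \cite[Th.~5.4]{SCR} promotes this to $G$-cr-ness once $p$ exceeds a constant depending only on $G \leq GL(n)$. For $M$: any connected reductive subgroup of $G$ is $G$-cr as soon as $p$ is very good for $G$, which is a condition depending only on the root datum of $G$. Corollary~\ref{cor:Gcr} applied to $V \leq L \leq M$ then yields that $L_{\ol{\F_p}}$ is $G_{\ol{\F_p}}$-cr, giving the first part of the proposition.

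Finally I would invoke McNinch's comparison theorem from \cite{McN}: for $p$ a sufficiently large good prime for $G$, the Lie algebra of a connected reductive $G_{\ol{\F_p}}$-cr subgroup is itself $G_{\ol{\F_p}}$-cr as a Lie subalgebra of $\mathfrak{g}_{\ol{\F_p}}$, in the sense of loc.\ cit. Applied to $L$, this yields the $G_{\ol{\F_p}}$-cr-ness of $\mathfrak{l} \tens \ol{\F_p}$. The main obstacle is not any single step---each is a direct application of a known result---but the bookkeeping required to collect the various lower bounds on $p$ (from Nori's theory, from Serre's transfer of $G$-complete reducibility, from the very-goodness of $G$, and from McNinch's group-to-Lie-algebra comparison) into a single constant $c(M,G)$ depending only on $M$ and $G$ and not on the auxiliary data $V$.
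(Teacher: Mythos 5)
Your proof follows essentially the same route as the paper's: verify that $V$ is $G_{\ol{\F_p}}$\nobreakdash-cr via Serre's transfer of semisimplicity, that $M_{\ol{\F_p}}$ is $G_{\ol{\F_p}}$\nobreakdash-cr for $p$ large (the paper cites~\cite[Th.~5.3]{SCR}, you cite very-goodness; both depend only on $G$), apply Lem.~\ref{lem:Gcr} and Cor.~\ref{cor:Gcr} to the chain $V\leq L_{\ol{\F_p}}\leq M_{\ol{\F_p}}$ with the given hypothesis $Z_{G_{\F_p}}(V)=Z_{G_{\F_p}}(M_{\F_p})$, and then invoke McNinch's comparison to pass from the group $L$ to its Lie algebra. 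Your computation of $Z_{G_{\F_p}}(L)=Z_{G_{\F_p}}(M_{\F_p})$ is correct but not what Lem.~\ref{lem:Gcr} requires (it requires the centraliser equality for the \emph{smallest} group $H=V$, which is a stated hypothesis), so that portion is harmless but superfluous.
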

\begin{proof}We know that there exists~$c'(M,G)$ such that, for~$p\geq c'(M,G)$, the algebraic groups~$M_{\F_p}$ and~$G_{\F_p}$ are reductive.
By~\cite[Th. 5.3.]{SCR}, we have, for~$p\geq h(G)$ and~$p\geq h(GL(n))$, that~$M_{\ol{\F_p}}$ and~$G_{\ol{\F_p}}$ are~$GL(n)_{\ol{\F_p}}$-cr and~$M_{\ol{\F_p}}$ is~$G_{\ol{\F_p}}$-cr.

Let~$n_p$ be the invariant~$n(\Lambda)$ of~\cite[\S\,5.2.]{SCR} for the representation~$\Lambda={\F_p}^n$ of~$G_{\F_p}$.
The definition of~$n_p$ is given in terms of weights, and, for~$p\gg0$, does not depend on~$p$. Thus~$n_{\max}:=\max_p n_p<+\infty$. Recall that the action of~$V\leq M(\F_p)$ is semisimple. By \eqref{rem4} of \S\ref{rem:Tate}, the~$\ol{\F_p}$-linear action of~$V$ on~$\Lambda\tens\ol{\F_p}$ is semisimple. By~\cite[Th. 5.4.]{SCR} this implies, for~$p\geq n_{\max}$, that~$V$ is~$G_{\ol{\F_p}}$-cr.

We apply Lem.~\ref{lem:Gcr} and Cor.~\ref{cor:Gcr} with~$H=V$ (viewed as an $\ol{\F_p}$-algebraic group of dimension~$0$), $L=L_{\ol{\F_p}}$ and~$M=M_{\ol{\F_p}}$ and~$G=G_{\ol{\F_p}}$. It follows that~$L_{\ol{\F_p}}$ is~$G_{\ol{\F_p}}$-cr. According to~\cite[Th.~1(2)]{McN}, it follows that~$\mathfrak{l}_{\ol{\F_p}}:=\mathfrak{l}\tens{\ol{\F_p}}$ is~$G_{\ol{\F_p}}$-cr.

This proves the Proposition with
\[
c(M,G)=\max\{c'(M,G);h(G);h(GL(n));n(V)\}.\qedhere\]
\end{proof}
\begin{corollary}\label{corMNich}
Let~$x_1,\ldots,x_k\in \mathfrak{gl}(n,\F_p)$ and~$y_1,\ldots,y_l\in  \mathfrak{gl}(n,\F_p)$ be such that~$V^\dagger$ is generated by~$\{\exp(x_1);\ldots;\exp(x_k)\}$ and~$\{y_1;\ldots;y_l\}$ generates~$\mathfrak{z}(\mathfrak{m})$.

Then the orbit (as an algebraic variety over~$\F_p$) of~$(x_1,\ldots,x_k,y_1,\ldots,y_l)$ in~$\mathfrak{gl}^{k+l}$ under the action of~$G_{\F_p}$ by conjugation, is Zariski closed.
\end{corollary}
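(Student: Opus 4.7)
The plan is to recognise the tuple $v := (x_1,\dots,x_k,y_1,\dots,y_l)$ as a generating tuple, in the Lie-algebra sense, of the subalgebra $\mathfrak{l}\leq \mathfrak{g}_{\F_p}$ furnished by Proposition~\ref{propMNich}, and then apply the Lie-algebra analogue of Richardson's closed-orbit criterion in positive characteristic (the "$G$-cr $\Leftrightarrow$ closed orbit" principle developed by McNinch~\cite{McN} and Bate--Martin--R\"ohrle, translating the subgroup version from~\cite{BMR10}).

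First, I would verify that $v\in\mathfrak{l}^{k+l}$. Each $y_j$ lies in $\mathfrak{z}(\mathfrak{m})\subseteq \mathfrak{l}$ by hypothesis. For the $x_i$, since $\exp(x_i)\in V^\dagger$ is unipotent, Nori theory (valid for $p>\dim G$, which we may assume) identifies $\operatorname{Lie}(W)$ with the $\F_p$-span of logarithms of unipotent elements of $V^\dagger$; in particular $x_i\in\operatorname{Lie}(W)\subseteq \mathfrak{l}$. Moreover, since the $\exp(x_i)$ topologically generate $V^\dagger$, the Nori correspondence shows that the $x_i$ generate $\operatorname{Lie}(W)$ as a Lie algebra (cf.~\cite{N}, \cite[\S3]{SCR}), while the $y_j$ generate $\mathfrak{z}(\mathfrak{m})$ as a vector space. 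Hence the Lie subalgebra of $\mathfrak{g}_{\F_p}$ generated by $v$ is exactly $\mathfrak{l}$.

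Second, Proposition~\ref{propMNich} gives that $\mathfrak{l}\otimes\overline{\F_p}\leq\mathfrak{g}_{\overline{\F_p}}$ is $G_{\overline{\F_p}}$-completely reducible in the sense of McNinch. I would then invoke the closed-orbit criterion: for a $G$-cr Lie subalgebra $\mathfrak{h}\leq \mathfrak{g}$ and any tuple $w\in\mathfrak{g}^s$ whose entries generate $\mathfrak{h}$ as a Lie algebra, the orbit $G\cdot w$ is Zariski closed in $\mathfrak{g}^s$ (this is the Lie-algebra form of Richardson's theorem that underlies the proofs in~\cite{McN} via the translation between subgroup and subalgebra $G$-cr, combined with~\cite[Th.~3.7]{SCR}). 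Applied to $\mathfrak{h}=\mathfrak{l}\otimes\overline{\F_p}$ and the tuple~$v$, this yields that $G_{\overline{\F_p}}\cdot v$ is closed in $\mathfrak{gl}(n,\overline{\F_p})^{k+l}$.

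Finally, the orbit $G_{\F_p}\cdot v$ is the image of the orbit morphism $G_{\F_p}\to \mathfrak{gl}^{k+l}_{\F_p}$, $g\mapsto g\cdot v$, hence constructible and defined over $\F_p$; since its base change to $\overline{\F_p}$ is Zariski closed by the previous step, the orbit is closed as an $\F_p$-subvariety. The main obstacle in the plan is pinning down the correct positive-characteristic Lie-algebra closed-orbit criterion and checking that a generating (not merely spanning) tuple suffices; everything else is a routine verification using Nori theory and descent of closedness from $\overline{\F_p}$ to $\F_p$.
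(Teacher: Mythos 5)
Your plan is essentially the paper's argument, and you have correctly pinpointed the one potential gap. Both proofs reduce to Prop.~\ref{propMNich}, which supplies the $G_{\ol{\F_p}}$-cr property of $\mathfrak{l}\tens\ol{\F_p}$, and both appeal to McNinch. The thing you leave as a black box --- ``for a $G$-cr Lie subalgebra $\mathfrak{h}$ and any generating tuple $w\in\mathfrak{g}^s$, the orbit $G\cdot w$ is Zariski closed'' --- is not quite a citable theorem in that form: what~\cite[Th.~1(1)]{McN} actually gives is closedness of the $G$-orbit of the inclusion $\iota:\mathfrak{l}\hookrightarrow\mathfrak{g}$ inside the variety of Lie algebra homomorphisms $\Hom(\mathfrak{l},\mathfrak{g})$, not of a tuple in $\mathfrak{g}^{k+l}$, and \cite[Th.~3.7]{SCR} is the group-tuple statement (it is what the paper uses in Lem.~\ref{conj orbit lemma} for tuples of group elements). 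The paper supplies the short translation you would need: it introduces the $G$-equivariant linear evaluation map $E:\Hom(\mathfrak{l}_{\ol{\F_p}},\mathfrak{g}_{\ol{\F_p}})\to\mathfrak{g}_{\ol{\F_p}}^{k+l}$, $\psi\mapsto(\psi(x_1),\ldots,\psi(y_l))$; because the tuple generates $\mathfrak{l}$ as a Lie algebra, $E$ is injective on the closed subvariety $Z$ of Lie algebra homomorphisms and $E(Z)$ is closed in $\mathfrak{g}_{\ol{\F_p}}^{k+l}$, so the closed orbit $G\cdot\iota\subset Z$ maps onto the closed subset $E(G\cdot\iota)=G\cdot v$ of $\mathfrak{g}_{\ol{\F_p}}^{k+l}$. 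Your preliminary verifications (that $v\in\mathfrak{l}^{k+l}$ and that $v$ generates $\mathfrak{l}$ as a Lie algebra, via Nori theory) and the descent of closedness from $\ol{\F_p}$ to $\F_p$ are both correct and match what the paper takes for granted; only this evaluation-map step is needed to complete your argument.
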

\begin{proof}Let~$F=\Hom(\mathfrak{l}_{\ol{\F_p}},\mathfrak{g}_{\ol{\F_p}})$ be the vector space of ${\ol{\F_p}}$-linear maps. Let~$Z\leq F$ be the subset of Lie algebra homomorphisms. Then~$Z$ is a Zariski closed subvariety.
Let~$E:F\to \mathfrak{g}_{\ol{\F_p}}^{k+l}$ be the evaluation map~$\phi\mapsto (\phi(x_1),\ldots,\phi(x_k),\phi(y_1),\ldots,\phi(y_l))$.

Since~$\mathfrak{l}$ is generated by~$x_1,\ldots,x_k,y_1,\ldots,y_l$, the map~$E$ is injective on~$Z$. This is also a linear map.
It thus identifies the vector subspace generated by~$Z$ with a vector subspace of~$\mathfrak{g}_{\ol{\F_p}}^{k+l}$,
and identifies~$Z$ with a closed subvariety~$Z'\subseteq\mathfrak{g}_{\ol{\F_p}}^{k+l}$.

Let~$G\cdot \phi$ be the conjugacy class of the inclusion~$\phi:\mathfrak{w}\to\mathfrak{g}$. According to \cite[Th.~1(1)]{McN}, this is a closed subvariety in~$L$. As it is contained in~$Z$, this is a closed subvariety of~$Z$. Its image in~$Z'$
is thus a closed subvariety of~$Z'$. This image is~$E(G\cdot \phi)=G\cdot (x_1,\ldots,x_k,y_1,\ldots,y_l)$. Since~$Z'$ is closed in~$\mathfrak{g}_{\ol{\F_p}}^{k+l}$, the conjugacy class~$G\cdot (x_1,\ldots,x_k,y_1,\ldots,y_l)$ is closed in~$\mathfrak{g}_{\ol{\F_p}}^{k+l}$.
\end{proof}

\section{Consequences of Uniform integral Tate property for $\ell$-independence}\label{sec:AppB}

Throughout App.~\ref{sec:AppB}, we fix reductive~$\Q$-algebraic groups~$M\leq G\leq GL(n)$.

The following is proved in~\S\ref{AppB:sec:proof}. 

\begin{theorem}\label{appB:thm}
Let~$U\leq M(\widehat{\Z})$ be a compact subgroup satisfying Def~.\ref{defi:Tate}.
Then
\begin{equation}\label{B1eq}
\exists e\in \Z_{\geq 1}, \forall u\in U, u^e\in \left(\prod_p U\cap M^{der}(\Z_p)\right)\cdot \left(U\cap Z(M)(\widehat{\Z})\right).
\end{equation}
\end{theorem}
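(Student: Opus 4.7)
The plan is to work prime by prime. For each prime $p$, set $U_p := U \cap M(\Z_p)$, let $H_p \leq M_{\Q_p}$ denote the Zariski closure of $U_p$ with identity component $H_p^0$, and set $U_p^0 := U_p \cap H_p^0(\Q_p)$. By Proposition~\ref{prop:Up0 bounded index}, the index $[U_p : U_p^0]$ is bounded by some $D$ independent of $p$, so it suffices to prove the conclusion for $u \in U_p^0$ and absorb a factor $D$ into the final exponent.

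The uniform integral Tate property forces $H_p^0$ to be reductive, yielding an almost direct product decomposition $H_p^0 = S_p \cdot T_p$ with $S_p = (H_p^0)^{\mathrm{der}}$ semisimple and $T_p = Z(H_p^0)^0$ a central torus. The centralizer condition~\eqref{defi:tate eq1} together with the inclusion $Z(M)^0_{\Q_p} \leq H_p^0$ from Proposition~\ref{prop:Up0 bounded index} identifies $T_p = Z(M)^0_{\Q_p}$ and places $S_p \leq M^{\mathrm{der}}_{\Q_p}$. Let $F := M^{\mathrm{der}} \cap Z(M)$, a finite group scheme of order $f$ independent of $p$. For $u \in H_p^0(\ol{\Q_p})$, the decomposition $u = sz$ with $s \in S_p$, $z \in T_p$ is unique up to $S_p \cap T_p \subseteq F$, so $s^f \in M^{\mathrm{der}}(\Q_p)$ and $z^f \in Z(M)(\Q_p)$ are canonically defined and Galois-invariant, giving $u^f = s^f \cdot z^f$. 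For all primes outside a finite set depending only on $M$, good reduction of $F$ and of the isogeny $M^{\mathrm{der}} \times Z(M) \to M$, together with $p \nmid f$, forces $s^f \in M^{\mathrm{der}}(\Z_p)$ and $z^f \in Z(M)(\Z_p)$ whenever $u \in M(\Z_p)$; the finitely many remaining primes are absorbed by a further uniformly bounded power using the compactness of $U$.

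The main obstacle is to upgrade this canonical decomposition to one with both factors in $U$, at the cost of a further uniformly bounded power. Writing $W_p := (U_p \cap M^{\mathrm{der}}(\Z_p)) \cdot (U_p \cap Z(M)(\Z_p))$, the task reduces to bounding the exponent of the abelian quotient $U_p^0/W_p$ uniformly in $p$: once $(u^f)^{e''} \in W_p$, both factors automatically lie in $U$ via the identity $s = u^{fe''} z^{-1}$. Since $\mathrm{Lie}(U_p^0) = \mathrm{Lie}(H_p^0) \supseteq \mathfrak{z}(\mathfrak{m})_{\Q_p}$, the group $U_p$ contains an open subgroup of $Z(M)^0(\Q_p)$, so $W_p$ is open in $U_p^0$; the crucial uniformity is extracted from the residual Tate conditions~\eqref{defi:tate eq 2}--\eqref{defi:tate eq 2.2} by reduction modulo $p$ and controlling the image of $U(p)$ in $M^{\mathrm{ab}}(\F_p)$ independently of $p$, parallel to arguments in Section~\ref{sec:bounds}. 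Setting $e := D \cdot f \cdot e''$ (with a further factor absorbing the finitely many bad primes) yields the conclusion.

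The hard part is this last step: a naive Lie-theoretic argument gives only a $p$-dependent bound on the exponent of $U_p^0/W_p$ (through the depth to which $\exp$ maps $p^n \mathfrak{z}(\mathfrak{m})_{\Z_p}$ into $U_p$), and the uniform integral Tate hypothesis is precisely designed to provide the necessary $p$-independent control through its $\F_p$-component.
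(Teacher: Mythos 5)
Your proposal works prime by prime with $U_p := U \cap M(\Z_p)$ and aims to show $u^e \in W_p := (U_p \cap M^{der}(\Z_p)) \cdot (U_p \cap Z(M)(\Z_p))$ for $u \in U_p^0$. The gap is that this only addresses elements of the subgroup $\prod_p U_p$, which is in general a \emph{proper} subgroup of $U$: for a general $u = (u_p)_p \in U$ the individual component $u_p$ need not lie in $U_p$ (it lies only in the image of $U$ under the projection to the $p$-th factor, which is larger). So even granting the uniform bound on $[U_p^0 : W_p]$, the theorem for arbitrary $u \in U$ does not follow; your approach does not explain why the factor you would produce, namely $v := (v_p)_p$ with each $v_p \in M^{der}(\Z_p)$, should lie in $U$. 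The asymmetry in the statement (a product over primes for the derived part, but a single $U \cap Z(M)(\widehat{\Z})$ for the central part) already signals that the nontrivial content is the $\ell$-independence of the derived part, which a purely per-prime analysis cannot yield.

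The paper's proof addresses this globally in two steps. First, \eqref{factors as der times z} shows the index $[U : (U \cap M^{der}(\widehat{\Z})) \cdot (U \cap Z(M)(\widehat{\Z}))]$ is finite, by applying Goursat's lemma to the image of $U$ in $M^{ab}(\A_f) \times M^{ad}(\A_f)$ together with the claims \eqref{preuveB1claim finite} and \eqref{preuveB1claim equal} that the image of $U$ in $M^{ad}(\Q_p)$ has abelian quotients of uniformly bounded order, trivial for $p \gg 0$. This gives the adelic splitting but not the per-prime one. Second, \eqref{factors by p} gives, for $u' \in U \cap M^{der}(\widehat{\Z})$, a bounded $e$ with $(u')^e \in \prod_p (U \cap M^{der}(\Z_p))$: this $\ell$-independence relies on the factorisation criterion \eqref{monogene is product}--\eqref{Gammabar is product} for elements of $GL(n,\widehat{\Z})$ reducing to $p$-unipotents modulo each $p$, combined with the fact (Prop.~\ref{app:cor:ab quot}, applied via \eqref{leq Vpdagger to abelian}) that the Nori groups $V(p)^\dagger$ have no abelian quotients for $p \gg 0$, so they cannot participate in cross-prime correlations. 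Your local decomposition $H_p^0 = S_p \cdot T_p$ and the goal of uniformly bounding $[U_p^0:W_p]$ are reasonable local observations, but the key ingredients you are missing are the global Goursat argument and the $p$-unipotence criterion for $\ell$-independence.
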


We deduce the following.
\begin{corollary}\label{AppB:main cor}
 Let~$U$ be as in Theorem~\ref{appB:thm}, let~$W$ be the image of~$U$ by~$ab_M:M(\A_f)\to M^{ab}(\A_f)$ with~$\A_f=\widehat{\Z}\tens\Q$, let~$W_p:=W\cap M^{ab}(\Q_p)$ and assume that
\begin{equation}\label{hypothese Cor B2}
\exists f, \forall w\in W,w^f\in \prod_p W_p.
\end{equation}
Then
\(
\exists e\in \Z_{\geq 1}, \forall u\in U, u^e\in \prod_p \left(U\cap M^{der}(\Z_p)\right)\cdot \left(U\cap Z(M)({\Z_p})\right).
\)
\end{corollary}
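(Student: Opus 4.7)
The plan is to bootstrap from Theorem~\ref{appB:thm} by leveraging the abelian nature of $Z(M)$ together with the finiteness of the isogeny kernel $F := Z(M) \cap M^{\der}$ (which maps isogenously via $\ab_M$ to $M^{ab}$). First I would apply Theorem~\ref{appB:thm} to $u \in U$ to obtain an exponent $e_0$ and a decomposition
\[
u^{e_0} = a \cdot z, \qquad a \in \prod_p (U \cap M^{\der}(\Z_p)), \ z \in U \cap Z(M)(\widehat{\Z}).
\]
The ``derived'' factor $a$ already has the desired form (with trivial central components at each prime), so the entire problem reduces to showing that a power of $z$ lies in $\prod_p (U \cap Z(M)(\Z_p))$.

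To handle $z$, set $w := \ab_M(z) \in W$. By the hypothesis~\eqref{hypothese Cor B2} we may write $w^f = \prod_p w_p$ as an (essentially finite) product with $w_p \in W_p$. Each $w_p$ lifts to some $v_p \in U$ with $\ab_M(v_p) = w_p$; since $w_p$ is supported only at $p$, for $q \neq p$ we have $(v_p)_q \in M^{\der}(\Z_q)$, and for almost all $p$ we may take $v_p = 1$. Applying Theorem~\ref{appB:thm} to each $v_p$ yields $v_p^{e_0} = a_p \cdot z_p$ with $z_p \in U \cap Z(M)(\widehat{\Z})$, and the identity $\ab_M(z_p) = w_p^{e_0}$ means the components $(z_p)_q$ for $q \neq p$ lie in $\ker(\ab_M)(\Z_q) \cap Z(M)(\Z_q) = F(\Z_q)$. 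Since $F$ is finite of order dividing some fixed integer $N := |F|$, raising to the $N$-th power kills these off-diagonal components, so $z_p^N$ is supported at $p$ with $p$-component in $Z(M)(\Z_p)$, i.e.\ $z_p^N \in U \cap Z(M)(\Z_p)$.

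Now set $z' := \prod_p z_p$, which is well defined because almost all $z_p$ are trivial. Both $z^{e_0 f}$ and $z'$ lie in the abelian group $Z(M)(\widehat{\Z})$, and they have the same image $w^{e_0 f}$ under $\ab_M$; hence $z^{e_0 f}(z')^{-1} \in F(\widehat{\Z}) = \prod_p F(\Z_p)$, which has exponent dividing $N$. Using commutativity in $Z(M)(\widehat{\Z})$ we conclude
\[
z^{e_0 f N} = (z')^N = \prod_p z_p^N \in \prod_p (U \cap Z(M)(\Z_p)).
\]
Finally, since $z$ is central it commutes with $a$, so $u^{e_0^2 f N} = a^{e_0 f N} \cdot z^{e_0 f N}$, and because the ``derived'' and ``central'' local factors commute prime by prime, the product $\prod_p (U \cap M^{\der}(\Z_p)) \cdot \prod_p (U \cap Z(M)(\Z_p))$ equals $\prod_p [(U \cap M^{\der}(\Z_p)) \cdot (U \cap Z(M)(\Z_p))]$. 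Thus the result holds with $e := e_0^2 f N$.

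The main technical point is verifying that the off-diagonal components of the lifted central elements $z_p$ can be killed uniformly, which is exactly what the finiteness of $F = Z(M) \cap M^{\der}$ gives; the rest of the argument is bookkeeping, exploiting that $Z(M)(\widehat{\Z})$ is abelian and that the ``product'' structure of $\prod_p$ is compatible with the product of derived and central local factors.
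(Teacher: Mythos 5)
Your overall strategy, namely reducing to the central factor $z$ of $u^{e_0}=az$, lifting the hypothesis on $W$ prime by prime, and killing off-diagonal components via the finiteness of $F:=Z(M)\cap M^{\der}$, is sound and is a genuine alternative to the paper's argument. The paper instead constructs, for each $p$, a quasi-section $\psi_p$ of $\phi_p\colon Z(M)(\Z_p)\to M^{ab}(\Q_p)$ satisfying $\psi_p\circ\phi_p=[d]$ with $d=\#Z(M^{\der})(\overline{\Q})$, applies $\psi:=(\psi_p)_p$ to the subgroup inclusion $[ef](W')\leq\prod_p W'_p$, and deduces $[efd](U')\leq\prod_p U'_p$. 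This works at the level of subgroups and sidesteps the convergence question that your element-by-element version runs into.

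As written, though, your proof has a gap in the construction of $z'=\prod_p z_p$, which you justify by asserting that almost all $z_p$ are trivial, relying on the earlier claim that for almost all $p$ one may take $v_p=1$. That claim is false: in \eqref{hypothese Cor B2}, $\prod_p W_p$ is the full product inside $M^{ab}(\widehat{\Z})$, and for a general $w\in W$ the local components $w_p$ of $w^f$ are nontrivial at every prime. Consequently the lifts $v_p\in U$, and hence the $z_p$, need not converge to $1$, so the infinite product $\prod_p z_p$ is not a priori convergent. The gap is repairable within your framework: skip $z'$ and work directly with the tuple $\xi:=(z_p^N)_p$, which is a well-defined element of $\prod_p(U\cap Z(M)(\Z_p))$ precisely because each $z_p^N$ is supported at the single prime $p$. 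Since $ab_M(\xi)=\prod_p w_p^{e_0 N}=w^{e_0 fN}=ab_M(z^{e_0 fN})$, the ratio $z^{e_0fN}\xi^{-1}$ lies in $F(\widehat{\Z})$, which has exponent dividing $N$; raising once more to the $N$-th power gives $z^{e_0fN^2}=\xi^N\in\prod_p(U\cap Z(M)(\Z_p))$, and the corollary follows with $e=e_0^2 fN^2$.
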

\begin{proof}[Proof of Corollary~\ref{AppB:main cor}]
For~$d\in\Z_{\geq1}$ and an abelian group~$A$, let~$[d]:A\to A$ denote the multiplication-by-$d$ map~$a\mapsto d\cdot a:=a+\ldots+a$. For any group homomorphism~$A\to B$ such that~$\ker(\phi)\leq \ker([d])$, we have
\begin{equation}\label{exists psi}
\exists\psi:\phi(A)\to A,~\psi\circ \phi=[d]
\end{equation}
because we can factor~$[d]:A\to \phi(A)\to A$ using 
\[
A\to \phi(A)\simeq A/\ker(\phi)\rightarrow A/\ker([d])\simeq [d](A)\hookrightarrow A.
\]

Let us denote by~$\phi_p:Z(M)(\Z_p)\to M^{ab}(\Q_p)$ and~$\phi=(\phi_p)_p:Z(M)(\widehat{\Z})\to M^{ab}(\Q\tens\widehat{\Z})$ the abelianisation maps. Because~$Z(M)\cap M^{der}$ is a finite algebraic group, we have
\[
\abs{\ker(\phi_p)}= \abs{Z(M^{der})(\Z_p)}\text{ divides }d:=\abs{Z(M^{der})(\ol{\Q})}<+\infty.
\]
We deduce~$[d]\left(\ker(\phi_p)\right)=\{1\}$. By~\eqref{exists psi} there exists~$\psi_p:\phi_p(Z(M)(\Z_p))\to Z(M)(\Z_p)$ such that~$\psi_p\circ \phi_p=[d]$. Thus, with~$\psi:=(\psi_p)_p$, we have~$\psi\circ\phi=[d]$.

The kernel of~$ab_M$ is~$M^{der}(\A_f)$. Thus, with~$U':=U\cap Z(M)(\widehat{\Z})$ and~$W'=\phi(U')$, we have
\[
ab_M\left(\left(\prod_p U\cap M^{der}(\Z_p)\right)\cdot \left(U\cap Z(M)(\widehat{\Z})\right)\right)=ab_M(U')=W'.
\]
Let~$e$ be as in Theorem~\ref{appB:thm}. Then we have~$[e](W)\leq W'$ and thus~$[e](W_p)\leq [e](W)\cap M^{ab}(\Q_p) \leq W'_p:=W'\cap M^{ab}(\Q_p)$. We deduce from~\eqref{hypothese Cor B2} that
\[
[e\cdot f](W')\leq 
[e\cdot f](W)\leq [e]\left(\prod_p W_p\right)\leq \prod _p W'_p\leq W'.
\]
Applying~$\psi$, we deduce
\[
[e\cdot f\cdot d](U')\leq [f\cdot d]\left(\prod_p\psi_p(W'_p)\right)\leq [d](U').
\]
As~$\psi_p(W'_p)\leq \psi(W)\cap Z(M)(\Q_p)\leq U'_p:=U'\cap Z(M)(\Q_p)$, we have
\begin{equation}\label{BCorfactor}
[e\cdot f\cdot d](U')\leq \prod_p U'_p.
\end{equation}

By Theorem~\ref{appB:thm}, for~$u\in U$, we can write~$u^e=(m_p)_p\cdot u'$ with~$u'\in U'$ and~$m_p\in U\cap M^{der}(\Z_p)$. By~\eqref{BCorfactor}, we can write~${u'}^{e\cdot f\cdot d}=(u'_p)_p$ with~$u'_p\in U'_p\leq U\cap Z(M)(\Z_p)$. We conclude
\[
u^{e\cdot e\cdot f\cdot d}=(m_p^f)_p\cdot (u'_p)_p\in \prod_p \left(U\cap M^{der}(\Z_p)\right)\cdot \left(U\cap Z(M)({\Z_p})\right).
\]
The Corollary~\ref{AppB:main cor} follows.
\end{proof}
%

\subsubsection{}\label{p0fact}
 Let us recall the following fact. By~\cite[\S3.9.1, p.\,55]{Tits}  there exists~$p_0(M,G)$ such that for every prime~$p\geq p_0(M,G)$, the induced~$\F_p$-algebraic groups~$M_{\F_p}\leq G_{\F_p}\leq GL(n)_{\F_p}$ are reductive  and that~$Z(M)_{\Z_p}$ is smooth over~$\Z_p$ and~$Z(M)_{\F_p}$ is a torus.

\subsection{Some properties over~$\F_p$}
\begin{definition}\label{def: Tate Fp}
For a subgroup~$H\leq M(\F_p)$ and~$C\in\Z_{\geq0}$,
let~$Tate(H,C)$ be the following property: for every subgroup~$H'\leq H$ such that~$[H:H']\leq C$, 
\begin{enumerate}[label={(T\arabic*)}]
\item the action of~$H'\leq GL(n,\F_p)$ on~${\F_p}^n$ is semisimple \label{Tate Fp semisimplicity}
\label{AppBT1}
\item and~$Z_{G_{\F_p}}(H')=Z_{G_{\F_p}}(M_{\F_p})$.
\label{AppBT2}
\end{enumerate}
\end{definition}
By \eqref{rem4} of \S\ref{rem:Tate}, we have~\ref{AppBT1} if and only if the action of~$H'$ on~$\ol{\F_p}^n$ is semisimple.
By~\cite[Th.\,5.4]{SCR}, there exists~$n(M,G)$ such that, for~$p\geq n(M,G)$, we have~\ref{AppBT1} $\Leftrightarrow$~$H'$ is~$G_{\ol{\F_p}}$-cr~$\Leftrightarrow$~$H'$ is~$M_{\ol{\F_p}}$-cr, in the sense of~\S\ref{AppA:def:cr}. If~$p\geq \max\{p_0(M,G);n(M,G)\}$, then, by~\cite[Th.\,5.4]{SCR}, any~$H'$ satisfying~\ref{AppBT1} and~\ref{AppBT2} is not contained in a proper parabolic subgroup~$P\leq M_{\ol{\F_p}}$.

\begin{lemma}\label{TateFp monter}
Let~$p\geq p_1(M,G):=\max\{p_0(M,G);n(M,G)\}$, let~$H\leq L\leq M(\F_p)$ be subgroups and let~$C\in\Z_{\geq0}$ be such that the property~$Tate(H,C)$ holds. Then the property~$Tate(L,C)$ holds.
\end{lemma}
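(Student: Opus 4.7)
My plan is to transfer the Tate property from $H$ up to $L$ by restricting along intersections, using the sandwich structure of centralisers together with the $G$-complete reducibility results of Lemma~\ref{lem:Gcr} and Corollary~\ref{cor:Gcr} to propagate the semisimplicity condition.

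Given an arbitrary subgroup $L' \leq L$ with $[L : L'] \leq C$, I will first form the pullback $H' := L' \cap H \leq H$. The natural map $H/H' \to L/L'$ sending $hH' \mapsto hL'$ is injective, which immediately yields $[H : H'] \leq [L : L'] \leq C$. Thus the hypothesis $Tate(H, C)$ applies to $H'$, providing us with properties \ref{AppBT1} and \ref{AppBT2} for $H'$.

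For the centraliser property \ref{AppBT2} for $L'$, I will use a two-sided squeeze: on the one hand, $L' \subseteq M(\F_p)$ gives $Z_{G_{\F_p}}(M_{\F_p}) \leq Z_{G_{\F_p}}(L')$; on the other hand, the inclusion $H' \leq L'$ together with \ref{AppBT2} for $H'$ yields
\[
Z_{G_{\F_p}}(L') \leq Z_{G_{\F_p}}(H') = Z_{G_{\F_p}}(M_{\F_p}).
\]
Equality follows, with no hypothesis on $p$ beyond what is needed for $H'$.

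The semisimplicity property \ref{AppBT1} for $L'$ is the more delicate step and will be the main obstacle, since we cannot directly push semisimplicity up through a supergroup without a geometric input. The idea is to translate into the language of $G$-complete reducibility: under the standing assumption $p \geq n(M,G)$, by \cite[Th.\,5.4]{SCR} the property \ref{AppBT1} is equivalent to $G_{\ol{\F_p}}$-complete reducibility, and under $p \geq p_0(M,G)$ the group $M_{\F_p}$ is reductive, hence $M_{\ol{\F_p}}$ is $G_{\ol{\F_p}}$-cr by \cite[Th.\,5.3]{SCR}. Viewing $H'$ and $L'$ as zero-dimensional $\ol{\F_p}$-algebraic subgroups of $M_{\ol{\F_p}}$, I will then apply Corollary~\ref{cor:Gcr} to the chain $H' \leq L' \leq M_{\ol{\F_p}}$: the hypotheses that $H'$ and $M_{\ol{\F_p}}$ are $G_{\ol{\F_p}}$-cr and that $Z_{G_{\ol{\F_p}}}(H') = Z_{G_{\ol{\F_p}}}(M_{\ol{\F_p}})$ (the latter obtained by base change from the centraliser equality established above) yield that $L'$ is $G_{\ol{\F_p}}$-cr. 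Translating back via \cite[Th.\,5.4]{SCR} gives \ref{AppBT1} for $L'$, and since $L'$ was arbitrary, $Tate(L, C)$ holds.
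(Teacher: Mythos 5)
Your proof is correct and follows essentially the same route as the paper: pull $L'$ back to $H'=H\cap L'$, note the index is still bounded by $C$, get property~\ref{AppBT2} for $L'$ by the sandwich $Z_{G_{\F_p}}(M_{\F_p})\leq Z_{G_{\F_p}}(L')\leq Z_{G_{\F_p}}(H')=Z_{G_{\F_p}}(M_{\F_p})$, and then obtain property~\ref{AppBT1} by translating into $G_{\ol{\F_p}}$-complete reducibility and invoking Lemma~\ref{lem:Gcr} and Corollary~\ref{cor:Gcr} for the chain $H'\leq L'\leq M_{\ol{\F_p}}\leq G_{\ol{\F_p}}$. You are a bit more explicit than the paper in spelling out why $M_{\ol{\F_p}}$ is $G_{\ol{\F_p}}$-cr and that the centraliser equality over $\F_p$ transfers to $\ol{\F_p}$, both of which are left implicit in the paper's version.
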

\begin{proof}We prove~$Tate(L,C)$, assuming~$p\geq p_1(M,G)$ and~$Tate(H,C)$. Let~$L'\leq L$ be such that~$[L:L']\leq C$.
Then~$H':=H\cap L'$ verifies~$[H:H']\leq C$. As~$H'\leq L'\leq M(\ol{\F_p})$, we have~$Z_{G_{\F_p}}(H')\geq Z_{G_{\F_p}}(L')\geq Z_{G_{\F_p}}(M_{\F_p})$.
Using~\ref{AppBT2}, we deduce~$Z_{G_{\F_p}}(M_{\F_p})=Z_{G_{\F_p}}(L')$. This proves~\ref{AppBT2} for~$L'$. Note that~$G_{\F_p}$ is reductive since~$p\geq p_0(M,G)$
and that~$H'$ is~$G_{\ol{\F_p}}$-cr since~$p\geq n(M,G)$. We may apply Lemma~\ref{lem:Gcr} for~$H'\leq L'\leq M_{\ol{\F_p}}\leq G_{\ol{\F_p}}$. By~Cor~\ref{cor:Gcr}, the group~$L$ is~$G_{\ol{\F_p}}$-cr. As~$p\geq n(M,G)$, this proves~\ref{AppBT1} for~$L'$. As~$L'$ is arbitrary, this proves~$Tate(L,C)$.
\end{proof}

\begin{lemma} \label{lem:B1}
There exist~$a_0,a_1:\Z_{\geq1}\to \Z_{\geq1}$ such that the following holds.
Let~$p\geq a_1(n)$ and~$\Gamma\leq M(\F_p)\leq GL(n,\F_p)$ be such that property~$Tate(\Gamma,a_0(n))$ holds,
and define~$\Gamma^\dagger$ as in \S\ref{rem:Tate}\eqref{rem2}.

Then~$\Lambda:=\Gamma^\dagger\cdot (\Gamma\cap Z(M)(\F_p))$ satisfies
\[
[\Gamma:\Lambda]\leq a_0(n)
\]
and~$\Gamma^\dagger=H(\F_p)^\dagger$ for a semisimple~$\F_p$-algebraic group~$H$.
\end{lemma}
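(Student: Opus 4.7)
The plan is to treat the two claims of the lemma separately: first produce a semisimple algebraic group $H$ with $\Gamma^\dagger = H(\F_p)^\dagger$, then bound $[\Gamma : \Lambda]$ by passing to the adjoint quotient and citing Lemma~\ref{lem:4.1}.

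For the first claim, I would take $a_1(n)$ large enough that $p$ is above the Nori threshold and that $M_{\F_p}$ is reductive (\S\ref{p0fact}). Remark~\ref{rem:Tate}\,\eqref{rem2} then yields an algebraic subgroup $H_0 \leq M_{\F_p}$ with $\Gamma^\dagger = H_0(\F_p)^\dagger$. The crux is establishing reductivity of $H_0$ without any a priori control on $[\Gamma:\Gamma^\dagger]$. The idea is to apply~\ref{AppBT1} of $Tate(\Gamma, a_0(n))$ to $\Gamma$ itself (trivially of index $1 \leq a_0(n)$), giving semisimplicity of the $\F_p$-linear $\Gamma$-representation on $\F_p^n$, and then to invoke Clifford's theorem—applicable since $\Gamma^\dagger$ is characteristic, hence normal, in the finite group $\Gamma$—to transfer the semisimplicity to the $\Gamma^\dagger$-representation. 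Reductivity of $H_0$ then follows from~\cite[Th.~5.3]{SCR}. To upgrade reductivity to semisimplicity, decompose $H_0 = H_0^{\der} \cdot Z(H_0)^0$; the central torus $Z(H_0)^0$ has $\F_p$-points of order prime to $p$, so every unipotent element of $H_0(\F_p)$ automatically lies in $H_0^{\der}(\F_p)$, whence $H_0(\F_p)^\dagger = H_0^{\der}(\F_p)^\dagger$ and one can take $H := H_0^{\der}$.

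For the second claim, I would exploit the fact that $\ker(\ad_M : M \to M^{\ad}) = Z(M)$. The first isomorphism theorem applied to the epimorphism $\ad_M : \Gamma \twoheadrightarrow \ad_M(\Gamma)$ gives a canonical isomorphism
\[
\Gamma \, / \, \bigl( \Gamma^\dagger \cdot (\Gamma \cap Z(M)(\F_p)) \bigr) \;\simeq\; \ad_M(\Gamma) \, / \, \ad_M(\Gamma^\dagger).
\]
Corollary~\ref{Sylow} identifies the right-hand denominator with $(\ad_M(\Gamma))^\dagger$, so the problem reduces to bounding $[\ad_M(\Gamma):(\ad_M(\Gamma))^\dagger]$. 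This is exactly the content of Lemma~\ref{lem:4.1}: its proof uses only Jordan's theorem (to reduce to abelian quotient) followed by Lemma~\ref{lem:4.2}, both of which invoke the Tate hypothesis only on subgroups of bounded index. The same argument applies in our setting provided $a_0(n)$ is taken large enough to dominate the implicit indices.

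Concretely, I would set $a_1(n)$ to be the maximum of the Nori threshold, $p_0(M,G)$, the $n(M,G)$ of \cite[Th.~5.4]{SCR}, and the $m(n)$ of Lemma~\ref{lem:4.2}, and take $a_0(n) := c(n)$ from Lemma~\ref{lem:4.1}, enlarged so that every Tate invocation in the proofs of Lemmas~\ref{lem:4.1} and~\ref{lem:4.2} is licit. The main obstacle in the plan is the absence of any direct bound on $[\Gamma:\Gamma^\dagger]$ at the outset—without which one cannot naively apply the Tate hypothesis to $\Gamma^\dagger$ to obtain reductivity of the Nori group; the passage via Clifford's theorem is precisely what bypasses this cleanly.
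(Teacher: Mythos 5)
Your proposal is correct but takes a genuinely different route from the paper in both parts, and the second part has a non-trivial dependence you should flag more prominently. For the first claim, the Clifford-theorem detour is unnecessary: property~\ref{AppBT1} applied to $\Gamma$ itself (index $1$) already gives, via~\cite[Th.~5.3]{SCR}, that the Nori group $S$ of $\Gamma^\dagger$ is reductive, since the Nori group of $\Gamma$ is by definition the Nori group of $\Gamma^\dagger$ and \cite[Th.~5.3]{SCR} is phrased in terms of the $\Gamma$-action; and since $S$ is by construction generated by connected unipotent subgroups, $S^{ab}$ is simultaneously a torus and unipotent, hence trivial, so $S$ is semisimple and one can take $H=S$ without passing to $H_0^{\der}$. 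For the second claim, your reduction $\Gamma/\Lambda \simeq \ad_M(\Gamma)/\ad_M(\Gamma)^\dagger$ via the first isomorphism theorem and Cor.~\ref{Sylow} is correct; but Lemma~\ref{lem:4.1} cannot be cited as a black box, since it is stated for the Galois image $U(p)$ under the full uniform hypothesis of Def.~\ref{defi:Tate}, not for abstract $\Gamma$ with only $Tate(\Gamma,a_0(n))$. You acknowledge this and propose to re-run its proof (hence also those of Lemmas~\ref{lem:centralisateur adjoint} and~\ref{lem:4.2}) locally with $a_0(n)$ dominating all index bounds; that is a valid but somewhat heavy porting exercise. The paper instead gives a self-contained and shorter argument that avoids the Goursat decomposition of Lemma~\ref{lem:4.2}: it considers $\alpha:\Gamma\to\Aut(S)(\F_p)$, bounds $[\Gamma:\alpha^{-1}(S^{\ad}(\F_p)^\dagger)]$ uniformly by $\#\Aut(S)(\F_p)/\ad(S(\F_p)^\dagger)$, identifies $\alpha^{-1}(S^{\ad}(\F_p)^\dagger)=\Gamma^\dagger\cdot\ker(\alpha)$, shows $\ker(\alpha)^\dagger=1$ (via $p\nmid \#Z(S)$) so that Jordan produces an abelian $K\leq\ker(\alpha)$ of bounded index, and then applies~\ref{AppBT2} to $\Gamma^\dagger\cdot K$ to force $K\leq Z(M)(\F_p)$. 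The $\Aut(S)$ route thus packages the inner/outer automorphism bookkeeping in one line and applies the Tate hypothesis exactly once, which is precisely why $a_0(n)$ can be pinned down explicitly as $c_5(n)\cdot d(n)$ without chasing through the nested constants of Lemma~\ref{lem:4.2}.
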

The proof will use the the following facts.
\begin{itemize}
\item By~\cite[No\,137, \S1 (p.\,38), cf.~Rem.\,38.3 (p.\,667) and No\,164]{S4}, there exists~$c_2(n)$ such that for every prime~$p$, there are at most~$c_2(n)$ $GL(n,\ol{\F_p})$-conjugacy classes of semisimple subgroups~$S\leq GL(n)_{\ol{\F_p}}$, and they "come from characteristic~$0$" for~$p\gg0$.
\item In particular, there exists~$c_3(n)$ such that, 
\begin{equation}\label{p nmid Z}
\forall p\geq c_3(n),~p\nmid \#Z(S)(\ol{\F_p}).
\end{equation}
\end{itemize}
By Lang's Theorem \cite[Prop.\,6.3, p.\,290]{PR},~$\abs{ S^{ad}(\F_p)/ad(S(\F_p))}=\abs{Z(S)({\F_p})}$. For~$p\geq c_3(n)$, this implies that~$p\nmid \# S^{ad}(\F_p)/ad(S(\F_p))$. By~Cor.~\ref{Sylow} this implies that
\[
\forall p\geq c_3(n),
ad(S(\F_p)^{\dagger})=ad(S(\F_p))^{\dagger}=S^{ad}(\F_p)^\dagger.
\]
\begin{itemize}
\item We have~$Aut(S)^0=S^{ad}$ and~$\pi_0(Aut(S))$ is finite. In particular, there exists~$c_4(n)$ such that we have~$\#\pi_0(Aut(S))\leq c_4(n)$.
\item We have~$\#S^{ad}(\F_p)/S^{ad}(\F_p)^\dagger\leq 2^n$. (\cite[(3.6(v)), p\,270]{N})
\end{itemize}
\begin{proof}
According to~\cite[Th. B]{N}, there exists~$c_1(n)$ such that for~$p\geq c_1(n)$, we have~$\Gamma^\dagger=S(\F_p)^\dagger$ where~$S\leq GL(n,\F_p)$ is a~$\F_p$-algebraic group generated by connected unipotent subgroups. From Nori's construction, the algebraic group~$S$ is invariant under the adjoint action of~$\Gamma$. We denote this action by
\[
\alpha:\Gamma\to Aut(S)(\F_p).
\]
By definition~$\ker(\alpha)$ is the centraliser~$Z_{\Gamma}(S)$. 

By~\cite[Th. 5.3]{SCR} and property~\ref{AppBT1} for~$H'=\Gamma$, the algebraic group~$S$ is semisimple.

From now on, let us assume~$p\geq a_1(n):=\max\{c_1(n);c_3(n)\}$. The facts preceding the proof imply that, for~$\Lambda':=\stackrel{-1}{\alpha}(S^{ad}(\F_p)^\dagger)$, we have
\[
[\Gamma:\Lambda']\leq 
\#Aut(S)(\F_p)/ad(S(\F_p)^\dagger)\leq c_5(n):=c_4(n)\cdot 2^n.
\]
By construction~$\ker(\alpha)\leq \Lambda'$ and~$S(\F_p)^\dagger\leq \Lambda'$. Note that~$\Gamma^\dagger=S(\F_p)^\dagger\to ad(S(\F_p)^\dagger)=S^{ad}(\F_p)^\dagger$ is surjective: for~$\lambda\in \Lambda'$, there is~$s\in \Gamma^\dagger$ such that~$\alpha(s)=\alpha(\lambda)$. Equivalently~$\lambda\in\ s\cdot \ker(\alpha)$. This proves
\[
\Lambda'=\Gamma^\dagger\cdot \ker(\alpha).
\]
From~$\ker(\alpha)\leq \Lambda'\leq \Gamma$ we deduce~$\ker(\alpha)^\dagger\leq \Gamma^\dagger\leq S$.
Thus~$\ker(\alpha)^\dagger\leq \ker(\alpha)\cap S\leq Z_\Gamma(S)\cap S\leq Z(S)$. By~\eqref{p nmid Z} we have~$\ker(\alpha)^\dagger=\{1\}$. Let~$n\mapsto d(n)$ be as in~\cite[\S4]{SCrit}. By Jordan's theorem~\cite[\S5.2.2]{SCrit},
there exists an abelian subgroup~$K\leq \ker(\alpha)$ such that~$[\ker(\alpha):K]\leq d(n)$. Thus~$\Lambda'':=\Gamma^\dagger\cdot K$ satisfies~$[\Gamma:\Lambda'']\leq a_0(n):=c_5(n)\cdot d(n)$.

Let us assume~$Tate(\Gamma,a_0(n))$. By~\ref{AppBT2} for~$H':=\Lambda''$, we have
\[
Z(M_{\F_p})\leq Z_{M_{\F_p}}(\Lambda'')= Z_{G_{\F_p}}(\Lambda'')\cap M_{\F_p}=Z_{G_{\F_p}}(M_{\F_p})\cap M_{\F_p}=Z(M_{\F_p}).
\]
As~$K$ is abelian and~$K\leq \ker(\alpha)$, we have
\[
K\leq Z_{M_{\F_p}}(K)\cdot Z_{M_{\F_p}}(S)=Z_{M_{\F_p}}(K\cdot S)\leq  Z_{M_{\F_p}}(K\cdot \Gamma^\dagger)=Z_{M_{\F_p}}(\Lambda'')=Z(M_{\F_p}).
\]
Thus~$\Lambda''=\Gamma^\dagger\cdot K\leq \Lambda:=\Gamma^\dagger\cdot (\Gamma\cap Z(M))$. Thus~$[\Gamma:\Lambda]\leq [\Gamma:\Lambda'']\leq a_0(n)$.
\end{proof}
%
\subsection{Independence properties over~$\prod_p\F_p$}\label{appB:sec:R}
Let~$R:=\prod_p\F_p$, and let~$W\leq M(R)$ be a closed subgroup (for the Tychonov product topology). We denote
by~$V(p)\leq M(\F_p)$ the image of~$U$ by the projection~$M(R)\to M(\F_p)$, and we define~$V=\prod_p V(p)$ and~$W(p):=M(\F_p)\cap W\leq V(p)$.

Let~$Tate(W)$ be the following property: there exists~$m=m_W:\Z_{\geq1}\to\Z_{\geq1}$ such that for every~$c\in\Z_{\geq1}$ and every~$p\geq m(c)$ we have~$Tate(W(p),c)$. Then~$Tate(W)$ implies $Tate(V)$ by Lemma~\ref{TateFp monter}.

\begin{proposition}\label{app:prop R}
Assume~$Tate(V)$ as defined above.

Then there exists~$p(W)\in\Z_{\geq1}$ such that,
\begin{equation}\label{app:eq:R:1}
\forall p\geq p(W), V(p)^\dagger\leq W
\end{equation}
and
\begin{equation}\label{eq:e}
\exists e\in\Z_{\geq 1}, \forall w\in W, w^e\in Y:= \left(\prod_{p\geq p(W)}V(p)^\dagger\right)\cdot (W\cap Z(M)(R)).
\end{equation}
\end{proposition}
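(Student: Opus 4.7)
The strategy is to establish claim~(1)---the inclusion $V(p)^\dagger\le W$ for $p\ge p(W)$---and then derive claim~(2) as a direct consequence via the bounded-index structure supplied by Lemma~\ref{lem:B1}.

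For~(1), fix a prime $p$ (the threshold $p(W)$ to be specified below). Let $\overline{K}_p\le V(p)$ denote the image under the projection $W\twoheadrightarrow V(p)$ of $W\cap M(\F_p)$, viewing $M(\F_p)$ as the factor of $M(R)$ at $p$. The desired inclusion $V(p)^\dagger\le W$ (at slot $p$) is equivalent to $V(p)^\dagger\subseteq\overline{K}_p$. Dually, the quotient $V(p)/\overline{K}_p$ embeds closedly into $\prod_{q\ne p}V(q)$ through the natural injection $W/(W\cap M(\F_p))\hookrightarrow\prod_{q\ne p}V(q)$, so it suffices to show that for every $q\ne p$ the induced homomorphism $\phi_q:V(p)^\dagger\to V(q)$ is trivial. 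For $p\ge\max\{a_1(n),\,m(a_0(n))\}$, where $m$ is the function provided by $Tate(V)$, Lemma~\ref{lem:B1} yields $V(p)^\dagger=S_p(\F_p)^\dagger$ for a semisimple $\F_p$-group $S_p$ of complexity bounded in terms of $n$; for $p$ beyond a universal bound in $n$, this group is perfect, so the image $H_q:=\phi_q(V(p)^\dagger)\le V(q)$ is perfect. Moreover, $V(p)^\dagger$ is characteristic in $V(p)$ hence normal in the image of $W$, and chasing through $W\to V(p)/\overline{K}_p\hookrightarrow\prod_{q'\ne p}V(q')$ shows that $H_q$ is normal in $V(q)$.

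The main obstacle is to show $H_q=\{1\}$ for all $q\ne p$ once $p$ is large enough. A first reduction handles "small" $q$ cheaply: any nontrivial perfect quotient of $S_p(\F_p)^\dagger$ has order divisible by $p$, while $|H_q|\le|V(q)|\le|GL(n,\F_q)|\le q^{n^2}$, so $H_q\ne\{1\}$ forces $q\ge p^{1/n^2}$. This dispatches all $q<p^{1/n^2}$. The difficult regime is then $q\ge p^{1/n^2}$, where both $p$ and $q$ are large and the Tate hypothesis applies at both. Here the plan is to invoke Lemma~\ref{lem:B1} symmetrically at $q$, so that the normal subgroup $H_q$ is almost contained in $V(q)^\dagger\cdot(V(q)\cap Z(M)(\F_q))$; a structural argument on cross-characteristic images of perfect finite groups of Lie type---combining Nori's theorem with the constraint that the generators of $H_q$ are $p$-power-order hence, for $p\ne q$, semisimple elements of $V(q)$---should force the image of $H_q$ into the central factor, whose perfectness then yields triviality. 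Reconciling all these thresholds into a single $p(W)$ depending on $n$ and the function $m$ is the technical heart, and this second regime is the step I expect to be the hardest.

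For~(2), granted~(1), the subgroup $\prod_{p\ge p(W)}V(p)^\dagger\le M(R)$ is closed and contained in $W$. Set $e_0:=\mathrm{lcm}\{|V(p)|:p<p(W)\}$ and $e:=e_0\cdot a_0(n)!$. For any $w=(w_p)_p\in W$: at each $p<p(W)$, $w_p^e=1$; at each $p\ge p(W)$, Lemma~\ref{lem:B1} gives $w_p^{a_0(n)!}\in V(p)^\dagger\cdot(V(p)\cap Z(M)(\F_p))$, so writing $w_p^{a_0(n)!}=u_p\cdot z_p'$ with $u_p\in V(p)^\dagger$ and $z_p'\in Z(M)(\F_p)$ and using that these two factors commute (the latter being central in $M_{\F_p}$), raising to the $e_0$-th power yields $w_p^e=v_p\cdot z_p$ with $v_p:=u_p^{e_0}\in V(p)^\dagger$ and $z_p:=(z_p')^{e_0}\in Z(M)(\F_p)$. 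Setting $v:=(v_p)_p\in\prod_{p\ge p(W)}V(p)^\dagger\le W$ (with $v_p=1$ for $p<p(W)$) and $z:=w^e v^{-1}$, the element $z$ lies in $W\cap Z(M)(R)$, so $w^e=v\cdot z\in Y$ as required.
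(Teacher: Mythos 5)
The argument for claim~\eqref{eq:e} is essentially correct and mirrors the paper's bookkeeping: given~\eqref{app:eq:R:1}, you factor $w^e$ coordinatewise into a $V(p)^\dagger$-piece and a central piece using Lemma~\ref{lem:B1} together with the finiteness of $\#GL(n,\F_p)$ for $p<p(W)$.

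The argument for claim~\eqref{app:eq:R:1}, however, has a genuine gap right at the reduction step. You assert that $V(p)/\overline{K}_p$ (with $\overline{K}_p=W(p):=W\cap M(\F_p)$) ``embeds closedly into $\prod_{q\neq p}V(q)$ through the natural injection $W/(W\cap M(\F_p))\hookrightarrow\prod_{q\neq p}V(q)$''. Goursat's Lemma applied to $W\le M(\F_p)\times M(R')$, with $R'=\prod_{q\neq p}\F_q$, gives an isomorphism $V(p)/W(p)\simeq W^{R'}/W_{R'}$, where $W^{R'}=\pi_{R'}(W)$ and $W_{R'}=W\cap M(R')$. The map $W/W(p)\xrightarrow{\ \sim\ }W^{R'}\hookrightarrow\prod_{q\neq p}V(q)$ that you invoke goes the \emph{wrong way}: it is a surjection \emph{onto} $V(p)/W(p)$ (with nontrivial kernel $W_{R'}W(p)/W(p)$ in general), not an injection from it. So $V(p)/W(p)$ is a \emph{subquotient} of $\prod_{q\neq p}V(q)$, not a subgroup, and there is no ``induced homomorphism $\phi_q:V(p)^\dagger\to V(q)$'' to analyse. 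Your whole subsequent case split (small $q$ via orders, large $q$ via the sketchy cross-characteristic argument) is built on this nonexistent family of maps, and you acknowledge the large-$q$ regime is not actually proved. The paper handles the subquotient structure correctly through Prop.~\ref{prop:B7}: a homomorphism $\Gamma=H(\F_p)^\dagger\to S/N$ with $S\le\prod_i GL(n,\F_{\ell_i})$ is trivial, proved by taking a simple quotient $\Sigma$ of the image, pulling back via Lemma~\ref{lem:product subquotient} to realise $\Sigma$ as a subquotient of a single $GL(n,\F_\ell)$, and then playing Serre's $\ell$-independence criterion (\cite{SCrit}) off against the fact that any nontrivial quotient of $\Gamma=\Gamma^\dagger$ has order divisible by $p$. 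Your ``first reduction'' observation that $p\mid\#\Sigma$ forces $\ell>p^{1/n^2}$ is exactly the ingredient the paper uses to ensure $\ell>c(n)$ so that Serre's criterion applies; but the criterion itself (the disjointness of $\Sigma_p$ and $\Sigma_\ell$ for $p\neq\ell$), not an ad hoc analysis of semisimple generators, is what actually closes the argument in the hard regime.
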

Proposition~\ref{prop:B7}, used below, will be proved in~\S\ref{sec:B8}.
\begin{proof}Let us define~$R':=\prod_{\ell\neq p}\F_\ell$. 
We denote by~$W^{R'}$ the image of~$W$ by the projection~$GL(n,R)\to GL(n,R')$ and we define~$W_{R'}:=W\cap GL(n,R')\leq W^{R'}$. 
We apply Goursat's Lemma to~$W\leq GL(n,R)=GL(n,\F_p)\times GL(n,R')$: there exists an isomorphism
\[
V(p)/W(p)\to W^{R'}/W_{R'}.
\]
Let us define~$p(W):=\max\{5;d(n);c(n)^{(n^2)};a_1(n);m_W(a_0(n))\}$ with~$c(n)$ as in Prop.~\ref{prop:B7}. 
For~$p\geq p(W)$, Lemma~\ref{lem:B1} implies that we have~$V(p)^\dagger=H(\F_p)^\dagger$ for a semisimple~$H\leq GL(n)_{\F_p}$, and Prop.~\ref{prop:B7} implies that the morphism~$V(p)^\dagger\to V(p)/W(p)\to W^{R'}/W_{R'}$ is trivial. Thus~$V(p)^\dagger\leq W(p)\leq W$. We proved~\eqref{app:eq:R:1}.

Let us define~$\Lambda(p):=V(p)^\dagger\cdot (Z(M)(\F_p)\cap V(p))$ and~$\Lambda:=\prod_p\Lambda(p)$. For~$p\geq c_0(n):=\max\{m(a_0(n));a_1(n)\}$ we apply Lemma~\ref{lem:B1} for~$\Gamma=V(p)$ and deduce~$[V(p):\Lambda(p)]\leq a_0(n)$. For~$p<c_0(n)$ we have~$[V(p):\Lambda(p)]\leq \#GL(n,\F_p)\leq c_0(n)^{n^2}$. Note that~$\Lambda(p)\leq V(p)$ is a normal subgroup. We deduce, with~$e':=\max\{a_0(n);c_0(n)^{n^2}\}$, that
\(
\forall p, \forall v\in V(p), v^{e'}\in \Lambda(p).
\)
It follows that
\begin{equation}\label{eq:e'}
\forall w\in W, w^{e'}\in W':=W\cap \Lambda.
\end{equation}

Because~$V(p)^\dagger\leq W$ and~$V(p)^\dagger\leq \Lambda(p)\leq \Lambda$, we have~$V(p)^\dagger\leq W'$. As~$\Lambda$ and~$W$ are closed, we deduce that
\begin{equation}\label{eq:Btruc}
\prod_{p\geq p(W)} V(p)^\dagger\leq W'.
\end{equation}

Let~$X:=\ker\left( W'\to \prod_{p\leq p(W)} GL(n,\F_p)\right)$.  Then
\begin{equation}\label{eq:e''}
[W':X]\leq e'':=\# \prod_{p\leq p(W)} GL(n,\F_p)\leq (p(W)!)^{(n^2)}<+\infty.
\end{equation}
Let~$x\in X$ be arbitrary and let~$x=(\lambda_p)_p$ be its coordinates in~$\prod_p\Lambda(p)$. For~$p\geq p(W)$ we can write~$\lambda_p=v_p\cdot z_p$ with~$v_p\in V(p)^\dagger$ and~$z_p\in Z(M)(\F_p)$.
For~$p< p(W)$ we define~$v_p=z_p=1$. Let~$v:=(v_p)_p$ and~$z=(z_p)_p$. By~\eqref{eq:Btruc} we have~$v\in X$. Thus~$z=x\cdot v^{-1}\in X$. Thus~$X\leq Y$. Together with~\eqref{eq:e'} and~\eqref{eq:e''}, this implies~\eqref{eq:e} with~$e:=e'\cdot e''$.
\end{proof}

\subsubsection{}\label{sec:B8}
We will prove~Prop.~\ref{prop:B7},  using arguments from~\cite{SCrit}.
%
%
\begin{proposition}\label{app:cor:ab quot}
Let~$H\leq GL(n)_{\F_p}$ be a semisimple algebraic group.
If~$5\leq p$, then~$\Gamma:=H(\F_p)^\dagger$ has no non-trivial abelian quotient. 
\end{proposition}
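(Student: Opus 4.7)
The plan is to reduce the statement to Steinberg's structure theorem for the abstract groups $G(\F_q)$ attached to simply connected simple algebraic groups. Write $\pi\colon\tilde H\to H$ for the simply connected central isogeny, defined over $\F_p$. Since $\ker(\pi)$ is central, the isogeny restricts to an isomorphism on every root subgroup $U_\alpha\simeq\mathbb{G}_a$ of $\tilde H_{\ol{\F_p}}$, and in particular every unipotent element of $H(\F_p)$ lifts to a unipotent element of $\tilde H(\F_p)$. Consequently $\Gamma=H(\F_p)^\dagger$ is the image of $\tilde H(\F_p)^\dagger$ under $\pi$. A quotient of a perfect group is perfect, so it suffices to prove that $\tilde H(\F_p)^\dagger$ is perfect.

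Next, I would decompose the simply connected semisimple $\F_p$-group $\tilde H$ as a product $\tilde H=\prod_i \operatorname{Res}_{\F_{q_i}/\F_p}G_i$, where each $G_i$ is absolutely simple simply connected over $\F_{q_i}=\F_{p^{f_i}}$ (so in particular $q_i\geq p\geq 5$). Taking $\F_p$-points yields $\tilde H(\F_p)=\prod_i G_i(\F_{q_i})$, and the $\dagger$-construction is compatible with finite direct products, so I may assume we are dealing with a single factor $G(\F_q)$ with $G$ absolutely simple simply connected and $q\geq 5$.

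At this point I invoke the classical theorem of Chevalley--Steinberg (see the references cited in \cite[\S4]{SCrit}, or Steinberg's \emph{Lectures on Chevalley groups}): for a simply connected absolutely simple group $G$ over $\F_q$, the subgroup generated by the (rational points of the) root subgroups coincides with $G(\F_q)$, equals $G(\F_q)^\dagger$, and is perfect, \emph{with the sole exceptions} of the small cases $A_1(2)$, $A_1(3)$, $B_2(2)$, $G_2(2)$, ${}^2A_2(2)$, ${}^2B_2(2)$, ${}^2G_2(3)$, ${}^2F_4(2)$. All of these involve $q\in\{2,3\}$, and since our $q\geq p\geq 5$ none can occur. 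Therefore $G(\F_q)^\dagger=G(\F_q)$ is perfect; multiplying over the factors, $\tilde H(\F_p)^\dagger$ is perfect; projecting through $\pi$, $\Gamma$ is perfect; and a perfect group admits no nontrivial abelian quotient.

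The only genuinely non-formal ingredient is the lifting of unipotents through the central isogeny, which uses that root subgroups are respected by $\pi$; everything else is group-theoretic packaging of Steinberg's theorem. The reason for the bound $p\geq 5$ is precisely to rule out Steinberg's tabulated exceptions, so this bound is sharp for the approach and should not be tightened without reworking the quoted classification.
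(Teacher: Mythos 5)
Your proof is correct, and it takes a genuinely different (though closely related) route from the paper's. The paper works directly with the almost direct product decomposition~$H=H_1\cdots H_c$ into quasi-simple $\F_p$-factors and cites~\cite[Th.\,2.2.7, Prop.\,2.2.11]{GLS}: the first gives that each~$H_i(\F_p)^\dagger$ is quasisimple (hence perfect) for~$p\geq 5$, the second that~$\Gamma$ is the product of the~$H_i(\F_p)^\dagger$. You instead pass to the simply connected cover~$\pi\colon\tilde H\to H$, decompose~$\tilde H$ as a product of Weil restrictions of absolutely simple simply connected groups, and then invoke Steinberg's perfectness theorem for~$G(\F_q)$ with~$q\geq 5$. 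Your reduction buys access to the cleanest form of the classical input (Steinberg for simply connected absolutely simple groups) at the modest cost of the lifting step, which is stated a bit quickly: a unipotent~$u\in H(\F_p)$ need not lie in a single root subgroup, so to conclude that it lifts one should observe that it lies in the unipotent radical of a Borel defined over~$\F_p$ (the variety of Borels through~$u$ is nonempty, connected, and $\F_p$-rational, so has an~$\F_p$-point by Lang), and that~$\pi$ restricts to an isomorphism on unipotent radicals of Borels because~$\ker\pi$ is diagonalizable. With that elaboration the argument is complete. The paper's route stays entirely within~$H$ and leans on a single external reference; yours is slightly more self-contained in spirit but needs the unipotent-lifting lemma. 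Either is a legitimate proof.
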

\begin{proof}
Let us write~$H=H_1\cdot \ldots\cdot H_c$ as an almost direct product of its quasi-simple factors~$H_1,\ldots,H_c$. 
 For~$p\geq5$, \cite[Th.\,2.2.7, p\,38]{GLS} implies that, for~$i=1,\ldots,c$, the group~$\Gamma_i:=H_i(\F_p)^\dagger$ is quasisimple: in particular~$\Gamma_i$ is its own derived subgroup. We deduce that~$\Gamma_1\cdot\ldots\cdot \Gamma_c$ is its own derived subgroup.
By \cite[Prop 2.2.11, p.\,40]{GLS}, we have~$\Gamma=\Gamma_1\cdot\ldots\cdot \Gamma_c$. 
\end{proof}
\begin{lemma}\label{lem:product subquotient}
Let~$G=G_1\times\ldots \times G_c$ be a product of groups, let~$H\leq G$ be a subgroup, and let~$q:H\to \Sigma$ be a simple quotient. Then there exists~$i\in\{1;\ldots;c\}$ and~$H_i\leq G_i$ such that~$\Sigma$ is a quotient of~$H_i$.
\end{lemma}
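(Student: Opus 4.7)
The plan is to proceed by induction on the number~$c$ of factors, with the case~$c=1$ being immediate (take~$H_1 = H$). The inductive step will be reduced to the case~$c=2$, after which one writes~$G = G_1 \times (G_2 \times \cdots \times G_c)$, applies the two-factor case, and uses induction on the second factor if needed.

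For the base case~$c=2$, I would look at the two natural normal subgroups of~$H$,
\[
N_1 := H \cap (\{1\} \times G_2), \qquad N_2 := H \cap (G_1 \times \{1\}),
\]
and push them through~$q$. Since~$\Sigma$ is simple, each of the normal subgroups~$q(N_1),q(N_2)\trianglelefteq\Sigma$ is either trivial or all of~$\Sigma$. If~$q(N_1) = \{1\}$, then~$q$ factors through~$H/N_1 \simeq \pi_1(H) \leq G_1$ (where~$\pi_i:G\to G_i$ denotes the projection), so~$\Sigma$ is a quotient of~$\pi_1(H)$ and we may take~$i=1$, $H_1 := \pi_1(H)$. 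The case~$q(N_2)=\{1\}$ is symmetric.

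The remaining case is~$q(N_1) = q(N_2) = \Sigma$. Here the key observation is that~$N_1 \subseteq \{1\}\times G_2$ and~$N_2 \subseteq G_1 \times \{1\}$, so elements of~$N_1$ commute with elements of~$N_2$ inside~$G$; consequently their images in~$\Sigma$ commute, forcing~$\Sigma$ to commute with itself. (In fact this forces~$\Sigma$ to be simple abelian, i.e.\ cyclic of prime order, but we do not need this.) In any event~$\Sigma$ is already a quotient of~$N_1$, and~$\pi_2$ restricts to an isomorphism on~$N_1$ since~$N_1 \subseteq \{1\}\times G_2$, so~$\Sigma$ is a quotient of the subgroup~$\pi_2(N_1) \leq G_2$ and we take~$i=2$, $H_2 := \pi_2(N_1)$.

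For~$c \geq 3$, I would group~$G = G_1 \times G'$ with~$G' = G_2 \times \cdots \times G_c$ and invoke the two-factor case: either~$\Sigma$ is a quotient of some subgroup of~$G_1$ and we are done, or it is a quotient of some subgroup~$H' \leq G'$, and then the induction hypothesis applied to~$H' \leq G_2 \times \cdots \times G_c$ produces the desired~$i \in \{2,\ldots,c\}$ and~$H_i \leq G_i$. There is no serious obstacle: the argument is a standard Goursat-style splitting, and the only content is the observation that commuting subgroups both mapping onto a simple group force that group to be abelian, allowing us to extract the quotient from a single factor in every case.
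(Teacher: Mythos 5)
Your proof is correct and follows essentially the same Goursat-style argument as the paper: reduce to $c=2$ and use that the images of the normal subgroups $H\cap(G_1\times\{1\})$, $H\cap(\{1\}\times G_2)$ in the simple group $\Sigma$ are each trivial or all of $\Sigma$. The paper is slightly more economical (it considers only one of the two kernels — if $q(H\cap G_1)=\Sigma$ take $i=1$, else factor through $p(H)\leq G_2$ — so there are only two cases and no need for your commutativity/abelian aside), but the underlying idea is identical.
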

\begin{proof}We may assume~$c=2$ by induction. The projection~$p:G_1\times G_2\to G_2$ induces a short exact sequence~$1\to H_1\to H\to H_2\to 1$ where~$H_1:=H\cap G_1$ and~$H_2:=p(H)$. Because~$\Sigma$ is simple, we have~$q(H_1)=\Sigma$ or~$q(H_1)=\{1\}$. In the first case, we may take~$i=1$. In the other case, we can factor~$H\to p(H)\to \Sigma$, and we take~$i=2$.
\end{proof}
\begin{proposition}\label{prop:B7}
Let~$H\leq GL(n)_{\F_p}$ be a semisimple algebraic group, and assume~$p> \max\{d(n);c(n)^{(n^2)};5\}$
with~$c(n)$ be as in~\cite[\S6.3, Th.\,4]{SCrit}. Let~$G=GL(n,\F_{\ell_1})\times\ldots \times GL(n,\F_{\ell_c})$ where~$p\not\in\{\ell_1;\ldots;\ell_c\}$. Let~$S/N$ be a quotient of a subgroup~$S\leq G$ and~$\phi:\Gamma\to S/N$ be an homomorphism (of abstract
 groups).  
Then~$\phi(\Gamma)=\{1\}$.
\end{proposition}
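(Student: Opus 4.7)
The plan is to argue by contradiction: suppose $\phi(\Gamma) \neq \{1\}$, and produce a finite simple group that must simultaneously be of Lie type in characteristic $p$ and sit inside some $GL(n,\F_{\ell_j})$ with $\ell_j \neq p$, which is forbidden by cross-characteristic rigidity when $p$ is large.

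First, I would identify the shape of a simple quotient of $\phi(\Gamma)$. Writing $H = H_1 \cdots H_c$ as an almost direct product of quasi-simple factors gives $\Gamma = \Gamma_1 \cdots \Gamma_c$ with each $\Gamma_i := H_i(\F_p)^\dagger$ quasi-simple, by \cite[Th.\,2.2.7, Prop.\,2.2.11]{GLS} (valid for $p \geq 5$). The finite nontrivial group $\phi(\Gamma)$ admits a simple quotient $\Sigma$, which is nonabelian by Prop.~\ref{app:cor:ab quot}. Since each $\Gamma_i$ is normal in $\Gamma$, the image of at least one $\Gamma_i$ in $\Sigma$ is a nontrivial normal subgroup, forcing $\Sigma \cong \Gamma_i/Z(\Gamma_i)$. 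Thus $\Sigma$ is a nonabelian finite simple group of Lie type in characteristic $p$; in particular, its order is divisible by $p^N$ with $N \geq 1$.

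Second, I would localise this obstruction to one factor of $G$. Setting $T := \pi^{-1}(\phi(\Gamma)) \leq S \leq G$, where $\pi : S \twoheadrightarrow S/N$ is the quotient, $\Sigma$ is realised as a simple quotient of $T$. Applying Lemma~\ref{lem:product subquotient} to $T \leq G = \prod_{j=1}^{c} GL(n,\F_{\ell_j})$ yields an index $j$ and a subgroup $T_j \leq GL(n,\F_{\ell_j})$ admitting $\Sigma$ as a simple quotient.

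Third, I would derive a contradiction via Serre's cross-characteristic theorems in \cite[\S5, \S6.3]{SCrit}. Since $\ell_j \neq p$, Jordan's bound $d(n)$ provides a normal subgroup $T_j' \leq T_j$ of index at most $d(n)$ whose non-abelian composition factors are groups of Lie type in characteristic $\ell_j$; as $p > d(n)$ and $\Sigma$ is simple nonabelian, the surjection $T_j \twoheadrightarrow \Sigma$ restricts to a surjection $T_j' \twoheadrightarrow \Sigma$, so $\Sigma$ would itself be of Lie type over some $\F_{\ell_j^r}$, contradicting its characterisation in characteristic $p$. Equivalently, Serre's bound on Sylow $p$-subgroups of subgroups of $GL(n,\F_{\ell_j})$ for $\ell_j \neq p$ gives $p^N \leq c(n)^{n^2}$, which combined with the crude estimate $N \leq \dim H_i \leq n^2$ contradicts $p > c(n)^{n^2}$. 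The main obstacle is this third step: the precise extraction of Serre's bounds to exclude Lie-type-over-$\F_p$ simple quotients: the exponent $n^2$ in the hypothesis reflects exactly the crude dimension bound $\dim H_i \leq \dim GL(n) = n^2$ applied to the $p$-part of $|\Sigma|$.
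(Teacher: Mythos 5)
Your skeleton — argue by contradiction, extract a simple quotient~$\Sigma$ of~$\phi(\Gamma)$, show it is nonabelian via~Prop.~\ref{app:cor:ab quot}, localise to a single~$GL(n,\F_{\ell_j})$ via Lemma~\ref{lem:product subquotient}, then derive a cross-characteristic obstruction — does match the paper's strategy, and your GLS identification~$\Sigma\cong\Gamma_i/Z(\Gamma_i)$ is a more concrete (and workable) substitute for the paper's appeal to Serre's Lemma~6.1 of~\cite{SCrit}. But the final step, where the contradiction is supposed to come from, contains a genuine gap in both of the forms you offer.

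Jordan's theorem in Serre's form (\cite[\S4]{SCrit}) does \emph{not} produce a bounded-index normal subgroup~$T_j'\leq T_j$ whose nonabelian composition factors are of Lie type in characteristic~$\ell_j$. It produces a chain~$\Lambda^\ddagger\triangleleft\Lambda'\triangleleft\Lambda$ with~$[\Lambda:\Lambda']\leq d(n)$,~$\Lambda'/\Lambda^\ddagger$ abelian, and~$\Lambda^\ddagger$ generated by elements of~$\ell$-power order; nothing about the Lie type of composition factors comes for free. Your alternative ``Sylow~$p$-subgroup bound''~$p^N\leq c(n)^{n^2}$ for subgroups of~$GL(n,\F_{\ell})$ with~$\ell\neq p$ simply does not exist: already for~$GL(2,\F_\ell)$ the~$p$-part of~$(\ell-1)^2(\ell+1)$ is unbounded as~$\ell$ varies. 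And the exponent~$n^2$ in the hypothesis is not the dimension bound~$\dim H_i\leq n^2$; its role is to force~$\ell>c(n)$ (since~$p\mid\#\Sigma$ divides~$\#GL(n,\F_\ell)<\ell^{n^2}$, so~$p>c(n)^{n^2}$ gives~$\ell>c(n)$), which is what licenses Serre's Th.~6.4 of~\cite{SCrit}. The missing idea is to show~$\Lambda:=\phi(\Gamma)$ equals~$\Lambda^\ddagger$, i.e., is generated by~$\ell$-power-order elements: apply Jordan to~$\Lambda$; since~$\Gamma=\Gamma^\dagger$ forces every nontrivial quotient of~$\Lambda$ to have order divisible by~$p>d(n)$, one gets~$\Lambda=\Lambda'$; and Prop.~\ref{app:cor:ab quot} kills the abelian layer, giving~$\Lambda'=\Lambda^\ddagger$. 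Then~$\ell\mid\#\Sigma$, so Serre's Th.~6.4 places~$\Sigma$ in his class~$\Sigma_\ell$, while your characteristic-$p$ identification (or Serre's Lem.~6.1) places it in~$\Sigma_p$; disjointness of~$\Sigma_p$ and~$\Sigma_\ell$ for~$p\neq\ell$ is the actual contradiction.
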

\begin{proof}
Let~$\Lambda:=\phi(\Gamma)$ and assume by contradiction that~$\Lambda\neq 1$. Then there exists a simple quotient~$\Lambda\to \Sigma$. 
Lemma~\ref{lem:product subquotient} implies that~$\Sigma$ is a subquotient of~$GL(n,\F_{\ell})$ for some~$\ell\in\{\ell_1;\ldots;\ell_c\}$.

According to Jordan's theorem~\cite[\S4]{SCrit}, there is a sequence of normal subgroups~$\Lambda^\ddagger\triangleleft \Lambda'\triangleleft \Lambda$ with~$\#\Lambda/\Lambda'\leq d(n)$ and~$\Lambda'/\Lambda^\ddagger$ abelian, where~$\Lambda^\ddagger$ denotes he subgroup generated by~$\{\lambda\in\Lambda|\exists k\in\Z_{\geq1} \lambda^{(\ell^k)}=1\}$.

As~$\Gamma=\Gamma^\dagger$, every non-trivial quotient~$\Gamma\to \Gamma/N$ satisfies~$p\mid \# \Gamma/N$. As~$p>c(n)^{(n^2)}$, we have~$\ell>c(n)$. As~$p>d(n)$, we have~$\Lambda=\Lambda'$.  By~Cor.~\ref{app:cor:ab quot},~$\Gamma$ has non non trivial abelian quotient. Thus~$\Lambda=\Lambda^\ddagger$. 
We deduce that every non-trivial quotient~$\Lambda\to \Lambda/N$ satisfies~$\ell\mid \# \Lambda/N$.

According to~\cite[\S6, Th.\,6.4]{SCrit}, $\Sigma$ is in~$\Sigma_\ell$ (in the sense of~\cite[\S6]{SCrit}). According to~\cite[\S6, Lem.\,6.1]{SCrit}, ~$\Sigma$ and is abelian or in~$\Sigma_p$.
According to~Prop.~\ref{app:cor:ab quot},~$\Sigma$ is not abelian. By assumption~$p\neq \ell$ and thus, by~\cite[\S6.4]{SCrit},~$\Sigma$ cannot be in both~$\Sigma_p$ and~$\Sigma_\ell$. This leads to a contradiction.
\end{proof}

\subsection{Proof of Theorem~\ref{appB:thm}}\label{AppB:sec:proof} 
Let~$U$ be as in Theorem~\ref{appB:thm}. 
Let us write~$U_p:=U\cap M(\Z_p)$. Let~$\pi_R:GL(n,\widehat{\Z})\to GL(n,R)$ be the map induced by~$\widehat{\Z}\to R:=\prod_p\F_p$, and let us define~$U(R):=\pi_R(U)$.
Def.~\ref{defi:Tate}\eqref{defi:Tate2} implies $Tate(\pi_R(\prod_p U_p))$ in the sense of~\S\ref{appB:sec:R}.
 Using Lemma~\ref{TateFp monter}  we deduce~$Tate(U(R))$. By Prop.~\ref{app:prop R} for~$W=U(R)$ and~$p_0\geq p(W)$ and~$U'$ the inverse image of~$\prod_{p\geq p_0} V(p)^\dagger\cdot \left(U(R) \cap Z(M)(R)\right)$ in~$U$, we have~$\exists e\in \Z_{\geq1} \forall e\in U, u^e\in U'$. It is thus enough to prove~\eqref{B1eq} for~$u\in U'$. Furthermore, for every~$u\in U_p$ we have~$u^e\in U'_p:=U'\cap M(\Q_p)$. By~Cor.~\ref{cor:burnside}, the group~$U'$ satisfies Def.~\ref{defi:Tate}. We may thus substitute~$U$ with~$U'$ in Theorem~\ref{appB:thm}, and we can thus assume from now on that 
\[
U(R)=\prod_{p\geq p_0} V(p)^\dagger\cdot \left(U(R) \cap Z(M)(R)\right).
\]
The Theorem~\ref{appB:thm} will be a consequence of~\eqref{factors as der times z} and~\eqref{factors by p}.
\begin{lemma}\label{lem:burnside}
 For every~$e,n\in\Z_{\geq1}$, there exists~$k(e,n)\in\Z_{\geq1}$ such that for every prime~$p$ and every compact subgroup~$K\leq GL(n,\Z_p)$, and every~$L\leq K$ such that~$\forall k\in K,k^e\in L$, we have
\[
[K:L]\leq k(e,n)
\]
\end{lemma}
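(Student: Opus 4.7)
The strategy is to reduce the bound on $[K:L]$ to a bound on the index of a closed normal subgroup of $K$ contained in $L$.

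First, set $N := \langle k^e : k \in K \rangle$, the subgroup generated (abstractly) by all $e$-th powers of elements of $K$. Since $gk^eg^{-1} = (gkg^{-1})^e$, this subgroup is normal in $K$, and $L \supseteq N$ by hypothesis; therefore $[K:L] \leq [K:N]$, and it suffices to bound $[K:N]$ uniformly in $K$ and $p$.

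The bound on $[K:N]$ rests on two main inputs. First, every compact subgroup $K \leq GL(n,\Z_p)$ is topologically generated by at most $d(n)$ elements, where $d(n)$ depends only on $n$ and not on $p$ (a standard fact about compact $p$-adic analytic groups, in the spirit of~\cite[Prop.~6.7]{RY}). Second, by the Nikolov--Segal theorem on verbal subgroups of topologically finitely generated profinite groups, the subgroup $N$, being the verbal subgroup associated to the word $w(x)=x^e$, is automatically closed in $K$. Hence $K/N$ is a profinite group of exponent dividing $e$ (since $k^e \in N$ for every $k \in K$), topologically generated by at most $d(n)$ elements. By Zelmanov's resolution of the Restricted Burnside Problem, in its profinite form, any such group is finite of order at most $B_0(d(n), e)$ for some function $B_0$ depending only on its two arguments. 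Setting $k(e, n) := B_0(d(n), e)$ thus yields $[K:L] \leq [K:N] \leq k(e, n)$.

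The main obstacle is invoking the Nikolov--Segal theorem to ensure that the abstract subgroup $N$ is automatically closed; without it, $N$ might be a proper dense subgroup of its closure $\overline{N}$, and one would only control $[K:\overline{N}]$ rather than $[K:N]$ itself. In the applications of this lemma in the present paper (see~\S\ref{thm 51 reduction produit}), the subgroup $L$ is automatically closed (being a product of closed local factors), so one may instead argue directly that $L \supseteq \overline{N}$ and bypass the Nikolov--Segal input, relying only on Zelmanov's theorem together with the uniform finite generation of $K$.
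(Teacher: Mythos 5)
Your proof is correct and rests on the same two pillars as the paper's: the uniform bound, independent of $p$, on the number of topological generators of a compact subgroup of $GL(n,\Z_p)$, and the positive solution of the Restricted Burnside Problem (Zelmanov). Where you go beyond the paper's two-line proof is in treating the subtlety that $L$ is a priori only an abstract subgroup: the Burnside input readily bounds $[K:\overline{N}]$, since every finite continuous quotient of $K/\overline{N}$ is a $c(n)$-generated finite group of exponent dividing $e$, but to pass to $[K:L]\leq [K:N]<\infty$ one must know that the verbal subgroup $N=\langle k^e : k\in K\rangle$ is closed, for which you correctly invoke Nikolov--Segal. Your alternative observation — that in every application in this paper (e.g.\ in \S\ref{thm 51 reduction produit} and in Cor.~\ref{cor:burnside}) the subgroup $L$ is in fact closed, so one may replace $N$ by $\overline{N}$ and dispense with Nikolov--Segal — is a lighter-weight route and is probably what the authors intended implicitly. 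Both routes are valid.
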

\begin{proof} It follows from~\cite[Prop~6.7 and Lem. 6.8 (2)]{RY}, there exists~$c(n)$ such that~$K$ is topologocally generated by at most~$c(n)$ elements. From the restricted Burnside problem~\cite[\S1.1 Introduction]{Vaugh}, we can take~$k(e,n)=R(e,c(n))$, with~$R(r,n)$ as in loc. cit.
\end{proof}
\begin{corollary}\label{cor:burnside}
Let~$V\leq U$ be a closed subgroup such that~$\exists e, \forall u\in U, u^e\in V$.
If~$U$ satisfies Def.~\ref{defi:Tate}, then~$U'$ satisfies Def.~\ref{defi:Tate}.
\end{corollary}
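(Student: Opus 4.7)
The plan is to reduce the verification of Def.~\ref{defi:Tate} for $V$ to that for $U$ by controlling the local indices $[U_p:V_p]$ uniformly in $p$ via the Burnside-type bound of Lem.~\ref{lem:burnside}, and then to invoke the stability remarks of~\S\ref{rem:Tate}.

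First, for each prime $p$, set $U_p := U \cap M(\Q_p)$ and $V_p := V \cap M(\Q_p)$. Since $V$ is a closed subgroup of $U \leq M(\widehat{\Z})$, both $U_p$ and $V_p$ are compact subgroups of $GL(n,\Z_p)$ with $V_p \leq U_p$. By hypothesis, for every $u \in U_p \leq U$ we have $u^e \in V$; but $u^e \in M(\Q_p)$ as well, so $u^e \in V_p$. Applying Lem.~\ref{lem:burnside} with $K = U_p$ and $L = V_p$ yields
\[
\forall p,\quad [U_p:V_p] \leq D' := k(e,n),
\]
a bound independent of $p$.

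Next, by Remark~\ref{rem:Tate}(\ref{passage aux Up}) the uniform integral Tate property for $V$ is equivalent to the same property for $\prod_p V_p$, and similarly for $U$. We may thus assume $U = \prod_p U_p$ and we need to check Def.~\ref{defi:Tate} for $\prod_p V_p$. For part~(\ref{defi:Tate1}): since $V_p \leq U_p$ is of finite index, Remark~\ref{rem:Tate}(\ref{rem(7)}) shows that property~\eqref{defi:tate eq1} and the semisimplicity~\eqref{defi:tate eq 1.2} pass from $U_p$ to $V_p$ (the $p$-adic Zariski closures have the same neutral component, and the centraliser chain $Z_G(M) \supseteq Z_G(U_p) \supseteq Z_G(V_p) \supseteq Z_G({V_p}^0) \supseteq Z_G(M)$ forces equality throughout). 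For part~(\ref{defi:Tate2}): this is exactly the content of Remark~\ref{rem:Tate}(\ref{rem(9)}), applied with the uniform bound $D'=k(e,n)$; it produces the function $D \mapsto M(D \cdot D')$ witnessing~\eqref{defi:tate eq 2} and~\eqref{defi:tate eq 2.2} for $\prod_p V_p$.

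There is no serious obstacle: the only substantive input is the Burnside-type finiteness of Lem.~\ref{lem:burnside}, which guarantees that the local indices $[U_p:V_p]$ do not blow up with $p$; once this is in hand, the invariance remarks of~\S\ref{rem:Tate} dispatch both parts of the definition mechanically. Combining the above verifications concludes that $V$ satisfies Def.~\ref{defi:Tate}.
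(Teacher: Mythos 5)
Your proof is correct and takes essentially the same route as the paper's (which is a single sentence: observe that $u^e\in V_p$ for $u\in U_p$, invoke Lem.~\ref{lem:burnside} to get $[U_p:V_p]\le k(e,n)$, and cite \S\ref{rem:Tate}~\eqref{rem(9)}). You have supplied more detail, notably by also treating part~\eqref{defi:Tate1} explicitly via~\eqref{rem(7)} and the passage-to-$\prod_p U_p$ remark, which the paper implicitly subsumes.

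One small wrinkle: the centraliser chain you wrote, $Z_G(M)\supseteq Z_G(U_p)\supseteq Z_G(V_p)\supseteq Z_G(V_p^0)\supseteq Z_G(M)$, has the a~priori inclusions reversed. Since $V_p^0\le V_p\le U_p\le M$, centralisers grow when the group shrinks: the inclusions that hold unconditionally are $Z_G(M)\le Z_G(U_p)\le Z_G(V_p)\le Z_G(V_p^0)$. To close the loop you need the extra ingredient — which you do mention parenthetically — that $V_p$ and $U_p$ have the same Zariski neutral component because $[U_p:V_p]<\infty$, giving $Z_G(V_p^0)=Z_G(U_p^0)=Z_G(M)$ by the hypothesis on $U$, after which the chain collapses to equalities. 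So the conclusion is right, but the chain as written would be circular without that observation; it should be presented as the genuine input, not a parenthetical.
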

\begin{proof}Note that~$\forall u\in U_p:=U\cap M(\Q_p)$, we have~$u^e\in V_p:=V\cap M(\Q_p)$. By Lemma~\ref{lem:burnside} we have~$[U_p:V\cap M(\Q_p)]\leq k(e,n)$ and~\S\ref{rem:Tate} \eqref{rem(9)} implies the conclusion.
\end{proof}
\subsubsection{}
Let~$p$ be a prime an let~$Y=Y(p)$ be the image of~$U$ by~$M(\widehat{\Z})\to M(\Q_p)\to M^{ad}(\Q_p)$.
Let~$W\leq Y$ be a closed subgroup which is invariant under conjugation by~$U$ and such that~$Y/W$ is abelian. We note that the conjugation action of~$U$ on~$Y/W$ is trivial.
We claim that
\begin{equation}\label{preuveB1claim finite}
\abs{Y/W}<+\infty.
\end{equation}
\begin{proof}
Let~$\mathfrak{w}\leq \mathfrak{y}\leq \mathfrak{m}^{ad}_{\Q_p}$ be the Lie~$\Q_p$-algebras of the~$p$-adic Lie groups~$W\leq Y\leq M^{ad}(\Q_p)$. Then~$\mathfrak{y}/\mathfrak{w}$ is a subquotient of~$\mathfrak{m}^{ad}_{\Q_p}$ as a representation of~$U$, and~$U$ acts trivially on~$\mathfrak{y}/\mathfrak{w}$. By~\eqref{defi:tate eq 1.2}, the representation~$\mathfrak{m}^{ad}_{\Q_p}$ of~$U_p:=M(\Q_p)\cap U$ is semisimple, and by~\eqref{defi:tate eq1} it has no nonzero factor with a trivial $U_p$-action. We deduce that~$\mathfrak{w}= \mathfrak{y}$, that~$W$ is open in the compact group~$Y$, and~\eqref{preuveB1claim finite} follows.
\end{proof}
We claim that
\begin{equation}\label{preuveB1claim equal}
\forall p\gg0, \abs{Y/W}=1.
\end{equation}
\begin{proof}Let~$M^{ad}\to GL(m)$ be an embedding and consider the induced~$\Z$-structure on~$M^{ad}$. Then we can define, for every prime~$p$ and every~$i\in\Z_{\geq1}$,
\[
K_i:=\ker M(\Z_p)\to M(\Z/(p^i)).
\]
Let~$Y_i$ and~$W_i$ denote the image of~$Y\cap K_i$ and~$W\cap K_i$ in~$K_i/K_{i+1}$.
Then~$\abs{Y/W}=1$ is equivalent to 
\begin{equation}\label{yiarewi}
\forall i\in\Z_{\geq1}, Y_i=W_i.
\end{equation}

For~$p\gg0$, there is a map~$\alpha:M(\F_p)\to M^{ad}(\F_p)$ such that the composed map~$M(\Z_p)\to M^{ad}(\Z_p)\to M^{ad}(\F_p)$ is the same as the composed map~$M(\Z_p)\to M(\F_p)\to M^{ad}(\F_p)$. We deduce that the image of~$U$ in~$M^{ad}(\F_p)$ is
\begin{equation}\label{y0isalpha}
Y_0=\alpha(V(p)^\dagger).
\end{equation}
By Lem.~\ref{lem:B1} and Prop.~\ref{app:cor:ab quot}, we have
\begin{equation}\label{leq Vpdagger to abelian}
\text{ for~$p\gg0$, the image of~$V(p)^\dagger$ in an abelian group is trivial. }
\end{equation}
Thus~$Y_0/W_0$ is trivial. This implies
\begin{equation}\label{y0isw0}
Y_0=W_0.
\end{equation}

For~$p\gg0$ and~$i\in\Z_{\geq1}$, we have~$K_i=\exp(p^i\cdot \mathfrak{m}^{ad}_{\Z_p})$ and this induces an identification of abelian groups, compatible with the action of~$U$ by conjugation,
\[
K_i/K_{i+1}\simeq \frac{p^i\cdot \mathfrak{m}^{ad}_{\Z_p}}{p^{i+1}\cdot \mathfrak{m}^{ad}_{\Z_p}}\simeq \mathfrak{m}^{ad}_{\F_p}.
\]
We note that~$Y_i/W_i$ is a subquotient of~$\mathfrak{m}^{ad}_{\F_p}$ as a representation of~$U$, and~$U$ acts trivially on~$Y_i/W_i$. Thus~$\alpha(U(p))\leq Y_0$ acts trivially on~$Y_i/W_i$.

For~$p\gg0$, the map~$M\to M^{ad}$ induces a bijection~$\mathfrak{m}^{der}_{\F_p}\to \mathfrak{m}^{ad}_{\F_p}$. 
For~$p\gg 0$ this bijection is equivariant for the action of~$M(\Z_p)$ via~$M(\Z_p)\to M(\F_p)$ on~$\mathfrak{m}^{der}_{\F_p}$ and via~$M(\Z_p)\to M(\F_p)\xrightarrow{\alpha} M^{ad}(\F_p)$ on~$\mathfrak{m}^{ad}_{\F_p}$.  We deduce a $M(\Z_p)$-equivariant embedding~$\mathfrak{m}^{ad}_{\F_p}\simeq \mathfrak{m}^{der}_{\F_p}\leq \mathfrak{m}_{\F_p}$. In particular it is~$U$-equivariant.

By~\eqref{defi:tate eq 2.2} and~\cite[Th.~5.4]{SCR} the~$U(p)$-representation~$\mathfrak{m}_{\F_p}$ is semisimple for~$p\gg0$. By~\eqref{defi:tate eq 2} and Lem.~\ref{conj orbit lemma}, the centraliser of~$U(p)$ in~$\mathfrak{m}_{\F_p}$ is~$\mathfrak{z(m)}_{\F_p}$. Thus~$\mathfrak{m}^{ad}_{\F_p}$ is semisimple as a representation of~$U(p)$ and, for~$p\gg0$, its~$U(p)$-invariants span~$\{0\}\simeq \mathfrak{z(m)}_{\F_p}\cap \mathfrak{m}^{der}_{\F_p}$.

We deduce
\begin{equation}\label{yiiswi}
Y_i=W_i.
\end{equation}

For~$p\gg0$, we have~\eqref{y0isw0} and~\eqref{yiiswi}. We deduce~\eqref{yiarewi}, which implies the claim~\eqref{preuveB1claim equal}.
\end{proof}

\subsubsection{}

We denote by~$M^{der}\leq M$ the derived subgroup, by~$Z(M)\leq M$ the centre, and we write~$ab_M:M\to M^{ab}=M/M^{der}$ and~$ad_M:M\to M^{ad}=M/Z(M)$ the quotient maps. Let~$\A_f=\widehat{\Z}\tens\Q$, let~$G_1=M^{ab}(\A_f)$ and~$G_2=M^{ab}(\A_f)$. We define
\[
\Gamma:=(ab_M,ad_M)(U)\leq G_1\times G_2
\]
and~$\Gamma_1=ab_M(U)$ and~$\Gamma_2=ad_M(U)$ the projections of~$\Gamma$ on~$G_1$ and on~$G_2$.
According to Goursat's lemma,~$\Gamma/((\Gamma\cap G_1)\times (\Gamma\cap G_2))$ is the graph of an isomorphism
\begin{equation}\label{GoursatB}
\Gamma_1/(\Gamma\cap G_1)\to \Gamma_2/(\Gamma\cap G_2).
\end{equation}
We note that~$G_1$ and~$G_2$ are stable under conjugation by~$U$, and that~$\Gamma$ is a~$U$-stable subgroup of~$G_1\times G_2$. The isomorphism~\eqref{GoursatB} is thus~$U$-equivariant. Note that~$U$ acts trivially on~$G_1$. This implies that~$U$ acts trivially on~$\Gamma_2/(\Gamma\cap G_2)$.

We can thus apply~\eqref{preuveB1claim finite} and~\eqref{preuveB1claim equal} to~$Y=Y(p)$ and~$W=Y(p)\cap \Gamma\cap G_2$. We deduce
\[
\abs{\Gamma_2/(\Gamma\cap G_2)}\leq \prod_p\abs{Y(p)/(Y(p)\cap \Gamma\cap G_2)}<+\infty.
\]
It follows
\[
[\Gamma:\Gamma_1\cap \Gamma_2]<+\infty.
\]
The inverse image of~$\Gamma_1$ and~$\Gamma_2$ in~$U$ is~$U\cap M^{der}(\widehat{\Z})$ and~$U\cap Z(M)(\widehat{\Z})$. We have proved
\begin{equation}\label{factors as der times z}
[U:(U\cap M^{der}(\widehat{\Z}))\cdot (U\cap Z(M)(\widehat{\Z}))]<+\infty.
\end{equation}
\subsubsection{}For~$u\in GL(n,\widehat{\Z})$, we denote by~$\ol{u^{\Z}}$ the closed subgroup generated by~$u$ (for the adelic topology).
Let us write~$\pi_R(u)=(v_p)_p\in \prod_p GL(n,\F_p)$. We claim that if
\begin{equation}\label{unipotent everywhere}
\text{ for all~$p$, the order of~$v_p$ is a power of~$p$ }
\end{equation}
then
\begin{equation}\label{monogene is product}
u\in\prod _p \ol{u^{\Z}}\cap GL(n,\Z_p).
\end{equation}
\begin{proof}The map~$k\to u^k:\Z\to u^\Z$ extends by continuity to a map~$\widehat{\Z}\to \ol{u^\Z}$. For every prime~$\ell$, we denote by~$\gamma_\ell(u)$ the image of~$1\in \Z_\ell$ by~$\Z_\ell\to \widehat{\Z}\to \ol{u^\Z}$.
We have in particular~$\gamma_\ell(u)\in \ol{u^\Z}$.
If we write~$u=(u_p)_p\in \prod_\ell GL(n,\Z_p)$, then we have
\begin{equation}\label{uZbar1}
\gamma_p(u)=(\gamma_\ell(u_p))_p,
\end{equation}
where~$\gamma_\ell(u_p)$ is defined similarly. We note the following: if~$u^{p^{i}}\to 1$ as~$i\to\infty$,
we have
\begin{equation}\label{uZbar2}
\gamma_\ell(u)=1\text{ if~$\ell\neq p$,}\qquad\text{ and}~\gamma_\ell(u)=u\text{ if~$\ell=p$}.
\end{equation}
The property~\eqref{unipotent everywhere} implies that~$\forall p, (u_p)^{p^{i}}\to 1$. From~\eqref{uZbar1} and~\eqref{uZbar2}, we deduce that for every~$\ell$,~$\gamma_\ell(u)$ is the image of~$u$ by~$GL(n,\widehat{\Z})\to GL(n,\Z_p) \to GL(n,\widehat{\Z})$. It follows that~$\gamma_\ell(u)\in \ol{u^\Z}\cap GL(n,\Z_p)$ and that~$u=\prod_\ell \gamma_\ell(u)$.
The claim follows.
\end{proof}
Let~$\Gamma\leq GL(n,\widehat{\Z})$ be a subgroup such that~$\pi_R(\Gamma)$ is generated by elements~$v=(v_p)_p$ satisfying~\eqref{unipotent everywhere}. We claim that
\begin{equation}\label{Gammabar is product}
\ol{\Gamma}=\prod_p\ol{\Gamma}\cap GL(n,\Q_p).
\end{equation}
\begin{proof}We argue by double inclusion, only one of which is non trivial. Because the right hand-side is a closed group, it is enough to prove that it contains a set of generators of~$\Gamma$. By~\eqref{monogene is product}, we can take the set of~$u\in\Gamma$ such that~$(v_p)_p:=\pi_R(u)$ satisfy~\eqref{unipotent everywhere}.
\end{proof}
\subsubsection{}
By~\eqref{leq Vpdagger to abelian}, there exists~$p_0$ such that
\begin{equation}\label{p0 Vp ab}
\forall~p\geq p_0, V(p)^{\dagger}\text{ has no abelian quotient.}
\end{equation}
We recall that~$\Lambda:=\oplus_{p\geq p_0} V(p)^\dagger$ (the subgroup generated by~$\bigcup_{p\geq p_0} V(p)^\dagger$) is dense in~$V_0:=\prod_{p\geq p_0} V(p)^\dagger$. 

Let~$U_0=\ker\left(U\to \prod_{p<p_0} GL(n,\F_p)\right)$. We have
\begin{equation}\label{B3finite}
[U:U_0]\leq \prod_{p<p_0}\abs{GL(n,\F_p)}<+\infty.
\end{equation}
Let~$U'=U\cap M^{der}(\widehat{\Z})$ and let~$U'_0=U'\cap U_0$. Using~\eqref{factors as der times z}, we get
\begin{equation}\label{B3 fracs}
\frac{U_0}{U'_0}\simeq \frac{U_0\cdot U'}{U'}\leq \frac{U}{U'}\simeq \frac{U\cap Z(M)(\widehat{Z})}{U\cap Z(M^{der})(\widehat{\Z})}.
\end{equation}
These are thus abelian groups. It follows that~$\pi_R(U_0)/\pi_R(U'_0)$ is abelian.

We may assume that ~$p_0$ is chosen big enough, so that for~$p\geq p_0$, there is a map~$M(\F_p)\to M^{ab}(\F_p)$ such that we have~$p\nmid \# M^{ab}(\F_p)$ and~$M^{der}(\F_p)=\ker\left( M(\F_p)\to M^{ab}(\F_p)\right)$. Thus~$V(p)^\dagger\leq M^{der}(\F_p)$ for~$p\geq p_0$, and thus~$V_0\leq M^{der}(R)$ and
\[
M^{der}(R)\cap (V_0\cdot Z(M)(R))=V_0\cdot (M^{der}(R)\cap Z(M)(R))=V_0\cdot Z(M^{der})(R).
\]
We deduce~$\pi_R(U'_0)\leq M^{der}(R)\cap U(R)\leq V_0\cdot Z(M^{der})(R)$. Let~$U_0''$ be the inverse image of~$V_0$ in~$U'_0$. There exists~$f\in\Z_{\geq1}$ such that~$\#{Z(M^{der}(\F_p))}$ divides~$f$ for all primes~$p$. We deduce
\[\forall u\in U'_0, u^f\in U''_0.\]

By~\eqref{p0 Vp ab}, the morphism~$V(p)^\dagger\to V_0/\pi_R(U_0)\leq \pi_R(U_0)/\pi_R(U'_0)$ is constant.
Thus~$V(p)^\dagger\leq \pi_R(U'_0)$, and thus~$\Lambda\leq \pi_R(U''_0)\leq V_0$. Let~$\Gamma$
be the inverse image of~$\Lambda$ in~$U''_0$. Then~$\pi_R(\Gamma)=\Lambda$. Moreover~$\Gamma\leq U''_0$ is a dense subgroup and~$U''_0$ is compact, and thus~$\ol{\Gamma}=U''_0$. 

By definition of~$V(p)^\dagger$, the group~$\Lambda$ is generated by elements~$v=(v_p)_p$ satisfying~\eqref{monogene is product}. From~\eqref{Gammabar is product} we deduce
\[
U''_0=\ol{\Gamma}=\prod_p \ol{\Gamma}\cap M(\Q_p)\leq \prod_p U'\cap M(\Q_p).
\]
By~\eqref{B3finite}, we have~$[U':U'_0]\leq [U:U_0]<\infty$, and we conclude
\begin{equation}\label{factors by p}
\exists e, \forall u\in U'=U\cap M^{der}(\widehat{\Z}), u^e\in U''_0\leq \prod_p U'\cap M(\Q_p).
\end{equation}

\end{document}